\newtheorem{thm}{Theorem}[section]
\newtheorem{cor}[thm]{Corollary}
\newtheorem{lem}[thm]{Lemma}
\newtheorem{prop}[thm]{Proposition}
\newtheorem{ex}[thm]{Example}
\newtheorem{defn}[thm]{Definition}
\newtheorem{rem}[thm]{Remark}
\DeclareMathAlphabet\EuScript{U}{eus}{m}{n}
\SetMathAlphabet\EuScript{bold}{U}{eus}{b}{n}
\newtheorem*{rep@theorem}{\rep@title}
\newcommand{\newreptheorem}[2]{%
	\newenvironment{rep#1}[1]{%
		\def\rep@title{#2 \ref{##1}}%
		\begin{rep@theorem}}%
		{\end{rep@theorem}}}
\theoremstyle{definition}
\begin{document}
	
	\title{Operator valued positive definite  kernels and differentiable universality}
	
	\author{J. C. Guella}
	\email{jean.guella@riken.jp}
	\address{RIKEN Center for Advanced Intelligence Project, Tokyo, Japan}
	
	\begin{abstract}
		We present a characterization for a positive definite operator valued kernel to be universal or $C_{0}$-universal, and apply these characterizations to a family of operator valued kernels that are shown to be  well behaved. Later, we obtain  a characterization for an operator valued differentiable kernel to be  $C^{q}$-universal and $C^{q}_{0}$-universal. In order to obtain such characterization and examples we generalize some well known results concerning the structure of differentiable kernels to the operator valued context. On the examples is given an emphasis on the radial kernels on Euclidean spaces.   
	\end{abstract}
	
     \keywords{ Positive definite kernels ; Universality ; Differentiable universality ; Operator valued kernels ; Radial kernels}
     \subjclass[2010]{ 42A82 ; 46E20 ; 46E40 ; 46G10}

	\maketitle


	\tableofcontents

	\section{\textbf{Introduction}}

	The concept of a complex valued positive definite kernel has been permeating Mathematics since the beginning of the $20$th century, especially after the seminal work  \cite{Aronszajn1950}, which laid down the connection  between positive definite kernels and Reproducing Kernel Hilbert Spaces (RKHS). In applications (especially in Machine Learning), one of the main desirable properties on a RKHS is if it can approximate a target (but usually unknown) function. In this sense, the concepts of  universality (ability to approximate continuous functions on compact sets) and $C_{0}$-universality (ability to approximate  any $C_{0}$ function) are a basic requirement \cite{CS}, \cite{CZ}. Recently the concept of $C^{q}$-universality (ability to approximate a function and its derivatives up to order $q$ on compact sets) and $C_{0}^{q}$-universality  (ability to approximate  any $C^{q}_{0}$ function and its derivatives up to order $q$) has gained some attraction \cite{derivativekernel}, \cite{Michelli2014} as a natural condition for approximating a target function and its derivatives up to order $q$.
	
	A generalization of the concept of complex valued valued positive definite kernel to the operator valued context also has been attracting attention \cite{Caponnetto2008},\cite{Minh2016}, \cite{zhang2012}. Let $\mathcal{H}$ be a separable Hilbert space and $\mathcal{L}(\mathcal{H})$ be the space of all continuous linear operators from $\mathcal{H}$ to $\mathcal{H}$. An operator valued kernel $K: X \times X \to \mathcal{L}(\mathcal{H})$ is called \textbf{positive definite} (or Multi-task kernel \cite{Caponnetto2008}) if for every finite quantity of distinct points $x_{1}, \ldots, x_{n} \in X$ and vectors $v_{1}, \ldots, v_{n} \in \mathcal{H}$ we have that
	$$
	\sum_{\mu, \nu =1}^{n}\langle K(x_{\mu}, x_{\nu})v_{\mu, v_{\nu}} \rangle_{\mathcal{H}} \geq 0.
	$$
	In addition, if the above double sum is zero only when all vectors $v_{\mu}$ are zero, we say that the kernel is \textbf{strictly positive definite}.

	Once a definition in Mathematics is extended to a broader context, one of the first questions that comes to mind is how the generalization behaves comparing to the objects that were extended. For operator valued positive definite kernels and associated definitions, one way of doing it is by the \textbf{scalar valued  projections of the kernel}, meaning if $K: X \times X \to \mathcal{L}(\mathcal{H}) $ is a positive definite kernel,  its scalar valued projections are the kernels $K_{v}: X \times X \to \mathbb{C} $, $v \in \mathcal{H}\setminus{\{0\}}$, given by
	$$
	K_{v}(x,y):= \langle  K(x,y)v ,v \rangle_{\mathcal{H}} \in \mathbb{C}.   
	$$
	
	It is easy to verify that if the operator valued kernel is positive definite and satisfy any associated definition that we present at Section \ref{Definitions},  then all scalar valued projections of the kernel are positive definite and satisfy the same associated definition.  For the convenience of the reader we prove this affirmation on Lemma \ref{scalarvaluedprojectionsuniversality}. In general the converse is not valid, not even on the positive definite case scenario. In Example \ref{examplenonuniversalityradial} and Example \ref{exampleconverseuniversality}, we prove that universality and differentiable universality on the scalar valued projections of a positive definite radial kernel on an Euclidean space does not imply the same property on the operator valued kernel. However, in Theorem \ref{universaleuclidian2} and Theorem \ref{universaleuclidian2cq}, we prove that  if the operator valued radial kernel is positive definite on all Euclidean spaces (under some technical conditions due to the subtelities of operator valued measures)  information on its scalar valued projections implies a similar property on the operator valued kernel.    
	
	One of the reasons of why the results on Theorem \ref{universaleuclidian2} are so well behaved, is connected to the characterization of the complex valued positive definite radial kernels on all Euclidean spaces \cite{schoenb}, which are related to the Gaussian kernels $e^{-r\|x-y\|^{2}}$, $r\geq 0$ and those kernels are $C^{\infty}_{0}$-universal. 
	
	This example can be put into a very general framework, that is of a bounded continuous function $p: \Omega \times (X\times X ) \to \mathbb{C}$ for which $p_{w}: X \times X \to \mathbb{C}$ is a positive definite kernel for all $w \in \Omega$, a  scalar valued nonnegative finite Radon measure $\lambda$ on $\Omega$ and the positive definite kernel being analyzed satisfies 
	\begin{equation}\label{mainexample}
	P(x,y)= \int_{\Omega}p_{w}(x,y)d\lambda(w).
	\end{equation}
	
	Actually, most well known families of complex valued positive definite kernels can be written in a similar way, for instance:
	\begin{enumerate}
		\item[$\circ$](Isotropic kernels on spheres) $X=S^{d}$, $\Omega = \mathbb{Z}_{+}$,\quad  $p_{n}(x,y)= C^{d-2/2}_{n}(\langle x ,y\rangle)/C^{d-2/2}_{n}(1)$;  
		\item[$\circ$] (Bochner  kernels)  $X = \mathbb{R}^{m}$, $\Omega =\mathbb{R}^{m}$,\quad  $p_{\xi}(x,y)= e^{-i(x-y)\cdot\xi}$;
		\item[$\circ$] (Askey kernels) $X = \mathbb{R}^{m}$, $\Omega = [0, \infty)$,\quad  $p_{r}(x,y)= (1 - r\|x-y\|)^{\ell}_{+}$;
		\item[$\circ$] (Radial kernels) $X = \mathbb{R}^{m}$, $\Omega = [0, \infty)$,\quad  $p_{r}(x,y)= \frac{1}{Vol(S^{m-1})}\int_{S^{m-1}}e^{-ir(x-y)\cdot w}dw$;    
	\end{enumerate} 
	
	In Subsection \ref{examplesuniversalkernels} we prove that if all kernels $p_{w}$ satisfy one property (strictly positive definite/ universal/ integrally strictly positive definite) then an operator valued version of \ref{mainexample}, by integrating it with a nonzero and finite operator valued nonnegative Radon measure that admits a Radon-Nikod\'ym decomposition satisfy the same property, and moreover, that this family of kernels is well behaved with respect to the scalar valued projections of the kernel. But in order to present the results on Subsection \ref{examplesuniversalkernels}, we need a method to analyze the universality and $C_{0}$-universality of an operator valued positive definite kernel, which is the focus of Subsection \ref{Characterization of operator valued positive definite universal (and related) kernels}. The method we present is a mixture of Theorem $11$ in \cite{Caponnetto2008} and Proposition $4$ in \cite{Sriperumbudur3} to the operator valued setting, and with this method we define the concept of operator valued integrally strictly positive definite kernel, in a similar way as \cite{Sriperumbudur3}. Additionally, we present a criteria for when the the inclusion-restriction $I : \mathcal{H}_{K} \to C(\mathcal{A}, \mathcal{H})$ is a compact operator for every compact set $\mathcal{A} \subset X$, as well as a version of this result on the $C_{0}$ case.       
	
	On Section \ref{On differentiable positive definite operator valued universal (and related) kernels} we move to differentiable kernels. First, at Subsection \ref{On differentiable positive definite kernels and properties of its RKHS} we prove a formula for the derivatives of  a kernel satisfying  Equation \ref{mainexample} on the operator valued context, as well as providing an operator valued generalization of the results proved in \cite{Michelli2014} and \cite{carmelivitotoigoumanita2010}, regarding the structure of the RKHS for a differentiable kernel. With these technical results at hand, we are able to prove a characterization for the $C^{q}$-universality and $C_{0}^{q}$-universality on the operator valued setting at Subsection \ref{Sufficient conditions for operator valued positive definite differentiable universal (and related) kernels}, generalizing the complex valued results proved in \cite{Simon-Gabriel2018}. We conclude this article at section \ref{Kernels on Euclidean spaces}, where we apply the results on the previous Subsections and obtain several families of operator valued kernels for which the differentiable universalities  are well behaved with respect to the scalar valued projections of the kernel, with an emphasis on radial kernels.                                                                             
	
	For the convenience of the reader, at the Appendix  \ref{Appendix} we discuss the definition and main properties of the several types of vector integration that we use in this article and at Section \ref{Definitions} we review the several concepts of universality that we use  in detail.

	\section{\textbf{Definitions}}\label{Definitions}
	
	In this preliminary section we present several definitions that describes qualitative properties for an operator valued positive definite kernel and its RKHS. If $\mathcal{H}$ is a Hilbert space, a continuous linear operator $T : \mathcal{H} \to \mathcal{H}$ is called 
	\begin{enumerate}
		\item[$\circ$] \textbf{Positive semidefinite} if $\langle Tv,v\rangle_{\mathcal{H}}\geq 0 $, for all $v \in \mathcal{H}$.
		\item[$\circ$]\textbf{Positive definite} if $\langle Tv,v\rangle_{\mathcal{H}}> 0 $, for all $v \in \mathcal{H}\setminus{\{0\}}$.
		\item[$\circ$]\textbf{Strictly positive definite} if $\langle Tv,v\rangle_{\mathcal{H}}\geq M \|v\|_{\mathcal{H}}^{2} $, for some $M \geq 0$ and all $v \in \mathcal{H}$.
	\end{enumerate}
	
	For the construction of the RKHS from a  positive definite kernel $K: X \times X \to \mathcal{L}(\mathcal{H})$ (we are not assuming any topology on $X$), consider the vector space 
	$$
	H_{K}:= span\{ z \in X \to [K_{x}v](z):= K(z,x)v \in \mathcal{H}  \}
	$$
	and the inner product on $H_{K}$  that satisfies $\langle K_{x}v, K_{y}u \rangle_{\mathcal{H}_{K}}:= \langle v, K(x,y)u\rangle_{\mathcal{H}}$. The completion of the normed space $(H_{K}, \|\cdot \|_{\mathcal{H}_{K}})$ is denoted by $\mathcal{H}_{K}$ and can be taken as a subspace of the set of  functions from $X$ to $\mathcal{H}$  that contains the subspace $H_{K}$. The inner product on $\mathcal{H}_{K}$ satisfies
	$$
	\langle K_{x}v, F \rangle_{\mathcal{H}_{K}}= \langle v, F(x) \rangle_{\mathcal{H}}.
	$$
	For detailed arguments and basic properties for RKHS of Hilbert valued functions we refer \cite{Schwartzkernell}. It is important to make this construction explicitly, because several results that we prove depends on how we choose to define the RKHS.

	Recall that for a locally compact space $X$, the Banach space $C_{0}(X, \mathcal{H})$ is defined as the set of continuous functions $f: X \to \mathcal{H}$ (on the norm topology of $\mathcal{H}$), such that for every $\epsilon >0$ there exists a compact set $\mathcal{A}_{\epsilon}$ for which $ \|f(x)\|_{\mathcal{H}}< \epsilon$ for   $x \in X \setminus\mathcal{A}_{\epsilon}$, with norm given by $\sup_{x \in X}\|f(x)\|_{\mathcal{H}}$.

	\begin{defn}\label{univdef} Let $X$ be a Hausdorff space and $K: X \times X \to \mathcal{L}(\mathcal{H}) $ be an operator valued positive definite kernel. We say that the operator $K$ is:\\
		$\circ$  \textbf{Universal}, if $\mathcal{H}_{K} \subset C(X, \mathcal{H})$ and for every compact set $\mathcal{A} \subset X$, every continuous function $g:  \mathcal{A} \to \mathcal{H}$ and every $\epsilon >0$ there exists $f : X \to \mathcal{H} \in \mathcal{H}_{K}$ for which
		$$	
		\sup_{x \in \mathcal{A}} \| f(x) - g(x)  \|_{\mathcal{H}}< \epsilon.	 
		$$
		$\circ$ \textbf{Projectively universal}, if for every $v \in \mathcal{H}\setminus{\{0\}}$, the scalar valued kernel $K_{v}: X \times X \to \mathbb{C}$, given by $K_{v}(x,y):=\langle K(x,y)v,v \rangle_{\mathcal{H}} $ is universal.\\
		In addition, when $X$ is a locally compact space, we say that the operator $K$ is:\\
		$\circ$ \textbf{$C_{0}$-universal}, if $\mathcal{H}_{K}   \subset C_{0}(X, \mathcal{H})$ and for  every continuous function $g \in C_{0}(X, \mathcal{H})$ and every $\epsilon >0$ there exists $f : X \to \mathcal{H} \in \mathcal{H}_{K}$ for which
		$$	
		\sup_{x \in X} \| f(x) - g(x)  \|_{\mathcal{H}}< \epsilon.	 
		$$
		$\circ$ \textbf{Projectively $C_{0}$-universal}, if for every $v \in \mathcal{H}\setminus{\{0\}}$, the scalar valued kernel $K_{v}$ is $C_{0}$-universal. 
	\end{defn}

	On the $C_{0}$ case we always assume that $X$ is locally compact in order to avoid pathological topologies. The definition of universal and $C_{0}$-universal kernels on the operator valued context first appeared at \cite{Caponnetto2008} and  the projectively definitions are a natural step from them. The main interest on the projectively universalities is to understand under which conditions the converse of Lemma \ref{scalarvaluedprojectionsuniversality} holds, in other words, when the fact that kernel is Projectively universal ($C_{0}$-universal) implies that the operator valued kernel is universal ($C_{0}$-universal). This type of analysis has been proving a fruitful and intriguing relationship, as can be seen in \cite{guella3}, \cite{Guella2019},  \cite{neeb1998}.

	Although the definition for a positive definite kernel being universal (or $C_{0}$-universal) is simple, usually it is difficult to obtain an explicit description for the  RKHS of a kernel.  At Theorem \ref{oneintegraltodoubleintegral}  we extend Theorem $11$ of \cite{Caponnetto2008} and present a powerful  if and only if characterization for the universality ($C_{0}$-universality)  of an operator valued kernel, which is one of the main building blocks for the results on this paper.

	Sometimes the inclusion $\mathcal{H}_{K} \subset C_{0}(X, \mathcal{H})$ might also be difficult to verify but the technical condition in Theorem \ref{oneintegraltodoubleintegral} that characterizes $C_{0}$-universal kernels might be much simpler to analyze. 
	
	\begin{defn}\label{intstricdefinition} Let $X$ be a locally compact Hausdorff space, we say that a bounded positive definite kernel  $K: X \times X \to \mathcal{L}(\mathcal{H}) $ for which $\mathcal{H}_{K}\subset C(X, \mathbb{C}^{\ell})$ is \textbf{integrally strictly positive definite} if for every nonzero  $\mathcal{H}$ valued Radon measure of bounded variation $ \eta$ in $X$ ($\eta \in \mathfrak{M}(X, \mathcal{H})\setminus{\{0\}}$)
		$$
		\int_{X}\langle \int_{X} K(x,y) d\eta(x), d\eta(y)\rangle >0. 
		$$	
	\end{defn}
	
	When $\mathcal{H}=\mathbb{C}$ this definition is the one given in \cite{Sriperumbudur}. For some specific type of complex valued kernels, a good description of those who are integrally strictly positive definite were obtained in  \cite{cheney1995}, \cite{Sriperumbudur2}, \cite{Sriperumbudur}, especially  the kernels on Euclidean spaces invariant by translations (more generally on a locally compact commutative group). 
	
	If a kernel $K: X \times X \to \mathcal{L}(\mathcal{H}) $ is integrally strictly positive definite, by standard arguments of measure theory, it is possible to obtain that this property is equivalent at $\mathcal{H}_{K}$ being dense on every Banach space $L^{1}(X, \lambda, \mathbb{C}^{\ell})$, where $\lambda$ is a finite scalar valued nonnegative finite Radon measure  on $X$ and 
	$$
	L^{1}(X, \lambda, \mathbb{C}^{\ell}):=\{h: X \to \mathbb{C}^{\ell}, \quad  \int_{X}\|h(x)\|d\lambda(x) <\infty \}.$$
	

	On Euclidean spaces the concept of universality can be generalized.  If $U \subset \mathbb{R}^{m}$ is an open set, a function $F : U  \to \mathcal{H} $ is an element  of  $C^{1}(U , \mathcal{H})$ if for every $1\leq i \leq m$, there exists a  continuous function $ U  \to \mathcal{H}  $, which can be proved that is unique and we denote it by $\partial^{e_{i}} F $, such that
	$$
	\lim_{h \to 0} \left \| \frac{F(x+he_{i}) -F(x)}{h} - \partial^{e_{i}} F(x) \right \|_{ \mathcal{H}}=0, \quad  x \in U.
	$$
	Recursively, we say that $F \in C^{2}( U, \mathcal{H})$ if for every $1\leq i \leq m$ the function $\partial^{i} F$ is in $C^{1}(U , \mathcal{H})$. Similar to multi variable calculus, it can be proved that $\partial^{e_{i}} \partial^{e_{j}}F = \partial^{e_{j}} \partial^{e_{i}}F $ for every $F \in C^{2}(U, \mathcal{H})$. For more information on vector valued differentiable functions we refer \cite{vecdiff}
	
	We also define the set  
	$$ 
	C^{q}_{0}(U, \mathcal{H}):= \{F \in C^{q}(U, \mathcal{H}), \quad  \partial^{\alpha}F \in C_{0}(U, \mathcal{H}), \quad  |\alpha| \leq q\}.
	$$
	
	\begin{defn}\label{Cinftyuniversality}
		Let  $U \subset \mathbb{R}^{m}$ be an open set and  $K: U \times U \to \mathcal{L}(\mathcal{H})$ be a positive definite kernel for which  $\mathcal{H}_{K} \subset C^{q}(U, \mathcal{H})$. The kernel $K$ is called \\
		$\circ$ \textbf{$C^{q}$-universal } if for every compact set $\mathcal{A} \subset U$  every function $g  \in C^{q}(U, \mathcal{H})$ and every $\epsilon >0$ there exists $f : U \to \mathcal{H} \in \mathcal{H}_{K}$ for which
		$$	
		\sup_{x \in \mathcal{A}} \sum_{|\alpha |\leq q}\| \partial^{\alpha}f(x) - \partial^{\alpha}g(x)\|_{\mathcal{H}}< \epsilon.	 
		$$
		$\circ$ \textbf{Projectively $C^{q}$-universal} if for every $v \in \mathcal{H}\setminus{\{0\}}$, the scalar valued kernel $K_{v}: U \times U \to \mathbb{C}$, given by $K_{v}(x,y):=\langle K(x,y)v,v \rangle_{\mathcal{H}} $ is $C^{q}$-universal.\\
		$\circ$ \textbf{$C_{0}^{q}$-universal } if  $\mathcal{H}_{K} \subset C_{0}(U, \mathcal{H})$ and for every function $g  \in C_{0}^{q}(U, \mathcal{H})$ and every $\epsilon >0$ there exists $f : U \to \mathcal{H} \in \mathcal{H}_{K}$ for which
		$$	
		\sup_{x \in U} \sum_{|\alpha |\leq q}\| \partial^{\alpha}f(x) - \partial^{\alpha}g(x)\|_{\mathcal{H}}< \epsilon.	 
		$$
		$\circ$ \textbf{Projectively $C^{q}_{0}$-universal}, if for every $v \in \mathcal{H}\setminus{\{0\}}$, the scalar valued kernel $K_{v}$ is $C^{q}_{0}$-universal. \\
		If the kernel $K$ is $C^{q}$-universal for every $q \in \mathbb{N}$, we say that the kernel is \textbf{$C^{\infty}$-universal}. Similarly, If the kernel $K$ is $C_{0}^{q}$-universal for every $q\in\mathbb{N}$, we say that the kernel is \textbf{$C_{0}^{\infty}$-universal}. 
	\end{defn}
	
	On the scalar valued case, the concept of $C^{q}$-universal kernel first appeared at \cite{kilmer1996} (with the terminology ``Fundamental set on $C^{q}$'') while the $C_{0}^{q}$-universal kernels appeared in \cite{Simon-Gabriel2018}.

	The notation $K \in C^{q,q}(U \times U, \mathcal{L}(\mathcal{H}) )$ means  that $K$ is jointly differentiable up to order $q$ on each coordinate and the derivatives are continuous functions from $U \times U$ to $\mathcal{L}(\mathcal{H})$. For instance, the kernel
	$$
	(x,y) \in \mathbb{R}^{m} \times \mathbb{R}^{m} \to k(x,y):= \frac{\langle x,y\rangle^{q}}{\|x\|^{2} + \|y\|^{2}} \in  \mathbb{R}
	$$ 
	is positive definite, however $k \in C^{q-1}(\mathbb{R}^{m} \times \mathbb{R}^{m})\setminus{C^{q}(\mathbb{R}^{m} \times \mathbb{R}^{m})}$.

	\section{\textbf{Operator valued universal (and related) kernels}}\label{Dual spaces and equivalent conditions for universality}
	In this section, we first present  a characterization for an operator valued positive definite kernel to be universal or $C_{0}$-universal. Later, we apply this characterization to a family of operator valued kernels, which are shown to be  well behaved, especially with respect to the scalar valued projections of the operator valued kernel.

	\subsection{\textbf{Characterization}}\label{Characterization of operator valued positive definite universal (and related) kernels}
	
	On a Banach space $\mathfrak{B}$, a subset $B$ is such that $span(B)$ is dense on $\mathfrak{B}$ if and only if  the only continuous linear functional $v \in  \mathfrak{B}^{*}$ such that $(v, b)_{(\mathfrak{B}^{*}, \mathfrak{B})} =0$ for all $b \in \mathfrak{B}$ is the zero functional. 
	
	Our proof for the characterization of universal and $C_{0}$-universal operator valued kernels relies on this simple relation from functional analysis. Being so, we make a few remarks over the dual spaces of $C(\mathcal{A}, \mathcal{H})$ ($\mathcal{A}$ is a compact Hausdorff space) and $C_{0}(X, \mathcal{H})$. Both   $C(\mathcal{A}, \mathcal{H})$ and $C_{0}(X, \mathcal{H})$ are Banach spaces on the sup norm 
	$$
	\|F\|_{C(\mathcal{A}, \mathcal{H})} := \sup_{x \in \mathcal{A}} \|F(x)\|_{\mathcal{H}}, \quad \|F\|_{C_{0}(X, \mathcal{H})} := \sup_{x \in X} \|F(x)\|_{\mathcal{H}}.
	$$


	We recall some measure theoretical definitions that will be needed.
	
	On a Hausdorff space $Z$, the sigma algebra generated by the open sets in $Z$, which we denote by $\mathscr{B}(Z)$ is called the Borel  sigma algebra on $Z$. A finite vector valued measure $ \Lambda : \mathscr{B}(Z) \to \mathfrak{B}$ of bounded variation is called a \textbf{Radon measure} on $Z$ if:
	\begin{enumerate}
		\item [(i)] $|\Lambda|$ is outer regular on all Borel sets ($
		|\Lambda|(E) = \inf \{ |\Lambda|(U), \quad E \subset U, \quad U \text{ is open} \}
		$) 
		\item[(ii)] $|\Lambda|$ is inner regular on all open sets($|\Lambda|(U) = \sup \{ |\Lambda|(\mathcal{A}), \quad \mathcal{A} \subset U \quad \mathcal{A} \text{ is compact} \}
		$)
	\end{enumerate}  
	
	The set of all $\mathcal{H}$-valued Radon measures of bounded variation on $Z$ is denoted by $\mathfrak{M}(Z, \mathcal{H})$, and this set naturally posses a structure of a Banach space when $Z$ is compact or locally compact by defining the norm
	$$
	\| \Lambda_{1} - \Lambda_{2} \|_{\mathfrak{M}(Z, \mathcal{H})}:= | \Lambda_{1} - \Lambda_{2} |(Z).
	$$ 
	
	Standard topological arguments shows that every function $F \in C(\mathcal{A}, \mathcal{H})$(or $C_{0}(X, \mathcal{H})$) is Bochner measurable and integrable with respect to any finite Radon measure of bounded variation, more precisely, it can be approximated by the simple functions on $$span\{ w \in X \to F(x)\chi_{A}(w) \in \mathcal{H}, \quad  x \in X, A \in \mathscr{B}(X) \}.$$ The integrability is a consequence of Equation \ref{bochnerintegration}. 
	
	If $\eta \in \mathfrak{M}(\mathcal{A}, \mathcal{H})$, the linear functional $F \in C(\mathcal{A}, \mathcal{H}) \to \int_{\mathcal{A}} \langle F(x), d\eta(x)\rangle \in \mathbb{C}$ is continuous, indeed
	\begin{align*}
	|\int_{\mathcal{A}} \langle F(x), d\eta(x)\rangle| &\leq  \int_{\mathcal{A}} \|F(x)\|_{\mathcal{H}}d|\eta|(x)\leq \|F\|_{C(\mathcal{A}, \mathcal{H})} \int_{\mathcal{A}} d|\eta|(x)\\ &= \|F\|_{C(\mathcal{A}, \mathcal{H})} \|\eta\|_{\mathfrak{M}(\mathcal{A}, \mathcal{H})}.
	\end{align*}
	
	A similar argument holds for $C_{0}(X, \mathcal{H})$. The following result, which is a vector valued generalization of the Riesz-Representation Theorem, states that those are the only continuous linear functionals on $C(\mathcal{A}, \mathcal{H}) $ or $C_{0}(X, \mathcal{H})$.
	\begin{thm}\textbf{(Dinculeanu-Singer)} \label{Dinculeanu-Singer} Let $Z$ be a compact Hausdorff space or a locally compact Hausdorff space,  $L : C(Z, \mathcal{H}) \to \mathbb{C} $ ($L : C_{0}(Z, \mathcal{H}) \to \mathbb{C}$) be a continuous linear functional. Then there exists an unique Radon measure of bounded variation $\eta \in \mathfrak{M}(Z, \mathcal{H})$  for which
		$$
		L(F)= \int_{Z} \langle F(x), d\eta(x)\rangle. 
		$$	
	\end{thm}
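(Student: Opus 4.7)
My strategy is to reduce to the classical scalar Riesz representation theorem by freezing one variable at a time. First, for each $v \in \mathcal{H}$, I would introduce the scalar functional $L_v : C(Z) \to \mathbb{C}$ (respectively $C_0(Z) \to \mathbb{C}$) defined by $L_v(f) := L(fv)$, where $fv$ denotes the continuous $\mathcal{H}$-valued function $x \mapsto f(x)v$. Continuity of $L$ immediately yields $\|L_v\| \leq \|L\|\,\|v\|_{\mathcal{H}}$, so the classical scalar Riesz representation theorem produces a unique complex Radon measure $\mu_v$ on $Z$ with $L_v(f) = \int_Z f\, d\mu_v$ and $|\mu_v|(Z) = \|L_v\|$.

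Uniqueness in the scalar theorem then forces the assignment $v \mapsto \mu_v$ to be complex-linear, so for each Borel set $A \in \mathscr{B}(Z)$ the map $v \mapsto \mu_v(A)$ is a bounded linear functional on $\mathcal{H}$ of norm at most $\|L\|$. The Hilbert-space Riesz theorem then produces a unique vector $\eta(A) \in \mathcal{H}$ with $\mu_v(A) = \langle v, \eta(A) \rangle_{\mathcal{H}}$ for every $v$; this is my candidate vector measure $\eta : \mathscr{B}(Z) \to \mathcal{H}$.

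The main technical content is to verify that $\eta$ is a genuine Radon measure of bounded variation, not just a finitely additive set function. Weak countable additivity of $\eta$ is inherited from countable additivity of each scalar $\mu_v$, and I would promote it to norm countable additivity via the Orlicz--Pettis theorem; outer and inner regularity of $\eta$ descend similarly from the regularity of the $\mu_v$. I expect the most delicate step to be the variation bound $|\eta|(Z) \leq \|L\|$. My plan is: given a Borel partition $\{A_i\}_{i=1}^n$ of $Z$ and unit vectors $v_i$ with $\|\eta(A_i)\|$ nearly equal to $\langle v_i, \eta(A_i)\rangle$, approximate the simple $\mathcal{H}$-valued function $\sum_i \chi_{A_i} v_i$ in $L^1(|\eta|)$ by a continuous function of comparable sup-norm via a Lusin/Urysohn construction, and apply $L$ to transfer the estimate; the bookkeeping closes because each scalar total variation $|\mu_{v_i}|(Z)$ equals $\|L_{v_i}\| \leq \|L\|$.

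Once $\eta \in \mathfrak{M}(Z, \mathcal{H})$ is established, the integral formula $L(F) = \int_Z \langle F(x), d\eta(x)\rangle$ holds on elementary tensors $F = fv$ directly from the construction, since $L(fv) = \int_Z f\, d\mu_v = \int_Z \langle f(x)v, d\eta(x)\rangle$. The algebraic tensor product $C(Z) \otimes \mathcal{H}$ is dense in $C(Z, \mathcal{H})$ and in $C_0(Z, \mathcal{H})$, as one sees by approximating a continuous $\mathcal{H}$-valued function through its projections onto finite-dimensional subspaces generated by its range on compact sets, combined with a partition-of-unity argument. Continuity of both sides then extends the formula to all of $C(Z, \mathcal{H})$ (resp. $C_0(Z, \mathcal{H})$). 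Uniqueness is automatic: if two Radon measures $\eta_1, \eta_2$ both represented $L$, then $\int_Z f\, d\langle v, \eta_1 - \eta_2\rangle = 0$ for every scalar $f$ and every $v \in \mathcal{H}$, and uniqueness in the scalar Riesz theorem forces $\langle v, \eta_1(A) - \eta_2(A)\rangle = 0$ for all $v$ and $A$, hence $\eta_1 = \eta_2$.
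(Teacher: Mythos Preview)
The paper does not prove this theorem; it is stated as a classical result (a vector-valued Riesz representation theorem attributed to Dinculeanu--Singer) and used as a black box. So there is no ``paper's own proof'' to compare against.

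Your sketch is essentially the standard reduction-to-scalars proof and is correct in outline. One phrasing issue: in the bounded-variation step you propose to approximate $\sum_i \chi_{A_i} v_i$ ``in $L^1(|\eta|)$'', but at that point you have not yet established that $|\eta|$ is finite, so this is circular as written. The fix is the one your final sentence already hints at: work instead with the auxiliary finite scalar measure $\nu := \sum_{i=1}^n |\mu_{v_i}|$ for the given finite partition. Use inner regularity of $\nu$ to find disjoint compact $K_i \subset A_i$ with $\nu(A_i \setminus K_i)$ small, then outer regularity to find pairwise disjoint open $U_i \supset K_i$ with $|\mu_{v_i}|(U_i \setminus K_i)$ small, and Urysohn functions $f_i$ supported in $U_i$ with $f_i \equiv 1$ on $K_i$. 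Then $G := \sum_i f_i v_i$ has $\|G\|_\infty \le 1$, and $L(G) = \sum_i \int f_i \, d\mu_{v_i}$ is within $O(\epsilon)$ of $\sum_i \mu_{v_i}(A_i) = \sum_i \|\eta(A_i)\|$, giving $|\eta|(Z) \le \|L\|$. With that correction the argument goes through.
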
 
	
	The \textbf{support of a measure} $\eta \in \mathfrak{M}(Z, \mathcal{H})$ is defined as the support of the measure $|\eta|$, so $[supp (\eta)]^{c}$
	is  the union of all open sets $U \subset X$, for which $\eta(E)=0$, for all $E \subset U$.
	
	In \cite{carmelivitotoigoumanita2010}, it was presented the following criteria for $\mathcal{H}_{K}$ to be a subset of $C(X,\mathcal{H})$ and  $C_{0}(X,\mathcal{H})$: 
	\begin{prop}\label{rkhscontained}Let $K: X \times X \to \mathcal{L}(\mathcal{H})$ be a positive definite kernel and $\mathcal{H}_{K}$ its RKHS. Then:
		\begin{enumerate}
			\item[(i)] $\mathcal{H}_{K} \subset C(X, \mathcal{H})$ if and only if the function $x \in X \to \|K(x,x)\|_{\mathcal{L}(\mathcal{H})} \in \mathbb{C}$ is locally bounded and the function $y \in X \to K(x,y)v \in C(X, \mathcal{H})$, for all $x \in X$ and $v \in \mathcal{H}$.
			\item[(ii)] $\mathcal{H}_{K} \subset C_{0}(X, \mathcal{H})$ if and only if the function $x \in X \to \|K(x,x)\|_{\mathcal{L}(\mathcal{H})} \in \mathbb{C}$ is bounded and the function $y \in X \to K(x,y)v \in C_{0}(X, \mathcal{H})$, for all $x \in X$ and $v \in \mathcal{H}$.\\
			Moreover, if $\mathcal{H}_{K}\subset C(X, \mathcal{H})$ then the inclusion-restriction $I :\mathcal{H}_{K} \to  C(\mathcal{A}, \mathcal{H})$ is continuous for every compact set $\mathcal{A}\subset X$. Similarly,  if $\mathcal{H}_{K} \subset C_{0}(X, \mathcal{H})$, then the inclusion  $I :\mathcal{H}_{K} \to  C_{0}(X, \mathcal{H})$ is continuous.
		\end{enumerate}	
	\end{prop}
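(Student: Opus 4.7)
The plan is to build everything from the fundamental reproducing-kernel inequality and then invoke the closed graph theorem for the nontrivial direction. For any $f \in \mathcal{H}_K$, $x \in X$, and $v \in \mathcal{H}$, the reproducing property combined with Cauchy--Schwarz gives
\begin{equation*}
|\langle f(x),v\rangle_{\mathcal{H}}| = |\langle f,K_{x}v\rangle_{\mathcal{H}_{K}}| \leq \|f\|_{\mathcal{H}_{K}}\,\|K_{x}v\|_{\mathcal{H}_{K}} = \|f\|_{\mathcal{H}_{K}}\,\langle v,K(x,x)v\rangle_{\mathcal{H}}^{1/2},
\end{equation*}
so that taking the supremum over $\|v\|_{\mathcal{H}}=1$ yields the pointwise bound $\|f(x)\|_{\mathcal{H}} \leq \|f\|_{\mathcal{H}_{K}}\,\|K(x,x)\|_{\mathcal{L}(\mathcal{H})}^{1/2}$. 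This estimate is the engine that drives both implications.

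For the sufficiency in (i), I would first observe that the generating elements $K_{x}v \in H_{K}$ are continuous by the second hypothesis (up to the identification $[K_{x}v](y)=K(y,x)v$, which matches the continuity assumption via the Hermitian symmetry $K(y,x)=K(x,y)^{*}$ and the fact that the condition is imposed for every $x$ and $v$). For an arbitrary $f \in \mathcal{H}_{K}$, pick $f_{n} \in H_{K}$ with $f_{n}\to f$ in $\mathcal{H}_{K}$; on any compact $\mathcal{A}\subset X$ with $M_{\mathcal{A}} := \sup_{x \in \mathcal{A}}\|K(x,x)\|_{\mathcal{L}(\mathcal{H})}<\infty$, the key inequality forces
\begin{equation*}
\sup_{x \in \mathcal{A}}\|f_{n}(x)-f(x)\|_{\mathcal{H}} \leq M_{\mathcal{A}}^{1/2}\,\|f_{n}-f\|_{\mathcal{H}_{K}},
\end{equation*}
so $f_{n}\to f$ uniformly on $\mathcal{A}$. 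As the continuous $f_{n}$ converge locally uniformly, $f \in C(X,\mathcal{H})$; simultaneously this inequality is the continuity of $I : \mathcal{H}_{K}\to C(\mathcal{A},\mathcal{H})$ claimed in the ``moreover'' clause.

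For the necessity in (i), the continuity condition on $y\mapsto K(x,y)v$ is immediate because the functions $K_{x}v$ lie in $\mathcal{H}_{K}\subset C(X,\mathcal{H})$. The local boundedness of $x \mapsto \|K(x,x)\|_{\mathcal{L}(\mathcal{H})}$ is where the closed graph theorem enters. For each compact $\mathcal{A}\subset X$, the inclusion $I:\mathcal{H}_{K}\to C(\mathcal{A},\mathcal{H})$ has closed graph: if $f_{n}\to f$ in $\mathcal{H}_{K}$ and $f_{n}\to g$ uniformly on $\mathcal{A}$, then the reproducing property gives $\langle f_{n}(x)-f(x),v\rangle_{\mathcal{H}} = \langle f_{n}-f, K_{x}v\rangle_{\mathcal{H}_{K}}\to 0$ pointwise on $\mathcal{A}$, forcing $f|_{\mathcal{A}}=g$. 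Hence $\|f\|_{C(\mathcal{A},\mathcal{H})}\leq C_{\mathcal{A}}\|f\|_{\mathcal{H}_{K}}$. Specializing to $f=K_{x}v$ with $x\in\mathcal{A}$, evaluating at the point $x$, and using $\|K_{x}v\|_{\mathcal{H}_{K}}\leq \|K(x,x)\|_{\mathcal{L}(\mathcal{H})}^{1/2}\|v\|_{\mathcal{H}}$ produces $\|K(x,x)v\|_{\mathcal{H}}\leq C_{\mathcal{A}}\,\|K(x,x)\|_{\mathcal{L}(\mathcal{H})}^{1/2}\|v\|_{\mathcal{H}}$; taking the supremum over $\|v\|_{\mathcal{H}}=1$ gives $\|K(x,x)\|_{\mathcal{L}(\mathcal{H})}\leq C_{\mathcal{A}}^{2}$ throughout $\mathcal{A}$.

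Part (ii) runs in exactly the same fashion with $\mathcal{A}$ replaced by all of $X$, using that $C_{0}(X,\mathcal{H})$ is a Banach space so the closed graph theorem applies directly to $I:\mathcal{H}_{K}\to C_{0}(X,\mathcal{H})$; the vanishing-at-infinity of $f$ is inherited from the uniform convergence of the approximating $f_{n}\in H_{K}$ via the global bound on $\|K(x,x)\|_{\mathcal{L}(\mathcal{H})}$. The step I expect to demand the most care is the verification of the closed graph hypothesis and a clean handling of the $K(x,y)$ versus $K(y,x)$ swap in the continuity assumption, since norm continuity of $y\mapsto K(x,y)v$ is not formally the same as norm continuity of $y\mapsto K(y,x)v = K(x,y)^{*}v$; the statement is consistent because the hypothesis is quantified over all $x\in X$ and $v\in\mathcal{H}$, which combined with the Hermitian symmetry and a polarization argument lets one transfer one continuity condition to the other.
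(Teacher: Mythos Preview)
The paper does not supply its own proof of this proposition: it is quoted verbatim from \cite{carmelivitotoigoumanita2010}. So there is no in-paper argument to compare against, and your task was really to reconstruct a proof.

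Your argument is sound. The pointwise estimate $\|f(x)\|_{\mathcal{H}}\le \|f\|_{\mathcal{H}_K}\|K(x,x)\|_{\mathcal{L}(\mathcal{H})}^{1/2}$ together with density of $H_K$ handles sufficiency and the ``moreover'' clause simultaneously, and the closed graph theorem is a clean way to extract local boundedness of $\|K(x,x)\|$ for the necessity direction. An alternative for that last step, and the one the paper itself uses in the differentiable analogue (Proposition~\ref{diffimersion}), is the Uniform Boundedness Principle applied to the evaluation operators $e_x:\mathcal{H}_K\to\mathcal{H}$, $f\mapsto f(x)$: each $f$ is continuous on the compact $\mathcal{A}$, so $\sup_{x\in\mathcal{A}}\|e_x f\|<\infty$, whence $\sup_{x\in\mathcal{A}}\|e_x\|<\infty$, and $\|e_x\|^2=\|K(x,x)\|_{\mathcal{L}(\mathcal{H})}$.

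One caveat: the issue you flag about $y\mapsto K(x,y)v$ versus $y\mapsto K(y,x)v$ is genuine, and your ``polarization'' remark does not fully close it. From norm continuity of $y\mapsto K(x,y)v$ for all $x,v$ and the Hermitian relation $K(y,x)=K(x,y)^{*}$ one obtains only \emph{weak} continuity of $y\mapsto K(y,x)v$; upgrading to norm continuity would require control of $\langle v,K(y,y)v\rangle$ along the diagonal, which the separate-continuity hypothesis does not give. This is almost certainly a transcription imprecision in how the paper records the condition from \cite{carmelivitotoigoumanita2010}: the natural (and provable) form of the hypothesis is exactly $K_x v\in C(X,\mathcal{H})$, i.e.\ continuity of $z\mapsto K(z,x)v$, and your proof is complete for that reading.
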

	
	We emphasize that a kernel does not need to be jointly continuous in order that $\mathcal{H}_{K} \subset C(X, \mathcal{H})$. For instance, the kernel $ (x,y) \in \mathbb{R}^{m} \times \mathbb{R}^{m} \to \frac{\langle x,y\rangle}{\|x\|^{2} + \|y\|^{2}} \in  \mathbb{R}$ is positive definite because $k(x,y)= \int_{[0, \infty)}\langle x,y\rangle e^{-r\|x\|^{2} - r\|y\|^{2}}dr$ and $\mathcal{H}_{k} \subset C_{0}(\mathbb{R}^{m})$ by Lemma \ref{rkhscontained}, however the kernel $k$ is not a jointly continuous function.

	At Theorem $11$ of \cite{Caponnetto2008} it is proved that if $X$ is a Hausdorff space and $\mathcal{H_{K}}\subset C(X,\mathcal{H})$, then $K$ is universal if and only if for every compact set $\mathcal{A}\subset X$, the only measure  $\eta \in \mathfrak{M}(\mathcal{A}, \mathcal{H})$ for which
	\begin{equation}\label{univform1}
	\int_{\mathcal{A}}\langle K(x,y)v,d\eta(x)\rangle=0 
	\end{equation} 
	is the zero measure. A similar result is possible for the $C_{0}$-universality. Indeed, by the comments made at the beginning of this subsection, if $\mathcal{H}_{K} \subset C_{0}(X, \mathcal{H})$, then $K$ is $C_{0}$-universal if and only if  the only measure $\eta \in \mathfrak{M}(X, \mathcal{H})$ for which
	\begin{equation}\label{c0univform1}
	\int_{X}\langle F(x),d\eta(x)\rangle=0 
	\end{equation}
	for all $F \in \mathcal{H}_{K}$ is the zero measure. Note that if Equation \ref{c0univform1} holds true for all $F \in \mathcal{H}_{K}$, then it holds for $F(x)= K(x,y)v$, for all $y \in X$ and $v \in \mathcal{H}$, and since the inclusion $I:\mathcal{H}_{K}\to  C_{0}(X, \mathcal{H})$ is continuous by Proposition \ref{rkhscontained},    the converse is also true, so if $\mathcal{H}_{K} \subset C_{0}(X, \mathcal{H})$, then $K$ is $C_{0}$-universal if and only if  the only measure $\eta \in \mathfrak{M}(X, \mathcal{H})$ for which
	\begin{equation}\label{c0univform2}
	\int_{X}\langle K(x,y)v,d\eta(x)\rangle=0 
	\end{equation}
	for all $y \in X$ and $v \in \mathcal{H}$ is the zero measure. In the following, based on these two results we prove another characterization for universality and $C_{0}$-universality that is more technically advantageous for our purposes.
	
	\begin{thm}\label{oneintegraltodoubleintegral} Let  $K: Z\times Z \to \mathcal{L}(\mathcal{H})$ be a positive semidefinite kernel for which $\mathcal{H}_{K} \subset C_{0}(Z, \mathcal{H})$ ($\mathcal{H}_{K} \subset C(Z, \mathcal{H})$), then:
		\begin{enumerate}
			\item [(i)] For every $\eta \in \mathfrak{M}(Z, \mathcal{H})$ ($\eta \in \mathfrak{M}(Z, \mathcal{H})$ of compact support) the function $K_{\eta}: Z \to \mathcal{H}$, defined as 
			$$
			\langle v, K_{\eta}(y) \rangle_{\mathcal{H}} = \int_{Z}\langle K(x,y)v,d\eta(x)\rangle
			$$
			is an element of $\mathcal{H}_{K}$ and it is also described  by the weak-Bochner integral $K_{\eta}(y)= \int_{Z}K(x,y)d\eta(x)$.
			\item [(ii)] The following equality holds
			$$\langle K_{\eta}, K_{\eta}\rangle_{\mathcal{H}_{K}}= \int_{Z} \langle \int_{Z} K(x,y) d\eta(x), d\eta(y) \rangle 
			$$
			and if $d\eta= Hd|\eta|$ is a Radon-Nikod\'ym decomposition for $\eta$, then
			$$\langle K_{\eta}, K_{\eta}\rangle_{\mathcal{H}_{K}}= \int_{Z}\int_{Z}\langle  K(x,y)H(y), H(x)\rangle_{\mathcal{H}} d|\eta|(x), d|\eta|(y). 
			$$
			\item[(iii)] The kernel $K$ is $C_{0}$-universal (universal) if and only if $\langle K_{\eta}, K_{\eta}\rangle_{\mathcal{H}_{K}}>0$ for all  $\eta \neq 0$.  	
		\end{enumerate}	
	\end{thm}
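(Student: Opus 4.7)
The plan is to prove the three parts in order, with part (i) providing the key Riesz-type construction that then drives (ii) and (iii).

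For part (i), the natural approach is to define a bounded linear functional on $\mathcal{H}_{K}$ and apply the Riesz representation theorem for Hilbert spaces. Given $\eta \in \mathfrak{M}(Z, \mathcal{H})$ (with compact support $\mathcal{A}$ in the universal case), I would set $L_{\eta}(F) := \int_{Z}\langle F(x), d\eta(x)\rangle$ for $F\in \mathcal{H}_{K}$. By Proposition \ref{rkhscontained}, the inclusion $I:\mathcal{H}_{K}\to C_{0}(Z,\mathcal{H})$ (resp. $I:\mathcal{H}_{K}\to C(\mathcal{A},\mathcal{H})$) is continuous, and combined with the elementary estimate $|L_{\eta}(F)|\leq \|F\|_{C_{0}(Z,\mathcal{H})}\|\eta\|_{\mathfrak{M}(Z,\mathcal{H})}$ already noted in the text, this makes $L_{\eta}$ bounded on $\mathcal{H}_{K}$. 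Hence there exists a unique $K_{\eta}\in\mathcal{H}_{K}$ with $\langle F, K_{\eta}\rangle_{\mathcal{H}_{K}} = L_{\eta}(F)$. Testing with $F = K_{y}v$ and invoking the reproducing property $\langle K_{y}v, K_{\eta}\rangle_{\mathcal{H}_{K}}=\langle v, K_{\eta}(y)\rangle_{\mathcal{H}}$ immediately gives the stated defining relation for $K_{\eta}(y)$. The weak-Bochner representation $K_{\eta}(y)=\int_{Z}K(x,y)d\eta(x)$ then follows by verifying the Pettis/weak-Bochner integrability conditions for $x\mapsto K(x,y)v$ against $\eta$, as set up in the appendix.

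For part (ii), I would insert $F = K_{\eta}$ into the Riesz identity from part (i), obtaining $\|K_{\eta}\|_{\mathcal{H}_{K}}^{2} = \int_{Z}\langle K_{\eta}(y), d\eta(y)\rangle$, and then substitute the weak-Bochner expression for $K_{\eta}(y)$ to get the first displayed equality. For the Radon--Nikod\'ym form, apply the vector-valued Radon--Nikod\'ym theorem (valid in the separable Hilbert setting, reviewed in the appendix) to write $d\eta = Hd|\eta|$ with $\|H(x)\|_{\mathcal{H}}=1$ for $|\eta|$-a.e. $x$. The iterated formula then falls out by unwinding $\langle F(x),d\eta(x)\rangle = \langle F(x),H(x)\rangle_{\mathcal{H}}d|\eta|(x)$ inside both layers of the double integral, using the Hermitian symmetry $K(x,y)^{*}=K(y,x)$ of the positive semidefinite kernel together with a Fubini statement for the weak-Bochner integrals.

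For part (iii), I would combine what has been constructed with the dual-pairing reformulations in Equations \ref{univform1} and \ref{c0univform2}: since $K_{\eta}=0$ in $\mathcal{H}_{K}$ is equivalent to $\langle F,K_{\eta}\rangle_{\mathcal{H}_{K}}=0$ for every $F\in\mathcal{H}_{K}$, and density of $\mathrm{span}\{K_{y}v: y\in Z, v\in\mathcal{H}\}$ in $\mathcal{H}_{K}$ reduces this condition to $\int_{Z}\langle K(x,y)v,d\eta(x)\rangle = 0$ for every $y$ and $v$, the vanishing of $K_{\eta}$ is exactly the obstruction to $C_{0}$-universality (respectively universality). Using $\|K_{\eta}\|_{\mathcal{H}_{K}}^{2}=0 \Leftrightarrow K_{\eta}=0$, the desired equivalence follows.

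The main obstacle I anticipate is the careful bookkeeping in part (ii): confirming that the weak-Bochner representation of $K_{\eta}(y)$ may be substituted inside another weak-Bochner integral and that a Fubini statement applies, together with invoking the appropriate version of the vector-valued Radon--Nikod\'ym theorem for $\mathcal{H}$-valued Radon measures of bounded variation. Once these appendix-level tools are carefully cited, parts (i) and (iii) reduce to standard applications of Riesz representation and of the Dinculeanu--Singer duality already recalled above.
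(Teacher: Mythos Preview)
Your proposal is correct and follows essentially the same route as the paper: you build $K_{\eta}$ via Riesz representation applied to the bounded functional $F\mapsto \int_{Z}\langle F(x),d\eta(x)\rangle$ (continuity coming from Proposition~\ref{rkhscontained} composed with the Dinculeanu--Singer pairing), test against $K_{y}v$ to identify it, then compute $\|K_{\eta}\|_{\mathcal{H}_{K}}^{2}$ by substituting $F=K_{\eta}$, unwinding the Radon--Nikod\'ym decomposition $d\eta=H\,d|\eta|$ and applying Fubini--Tonelli (justified since $\|K(x,x)\|_{\mathcal{L}(\mathcal{H})}$ is bounded and $H$ is Bochner integrable), and finally read off (iii) from the equivalence $K_{\eta}=0\Leftrightarrow \|K_{\eta}\|_{\mathcal{H}_{K}}=0$ together with Equations~\ref{univform1} and~\ref{c0univform2}. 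The Hermitian symmetry $K(x,y)^{*}=K(y,x)$ you flag is exactly what the paper uses (implicitly, via conjugation) in the chain of equalities for part~(ii); the appendix Lemma~\ref{manipulation} packages the ``unwind $\langle F,d\eta\rangle$ into $\langle F,H\rangle_{\mathcal{H}}\,d|\eta|$'' step you anticipate.
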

	When $\mathcal{H}= \mathbb{C}$ this result can be found at \cite{Sriperumbudur3}. We remark that when $\mathcal{H}= \mathbb{C}$, by the way we defined the vector integrals we get
	$$\langle K_{\eta}, K_{\eta}\rangle_{\mathcal{H}_{K}}= \int_{Z}\int_{Z} K(x,y) d\overline{\eta}(x) d\eta(y) 
	$$
	while on \cite{Sriperumbudur3}, the inner product is equal to  $\int_{Z}\int_{Z} K(x,y) d\overline{\eta}(y) d\eta(x)$. The  criteria for universality are equivalent, the only  difference is that the association of $\eta$ to $K_{\eta}$ in \cite{Sriperumbudur3} occurs (on our notation) from $\eta$ to $K_{\overline{\eta}}$.

	In some applications, the inclusion-restriction $I: \mathcal{H}_{K} \to C(\mathcal{A},\mathcal{H} )$ being compact for every compact set $\mathcal{A} \subset X$  is another desirable property, because under this hypothesis and the properties of RKHS every continuous (not necessarily linear) function  $T : C(\mathcal{A},\mathcal{H} ) \to \mathbb{R} $ admits a minimizer (on $\mathcal{H}_{K}$) for every closed and bounded subset of $\mathcal{H}_{K}$. Below we present a criteria for when this property occurs based on the Arzel\`{a}-Ascoli Theorem,  which states that given a compact  Hausdorff set $\mathcal{A}$,  a closed subset $B$ of $C(\mathcal{A}, \mathcal{H})$ is compact if and only if 
	\begin{enumerate}
		\item[$\circ$] $B$ is \textbf{equicontinuous}: For every $\epsilon >0$ and $x \in X$ there is open set  $U_{x}$ that contains $x$  for which $\|F(x) - F(y)\|_{\mathcal{H}}< \epsilon $ for all $F \in B$ and $y \in U_{x}$.
		\item[$\circ$]  $B$ is \textbf{pointwise relatively compact}: The set $\{F(y), F \in B   \} \subset \mathcal{H}$ has compact closure on the norm topology of $\mathcal{H}$.
	\end{enumerate}
	
	\begin{lem}\label{compacinclusion} Let $Z$ be a locally compact space and $K: Z\times Z \to \mathcal{L}(\mathcal{H})$ be a positive definite kernel. Suppose that $\mathcal{H}_{K} \subset C(Z, \mathcal{H})$, then 
		\begin{enumerate}
			\item[(i)] Every bounded set $B \subset \mathcal{H}_{K}$ is equicontinuous if and only if the kernel $K : Z \times Z \to \mathcal{L}(\mathcal{H})$ is  continuous on the norm topology of $\mathcal{L}(\mathcal{H})$. 
			\item[(ii)] If $K(x,x)$ is a trace class operator for every $x \in Z$, then every bounded set $B \subset \mathcal{H}_{K}$ is  pointwise relatively compact. 
		\end{enumerate}
		In particular, by mixing those two results we obtain that if $K(x,x)$ is a trace class operator for every $x \in Z$, then
		\begin{enumerate}
			\item[(iii)] The  continuous inclusion-restriction $I: \mathcal{H}_{K} \to C(\mathcal{A}, \mathcal{H})$ is compact  for  every compact set $\mathcal{A} \subset Z$ if and only if the kernel $K : Z \times Z \to \mathcal{L}(\mathcal{H})$ is  continuous on the norm topology of $\mathcal{L}(\mathcal{H})$. Under this hypothesis, every closed and bounded set of $\mathcal{H}_{K}$ (restricted to $\mathcal{A}$) is a compact set of $C(\mathcal{A}, \mathcal{H})$.
			\item[(iv)] If $\mathcal{H}_{K} \subset C_{0}(Z, \mathcal{H})$, then the set
			$$
			B_{\mathcal{A}}: =B \cap \overline{span\{K_{x}v, \quad , x \in \mathcal{A}, v \in \mathcal{H} \}  }_{\mathcal{H}_{K}}
			$$
			has compact closure on $C_{0}(Z, \mathcal{H})$ for every bounded set $B$ of $\mathcal{H}_{K}$ and compact set $\mathcal{A} \subset Z$ if and only if the kernel $K : Z \times Z \to \mathcal{L}(\mathcal{H})$ is continuous on the norm topology of $\mathcal{L}(\mathcal{H})$. Under this hypothesis, if $B$ is  closed  then $B_{\mathcal{A}}$ is a compact set of $C_{0}(Z, \mathcal{H})$.
		\end{enumerate}
	\end{lem}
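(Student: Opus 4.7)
The plan is to reformulate every claim in terms of the bounded evaluation operator $A_y : \mathcal{H}_K \to \mathcal{H}$, $F \mapsto F(y)$, whose adjoint is $A_y^* v = K_y v$ and which satisfies $A_y A_y^* = K(y, y)$ on $\mathcal{H}$. The computational backbone is the bound
$$\|A_x^* - A_{x_0}^*\|_{\mathcal{L}(\mathcal{H},\mathcal{H}_K)}^2 \leq \|K(x,x) - K(x, x_0) - K(x_0, x) + K(x_0, x_0)\|_{\mathcal{L}(\mathcal{H})},$$
obtained by expanding $\|A_x^* v - A_{x_0}^* v\|_{\mathcal{H}_K}^2$ via the reproducing property.

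For part (i), the forward direction follows from $\|F(x) - F(x_0)\|_{\mathcal{H}} \leq \|F\|_{\mathcal{H}_K}\|A_x^* - A_{x_0}^*\|_{op}$, which yields equicontinuity on every bounded set as soon as $K$ is norm continuous. For the converse, I would apply the equicontinuity hypothesis to the bounded family $\{A_y^* v / \max(1, \|A_y^* v\|_{\mathcal{H}_K}) : y \in Z,\, v \in \mathcal{H}\}$ to recover operator-norm continuity of $x \mapsto K(x, y)$ at each $x_0$, uniformly for $y$ in compact subsets (using the local boundedness of $\|K(y,y)\|_{\mathcal{L}(\mathcal{H})}$ from Proposition \ref{rkhscontained}); a triangle inequality together with $K(x, y)^* = K(y, x)$ upgrades this to joint norm continuity. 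Part (ii) is immediate: $K(y,y) = A_yA_y^*$ being trace class forces $A_y$ to be Hilbert--Schmidt, hence compact, so $A_y$ sends bounded subsets of $\mathcal{H}_K$ to relatively compact subsets of $\mathcal{H}$. Part (iii) then follows from Arzel\`a--Ascoli in $C(\mathcal{A},\mathcal{H})$: one direction uses that compactness of $I$ forces relative compactness of $I(B)$ and hence equicontinuity, which by (i) yields norm continuity of $K$; the other direction combines the equicontinuity from (i) with the pointwise relative compactness from (ii).

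The main obstacle is part (iv), where one must additionally verify equi-vanishing at infinity of $I(B_\mathcal{A})$ in $C_0(Z,\mathcal{H})$. My plan is to identify $\mathcal{H}_K^{\mathcal{A}} := \overline{\operatorname{span}\{K_x v : x \in \mathcal{A}, v \in \mathcal{H}\}}_{\mathcal{H}_K}$ with the RKHS of the restricted kernel $K|_{\mathcal{A} \times \mathcal{A}}$ via restriction (a standard RKHS fact), under which the orthogonal projection $P$ onto $\mathcal{H}_K^{\mathcal{A}}$ satisfies $(P A_y^* v)|_{\mathcal{A}} = K(\cdot, y) v|_{\mathcal{A}}$ and for $F \in B_\mathcal{A}$ one has $\|F(y)\|_{\mathcal{H}} \leq \|F\|_{\mathcal{H}_K}\|P A_y^*\|_{op}$. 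Applying (iii) to the restricted kernel yields a compact inclusion $\mathcal{H}_K^{\mathcal{A}} \hookrightarrow C(\mathcal{A},\mathcal{H})$, so bounded families in $\mathcal{H}_K^{\mathcal{A}}$ are totally bounded after restriction to $\mathcal{A}$. Combining this total boundedness with the hypothesis that every $K_x v$ lies in $C_0(Z,\mathcal{H})$, together with the norm continuity of $K$ and the compactness of $\mathcal{A}$, a finite $\epsilon$-net argument should convert pointwise vanishing at infinity of each $K_x v$ into uniform vanishing of $F(y)$ over $F \in B_\mathcal{A}$ as $y \to \infty$, which is the desired equi-vanishing hypothesis of Arzel\`a--Ascoli in $C_0$. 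The converse direction of (iv) reduces to (iii) by restriction to arbitrary compacta and the local compactness of $Z$.
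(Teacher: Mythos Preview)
Your treatment of parts (i)--(iii) is correct and matches the paper's argument closely. The paper uses the same displayed inequality for the forward direction of~(i) and, for the converse, applies the equicontinuity hypothesis to the bounded family $\{K_z v : z \in U\cup V,\ \|v\|=1\}$ on suitable neighborhoods~$U,V$ --- essentially your normalized family. For~(ii) the paper argues coordinatewise via $|\langle e_\mu,F(y)\rangle|\le \|F\|_{\mathcal{H}_K}\langle K(y,y)e_\mu,e_\mu\rangle^{1/2}$ together with summability of the diagonal; your observation that $A_yA_y^*=K(y,y)$ trace class forces $A_y$ to be Hilbert--Schmidt, hence compact, is an equivalent and arguably cleaner packaging of the same estimate. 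Part~(iii) is then identical in both.

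There is, however, a genuine gap in your plan for~(iv). You correctly isolate equi-vanishing at infinity of $B_{\mathcal{A}}$ as the missing Arzel\`a--Ascoli ingredient, and you correctly note that $B_{\mathcal{A}}|_{\mathcal{A}}$ is totally bounded in $C(\mathcal{A},\mathcal{H})$. But total boundedness of restrictions to $\mathcal{A}$ gives no control over $F(y)$ for $y\notin\mathcal{A}$: two elements of $\mathcal{H}_K^{\mathcal{A}}$ that are $C(\mathcal{A},\mathcal{H})$-close need not be $C_0(Z,\mathcal{H})$-close, precisely because the compact inclusion $\mathcal{H}_K^{\mathcal{A}}\hookrightarrow C(\mathcal{A},\mathcal{H})$ has discontinuous inverse on its range. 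Your $\epsilon$-net argument, combined with norm continuity of $K$, compactness of $\mathcal{A}$ and the trace-class hypothesis, does yield $\sup_{x\in\mathcal{A}}\|K(y,x)\|_{\mathcal{L}(\mathcal{H})}\to 0$, i.e.\ uniform vanishing of the \emph{generators} $K_xv$; but a general $F\in B_{\mathcal{A}}$ is an $\mathcal{H}_K$-limit of combinations $\sum_i c_iK_{x_i}v_i$ whose coefficients are constrained only through $\|F\|_{\mathcal{H}_K}$, not through $\sum_i|c_i|$, so smallness of each $K(y,x_i)v_i$ does not bound $F(y)$. In your own notation, what is needed is $\|PA_y^*\|_{\mathrm{op}}\to 0$, i.e.\ \emph{strong} convergence $PK_yv\to 0$ in $\mathcal{H}_K$ uniformly over $\|v\|\le 1$, whereas the ingredients you list deliver only weak convergence to~$0$. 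The paper's own proof of~(iv) is a one-sentence sketch (``similar to the previous ones'') and does not supply this step either, so you should not expect to extract the missing mechanism from it; a separate argument is required.
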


	\subsection{\textbf{A well behaved family of operator valued kernels}}\label{examplesuniversalkernels}
	The following $2$ examples shows that universality on matrix valued kernels in Euclidean spaces does not satisfy the converse of the scalar valued projections property not even for radial kernels.  
	
	\begin{ex} \label{examplenonuniversalityradial}For every $m \in \mathbb{N}$ and $ \ell \geq 2$, there exists a continuous function $F: [0, \infty) \to M_{\ell}(\mathbb{C})$ for which the kernel
		$$
		(x,y) \in 	\mathbb{R}^{m} \times \mathbb{R}^{m} \to F(\|x-y\|) \in   M_{\ell}(\mathbb{C}),
		$$
		is positive definite and  for every $v \in \mathbb{C}^{\ell}\setminus{\{0\}}$ the scalar valued positive definite kernel 
		$$
		(x,y) \in 	\mathbb{R}^{m} \times \mathbb{R}^{m} \to \langle   F(\|x-y\|)v,v \rangle  \mathbb{C}
		$$
		is $C_{0}^{\infty}$-universal, but the matrix valued kernel is not universal.
	\end{ex}
	
	Another example is the Gaussian type kernel
	
	\begin{ex}\label{exampleconverseuniversality} The matrix valued kernel
		$$K(x,y)=\begin{bmatrix}
		e^{-\|x-y\|^{2}} & e^{-\| x+2w-y\|^{2}}\\
		e^{-\|x-y-2w\|^{2}} & e^{-\| x-y\|^{2}}\\	
		\end{bmatrix}, \quad w \neq 0$$
		is  positive definite but it is not strictly positive definite (hence, it is not universal) and all of scalar valued projections of this kernel are $C_{0}^{\infty}$-universal kernels.
	\end{ex}
	
	However, it is proved in \cite{neeb1998} an operator valued version of Bochner's Theorem, more precisely, if  $F: \mathbb{R}^{m} \to \mathcal{L}(\mathcal{H})$ is an ultraweakly continuous function, then  the  operator valued kernel $$
	(x,y) \in \mathbb{R}^{m} \times \mathbb{R}^{m} \to F(x-y) \in \mathcal{L}(\mathcal{H})
	$$
	is positive definite if and only if for every $v \in \mathcal{H}$ the scalar valued kernel 
	$$
	(x,y) \in \mathbb{R}^{m} \times \mathbb{R}^{m} \to F_{v}(x-y):=\langle F(x-y)v,v \rangle_{\mathcal{H}}\in \mathbb{C}
	$$
	is positive definite.

	In \cite{Guella2019}, it is proved that if $F: [0, \infty) \to M_{\ell}(\mathbb{C})$ is a continuous function, then the matrix valued kernel ($m\geq 2$)$$
	(x,y) \in \mathbb{R}^{m} \times \mathbb{R}^{m} \to F(\|x-y\|) \in M_{\ell}(\mathbb{C})
	$$
	is strictly positive definite if and only if for every $v \in \mathbb{C}^{\ell}$ the scalar valued kernel 
	$$
	(x,y) \in \mathbb{R}^{m} \times \mathbb{R}^{m} \to F_{v}(x-y):=\langle F(x-y)v,v \rangle\in \mathbb{C}
	$$
	is strictly positive definite (by using a Hamel basis argument and this result, it is possible to obtain the same type of equivalence for operator valued ultraweakly radial kernels).

	In some sense, the results from \cite{neeb1998}, \cite{Guella2019}  and Examples \ref{examplenonuniversalityradial}, \ref{exampleconverseuniversality}  are  related to how we can describe a scalar valued kernel of this type.

	For instance, if $f: \mathbb{R}^{m} \to \mathbb{C}$ is a continuous function that defines a positive definite kernel on Bochner's sense, then
	$$
	f(x-y)=\int_{\mathbb{R}^{m}}e^{-i(x-y)\cdot\xi}d\lambda(\xi), \quad x,y \in \mathbb{R}^{m} 
	$$
	for some scalar valued nonnegative finite Radon measure $\lambda$ on $\mathbb{R}^{m}$. Note that the kernels $e^{-i(x-y)\cdot\xi}$ are positive definite but none of them is strictly positive definite. 
	
	As for radial kernels on a fixed Euclidean space, if $f: [0, \infty) \to \mathbb{C}$ is a continuous function that defines a positive definite radial kernel on $\mathbb{R}^{m}$  then
	$$
	f(\|x-y\|)=\int_{[0,\infty)}\Omega_{m}(r\|x-y\|)d\lambda(r), \quad x,y \in \mathbb{R}^{m}
	$$
	for some scalar valued nonnegative finite Radon measure $\lambda$ on $[0,\infty)$ \cite{wend}, where 
	$$
	\Omega_{m}(\|x\|)= \frac{1}{Vol(S^{m-1})}\int_{S^{m-1}}e^{-ix\cdot\xi}d\xi, \quad x \in \mathbb{R}^{m}.
	$$
	Note that the kernels $\Omega_{m}(r\|x-y\|)$ are strictly positive definite for all $r>0$ (when $m\geq 2$) by \cite{zucastell}, but they are not universal (we prove the last affirmation at Lemma \ref{radialfini}). 
	
	On the other hand, for radial kernels on  all Euclidean spaces, if $f: [0, \infty) \to \mathbb{C}$ is a continuous function that defines a positive definite radial kernel on all Euclidean spaces  by \cite{schoenb} we have that
	\begin{equation}\label{pdonallEuclid}
	f(\|x-y\|)=\int_{[0,\infty)}e^{-r\|x-y\|^{2}}d\lambda(r), \quad x,y \in \mathbb{R}^{m}
	\end{equation}
	for some scalar valued nonnegative finite Radon measure $\lambda$ on $[0,\infty)$. The Gaussian kernels $e^{-r\|x-y\|^{2}}$ are $C^{\infty}_{0}$-universal for all $r>0$ by \cite{Simon-Gabriel2018}.

	At Theorem \ref{universalityintegralkernel} we present a general setting  where several properties over the scalar valued projections of the kernel implies that the operator valued kernel also satisfies this property.    
	
	\begin{thm}\label{universalityintegralkernel} Let $X$ and $\Omega$ be Hausdorff spaces, with $\Omega$ being locally compact and  $p : \Omega \times X \times X \to \mathbb{C}$ be a bounded continuous  function such that the kernel $$
		(x,y) \in X\times X \to p_{\omega}(x,y):=p(\omega, x,y)
		$$
		is  positive definite for every $w \in \Omega$. Given a Radon nonnegative finite operator $\mathcal{L}(\mathcal{H})$ valued measure $\Lambda: \mathscr{B}(\Omega) \to \mathcal{L}(\mathcal{H})$  such that $\Lambda$ admits a  Radon-Nikod\'ym decomposition as $d\Lambda= Gd\lambda$,  consider the operator valued kernel
		$$	
		P : X\times X \to \mathcal{L}(\mathcal{H}), \quad P(x,y)= \int_{\Omega}p_{w}(x,y) d\Lambda(w),	
		$$
		and for every $v \in \mathcal{H}\setminus{\{0\}}$ the scalar valued kernels  and the scalar valued nonnegative finite Radon measures
		$$
		P_{v} : X\times X \to \mathbb{C}, \quad P_{v}(x,y)= \langle P(x,y)v, v \rangle_{\mathcal{H}} 
		$$ 
		$$
		\Lambda_{v}: \mathscr{B}(\Omega) \to \mathbb{C}, \quad    
		\Lambda_{v}(A)= \langle \Lambda(A)v, v \rangle_{\mathcal{H}}.  	
		$$
		Then:
		\begin{enumerate}
			\item[(i)] The operator valued kernel  $P$ and the scalar valued kernels $P_{v}$ are well defined and positive definite.
			\item[(ii)] The function $P : X\times X \to \mathcal{L}(\mathcal{H})$ is bounded and continuous on the norm topology of $\mathcal{L}(\mathcal{H})$. In particular, $\mathcal{H}_{P} \subset C(X, \mathcal{H})$.
			\item[(iii)] If the kernel  $p_{\omega}$ is strictly positive definite for every $w \in \Omega$, then the operator valued kernel $P$ is strictly positive definite if and only if  all scalar valued projections $P_{v}$ are strictly positive definite.
			\item[(iv)] If the kernel  $p_{\omega}$ is universal for every $w \in \Omega$, then the operator valued kernel $P$ is universal if and only if  all the scalar valued kernels $P_{v}$ are universal. 
			\item[(v)] If $X$ is a locally compact space and the kernel  $p_{\omega}$ is integrally strictly positive definite for every $w \in \Omega$, then the operator valued kernel $P$ is integrally strictly positive definite if and only if  all the scalar valued kernels $P_{v}$ are integrally strictly positive definite. 
		\end{enumerate}
		Additionally, the equivalences in $(iii)$, $(iv)$ and $(v)$  are also equivalent at $\Lambda(\Omega)$ being a positive definite operator.
	\end{thm}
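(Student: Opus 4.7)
The overall plan is to prove, for each of parts (iii)--(v), a triangle of equivalences between (a) $P$ having the property, (b) all scalar projections $P_v$ having the property, and (c) $\Lambda(\Omega)$ being a strictly positive definite operator. The implication (a) $\Rightarrow$ (b) is Lemma \ref{scalarvaluedprojectionsuniversality}, so the work is in (b) $\Leftrightarrow$ (c) and (c) $\Rightarrow$ (a). The algebraic identity underlying everything is the rewriting
\begin{equation*}
P(x,y) = \int_\Omega p_\omega(x,y)\, G(\omega)\, d\lambda(\omega), \qquad P_v(x,y) = \int_\Omega p_\omega(x,y)\, d\Lambda_v(\omega),
\end{equation*}
obtained from the Radon-Nikod\'ym decomposition $d\Lambda = G d\lambda$, where $G(\omega)$ is positive semidefinite for $\lambda$-a.e. $\omega$ and $d\Lambda_v = \langle G v, v\rangle d\lambda$.

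Parts (i) and (ii) are largely routine. Well-definedness of $P$ (and of each $P_v$) follows from boundedness of $p$ and finiteness of $\Lambda$, with $\|P(x,y)\|_{\mathcal{L}(\mathcal{H})} \leq \|p\|_\infty |\Lambda|(\Omega)$. Positive definiteness comes from the identity
\begin{equation*}
\sum_{\mu,\nu} \langle P(x_\mu,x_\nu) v_\mu, v_\nu\rangle_\mathcal{H} = \int_\Omega \sum_{\mu,\nu} p_\omega(x_\mu,x_\nu) \langle G(\omega) v_\mu, v_\nu\rangle_\mathcal{H}\, d\lambda(\omega),
\end{equation*}
whose integrand is nonnegative $\lambda$-a.e. by the Schur product theorem applied to the PSD matrices $[p_\omega(x_\mu,x_\nu)]$ and the Gram matrix $[\langle G(\omega)^{1/2}v_\mu, G(\omega)^{1/2}v_\nu\rangle]$. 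Norm-continuity of $P$ follows from joint continuity of $p$ together with dominated convergence (dominator $2\|p\|_\infty \|G(\omega)\|_{\mathcal{L}(\mathcal{H})}$, integrable against $\lambda$), after which Proposition \ref{rkhscontained} gives $\mathcal{H}_P \subset C(X,\mathcal{H})$.

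For (b) $\Leftrightarrow$ (c): if $\Lambda(\Omega) v_0 = 0$ for some $v_0 \neq 0$, nonnegativity of $\omega \mapsto \langle G(\omega) v_0, v_0\rangle$ and the identity $\int \langle G(\omega) v_0,v_0\rangle d\lambda = 0$ force it to vanish $\lambda$-a.e., so $P_{v_0} \equiv 0$, contradicting every property. Conversely, if $\Lambda(\Omega)$ is strictly PD then $\Lambda_v$ is a nonzero finite nonnegative scalar Radon measure for every $v \neq 0$, and each property of $P_v$ follows from the corresponding scalar fact about integrals of uniformly well-behaved families: for instance, $\sum P_v(x_\mu,x_\nu) c_\mu \bar{c_\nu} = \int \sum p_\omega(x_\mu,x_\nu) c_\mu \bar{c_\nu}\, d\Lambda_v(\omega)$ has strictly positive integrand for any nonzero $c$ by strict PD of $p_\omega$, giving a positive integral since $\Lambda_v \neq 0$; and analogously for universality and integral strict positive definiteness, after invoking the scalar case of Theorem \ref{oneintegraltodoubleintegral} and exchanging the scalar integrals by Fubini.

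The main obstacle is (c) $\Rightarrow$ (a). For (iii), vanishing of $\sum \langle P(x_\mu,x_\nu) v_\mu, v_\nu\rangle$ forces the integrand above to vanish $\lambda$-a.e.; using the spectral decomposition $[\langle G(\omega) v_\mu, v_\nu\rangle] = \sum_k d_k(\omega) u_k(\omega) u_k(\omega)^*$ and strict PD of $p_\omega$, one obtains $\langle G(\omega) v_\mu, v_\nu\rangle = 0$ for all $\mu,\nu$ and $\lambda$-a.e. $\omega$, whence integrating the diagonal gives $\langle \Lambda(\Omega) v_\mu, v_\mu\rangle = 0$ and strict PD of $\Lambda(\Omega)$ forces $v_\mu = 0$. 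For (iv) and (v), Theorem \ref{oneintegraltodoubleintegral} reduces universality (resp.\ integral strict positive definiteness) of $P$ to positivity of
\begin{equation*}
\langle P_\eta, P_\eta\rangle_{\mathcal{H}_P} = \int_\Omega \int\!\!\int p_\omega(x,y) \langle G(\omega) H(y), H(x)\rangle\, d|\eta|(x)\, d|\eta|(y)\, d\lambda(\omega)
\end{equation*}
for all nonzero $\eta \in \mathfrak{M}(\mathcal{A},\mathcal{H})$ (resp.\ $\mathfrak{M}(X,\mathcal{H})$), where $d\eta = H d|\eta|$ with $\|H(x)\|_\mathcal{H} = 1$ $|\eta|$-a.e. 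Fixing an orthonormal basis $\{e_k\}$ of $\mathcal{H}$ and setting $\phi_k^\omega(x) := \overline{\langle H(x), G(\omega)^{1/2} e_k\rangle}$, the inner double integral becomes the nonnegative sum $\sum_k \int\!\!\int p_\omega(x,y) \phi_k^\omega(x) \overline{\phi_k^\omega(y)}\, d|\eta|(x)\, d|\eta|(y)$. Universality (resp.\ integral strict positive definiteness) of $p_\omega$ makes each term vanish only when the scalar Radon measure $\phi_k^\omega d|\eta|$ is zero, that is, $\phi_k^\omega = 0$ $|\eta|$-a.e.; vanishing for all $k$ is equivalent to $\langle G(\omega) H(x), H(x)\rangle = 0$ $|\eta|$-a.e. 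If the triple integral vanishes, this holds for $\lambda$-a.e. $\omega$, and a Fubini exchange then gives $\langle \Lambda(\Omega) H(x), H(x)\rangle = 0$ $|\eta|$-a.e.; since $\|H(x)\|_\mathcal{H} = 1$ $|\eta|$-a.e., strict PD of $\Lambda(\Omega)$ yields $|\eta| = 0$, contradicting $\eta \neq 0$. The delicate points are the iterated Fubini interchanges and the term-by-term passage from operator-valued positivity to the scalar statements for $p_\omega$; this is exactly where the Radon-Nikod\'ym hypothesis on $\Lambda$ is essential, since it makes the operator integrand factorize as a scalar-valued nonnegative kernel times a PSD operator.
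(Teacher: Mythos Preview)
Your proposal is correct and follows the same overall strategy as the paper: the triangle of implications via Lemma \ref{scalarvaluedprojectionsuniversality} and the scalar case, the reduction through the Radon--Nikod\'ym density $G$ and its square root, the coordinate expansion along an orthonormal basis, and the invocation of Theorem \ref{oneintegraltodoubleintegral} together with universality of each $p_\omega$ on the resulting scalar measures.

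Two small differences are worth recording. First, for part (ii) the paper proves norm continuity by Radon regularity of $|\Lambda|$: one fixes a compact $\mathcal{C}\subset\Omega$ carrying all but $\epsilon$ of the mass and then uses joint continuity of $p$ on the compact set $\mathcal{C}\times\{x\}\times\{y\}$ to get a genuine $\epsilon$--neighborhood estimate. Your dominated-convergence phrasing is morally the same, but since $X$ is merely Hausdorff (not first countable) the sequential DCT does not literally apply; unpacking it into an $\epsilon$-argument leads back to the paper's compact-exhaustion step. Second, in the endgame of (iv)/(v), after obtaining $G(\omega)^{1/2}H(x)=0$ for $|\eta|$-a.e.\ $x$ (for $\lambda$-a.e.\ $\omega$), you Fubini-swap to conclude $\langle\Lambda(\Omega)H(x),H(x)\rangle=0$ $|\eta|$-a.e.\ and then use $\|H(x)\|=1$ to force $|\eta|=0$. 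The paper instead integrates over Borel sets to obtain $\langle G(\omega)\eta(A),\eta(A)\rangle=0$, hence $\langle\Lambda(\Omega)\eta(A),\eta(A)\rangle=0$, and concludes $\eta(A)=0$ for every $A$. Both closures are valid; yours is slightly more direct, while the paper's avoids invoking the normalization $\|H\|=1$ of the polar density.
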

	
	As a direct consequence of Theorem \ref{universalityintegralkernel}  to a specific context on Euclidean spaces we have that:
	
	\begin{cor}\label{compcsuppor} Let  $f:\mathbb{R}^{m} \to \mathbb{C}$ be a continuous function for which the kernel $$
		(x,y) \in \mathbb{R}^{m}\times \mathbb{R}^{m} \to k(x,y):= f(x-y) \in \mathbb{C}
		$$ 
		is positive definite. If $\Lambda $ is a finite Radon nonnegative $\mathcal{L}(\mathcal{H})$ valued measure defined on $(0, \infty)$ that admits a Radon-Nikod\'ym decomposition $d\Lambda= Gd\lambda$, the operator valued kernel 
		$$
		(x, y) \in \mathbb{R}^{m} \times \mathbb{R}^{m} \to P(x,y):=\int_{(0, \infty)}f(w(x -y))d\Lambda(r) \in   \mathcal{L}(\mathcal{H}),
		$$
		is  positive definite, $\mathcal{H}_{P} \subset C(\mathbb{R}^{m}, \mathcal{H})$ and the following statements are true
		\begin{enumerate}
			\item If the kernel $k$ is strictly positive definite, the operator valued kernel $P$ is strictly positive definite if and only if $\Lambda((0, \infty))$ is positive definite.
			\item If the kernel $k$ is universal, the operator valued kernel $P$ is universal if and only if $\Lambda((0, \infty))$ is positive definite.
			\item If the kernel $k$ is integrally strictly positive definite, the operator valued kernel $P$ is integrally strictly positive definite if and only if $\Lambda((0, \infty))$ is positive definite.
		\end{enumerate}
	\end{cor}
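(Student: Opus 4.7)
The proof is a direct specialization of Theorem \ref{universalityintegralkernel} to $\Omega = (0,\infty)$, $X = \mathbb{R}^m$, and the parametrized family $p(w,x,y) := f(w(x-y))$. I would proceed in three stages.

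First, I would verify the hypotheses of Theorem \ref{universalityintegralkernel}. The space $(0,\infty)$ is locally compact Hausdorff and $p$ is jointly continuous because $f$ is. Boundedness follows from the standard inequality $|f(z)| \leq f(0)$ valid for any continuous positive definite function (apply positivity of the $2\times 2$ matrix with rows indexed by $0$ and $z$). For each fixed $w > 0$, the identity $p_w(x,y) = k(wx,wy)$ exhibits $p_w$ as the pullback of $k$ by the bijection $\phi_w: x \mapsto wx$, hence $p_w$ is positive definite. Parts (i) and (ii) of Theorem \ref{universalityintegralkernel} then yield that $P$ is positive definite, bounded, norm-continuous, and $\mathcal{H}_P \subset C(\mathbb{R}^m,\mathcal{H})$.

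Second, I would check that each of the three properties (strict positive definiteness, universality, integral strict positive definiteness) transfers from $k$ to every $p_w$ via the homeomorphism $\phi_w$. Strict positive definiteness is immediate since $\phi_w$ preserves distinctness. Universality transfers by observing that $\mathcal{H}_{p_w} = \{h \circ \phi_w : h \in \mathcal{H}_k\}$ isometrically; given a compact $\mathcal{A} \subset \mathbb{R}^m$ and a continuous $g$ on $\mathcal{A}$, the set $w\mathcal{A}$ is compact and $g \circ \phi_w^{-1}$ is continuous on it, so universality of $k$ provides an approximant $h \in \mathcal{H}_k$ whose precomposition with $\phi_w$ approximates $g$ on $\mathcal{A}$. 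Integral strict positive definiteness transfers via the pushforward: for a nonzero $\eta \in \mathfrak{M}(\mathbb{R}^m)$, the measure $\phi_w\# \eta$ is again a nonzero Radon measure of bounded variation on $\mathbb{R}^m$, and the change of variables gives
\[
\int\!\!\int p_w(x,y)\, d\overline{\eta}(x)\, d\eta(y) = \int\!\!\int k(u,v)\, d\overline{(\phi_w\#\eta)}(u)\, d(\phi_w\#\eta)(v) > 0.
\]

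Third, I would invoke parts (iii), (iv), (v) of Theorem \ref{universalityintegralkernel} together with its concluding clause, which states that each of those equivalences is also equivalent to $\Lambda(\Omega)$ being a positive definite operator. With $\Omega = (0,\infty)$ this delivers precisely the three statements of the corollary. The plan presents no serious obstacle, since the work is entirely a reduction to the general integral representation theorem; the only mild point to keep in mind is that $\phi_w$ being a homeomorphism of $\mathbb{R}^m$ ensures the Radon and bounded-variation properties of $\eta$ are preserved under pushforward, so that the transfer of integral strict positive definiteness is legitimate.
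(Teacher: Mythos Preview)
Your proposal is correct and follows the same route as the paper: both reduce the corollary to Theorem \ref{universalityintegralkernel} with $\Omega=(0,\infty)$ and $p_w(x,y)=f(w(x-y))$. The paper's proof is a single sentence that simply asserts $p$ is bounded by $f(0)$ and each $p_w$ is positive definite, then declares the result a direct consequence of the theorem; your write-up is more thorough in that you explicitly verify, via the dilation $\phi_w\colon x\mapsto wx$, that strict positive definiteness, universality, and integral strict positive definiteness each transfer from $k$ to $p_w$, a step the paper leaves to the reader.
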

	

	From this Corollary we  obtain a characterization for several families of  trace class valued positive definite radial kernels on Euclidean spaces.
	
	\begin{thm}\label{universaleuclidian2}Let   $F: [0, \infty) \to \mathcal{L}(\mathcal{H})$ be an ultraweakly continuous function for which   $F(0) \in \mathcal{L}(\mathcal{H})$ is  a trace class operator and the kernel $K_{F,v}: \mathbb{R}^{m}\times \mathbb{R}^{m}\to \mathbb{C}$  given by	$$ K_{F,v}(x,y):= \langle F(\|x-y\|)v, v\rangle_{\mathcal{H}}$$
		is positive definite for every $m \in \mathbb{N}$  and $v \in \mathcal{H}$. Then the kernel 
		$$
		K_{F} : \mathbb{R}^{m} \times \mathbb{R}^{m} \to F(\|x-y\|) \in \mathcal{L}(\mathcal{H})
		$$
		is positive definite for every $m \in \mathbb{N}$, $\mathcal{H}_{K_F} \subset C(\mathbb{R}^{m}, \mathcal{H})$ and the following are equivalent
		\begin{enumerate}
			\item[(i)] The kernel $K$ is strictly positive definite. 
			\item[(ii)] The kernel $K$ is universal.
			\item[(iii)] For every $v \in \mathcal{H}\setminus\{{0}\}$ the function $t \in [0, \infty) \to \langle F(t)v, v\rangle_{\mathcal{H}} $ is non constant.
		\end{enumerate}
		Moreover, $\mathcal{H}_{K_{F}} \subset C_{0}(\mathbb{R}^{m}, \mathcal{H})$ if and only if for every $v \in \mathcal{H}\setminus\{{0}\}$ the function $t \in [0, \infty) \to \langle F(t)v, v\rangle_{\mathcal{H}} \in C_{0}([0, \infty))$. Under this additional hypothesis, the following are equivalent
		\begin{enumerate}
			\item[(i)] The kernel $K$ is strictly positive definite. 
			\item[(ii)] The kernel $K$ is $C_{0}$-universal.
			\item[(iii)] For every $v \in \mathcal{H}\setminus\{{0}\}$ the function $t \in [0, \infty) \to \langle F(t)v, v\rangle_{\mathcal{H}} $ is nonzero.
		\end{enumerate}	
	\end{thm}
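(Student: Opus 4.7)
The plan is to reduce the operator valued statement to the scalar Schoenberg theorem followed by an application of Corollary \ref{compcsuppor} to the Gaussian. First, I would establish an operator valued Schoenberg representation: using ultraweak continuity of $F$ and the fact that every scalar projection $t \mapsto \langle F(t)v,v\rangle$ is positive definite radial on all $\mathbb{R}^m$, the classical Schoenberg theorem \cite{schoenb} yields a nonnegative finite Radon measure $\lambda_v$ on $[0,\infty)$ with $\langle F(t)v,v\rangle = \int_{[0,\infty)} e^{-rt^2}\,d\lambda_v(r)$. A polarization argument together with the ultraweak continuity promotes $v \mapsto \lambda_v$ to a nonnegative operator valued Radon measure $\Lambda$ on $[0,\infty)$ satisfying $\Lambda_v = \lambda_v$ and $F(t) = \int_{[0,\infty)} e^{-rt^2}\,d\Lambda(r)$. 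Since $F(0) = \Lambda([0,\infty))$ is trace class by hypothesis, $\mathrm{tr}\,\Lambda$ is a finite scalar measure and the Radon-Nikod\'ym theorem for trace class valued measures (as set up in the Appendix) gives a decomposition $d\Lambda = G\,d\lambda$ with $G(r)$ trace class and positive semidefinite a.e.

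Next I would split $\Lambda = \Lambda(\{0\})\delta_0 + \Lambda|_{(0,\infty)}$, which writes
$$
F(\|x-y\|) = \Lambda(\{0\}) + \int_{(0,\infty)} e^{-r\|x-y\|^2}\,d\Lambda(r) =: \Lambda(\{0\}) + P(x,y).
$$
Because $0 \le F(t) \le F(0)$ in operator order, $\|F(t)\|_{\mathcal{L}(\mathcal{H})}$ is uniformly bounded by $\|F(0)\|_{\mathcal{L}(\mathcal{H})}$, and dominated convergence in the trace norm together with continuity of the scalar Gaussians shows that $K_F$ is norm continuous; Proposition \ref{rkhscontained}(i) then gives $\mathcal{H}_{K_F} \subset C(\mathbb{R}^m, \mathcal{H})$, and $K_F$ is positive definite on every $\mathbb{R}^m$ since it is an integral of positive definite operator valued kernels.

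Now I would read off the first set of equivalences. The Gaussian $e^{-\|x-y\|^2}$ is strictly positive definite and universal on $\mathbb{R}^m$, so Corollary \ref{compcsuppor} applied to $P$ (after the change of variable $w=\sqrt{r}$) says that each of ``$P$ strictly positive definite'' and ``$P$ universal'' is equivalent to $\Lambda((0,\infty))$ being a positive definite operator. Because $\Lambda((0,\infty))$ is already positive semidefinite, the latter is equivalent to $\langle \Lambda((0,\infty))v,v\rangle > 0$ for every $v \neq 0$, which by the Schoenberg representation is precisely the statement that $t \mapsto \langle F(t)v,v\rangle$ is not constant. Finally, $K_F$ is obtained from $P$ by adding the constant positive semidefinite operator $\Lambda(\{0\})$; this operation visibly preserves universality and preserves strict positive definiteness (in the reverse direction, if some projection $F_v$ were constant one produces a vanishing quadratic form by choosing $v_\mu = c_\mu v$ with $\sum c_\mu = 0$, contradicting strict positive definiteness). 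This chains (i) $\Leftrightarrow$ (ii) $\Leftrightarrow$ (iii).

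For the $C_0$ addendum, Proposition \ref{rkhscontained}(ii) reduces $\mathcal{H}_{K_F} \subset C_0(\mathbb{R}^m,\mathcal{H})$ to checking that $x \mapsto F(\|x-y\|)v$ lies in $C_0$ for each $y,v$; the operator inequality $\|F(t)v\|_{\mathcal{H}}^2 \le \|F(0)\|_{\mathcal{L}(\mathcal{H})} \langle F(t)v,v\rangle$ makes this equivalent to the $C_0$ condition on every scalar projection, which by Schoenberg translates to $\Lambda(\{0\}) = 0$. Under this hypothesis $K_F = P$, and applying Corollary \ref{compcsuppor}(3) together with the observation (via Equation \ref{c0univform2}) that an integrally strictly positive definite kernel with RKHS in $C_0$ is automatically $C_0$-universal delivers (i) $\Leftrightarrow$ (ii); since $F_v$ now lives in $C_0([0,\infty))$, being non-constant is the same as being nonzero, matching (iii). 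The main obstacle will be establishing the operator valued Schoenberg representation rigorously: promoting the pointwise scalar measures $\lambda_v$ to a genuine nonnegative operator valued Radon measure with a Radon-Nikod\'ym decomposition of bounded trace variation, which is the critical input that lets the rest of the argument go through mechanically.
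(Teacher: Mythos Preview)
Your proposal is correct and follows essentially the same route as the paper: obtain an operator valued Schoenberg representation $F(t)=\int_{[0,\infty)}e^{-rt^{2}}\,d\Lambda(r)$ by polarizing the scalar Schoenberg measures (this is Lemma~\ref{uniquelcomplmon} with $\ell=\infty$), upgrade $\Lambda$ to a bounded-variation measure with a Radon--Nikod\'ym decomposition via the trace class hypothesis (this is Lemma~\ref{radonnikodintrace}), split off the atom at $0$, and then feed the remaining Gaussian integral into Corollary~\ref{compcsuppor}/Theorem~\ref{universalityintegralkernel}. The only cosmetic difference is that the paper applies Theorem~\ref{universalityintegralkernel} directly with $p_{r}(x,y)=e^{-r\|x-y\|^{2}}$ rather than invoking Corollary~\ref{compcsuppor} after the change of variable $w=\sqrt{r}$, and it verifies the $C_{0}$ inclusion by dominated convergence on $\int_{(0,\infty)}e^{-r\|x-y\|^{2}}\|G(r)\|\,d\Lambda_{T}(r)$ rather than through your operator inequality; both variants are equivalent, and the ``main obstacle'' you identify is handled exactly by Lemmas~\ref{uniquelcomplmon} and~\ref{radonnikodintrace}.
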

	
	In \cite{schoenb}, besides the integral representation at Equation \ref{pdonallEuclid} for the continuous positive definite radial kernels on all Euclidean spaces, it is also proved  a very useful equivalence: a continuous function $f:[0, \infty) \to \mathbb{R}$ admits an integral representation as  Equation \ref{pdonallEuclid} if and only if the function $g(t):=f(\sqrt{t})$ is  \textbf{completely monotone}, that is $g \in C^{\infty}((0,\infty))$ and $(-1)^{n}g^{(n)}(t) \geq 0$ for every $t>0$ and $n\in \mathbb{N}$.
	
	Although the class of completely monotone functions has some very important examples,  it has the problem that any function that satisfies it is highly regular and do not have compact support, which might be troublesome in some applications, as argued in \cite{compcsimgraph}. A continuous function $f: [0, \infty)\to \mathbb{R}$ is called \textbf{$\ell$-times completely monotone} ($\ell \geq2$) if  
	\begin{enumerate}
		\item[(i)] $f \in C^{(\ell-2)}((0, \infty))$;
		\item[(ii)] f is nonnegative;
		\item[(iii)] $(-1)^{(\ell-2)}f^{(\ell -2)(t)}$ is convex;
		\item[(iv)] $\lim_{t \to \infty} f(t)$ exists;
	\end{enumerate} 
	
	Note that a function is completely monotone  if and only if is $\ell$-times completely monotone for every $\ell\geq 2$.  In \cite{askey1}, \cite{askey2} it is proved that for a $\ell$-times completely monotone function $f$,  the kernel $(x,y) \in \mathbb{R}^{m} \times \mathbb{R}^{m} \to f(\|x-y\|) \in \mathbb{R}$ is positive definite for $m \leq 2\ell -3$. Moreover,  a $\ell$-times completely monotone function $f$ can be  represented as 
	$$
	f(t)= \int_{[0, \infty)}(1-rt)_{+}^{\ell -1}d\lambda(r)   
	$$
	for some finite scalar valued nonnegative finite Radon measure $\lambda$ on $[0,\infty)$. Conversely, every function with this representation is also a $\ell$-times completely monotone function and the representation is unique  as proved in \cite{williamson}.

	
	\begin{thm}\label{universaleuclidianaskey}Let $\ell \in \{ 2,3, \cdots, \infty \}$,    $F: [0, \infty) \to \mathcal{L}(\mathcal{H})$ be an ultraweakly continuous function for which  $F_{v}$ is $\ell$-times completely monotone for every $v \in \mathcal{H}$ and $F(0) \in \mathcal{L}(\mathcal{H})$ is  a trace class operator. Then, the kernel 	
		$$
		(x,y) \in 	\mathbb{R}^{m} \times \mathbb{R}^{m} \to  K_{F}(x,y):= F(\|x-y\|) \in   \mathcal{L}(\mathcal{H}),
		$$
		is positive definite for  $m \leq 2\ell -3$, $\mathcal{H}_{K_{F}} \subset C(\mathbb{R}^{m}, \mathcal{H})$ and the following are equivalent
		\begin{enumerate}
			\item[(i)] The kernel  $K_{F}$ is strictly positive definite. 
			\item[(ii)] The kernel $K_{F}$ is universal.
			\item[(iii)] For every $v \in \mathcal{H}\setminus\{{0}\}$ the function $t \in [0, \infty) \to \langle F(t)v, v\rangle_{\mathcal{H}} $ is non constant.
		\end{enumerate}
		Moreover, $\mathcal{H}_{K_{F}} \subset C_{0}(\mathbb{R}^{m}, \mathcal{H})$ if and only if for every $v \in \mathcal{H}\setminus\{{0}\}$ the function $t \in [0, \infty) \to \langle F(t)v, v\rangle_{\mathcal{H}} \in C_{0}([0, \infty))$. Under this addtional hypothesys, the following are equivalent
		\begin{enumerate}
			\item[(i)] The kernel $K_{F}$ is strictly positive definite. 
			\item[(ii)] The kernel $K_{F}$ is $C_{0}$-universal.
			\item[(iii)] For every $v \in \mathcal{H}\setminus\{{0}\}$ the function $t \in [0, \infty) \to \langle F(t)v, v\rangle_{\mathcal{H}} $ is nonzero.
		\end{enumerate}	
	\end{thm}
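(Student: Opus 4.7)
The plan is to lift the scalar Williamson integral representation for $\ell$-times completely monotone functions to the operator-valued setting, and then feed the result into Corollary \ref{compcsuppor} applied to the Askey kernel $f(t) = (1-t)_{+}^{\ell-1}$. This mirrors the strategy that should underlie Theorem \ref{universaleuclidian2}, with Schoenberg's Gaussian representation replaced by the Williamson representation.

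First I would construct a nonnegative $\mathcal{L}(\mathcal{H})$-valued finite Radon measure $\Lambda$ on $[0,\infty)$ with
$$
\langle F(t)v,v\rangle_{\mathcal{H}} = \int_{[0,\infty)} (1-rt)_{+}^{\ell-1}\, d\langle \Lambda v,v\rangle(r), \qquad v \in \mathcal{H}.
$$
For each $v$ the scalar function $F_{v}$ is $\ell$-times completely monotone, so by the Williamson representation recalled in the excerpt there is a unique finite nonnegative Radon measure $\lambda_{v}$ on $[0,\infty)$ representing $F_{v}$. The map $A \mapsto \lambda_{v}(A)$ is a quadratic form in $v$ with total mass $F_{v}(0) \leq \|v\|^{2}\|F(0)\|_{\mathcal{L}(\mathcal{H})}$, and the trace-class hypothesis on $F(0)$ lets me polarize and conclude that the associated sesquilinear form $A \mapsto \langle \Lambda(A)v,u\rangle$ defines a genuine trace-class valued finite Radon measure whose total variation satisfies $|\Lambda|([0,\infty)) \leq \operatorname{tr} F(0)$. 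The Appendix results on operator-valued measures then give a Radon-Nikod\'ym decomposition $d\Lambda = G\, d|\Lambda|$, and ultraweak continuity of $F$ together with the uniqueness part of Williamson ensures the integral representation of $F$ itself, not only its scalar projections.

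Once $\Lambda$ is in hand, I would split $\Lambda = \Lambda(\{0\})\delta_{0} + \Lambda|_{(0,\infty)}$; the first summand contributes only the constant kernel $\Lambda(\{0\})$ and is invisible to strict positive definiteness and universality. To the second summand I would apply Corollary \ref{compcsuppor} with $f(x) = (1-\|x\|)_{+}^{\ell-1}$: by Askey's theorem this scalar kernel is positive definite on $\mathbb{R}^{m}$ for $m \leq 2\ell-3$, and on that range it is strictly positive definite, universal, and integrally strictly positive definite (standard, and compatible with the hypotheses required by Corollary \ref{compcsuppor}). The Corollary then yields positive definiteness of $K_{F}$, the inclusion $\mathcal{H}_{K_{F}} \subset C(\mathbb{R}^{m},\mathcal{H})$ from Theorem \ref{universalityintegralkernel}(ii), and the chain of equivalences (i)\,$\Leftrightarrow$\,(ii)\,$\Leftrightarrow$\,$\Lambda((0,\infty))$ is positive definite. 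Finally $\Lambda((0,\infty))$ positive definite means $\lambda_{v}((0,\infty)) > 0$ for every nonzero $v$, which is exactly (iii): $F_{v}$ fails to be constant precisely when $\lambda_{v}$ is not concentrated at the origin.

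For the $C_{0}$ statement, the characterization of $\mathcal{H}_{K_{F}} \subset C_{0}(\mathbb{R}^{m},\mathcal{H})$ is read off from Proposition \ref{rkhscontained}(ii): the function $x \mapsto \|K_{F}(x,x)\|_{\mathcal{L}(\mathcal{H})} = \|F(0)\|_{\mathcal{L}(\mathcal{H})}$ is trivially bounded and, by the radial structure, $y \mapsto K_{F}(x,y)v \in C_{0}(\mathbb{R}^{m},\mathcal{H})$ for every $x,v$ reduces to $F_{v} \in C_{0}([0,\infty))$ for every $v$. The $C_{0}$-universality equivalences then follow in the same manner via the integrally strictly positive definite half of Corollary \ref{compcsuppor}; the only change is that non-vanishing of $F_{v}$ now replaces non-constancy, because under the $C_{0}$ hypothesis the atom at $r=0$ is already ruled out, and $\lambda_{v}((0,\infty))>0$ amounts to $F_{v}\not\equiv 0$. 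The main obstacle is the operator-valued lift of Williamson's representation: showing that the fibrewise measures $\lambda_{v}$ assemble into a genuinely $\mathcal{L}(\mathcal{H})$-valued Radon measure with the regularity and Radon-Nikod\'ym properties demanded by Corollary \ref{compcsuppor}. The trace-class assumption on $F(0)$ is what makes this lifting work, after which the Askey kernel does all the remaining geometric work through Corollary \ref{compcsuppor}.
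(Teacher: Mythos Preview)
Your approach is exactly the paper's: the operator-valued Williamson representation you sketch is Lemma \ref{uniquelcomplmon}, the Radon--Nikod\'ym step is Lemma \ref{radonnikodintrace}, and the remainder is Corollary \ref{compcsuppor} applied to the Askey kernel $(1-r\|x-y\|)_{+}^{\ell-1}$ (whose $C_0$-universality the paper obtains from Corollary~10 of \cite{Sriperumbudur}), mirroring verbatim the argument for Theorem \ref{universaleuclidian2}. One small imprecision to fix: in the $C_0$ characterization you invoke Proposition \ref{rkhscontained}(ii) and say that $y\mapsto K_F(x,y)v \in C_0(\mathbb{R}^m,\mathcal{H})$ ``reduces to'' $F_v\in C_0([0,\infty))$, but that proposition asks for norm convergence $\|F(t)v\|_{\mathcal{H}}\to 0$, and the scalar conditions $F_v\in C_0$ only give weak convergence of $F(t)v$; the paper (in the proof of Theorem \ref{universaleuclidian2}) bridges this by first reading off $\Lambda(\{0\})=0$ from $F_v\in C_0$ and then bounding $\|F(t)\|_{\mathcal{L}(\mathcal{H})}\le \int_{(0,\infty)}(1-rt)_+^{\ell-1}\|G(r)\|\,d\Lambda_T(r)\to 0$ via dominated convergence in the Bochner integral you have already set up.
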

	
	We emphasize that the kernel that we define on a $\ell$-times completely monotone function ($g(\|x-y\|)$, on \cite{askey1}, \cite{askey2}) and the one we define for a completely monotone function ($g(\|x-y\|^{2})$, by reescaling \cite{schoenb}) are different. In particular, the set of kernels defined  by completely monotone functions using $\|x-y\|$, instead of $\|x-y\|^{2}$ is a proper subset from the Schoenberg result. A characterization of this space and further information can be found in \cite{gneiting}.

	
	For the final example of this section, we need the following interesting result.
	
	\begin{lem}\label{radialfini}Let $m\geq 2$. The kernel
		$$
		(x,y) \in \mathbb{R}^{m} \times \mathbb{R}^{m} \to \Omega_{m}^{m}(x,y):=\Omega_{m}(\|x-y\|) \in \mathbb{R}
		$$	
		is not universal and  the kernel
		$$
		(x,y) \in \mathbb{R}^{m-1} \times \mathbb{R}^{m-1} \to \Omega_{m}^{m-1}(x,y):=\Omega_{m}(\|x-y\|) \in \mathbb{R}
		$$
		is $C^{\infty}$-universal  but it is not  $C_{0}$-universal.\\
		In particular, if a nonzero function $f:[0, \infty) \to \mathbb{R}$ is an element of $C^{2q}([0, \infty))$ and the kernel
		$$
		(x,y) \in \mathbb{R}^{m}\times \mathbb{R}^{m} \to f(\|x-y\|) \in \mathbb{R}
		$$
		is positive definite, then the kernel
		$$
		(x,y) \in \mathbb{R}^{m-1}\times \mathbb{R}^{m-1} \to f(\|x-y\|) \in \mathbb{R}
		$$
		is $C^{q}$-universal.
	\end{lem}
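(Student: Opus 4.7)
My plan is to approach all three statements via Fourier analysis, using Theorem~\ref{oneintegraltodoubleintegral} as the main input. For a continuous translation-invariant positive definite kernel $f(x-y)=\hat{\mu}(x-y)$ on $\mathbb{R}^{d}$, Fubini applied to item (ii) of Theorem~\ref{oneintegraltodoubleintegral} yields
$$\langle K_\eta, K_\eta\rangle_{\mathcal{H}_K}=\int_{\mathbb{R}^{d}}|\hat{\eta}(\xi)|^{2}\,d\mu(\xi),\qquad \eta\in\mathfrak{M}(\mathbb{R}^{d},\mathbb{C}),$$
so the kernel is universal (resp.\ $C_0$-universal) iff no nonzero compactly supported (resp.\ merely finite) Radon measure $\eta$ has $\hat{\eta}$ vanishing $\mu$-a.e. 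For $C^q$-universality I would adapt the same duality argument replacing $\eta$ by a compactly supported distribution of order $\leq q$; the Fourier transform of such a distribution is entire of exponential type by Paley-Wiener-Schwartz, so vanishing on any nonempty open subset of $\mathrm{supp}(\mu)$ forces the distribution itself to be zero.

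For $\Omega_m^{m}$, the Bochner measure is normalized surface measure on $S^{m-1}$, which has empty interior. I would construct a non-universality witness directly: pick $\phi\in C_c^\infty(\mathbb{R}^{m})\setminus\{0\}$ and set $\eta=\bigl[(-\Delta-1)\phi\bigr]\,dx$. Then $\eta$ is compactly supported inside $\mathrm{supp}(\phi)$, $\hat{\eta}(\xi)=(\|\xi\|^{2}-1)\hat{\phi}(\xi)$ vanishes on $S^{m-1}$, and $\eta$ is nonzero because a compactly supported $C^\infty$ solution of the Helmholtz equation $(-\Delta-1)\phi=0$ would have entire Fourier transform supported on $S^{m-1}$, forcing $\hat{\phi}\equiv 0$. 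Hence $\langle K_\eta,K_\eta\rangle=0$ and $\Omega_m^{m}$ is not universal.

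For $\Omega_m^{m-1}$, identify $\mathbb{R}^{m-1}$ with the hyperplane $\{x_m=0\}\subset\mathbb{R}^{m}$ and write $\xi=(\xi',\xi_m)$. The Bochner measure of the restricted kernel equals the pushforward $\pi_{*}\sigma$ of normalized surface measure on $S^{m-1}$ under the projection $\pi(\xi',\xi_m)=\xi'$, whose support is the closed unit ball $\overline{B(0,1)}\subset\mathbb{R}^{m-1}$ and has nonempty interior. By the distributional criterion above, the kernel is $C^q$-universal for every $q$, hence $C^\infty$-universal. Failure of $C_0$-universality follows by exhibiting a nonzero finite measure $\eta=g(x)\,dx\in\mathfrak{M}(\mathbb{R}^{m-1},\mathbb{C})$ with $\hat{\eta}$ vanishing on $\overline{B(0,1)}$: take $g$ to be the inverse Fourier transform of any nonzero Schwartz function supported in $\{\|\xi'\|\geq 2\}$.

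Finally, for the ``in particular'' clause, Schoenberg's representation for positive definite radial kernels on $\mathbb{R}^{m}$ gives $f(t)=\int_{[0,\infty)}\Omega_m(rt)\,d\lambda(r)$ for some finite nonnegative Radon measure $\lambda$. For non-constant $f$---the intended reading of ``nonzero'', since a nonzero constant kernel has one-dimensional RKHS and is manifestly not $C^q$-universal---the restriction $\lambda|_{(0,\infty)}$ is nonzero, so the Bochner measure of $f(\|x-y\|)$ on $\mathbb{R}^{m-1}$ contains $\int_{(0,\infty)}\pi_{*}\sigma_r\,d\lambda(r)$, whose support contains $\overline{B(0,r_0)}$ for every $r_0\in\mathrm{supp}(\lambda|_{(0,\infty)})$, and in particular has nonempty interior. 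The assumption $f\in C^{2q}$ guarantees that the kernel lies in $C^{q,q}$, so $C^q$-universality is a meaningful notion, and the Paley-Wiener-Schwartz argument closes the proof. The principal obstacle is the rigorous passage from Theorem~\ref{oneintegraltodoubleintegral}, which is phrased for measures, to a distributional criterion for $C^q$-universality; this would be derived here from Hahn-Banach applied to the dual of $C^q(\mathcal{A},\mathbb{C})$, or alternatively invoked from the deferred machinery of Section~\ref{On differentiable positive definite operator valued universal (and related) kernels}.
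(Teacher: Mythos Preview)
Your argument is correct and, for the first claim, essentially identical to the paper's: both exhibit a compactly supported measure of the form $[(\pm\Delta\mp 1)\phi]\,dx$ whose Fourier transform vanishes on $S^{m-1}$. For $\Omega_{m}^{m-1}$ you compute the Bochner measure as the pushforward $\pi_{*}\sigma$ of surface measure under the coordinate projection, whereas the paper obtains the same measure (with density $c_m(1-\|\xi\|^{2})_{+}^{-1/2}\|\xi\|^{m-2}$ on the closed unit ball) via a recurrence relation for $\Omega_{m}$ from \cite{Emilio}; the paper then cites Theorem~18 of \cite{Simon-Gabriel2018} and Theorem~9 of \cite{Sriperumbudur} for $C^{\infty}$-universality and failure of $C_{0}$-universality, while you re-derive these directly via Paley--Wiener--Schwartz and an explicit Schwartz witness. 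Both routes are equivalent.

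The main methodological difference is in the ``in particular'' clause. You argue directly that the Bochner measure of $f(\|\cdot\|)$ on $\mathbb{R}^{m-1}$ has interior in its support and then invoke Paley--Wiener--Schwartz once more. The paper instead stays within its integral-kernel framework: it writes $f(\|x-y\|)=\int_{[0,\infty)}\Omega_{m}^{m-1}(r\|x-y\|)\,d\lambda(r)$, uses \cite{gneiting2} and Lemma~\ref{diffform} to get the $C^{q}$-domination condition from $f\in C^{2q}$, and then applies Theorem~\ref{Cpuniversalityintegralkernelcomplvalue} with the already-established $C^{q}$-universality of each $\Omega_{m}^{m-1}(r\,\cdot\,)$. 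Your route is more self-contained; the paper's is more modular and illustrates how its general machinery specializes. Your observation that ``nonzero'' must really mean ``nonconstant'' is apt---the paper's own proof silently assumes $\lambda((0,\infty))>0$, which is equivalent to $f$ being nonconstant, not merely nonzero.
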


	\begin{thm}\label{universaleuclidian3}Let $m \geq 2$,   $F: [0, \infty) \to \mathcal{L}(\mathcal{H})$ be an ultraweakly continuous function for which the kernel $(x,y)\in \mathbb{R}^{m}\times \mathbb{R}^{m}\to  \langle F(\|x-y\|)v,v \rangle_{\mathcal{H}}  \in \mathbb{C}$
		is positive definite  for every $v \in \mathcal{H}$ and $F(0) \in \mathcal{L}(\mathcal{H})$ is  a trace class operator. Then, the kernel 
		$$
		(x,y)\in \mathbb{R}^{m-1}  \times \mathbb{R}^{m-1} \to K(x,y):=F(\|x-y\|) \in \mathcal{L}(\mathcal{H})
		$$	
		is such that $\mathcal{H}_{K} \subset C(\mathbb{R}^{m-1}, \mathcal{H})$ and the following are equivalent
		\begin{enumerate}
			\item[(i)] The kernel $K$ is strictly positive definite. 
			\item[(ii)] The kernel $K$ is universal.
			\item[(iii)] For every $v \in \mathcal{H}\setminus\{{0}\}$ the function $t \in [0, \infty) \to \langle F(t)v, v\rangle_{\mathcal{H}} $ is non constant.
		\end{enumerate}
	\end{thm}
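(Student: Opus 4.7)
The plan is to reduce the statement to Corollary \ref{compcsuppor} by obtaining an operator-valued integral representation
\[ F(t) = \Lambda(\{0\}) + \int_{(0,\infty)} \Omega_m(rt)\, d\Lambda(r), \qquad t \geq 0, \]
in which $\Lambda$ is a nonnegative finite operator-valued Radon measure on $[0,\infty)$ admitting a Radon-Nikod\'ym decomposition. The crucial point is that, by Lemma \ref{radialfini}, the scalar kernels $\Omega_m(\|x-y\|)$ are universal on $\mathbb{R}^{m-1}$ (though not on $\mathbb{R}^{m}$), which is exactly why the conclusion is phrased on $\mathbb{R}^{m-1}$ rather than $\mathbb{R}^{m}$.

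To construct $\Lambda$, I first invoke Schoenberg's characterization of continuous positive definite radial kernels on the fixed space $\mathbb{R}^{m}$ (cf.\ \cite{wend}): for every $v \in \mathcal{H}$ there is a unique scalar nonnegative finite Radon measure $\lambda_v$ on $[0,\infty)$ such that $\langle F(t) v, v\rangle_{\mathcal{H}} = \int_{[0,\infty)} \Omega_m(rt)\, d\lambda_v(r)$. Polarization gives a sesquilinear family of complex Radon measures, and the trace-class hypothesis on $F(0)$ (which forces $\Lambda([0,\infty)) = F(0)$ to be trace class) combined with the ultraweak continuity of $F$ should lift this family to an operator-valued Radon measure $\Lambda$ on $[0,\infty)$ with the required Radon-Nikod\'ym decomposition; this lifting, relying on the vector-integration machinery of the Appendix, is the main technical obstacle of the proof. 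Once $\Lambda$ is in place, $y \mapsto F(\|x-y\|)v$ is norm-continuous for each $v$ by dominated convergence applied to the bounded continuous scalar integrand $\Omega_m(r\|x-y\|)$, so Proposition \ref{rkhscontained} yields $\mathcal{H}_K \subset C(\mathbb{R}^{m-1}, \mathcal{H})$.

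With the representation at hand, I apply Corollary \ref{compcsuppor}(2) with $f(\xi) := \Omega_m(\|\xi\|)$, noting that the scalar kernel $f(x-y)$ is universal on $\mathbb{R}^{m-1}$ by Lemma \ref{radialfini}. Splitting off the atom $\Lambda(\{0\})$ (which only contributes a constant operator and does not affect any of the three properties), the corollary yields that $K$ is universal on $\mathbb{R}^{m-1}$ iff $\Lambda((0,\infty))$ is a positive definite operator, iff $\lambda_v((0,\infty))>0$ for every $v \neq 0$, iff the scalar function $t \mapsto \langle F(t)v,v\rangle_{\mathcal{H}}$ is non-constant (the last equivalence uses that $\Omega_m(rt) \to 0$ as $t \to \infty$ for each $r>0$, so a nonzero $\lambda_v$ on $(0,\infty)$ forces non-constancy, while the converse is trivial). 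This gives (ii) $\Leftrightarrow$ (iii). For the remaining equivalences, (ii) $\Rightarrow$ (i) is the standard RKHS argument: a nontrivial relation $\sum_i K_{x_i} v_i = 0$ in $\mathcal{H}_K$ produces a nonzero continuous linear functional on $C(\mathcal{A},\mathcal{H})$ that vanishes on $\mathcal{H}_K$, contradicting universality. Finally, (i) $\Rightarrow$ (iii) is immediate: if $\langle F(t)v, v\rangle_{\mathcal{H}}$ were constant in $t$ for some $v \neq 0$, then at any two distinct points $x_1, x_2$ the choice $v_1 = v$, $v_2 = -v$ would annihilate the positive-definite double sum, contradicting strict positive definiteness.
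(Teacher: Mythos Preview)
Your proposal is correct and follows essentially the same route as the paper: obtain the operator-valued Schoenberg representation via polarization (this is Lemma \ref{uniquelcomplmonradial} in the paper), use the trace-class hypothesis together with Lemma \ref{radonnikodintrace} to secure the Radon--Nikod\'ym decomposition, invoke Lemma \ref{radialfini} to know that $\Omega_m(r\|x-y\|)$ is universal on $\mathbb{R}^{m-1}$, and then feed everything into Corollary \ref{compcsuppor} after splitting off the atom at $0$. The paper's own proof is a one-line pointer to the template established in Theorem \ref{universaleuclidian2}; your outline makes the same steps explicit, and your direct arguments for $(i)\Rightarrow(iii)$ and for the equivalence $\lambda_v((0,\infty))>0 \Leftrightarrow F_v$ non-constant (via $\Omega_m(s)\to 0$) are cleaner than the paper's reference to \cite{sun}, which strictly speaking treats the Gaussian case.
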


	Theorem \ref{universaleuclidian2}, Theorem \ref{universaleuclidianaskey} and Theorem \ref{universaleuclidian3} still holds true if  the finite nonnegative operator valued measure that uniquely describes $F$ has finite variation and admits a Radon-Nikod\'ym decomposition. But, even if the function $F$ as above is complex valued, it is difficult to obtain the measure that describes $F$ by simply using the function $F$. By assuming that $F(0)$ is a trace class operator, the setting allow us to use Lemma \ref{radonnikodintrace}. Further, we expect that the trace class assumption will be more relevant on applications because of Lemma \ref{compacinclusion}.

	\section{\textbf{On differentiable operator valued universal (and related) kernels}}\label{On differentiable positive definite operator valued universal (and related) kernels}

	In this section, we first deal with extensions of some well known results related to differentiable positive definite kernels (matrix and scalar valued) to the operator valued context. From those results we obtain a characterization for differentiable universality on the operator valued context and we obtain a differentiable version of Theorem \ref{universalityintegralkernel}. We finish this section presenting several families of operator valued kernels for which the differentiable universalities are well behaved with respect to the scalar valued projections of the kernel, with an emphasis on radial kernels.    
	
	\subsection{\textbf{On differentiable kernels and properties of its RKHS}\label{On differentiable positive definite kernels and properties of its RKHS}}
	Throughout  this subsection, let $\mathcal{A}  \subset \mathbb{R}^{m}$ be a compact set for which $ \overline{Int(\mathcal{A})}= \mathcal{A}$. This condition is not essential for the development of this subject, but simplifies several arguments. Note that for every open set $U  \subset\mathbb{R}^{m}$, there exists a sequence of compact sets $\mathcal{A}_{n}$, $n \in \mathbb{N} $, such that $\overline{Int(\mathcal{A}_{n})}= \mathcal{A}_{n}  \subset Int(\mathcal{A}_{n+1})  \subset U $, and $\cup_{n \in \mathbb{N} } \mathcal{A}_{n}= U$.  We define
	$$   
	C^{q}(\mathcal{A}, \mathcal{H}):= \{ F: \mathcal{A} \to \mathcal{H}\mid  F   \in C^{q}(Int(\mathcal{A}), \mathcal{H}),  \alpha \in \mathbb{Z}_{+}^{m}, |\alpha|\leq q,  \partial^{\alpha}F=F_{\alpha}\in C(\mathcal{A}, \mathcal{H}) \}
	$$
	Because $ \overline{Int(\mathcal{A})}= \mathcal{A}$, the functions  $F_{\alpha}$ are unique and we write $\partial^{\alpha} F(x)$ even for points in $\mathcal{A}\setminus{  Int(\mathcal{A})}$. In particular, $C^{q}(\mathcal{A}, \mathcal{H})$ is a vector space and admits a natural complete norm by setting
	$$
	\|F\|_{C^{q}(\mathcal{A}, \mathcal{H})}:= \sum_{\alpha \in \mathbb{Z}_{+}^{m}}^{|\alpha| \leq q}\sup_{x \in \mathcal{A}}\|\partial^{\alpha}F(x)\|_{\mathcal{H}}= \sum_{\alpha \in \mathbb{Z}_{+}^{m}}^{|\alpha| \leq q}\|\partial^{\alpha}F\|_{C(\mathcal{A}, \mathcal{H})}.
	$$
	Similarly, for an open set $U \subset \mathbb{R}^{m}$, the vector space $C^{q}_{0}(U, \mathcal{H})$ admits a natural complete norm by setting
	$$
	\|F\|_{C_{0}^{q}(U, \mathcal{H})}:= \sum_{\alpha \in \mathbb{Z}_{+}^{m}}^{|\alpha| \leq q}\sup_{x \in U}\|\partial^{\alpha}F(x)\|_{\mathcal{H}}= \sum_{\alpha \in \mathbb{Z}_{+}^{m}}^{|\alpha| \leq q}\|\partial^{\alpha}F\|_{C_{0}(U, \mathcal{H})}.
	$$

	First we prove a differentiable version of Proposition \ref{rkhscontained}. This result is a generalization  of Theorem $2.11$ in \cite{Michelli2014} to the operator valued setting. 
	\begin{prop}\label{diffimersion} Let $K : U \times U \to \mathcal{L}(\mathcal{H})$ be a positive definite kernel. Then $ \mathcal{H}_{K} \subset  C^{q}(U, \mathcal{H}) $ if and only if the functions
		$$
		y \in U \to K(x,y)v \in C^{q}(U, \mathcal{H}), \quad  \text{ for every }  x \in U, \quad  v \in \mathcal{H}
		$$  
		$$
		x \in U \to \partial_{2}^{\beta}[K(x,y)v]= \partial^{\beta}_{y}K_{y}v \in C^{q}(U, \mathcal{H}), \quad  \text{ for every }  y \in U, \quad  v \in \mathcal{H},\quad  |\beta |\leq q.
		$$ 
		and for every compact set $\mathcal{A} \subset U$ 
		$$ \| \partial_{1}^{\alpha}[\partial_{2}^{\beta}[K(x,y)v]]\|_{\mathcal{H}} \leq M_{\mathcal{A}}\|v\|, \quad x,y \in \mathcal{A}, \quad |\alpha|, |\beta|\leq q, \quad v \in \mathcal{H} $$  
		Additionally, the following properties are satisfied
		\begin{enumerate}   
			\item[(i)] For every $y \in U$ , $v \in \mathcal{H}$ and   $|\beta| \leq q$, $\partial^{\beta}_{y}K_{y}v\in \mathcal{H}_{K}$.
			\item[(ii)]	If $F \in \mathcal{H}_{K}$, then  $F\in C^{q}(U , \mathcal{H} ) $ and $
			\langle v, \partial^{\alpha} F(x)\rangle_{\mathcal{H}}= \langle \partial^{\alpha}_{x}K_{x}v , F \rangle_{\mathcal{H}_{K}} 
			$.
			\item[(iii)] The inclusion-restriction  $I : \mathcal{H}_{K} \to C^{q}(\mathcal{A} , \mathcal{H} )  $ is continuous.
			\item[(iv)]For every $v_{\alpha} \in \mathcal{H}$, $|\alpha|\leq q$, the  matrix valued  kernel 
			$$
			(x,y) \in\mathbb{R}^{m}\times\mathbb{R}^{m}  \to \langle \partial_{1}^{\alpha} [\partial_{2}^{\beta}[K(x,y)v_{\beta}]], v_{\alpha} \rangle_{\mathcal{H}} \in M_{\ell}(\mathbb{C})
			$$
			is positive definite.
		\end{enumerate}	
	\end{prop}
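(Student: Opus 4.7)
The plan is to prove the equivalence by the classical scheme for differentiable RKHS theory, suitably adapted to the operator valued setting. The necessity direction is the easy one: each element $K_x v$ lies in $\mathcal{H}_K$ and is given pointwise by $(K_x v)(z)=K(z,x)v$, so if $\mathcal{H}_K \subset C^q(U,\mathcal{H})$ then $z\mapsto K(z,x)v$ is in $C^q(U,\mathcal{H})$. Using the adjoint relation $K(x,y)=K(y,x)^{*}$ together with testing against an arbitrary vector $u$, we get that $y\mapsto \langle K(x,y)u,v\rangle_{\mathcal{H}}$ is $C^q$; combined with a local uniform bound (obtained from the closed graph theorem applied to the inclusion $\mathcal{H}_K\hookrightarrow C^q(\mathcal{A},\mathcal{H})$) this upgrades to strong $C^q$-regularity, yielding conditions (A) and (B). The local bound in (C) then follows from the reproducing identity applied to the vectors $\partial_y^{\beta}K_y v$.

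For the sufficiency direction, the central technical step is (i), namely $\partial_y^{\beta}K_y v\in\mathcal{H}_K$ for every $v\in\mathcal{H}$, $y\in U$ and $|\beta|\leq q$. I proceed by induction on $|\beta|$, the base case being trivial. For the inductive step, choose a coordinate $e_i$ and set
$$
D_h = \frac{1}{h}\bigl(\partial_y^{\beta}K_{y+he_i} v - \partial_y^{\beta}K_y v\bigr) \in \mathcal{H}_K.
$$
Expanding $\|D_h - D_{h'}\|_{\mathcal{H}_K}^{2}$ via the reproducing property produces a linear combination of terms of the form $\langle v,\partial_y^{\beta}\partial_{y'}^{\beta}K(y+he_i,y'+h'e_i)v\rangle_{\mathcal{H}}$ evaluated at four nearby points. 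The continuity of the mixed partial $\partial_1^{e_i}\partial_2^{e_i}(\partial_y^{\beta}\partial_{y'}^{\beta}K)(y,y)v$, which is exactly what hypotheses (A), (B) and the uniform bound (C) guarantee, forces this expression to tend to zero as $h,h'\to 0$. Hence $\{D_h\}$ is Cauchy in $\mathcal{H}_K$; its limit $G\in\mathcal{H}_K$ is identified pointwise by evaluating at $z$ and invoking hypothesis (A), giving $G(z)=\partial_y^{\beta+e_i}[K(z,y)v]$.

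With (i) established, the remaining properties fall out quickly. For (ii): for $F\in\mathcal{H}_K$ the reproducing identity $\langle v,F(x)\rangle_{\mathcal{H}}=\langle K_x v,F\rangle_{\mathcal{H}_K}$ combined with the Cauchy convergence from (i) yields $\langle v,\partial^{\alpha}F(x)\rangle_{\mathcal{H}}=\langle \partial_x^{\alpha}K_x v,F\rangle_{\mathcal{H}_K}$, after verifying that the difference quotient of the left side converges strongly using (C). For (iii): from (ii) and Cauchy--Schwarz,
$$
\|\partial^{\alpha}F(x)\|_{\mathcal{H}} = \sup_{\|v\|=1}|\langle \partial_x^{\alpha}K_x v,F\rangle_{\mathcal{H}_K}| \leq \sup_{\|v\|=1}\|\partial_x^{\alpha}K_x v\|_{\mathcal{H}_K}\|F\|_{\mathcal{H}_K},
$$
and $\|\partial_x^{\alpha}K_x v\|_{\mathcal{H}_K}^{2}=\langle v,\partial_1^{\alpha}\partial_2^{\alpha}K(x,x)v\rangle_{\mathcal{H}}\leq M_{\mathcal{A}}\|v\|^{2}$ by hypothesis, giving the claimed continuity. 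For (iv): the matrix in question is the Gram matrix of the vectors $\sum_{|\alpha|\leq q}\partial_x^{\alpha}K_{x_\mu}v_{\alpha}\in\mathcal{H}_K$, and so is automatically positive definite.

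The main obstacle is precisely the Cauchy argument for (i): one must carry out the expansion of $\|D_h - D_{h'}\|_{\mathcal{H}_K}^{2}$ carefully, keeping track of all four points at which $K$ is evaluated, and invoke the joint continuity of the mixed partials in the exact form supplied by the hypothesis. The operator valued setting adds the subtlety that scalar-valued arguments in $v$ must be promoted to vector-valued statements via the uniform bound (C), since weak differentiability of $y\mapsto K(x,y)v$ does not a priori imply strong differentiability without such control.
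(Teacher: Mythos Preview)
Your Cauchy argument for (i) in the sufficiency direction is the gap. You claim that hypotheses (A), (B), (C) guarantee enough continuity of the mixed partial $\partial_1^{e_i}\partial_2^{e_i}\bigl(\partial_1^{\beta}\partial_2^{\beta}K\bigr)$ to force $\|D_h-D_{h'}\|_{\mathcal{H}_K}^{2}\to 0$. They do not: (A) and (B) give only \emph{separate} $C^{q}$-regularity in each variable, and (C) is merely a uniform bound. Joint continuity of the mixed partial at the diagonal is exactly what is missing, and without it the second-order difference quotient on the diagonal need not converge. The paper itself flags this in the Remark following the proof, with the scalar example $k(x,y)=x^{2}y^{2}/(x^{2}+y^{2})$ on $\mathbb{R}$. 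This kernel satisfies (A), (B), (C) for $q=1$; yet at $y=0$ one computes $\langle D_h,D_{h'}\rangle_{\mathcal{H}_k}=hh'/(h^{2}+h'^{2})$, hence $\|D_h\|^{2}=1/2$ for all $h$ and $\|D_h-D_{h/2}\|^{2}=1/5$ for all $h>0$. The sequence is bounded, converges pointwise to $\partial_y k(\cdot,0)=0$, but is \emph{not} Cauchy.

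What the paper does instead is replace norm convergence by weak convergence. The difference quotients $\Delta_{h}^{e_i}K_y v$ are shown to be bounded in $\mathcal{H}_K$ (via the Uniform Boundedness Principle in the necessity direction, via the mean value inequality and (C) in the sufficiency direction) and to converge pointwise; in an RKHS this forces weak convergence to an element of $\mathcal{H}_K$, which is identified as $\partial_y^{e_i}K_y v$. Weak convergence is enough to derive the reproducing formula in (ii), and hence (iii) and (iv). To conclude that an arbitrary $F\in\mathcal{H}_K$ is $C^{q}$, the paper approximates $F$ in the $\mathcal{H}_K$-norm by elements of $H_K$ (finite combinations of $K_x v$), which are $C^{q}$ directly from (A), and passes to the limit using the uniform bounds on $\|\partial_x^{\alpha}K_x v\|_{\mathcal{H}_K}$. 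Your downstream arguments for (ii)--(iv) are essentially correct, but they rest on having $\partial_y^{\beta}K_y v\in\mathcal{H}_K$, and your route to that fact needs to be replaced by the weak-convergence argument.
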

	
	We emphasize that a kernel does not need to be jointly differentiable in order that $\mathcal{H}_{K} \subset C^{q}(X, \mathcal{H})$. For instance, the kernel 
	$$
	(x,y) \in \mathbb{R}^{m} \times \mathbb{R}^{m} \to k(x,y):= \frac{\langle x,y\rangle^{q}}{\|x\|^{2} + \|y\|^{2}} \in  \mathbb{R}
	$$ is positive definite because $k(x,y)= \int_{[0, \infty)}\langle x,y\rangle^{k} e^{-r\|x\|^{2} - r\|y\|^{2}}dr$ and $\mathcal{H}_{k} \subset C_{0}^{q}(\mathbb{R}^{m})$ by Proposition \ref{diffimersion}, however  $k\notin C^{q,q}(\mathbb{R}^{m} \times \mathbb{R}^{m})$. Also, the linear operator $\partial_{1}^{\alpha}[\partial_{2}^{\beta}K(x,y)]$ is well defined and is an element of $\mathcal{L}(\mathcal{H})$  by Proposition  \ref{diffimersion}, however we do not expect  that the operator valued kernel is separably differentiable, but we have not found an example for this.
	
	Often we encounter stronger conditions compared to the ones at Proposition \ref{diffimersion}, for instance that the kernel $K \in C^{q,q}(U\times U, \mathcal{L}(\mathcal{H}))$. On this case, it holds that 
	$$
	\partial_{1}^{\alpha} [\partial_{2}^{\beta}[K(x,y)v]]=[\partial_{1}^{\alpha} \partial_{2}^{\beta}K(x,y)]v.
	$$

	\begin{prop}\label{diffimersion0} Let $K : U \times U \to \mathcal{L}(\mathcal{H})$ be a positive definite kernel. Then $\mathcal{H}_{K} \subset C^{q}_{0}(U, \mathcal{H})$ if and only if the functions
		$$
		y \in U \to K(x,y)v \in C^{q}_{0}(U, \mathcal{H}), \quad  \text{ for every }  x \in U, \quad  v \in \mathcal{H}
		$$  
		$$
		x \in U \to \partial_{2}^{\beta}[K(x,y)v]= \partial^{\beta}_{y}K_{y}v \in C^{q}_{0}(U, \mathcal{H}), \quad  \text{ for every }  y \in U, \quad  v \in \mathcal{H},\quad  |\beta |\leq q.
		$$
		and           
		$$
		\| \partial_{1}^{\alpha}[\partial_{2}^{\beta}[K(x,y)v]] \|_{\mathcal{H}} \leq M\|v\|, \quad |\alpha|, |\beta| \leq q , \quad x,y \in U, \quad  v \in \mathcal{H}.
		$$  
		Additionally, the following properties are satisfied
		\begin{enumerate}
			\item[(i)] For every $x \in U$ , $v \in \mathcal{H}$ and   $|\beta| \leq q$, $\partial^{\beta}_{y}K_{y}v\in \mathcal{H}_{K}$.   
			\item[(ii)]	If $F \in \mathcal{H}_{K}$, then  $F\in C^{q}_{0}(U , \mathcal{H} ) $ and $
			\langle v, \partial^{\alpha} F(x)\rangle_{\mathcal{H}}= \langle \partial^{\alpha}_{x}K_{x}v , F \rangle_{\mathcal{H}_{K}}. 
			$
			\item[(iii)] The inclusion $I : \mathcal{H}_{K} \to C^{q}_{0}(U, \mathcal{H} )  $ is continuous.
			\item[(iv)]For every $v_{\alpha} \in \mathcal{H}$, $|\alpha|\leq q$, the  matrix valued  kernel 
			$$
			(x,y) \in\mathbb{R}^{m}\times\mathbb{R}^{m}  \to \langle \partial_{1}^{\alpha} [\partial_{2}^{\beta}[K(x,y)v_{\beta}]], v_{\alpha} \rangle_{\mathcal{H}} \in M_{\ell}(\mathbb{C})
			$$
			is positive definite.
		\end{enumerate}	
	\end{prop}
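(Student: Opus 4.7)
The proof plan is to adapt the argument for Proposition~\ref{diffimersion} from $C^{q}$ to $C_{0}^{q}$, the essential difference being that all bounds must be uniform on $U$ rather than local on compact subsets. I would prove the ``if'' direction first, then the converse.

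The starting point is to construct $\partial_{y}^{\beta}K_{y}v \in \mathcal{H}_{K}$ as a limit (in $\mathcal{H}_{K}$-norm) of iterated difference quotients of $K_{y}v$. The convergence is controlled by the identity
\[
\|K_{y+he_{i}}v - K_{y}v\|_{\mathcal{H}_{K}}^{2} = \langle v, [K(y+he_{i},y+he_{i}) - K(y,y+he_{i}) - K(y+he_{i},y) + K(y,y)]v\rangle_{\mathcal{H}},
\]
together with the hypothesized continuity and global boundedness of the mixed partials $\partial_{1}^{\alpha}\partial_{2}^{\beta}[K(\cdot,\cdot)v]$. Iterating $q$ times establishes (i). Property (iv) comes for free: the Gram matrix of the difference quotients is positive semidefinite, and semidefiniteness passes to limits. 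Item (ii) then follows by differentiating $\langle v,F(x)\rangle_{\mathcal{H}}=\langle K_{x}v,F\rangle_{\mathcal{H}_{K}}$ in $x$, moving the derivative into the inner product once the $\mathcal{H}_{K}$-differentiability of $x\mapsto K_{x}v$ is in place.

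The crux of the $C_{0}^{q}$ conclusion is the continuity of the inclusion $I:\mathcal{H}_{K}\to C_{0}^{q}(U,\mathcal{H})$. From (ii) and Cauchy-Schwarz,
\[
\|\partial^{\alpha}F(x)\|_{\mathcal{H}} \leq \|F\|_{\mathcal{H}_{K}} \sup_{\|v\|_{\mathcal{H}}=1}\|\partial_{x}^{\alpha}K_{x}v\|_{\mathcal{H}_{K}},
\]
and the quantity inside the supremum equals $\langle v,\partial_{1}^{\alpha}\partial_{2}^{\alpha}K(x,x)v\rangle_{\mathcal{H}}^{1/2}\leq M^{1/2}\|v\|_{\mathcal{H}}$ by the uniform bound hypothesis. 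Summing over $|\alpha|\leq q$ yields a global estimate $\|F\|_{C_{0}^{q}(U,\mathcal{H})}\leq C\|F\|_{\mathcal{H}_{K}}$ whenever $F$ is known to already lie in $C_{0}^{q}(U,\mathcal{H})$. Every element of $H_{K}$ lies in $C_{0}^{q}(U,\mathcal{H})$ by the first two hypotheses, so by density of $H_{K}$ in $\mathcal{H}_{K}$ and the closedness of $C_{0}^{q}(U,\mathcal{H})$ in its natural norm, the $C_{0}^{q}$-membership propagates to every $F\in\mathcal{H}_{K}$.

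For the converse, the containment $K_{y}v\in\mathcal{H}_{K}\subset C_{0}^{q}(U,\mathcal{H})$ yields the first hypothesis after appealing to the kernel symmetry $K(x,y)=K(y,x)^{*}$; the second follows from $\partial_{y}^{\beta}K_{y}v\in\mathcal{H}_{K}\subset C_{0}^{q}$, using (i) together with the closed graph theorem applied to $I$. The uniform bound is then recovered from $\|\partial_{x}^{\alpha}K_{x}v\|_{\mathcal{H}_{K}}^{2}=\langle v,\partial_{1}^{\alpha}\partial_{2}^{\alpha}K(x,x)v\rangle_{\mathcal{H}}$ combined with the continuity of $I$. I expect the main obstacle to be the passage to the limit in the vanishing-at-infinity condition: a Cauchy sequence in $\mathcal{H}_{K}$ whose elements lie in $C_{0}^{q}(U,\mathcal{H})$ must be shown to converge to a function whose derivatives up to order $q$ also vanish at infinity, and this is exactly what forces the globally uniform bound on $\partial_{1}^{\alpha}\partial_{2}^{\beta}[K(x,y)v]$ rather than the merely local bounds available in the $C^{q}$ version.
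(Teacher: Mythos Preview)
Your proposal is correct and follows essentially the same route as the paper: reduce to the $C^{q}$ result (Proposition~\ref{diffimersion}) and replace every local bound on a compact $\mathcal{A}$ by a global one on $U$, using that derivatives of functions in $C_{0}^{q}$ are bounded. The only noteworthy deviation is in the converse direction: the paper obtains the global bound $\|\partial_{1}^{\alpha}[\partial_{2}^{\beta}[K(x,y)v]]\|_{\mathcal{H}}\leq M\|v\|$ by applying the Uniform Boundedness Principle to the family of difference-quotient functionals $F\mapsto\langle v,\Delta_{h}^{e_{i}}F(y)\rangle_{\mathcal{H}}$ over all $y\in U$, $\|v\|=1$ (pointwise boundedness comes precisely from $\partial^{e_{i}}F\in C_{0}$), whereas you first establish continuity of $I:\mathcal{H}_{K}\to C_{0}^{q}(U,\mathcal{H})$ via the closed graph theorem and then read off the bound from $\|\partial_{x}^{\alpha}K_{x}v\|_{\mathcal{H}_{K}}\leq\|I\|\,\|v\|$. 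Both arguments are standard consequences of Banach--Steinhaus and yield the same conclusion; your version has the minor advantage of packaging the estimate cleanly through the operator norm of $I$.
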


	An immediate generalization of  the Arzel\`{a}-Ascoli Theorem to the differentiable setting is that a closed subset $B$ of $C^{q}(\mathcal{A}, \mathcal{H})$ is compact if and only if 
	\begin{enumerate}
		\item[$\circ$] $B$ is \textbf{$C^{q}$-equicontinuous}: For every $\epsilon >0$ and $x \in \mathcal{A}$ there is open set  $U_{x}$ that contains $x$  for which $\|\partial^{\alpha}F(x) - \partial^{\alpha}F(y)\|_{\mathcal{H}}< \epsilon $ for all $F \in B$, $y \in U_{x}$ and $|\alpha|\leq q$.
		\item[$\circ$]  $B$ is \textbf{$C^{q}$-pointwise relatively compact}: The set $\{\partial^{\alpha}F(y), F \in B, |\alpha|\leq q   \} \subset \mathcal{H}$ has compact closure on the norm topology of $\mathcal{H}$.
	\end{enumerate}

	Below we prove a differentiable version of Lemma \ref{compacinclusion}.

	\begin{lem}\label{compacinclusiondiff} Let $U\subset \mathbb{R}^{m}$ be an open set and $K: U\times U \to \mathcal{L}(\mathcal{H})$ be a positive definite kernel. Suppose that $\mathcal{H}_{K} \subset C^{q}(U, \mathcal{H})$, then 
		\begin{enumerate}
			\item[(i)] Every bounded set $B \subset \mathcal{H}_{K}$ is $C^{q}$-equicontinuous if and only if $K \in C^{q,q}(U\times U, \mathcal{L}(\mathcal{H}))$.
			\item[(ii)] If $(u,v) \in \mathcal{H}\times \mathcal{H} \to \langle  [\partial^{\alpha}_{1}[\partial^{\alpha}_{2}K(x,x)u]], v \rangle_{\mathcal{H}} \in \mathbb{C}$ is a trace class operator for every $x \in U$ and $|\alpha |\leq q$, then for every  bounded set $B \subset \mathcal{H}_{K}$ and $y \in U$, the set $\{\partial^{\alpha}F(y), \quad  F \in B  , |\alpha| \leq q\} \subset \mathcal{H}$ is  $C^{q}$-pointwise relatively compact.
		\end{enumerate}
		In particular, by mixing those two results we obtain that if $ \langle  [\partial^{\alpha}_{1}[\partial^{\alpha}_{2}K(x,x)u]], v \rangle_{\mathcal{H}}$ is a trace class operator for every $x \in U$ and $|\alpha |\leq q$, then
		\begin{enumerate}
			\item[(iii)] The  continuous inclusion-restriction $I: \mathcal{H}_{K} \to C^{q}(\mathcal{A}, \mathcal{H})$ is compact  for  every compact set $\mathcal{A}= \overline{Int(\mathcal{A})} \subset U$ if and only if $K \in C^{q,q}(U\times U, \mathcal{L}(\mathcal{H}))$. Under this hypothesis, every closed and bounded set of $\mathcal{H}_{K}$ (restricted to $\mathcal{A}$) is a compact set of $C^{q}(\mathcal{A}, \mathcal{H})$.
			\item[(iv)] If $\mathcal{H}_{K} \subset C_{0}^{q}(U, \mathcal{H})$, then the set
			$$
			B_{\mathcal{A}}: =B \cap \overline{span\{K_{x}v, \quad , x \in \mathcal{A}, v \in \mathcal{H} \}  }_{\mathcal{H}_{K}}
			$$
			has compact closure on $C_{0}(U, \mathcal{H})$ for every bounded set $B$ of $\mathcal{H}_{K}$ and compact set $\mathcal{A} \subset U$ if and only if  $K \in C^{q,q}(U\times U, \mathcal{L}(\mathcal{H}))$. Under this hypothesis, if $B$ is  closed  then $B_{\mathcal{A}}$ is a compact set of $C_{0}^{q}(U, \mathcal{H})$.
		\end{enumerate}
	\end{lem}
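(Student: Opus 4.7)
The strategy is to mirror the non-differentiable argument in Lemma \ref{compacinclusion}, replacing pointwise evaluation with the differential reproducing formula $\langle v, \partial^{\alpha} F(x) \rangle_{\mathcal{H}} = \langle \partial^{\alpha}_{x} K_{x} v, F \rangle_{\mathcal{H}_{K}}$ from Proposition \ref{diffimersion}(ii), and then combining with the $C^{q}$-Arzel\`{a}-Ascoli characterization stated just above the lemma.

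For the forward direction of (i), I start from the estimate
\[
\|\partial^{\alpha} F(x) - \partial^{\alpha} F(y)\|_{\mathcal{H}} \leq \|F\|_{\mathcal{H}_{K}} \sup_{\|v\|\leq 1} \|\partial^{\alpha}_{x} K_{x} v - \partial^{\alpha}_{y} K_{y} v\|_{\mathcal{H}_{K}},
\]
and expand the squared $\mathcal{H}_K$-norm inside the supremum as
\[
\langle v, [\partial^{\alpha}_{1} \partial^{\alpha}_{2} K(x,x) - \partial^{\alpha}_{1} \partial^{\alpha}_{2} K(x,y) - \partial^{\alpha}_{1} \partial^{\alpha}_{2} K(y,x) + \partial^{\alpha}_{1} \partial^{\alpha}_{2} K(y,y)] v \rangle_{\mathcal{H}}.
\]
Joint norm continuity of $\partial^{\alpha}_{1} \partial^{\alpha}_{2} K$ then gives $C^{q}$-equicontinuity on every $\|\cdot\|_{\mathcal{H}_{K}}$-bounded set $B$, uniformly over $|\alpha|\leq q$. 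For the converse, I apply the equicontinuity hypothesis to the explicit family $\{\partial^{\beta}_{y_{0}} K_{y_{0}} u/\|u\| : u \neq 0\}$, which lies in $\mathcal{H}_{K}$ by Proposition \ref{diffimersion}(i); this yields norm continuity of $\partial^{\alpha}_{1} \partial^{\beta}_{2} K(x,y_{0})$ in the first argument uniformly in $y_0$ on compacts, and a symmetric application of the same reasoning (together with polarization, noting that the hypothesis uses $|\alpha|=|\beta|$ but the mixed case follows by the standard Cauchy--Schwarz bound from Proposition \ref{diffimersion}(iv)) gives joint continuity of the operator-valued kernel in $\mathcal{L}(\mathcal{H})$-norm.

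For (ii), I fix $y \in U$ and $|\alpha|\leq q$ and diagonalize the trace class positive operator $T := \partial^{\alpha}_{1} \partial^{\alpha}_{2} K(y,y) = \sum_{i} \lambda_{i} e_{i}\otimes e_{i}$, where $\lambda_i\geq 0$ and $\sum_{i} \lambda_{i} < \infty$. For any $F \in B$ with $\|F\|_{\mathcal{H}_{K}} \leq M$, the chain
\[
|\langle \partial^{\alpha} F(y), e_{i} \rangle_{\mathcal{H}}|^{2} = |\langle \partial^{\alpha}_{y} K_{y} e_{i}, F \rangle_{\mathcal{H}_{K}}|^{2} \leq M^{2}\, \|\partial^{\alpha}_{y} K_{y} e_{i}\|_{\mathcal{H}_K}^{2} = M^{2} \langle e_{i}, T e_{i} \rangle_{\mathcal{H}} = M^{2} \lambda_{i}
\]
places $\partial^{\alpha} F(y)$ in the set $\{\sum_{i} c_{i} e_{i} : |c_{i}|^{2} \leq M^{2} \lambda_{i}\}$. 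The summability $\sum_{i} \lambda_{i} < \infty$ makes this set totally bounded in $\mathcal{H}$ (approximate by finite-dimensional truncations), hence relatively compact, yielding the required $C^{q}$-pointwise relative compactness.

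Items (iii) and (iv) then follow immediately by combining (i), (ii) with the $C^{q}$-Arzel\`{a}-Ascoli theorem. In (iv) the intersection with $\overline{\operatorname{span}\{K_{x}v : x\in\mathcal{A},\, v\in\mathcal{H}\}}_{\mathcal{H}_K}$ is needed to obtain vanishing at infinity uniformly over the restricted family, exactly as in the non-differentiable case of Lemma \ref{compacinclusion}(iv). The main obstacle will be the converse direction of (i): equicontinuity is a statement about a bounded family of $\mathcal{H}$-valued functions, and one must extract from it norm continuity of the operator-valued kernel, which requires the uniform-in-$v$ argument outlined above together with a careful use of Proposition \ref{diffimersion}(iv) to bound mixed partial derivatives from the diagonal ones.
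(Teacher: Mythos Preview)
Your approach is essentially the paper's: the forward direction of (i) via the reproducing inequality, (ii) via the trace-class coordinate bound, and (iii)--(iv) via $C^{q}$-Arzel\`{a}--Ascoli are all handled the same way.

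There is, however, a genuine gap in your converse of (i). You write ``this yields norm continuity of $\partial^{\alpha}_{1}\partial^{\beta}_{2}K(x,y_{0})$'', but under the standing hypothesis $\mathcal{H}_{K}\subset C^{q}(U,\mathcal{H})$ the object $\partial^{\alpha}_{1}\partial^{\beta}_{2}K(x,y)$ is only known to exist as the bounded operator $v\mapsto \partial_{1}^{\alpha}[\partial_{2}^{\beta}[K(x,y)v]]$ from Proposition~\ref{diffimersion}; it is \emph{not} yet known that $K$ is norm-differentiable as an $\mathcal{L}(\mathcal{H})$-valued map (the paper even remarks after Proposition~\ref{diffimersion} that separable differentiability of the operator-valued kernel is not expected in general). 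What your equicontinuity argument actually delivers is uniform-in-$v$ norm continuity of these weak-derivative operators; from there you still need to prove that the operator-norm difference quotients $\Delta^{e_{i}}_{h}K$ converge. The paper does this explicitly: it uses the mean value inequality applied to the $\mathcal{H}$-valued functions $K(\cdot,\cdot)v$ to show that $\Delta^{e_{i}}_{h,2}K(x,y)$ is Cauchy in $\mathcal{L}(\mathcal{H})$ as $h\to 0$, and then iterates. Your polarization/Cauchy--Schwarz remark does not address this; the issue is existence of norm limits, not bounding mixed indices by diagonal ones. Once you insert this mean-value step (or an equivalent fundamental-theorem-of-calculus argument using the integral representation of the increment), your outline becomes complete and matches the paper's proof.
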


	\begin{defn} Let $U \subset \mathbb{R}^{m}$ be an open set, $\Omega$  a locally compact Hausdorff space,  $p : \Omega \times U \times U \to \mathbb{C} \in C^{0, q, q}(\Omega\times U \times U)$ and $\lambda$ be scalar valued nonnegative finite Radon measure on $\Omega$. We say that $p$ is \textbf{$C^{q}$-Dominated with respect to $\lambda$} if there exists a $\lambda$ integrable function $h: \Omega \to \mathbb{C}$ such that 
		$$
		|\partial_{1}^{\alpha}\partial_{2}^{\beta}p_{w}(x,y)| \leq |h(w)|
		$$
		for all $x,y \in X$ and $|\alpha|, |\beta| \leq q$. 
	\end{defn}
	Note that on the previous definition,  if $p_{w}$ is a positive definite kernel for all $w$,  then it is sufficient to analyze the case $x=y$ and $\alpha=\beta$, because the matrix valued kernel presented at Lemma \ref{diffimersion} is positive definite. The interest on the previous definition is the following Lemma, where we obtain an explicit description of the derivatives of an  operator valued  kernel.

	\begin{lem}\label{diffkernelfourtrans} Let $\Omega$ be a locally compact space, $U \subset \mathbb{R}^{m}$ an open set, $p : \Omega \times U \times U \to \mathbb{C} \in C^{0,q,q}$ such that the kernels $$
		(x,y) \in U \times U \to p_{w}(x,y):=p(w,x,y) 
		$$ are positive definite for all $w \in \Omega$, and $\Lambda : \mathscr{B}(\Omega) \to \mathcal{L}(\mathcal{H})$ be a Radon nonnegative finite operator valued measure that admits a Radon-Nikod\'ym decomposition as $d\Lambda= G d\lambda$. Suppose in addition that the function $p$  is $C^{q}$-dominated with respect to $|\Lambda|$, then the kernels
		$$
		(x,y) \in \mathbb{R}^{m} \times \mathbb{R}^{m} \to P_{\alpha, \beta}(x,y):=\int_{\Omega} \partial^{\alpha}_{1}\partial^{\beta}_{2}p_{w}(x,y)d\Lambda(w) \in \mathcal{L}(\mathcal{H}), \quad  |\alpha|, |\beta|\leq q
		$$	
		are well defined, $P_{\alpha, \beta}\in C^{q-|\alpha|, q-|\beta|}( \mathbb{R}^{m}\times \mathbb{R}^{m}, \mathcal{L}(\mathcal{H}))$ and
		$$
		\partial^{\alpha}_{1}\partial^{\beta}_{2}P(x,y)= P_{\alpha, \beta}(x,y), \quad |\alpha|, |\beta|\leq q.
		$$
	\end{lem}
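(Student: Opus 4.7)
The plan is to establish three things in order: well-definedness of the operator-valued integral $P_{\alpha,\beta}(x,y)$, its joint norm-continuity on $U \times U$ in the appropriate $C^{q-|\alpha|, q-|\beta|}$ sense, and finally the identity $\partial_1^{\alpha}\partial_2^{\beta} P = P_{\alpha,\beta}$. Throughout, I would exploit the Radon-Nikod\'ym decomposition $d\Lambda = G d\lambda$ to reduce the operator-valued integration to a Bochner integral against the scalar measure $\lambda$, which allows all the convergence arguments to be done through the standard Bochner DCT (as set up in the Appendix).

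For well-definedness, I would rewrite $P_{\alpha,\beta}(x,y) = \int_{\Omega} \partial_1^{\alpha}\partial_2^{\beta} p_w(x,y)\, G(w)\, d\lambda(w)$. Joint continuity of $\partial_1^{\alpha}\partial_2^{\beta}p_w$ in $(x,y)$ plus strong measurability of $G$ give strong measurability of the integrand. The $C^q$-domination then supplies the operator-norm bound $|h(w)|\,\|G(w)\|_{\mathcal{L}(\mathcal{H})}$, and since $d|\Lambda| = \|G\|_{\mathcal{L}(\mathcal{H})} d\lambda$ with $h \in L^{1}(|\Lambda|)$, this majorant is $\lambda$-integrable. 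Hence the integrand is Bochner integrable into $\mathcal{L}(\mathcal{H})$, and the integral is well defined with $\|P_{\alpha,\beta}(x,y)\|_{\mathcal{L}(\mathcal{H})} \leq \int_{\Omega}|h(w)|\, d|\Lambda|(w)$.

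For continuity and differentiability, I would proceed by induction on $|\alpha|+|\beta|$, the base case $\alpha=\beta=0$ being immediate. For the inductive step, to show $\partial_{x_i} P_{\alpha,\beta} = P_{\alpha+e_i,\beta}$ when $|\alpha+e_i|\leq q$, consider the difference quotient
\begin{equation*}
\frac{P_{\alpha,\beta}(x+h e_i,y) - P_{\alpha,\beta}(x,y)}{h} = \int_{\Omega} \frac{\partial_1^{\alpha}\partial_2^{\beta} p_w(x+h e_i,y) - \partial_1^{\alpha}\partial_2^{\beta} p_w(x,y)}{h}\, G(w)\, d\lambda(w).
\end{equation*}
On a compact neighborhood of $x$ contained in $U$, the mean value theorem applied to the scalar function $t \mapsto \partial_1^{\alpha}\partial_2^{\beta} p_w(x+t e_i,y)$ bounds the integrand in operator norm by $|h(w)|\,\|G(w)\|_{\mathcal{L}(\mathcal{H})}$, which is $\lambda$-integrable by the $C^q$-domination at order $\alpha+e_i$. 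Pointwise the integrand converges to $\partial_1^{\alpha+e_i}\partial_2^{\beta} p_w(x,y) G(w)$, so the Bochner DCT yields $\partial_{x_i}P_{\alpha,\beta}(x,y) = P_{\alpha+e_i,\beta}(x,y)$. The same argument yields continuity of $P_{\alpha,\beta}$ (taking $(x_n,y_n)\to(x,y)$ and dominating by $2|h(w)|\|G(w)\|_{\mathcal{L}(\mathcal{H})}$), and an identical computation handles derivatives in the $y$ variable. Iterating gives $P_{\alpha,\beta} \in C^{q-|\alpha|, q-|\beta|}(U\times U, \mathcal{L}(\mathcal{H}))$ and, specializing to $\alpha=\beta=0$, the identity $\partial_1^{\alpha}\partial_2^{\beta} P = P_{\alpha,\beta}$.

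The main technical obstacle is that the conclusion asks for continuity in the \emph{norm topology} of $\mathcal{L}(\mathcal{H})$ rather than a weaker one, so I must be sure the dominated convergence theorem is applied in the Bochner sense and not just weakly. This is exactly where the Radon-Nikod\'ym hypothesis pays off: without it, one would only have access to a weak integral and could not control $\|P_{\alpha,\beta}(x_n,y_n) - P_{\alpha,\beta}(x,y)\|_{\mathcal{L}(\mathcal{H})}$. With the decomposition $d\Lambda = G d\lambda$ and the scalar $C^q$-majorant, the problem collapses to a routine Bochner DCT argument.
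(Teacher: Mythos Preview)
Your proposal is correct and follows essentially the same route as the paper: rewrite $P_{\alpha,\beta}$ via the Radon--Nikod\'ym decomposition as a Bochner integral against $\lambda$, use the $C^{q}$-domination bound $|\partial_1^{\alpha}\partial_2^{\beta}p_w|\le |h(w)|$ together with $d|\Lambda|=\|G\|_{\mathcal{L}(\mathcal{H})}d\lambda$ for well-definedness, then apply the mean value inequality plus the Bochner dominated convergence theorem to pass the derivative under the integral, and iterate. The only cosmetic difference is that the paper bounds $\bigl|\text{(difference quotient)}-\partial_1^{e_i}p_w\bigr|$ by $2h(w)$ before invoking DCT, whereas you bound the difference quotient itself by $|h(w)|$; both are equivalent applications of the same idea.
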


	On the special case where $p_{w}(x,y)= e^{-iw(x-y)}$, usually it is defined $h(w):=\|w\|^{2q}$. Actually, it can be proved that the integral of $|h|$ with respect to the finite measure $\|G\|_{\mathcal{L}(\mathcal{H})}d\lambda$  is finite if and only if the integrals of the functions $|\partial_{1}^{\alpha}\partial_{2}^{\alpha}e^{-iw(x-y)}|= |w^{2\alpha}|$ with respect to the finite measure $\|G(w)\|_{\mathcal{L}(\mathcal{H})}d\lambda$ are finite, for all $|\alpha|=p$. In other words, the fact that the kernels $P_{\alpha, \beta}$ are well defined (in the sense that are defined by a Bochner integral) implies the differentiation properties, which does not hold on the general setting. On the following result we prove a similar property for the kernels  presented at Corollary \ref{compcsuppor} based on the main result of \cite{gneiting2}.
	
	\begin{lem}\label{diffform}Let $f: \mathbb{R}^{m}\to \mathbb{C}$ be a  non constant function in $C^{2q}(\mathbb{R}^{m})$ that defines a positive definite kernel on the Bochner's sense. If $\lambda$ is a scalar valued nonnegative finite Radon measure on $[0,\infty)$, then the kernel
		$$ (x,y) \in \mathbb{R}^{m} \times \mathbb{R}^{m} \to p(x,y):= \int_{[0,\infty)}f(w(x-y))d\lambda(w) \in \mathbb{C} $$ 
		is an element of $C^{q,q}(\mathbb{R}^{m} \times \mathbb{R}^{m})$ if and only if 
		$$
		\int_{[0, \infty)} w^{2q}d\lambda(w) < \infty.
		$$
		Moreover 
		$$
		\partial^{\alpha}_{1}\partial^{\beta}_{2}p(x,y)= \int_{[0, \infty)}\partial^{\alpha}_{x}\partial^{\beta}_{y}f(w(x-y))d\lambda(w)
		$$
		and $\partial^{\alpha}_{1}\partial^{\beta}_{2}p(x,x)=(-1)^{|\beta|}f^{(\alpha +\beta)}(0) 	\int_{[0, \infty)} w^{|\alpha| +|\beta|}d\lambda(w)$ 
	\end{lem}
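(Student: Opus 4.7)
My plan is to handle the two implications separately, using throughout the Bochner representation $f(x)=\int_{\mathbb{R}^m} e^{-ix\cdot\xi}d\mu(\xi)$ of the continuous positive definite function $f$. The main result of \cite{gneiting2} is the classical fact that $f\in C^{2q}(\mathbb{R}^m)$ is equivalent to $\int\|\xi\|^{2q}d\mu(\xi)<\infty$, and under that hypothesis $\partial^{\gamma}f$ is uniformly bounded by $\int\|\xi\|^{|\gamma|}d\mu(\xi)$ for every $|\gamma|\leq 2q$. This uniform bound on the derivatives of $f$ is the main tool behind the sufficiency half.

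For the ``if'' direction, I would assume $\int w^{2q}d\lambda<\infty$. The chain rule gives $\partial^{\alpha}_{x}\partial^{\beta}_{y}f(w(x-y))=(-1)^{|\beta|}w^{|\alpha|+|\beta|}(\partial^{\alpha+\beta}f)(w(x-y))$, which the previous paragraph dominates uniformly in $x,y$ by $C w^{|\alpha|+|\beta|}\leq C(1+w^{2q})$; since $\lambda$ is finite with finite $2q$-th moment, this dominating function is $\lambda$-integrable. Iterating the dominated convergence theorem, one coordinate at a time, produces the claimed derivative formula for $p$ and shows $p\in C^{q,q}(\mathbb{R}^m\times\mathbb{R}^m)$. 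Evaluating at $x=y$ and pulling the constant $(-1)^{|\beta|}(\partial^{\alpha+\beta}f)(0)$ out of the integral then yields the diagonal formula.

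For the ``only if'' direction I reduce to one variable. Since $f$ is nonconstant I may choose $e\in\mathbb{R}^m$ with $g(t):=f(te)$ nonconstant, and set $h(s,t):=p(se,-te)=\int g((s+t)w)\,d\lambda(w)=\phi(s+t)$ where $\phi(u):=\int g(uw)\,d\lambda(w)$. From $p\in C^{q,q}$, differentiating $h$ successively $q$ times in $s$ and $q$ times in $t$ forces $\phi\in C^{2q}(\mathbb{R})$. The function $\phi$ is itself positive definite on $\mathbb{R}$ as a nonnegative combination of the one-variable positive definite functions $u\mapsto g(uw)$, so Bochner provides a spectral measure $\nu$ for $\phi$; Fubini identifies $\nu$ as the pushforward of $\mu_{g}\otimes\lambda$ under $(\xi,w)\mapsto w\xi$, where $\mu_{g}$ is the Bochner measure of $g$. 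Applying \cite{gneiting2} to $\phi$ gives $\int_{\mathbb{R}}\eta^{2q}d\nu(\eta)<\infty$, and the product structure yields the factorisation $\int\eta^{2q}d\nu=\int\xi^{2q}d\mu_{g}(\xi)\cdot\int w^{2q}d\lambda(w)$.

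The only subtle point, and the place where the nonconstancy of $f$ really does the work, is to ensure that the factor $\int\xi^{2q}d\mu_{g}(\xi)$ is both finite and strictly positive so that it can be divided out. Finiteness is the $C^{2q}$-regularity of $g$ (inherited from that of $f$); strict positivity is precisely the careful choice of $e$, since $\mu_{g}$ supported at $\{0\}$ would force $g$ to be constant, contradicting the selection. Dividing then produces $\int w^{2q}d\lambda<\infty$ and completes the equivalence, while the derivative formula is already delivered by the sufficiency half.
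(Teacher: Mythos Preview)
Your argument is correct. The sufficiency direction is essentially the paper's: both dominate $\partial_x^{\alpha}\partial_y^{\beta}f(w(x-y))$ by a multiple of $1+w^{2q}$ and differentiate under the integral (the paper phrases the bound via the positive definiteness of the derivative matrix, you via the Bochner representation of $f$, but the mechanism is the same).

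The necessity direction, however, is genuinely different from the paper's. The paper works directly in $\mathbb{R}^m$: it forms the second difference
\[
\frac{p(x+he_j,x+he_j)-p(x,x+he_j)-p(x+he_j,x)+p(x,x)}{h^2}
=\int_{[0,\infty)}\frac{2f(0)-f(hwe_j)-f(-hwe_j)}{h^2}\,d\lambda(w),
\]
observes that the integrand is nonnegative (positive definiteness of $f$) and converges pointwise to $-w^{2}\partial^{2e_j}f(0)$, and applies Fatou's lemma to obtain $\int w^{2}\,d\lambda<\infty$; the nonconstancy of $f$ is used to guarantee that some $-\partial^{2e_j}f(0)>0$. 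An induction on $q$ then finishes. Your route instead restricts to a line on which $f$ is nonconstant, recognises $p(se,-te)=\phi(s+t)$ with $\phi$ a one-dimensional positive definite function, reads off $\phi\in C^{2q}$ from $p\in C^{q,q}$, and exploits that the Bochner measure of $\phi$ is the pushforward of $\mu_g\otimes\lambda$ under $(\xi,w)\mapsto w\xi$, so its $2q$-th moment factorises as $\bigl(\int\xi^{2q}d\mu_g\bigr)\bigl(\int w^{2q}d\lambda\bigr)$. Your approach avoids induction and obtains the full moment bound in one step, at the price of invoking the spectral characterisation of $C^{2q}$ positive definite functions and identifying a pushforward measure; the paper's argument is more elementary (only Fatou) but has to climb from $w^{2}$ to $w^{2q}$ inductively. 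Both handle the role of nonconstancy in the same spirit: you need $\int\xi^{2q}d\mu_g>0$, the paper needs some $-\partial^{2e_j}f(0)>0$.
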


	\subsection{\textbf{Characterization of differentiable universality}} \label{Sufficient conditions for operator valued positive definite differentiable universal (and related) kernels}
	
	The analysis of $C^{q}$-universal  and $C^{q}_{0}$-universal kernels follows a similar path as the universality on $C(U, \mathcal{H})$ and $C_{0}(U,\mathcal{H})$. The linear operator
	$$
	F\in  C^{q}(\mathcal{A}, \mathcal{H}) \to \partial F := ( \partial^{\alpha}F )_{|\alpha| \leq q}  \in   \prod_{|\alpha| \leq q}  C(\mathcal{A}, \mathcal{H})
	$$
	is a continuous isometry (on the product space we are defining the sum norm). If $T : C^{q}(\mathcal{A}, \mathcal{H}) \to \mathbb{C}$ is a continuous operator, by a very famous consequence of the Hanh-Banach Theorem there exists a continuous linear operator  $J : \prod_{|\alpha| \leq q}  C(\mathcal{A}, \mathcal{H}) \to \mathbb{C}$ such that $J(\partial (F)) = T(F)$ for all $F \in C^{q}(\mathcal{A}, \mathcal{H}))$. 
	
	But $J( (\phi_{\alpha})_{|\alpha| \leq q} ) =  \sum_{|\alpha| \leq q} J(\phi_{\alpha}) $, where  we made the abuse of notation $(\phi_{\alpha})_{\beta}=: \delta_{\alpha, \beta} \phi_{\alpha}$. In other words,  $\phi_{\alpha} \in  \prod_{|\alpha| \leq q} C(\mathcal{A}, \mathcal{H})$, but except for the coordinate $\alpha$ it is the zero function on $C(\mathcal{A}, \mathcal{H})$. By the choice of the norms we are working, $J(\phi_{\alpha})$ is a continuous linear functional defined on $C(\mathcal{A}, \mathcal{H})$, by Theorem \ref{Dinculeanu-Singer}, there exists a measure $\eta_{\alpha} \in \mathfrak{M}(\mathcal{A}, \mathcal{H})$ for which
	$$  
	J(\phi_{\alpha})= \int_{\mathcal{A}} \langle \phi_{\alpha}(x), d\eta_{\alpha}(x) \rangle. 
	$$
	In particular, we obtain that
	$$
	T(F)= J(\partial(F))= J((\partial^{\alpha}F)_{|\alpha| \leq q})= \sum_{|\alpha| \leq q} J(\partial^{\alpha} F)= \sum_{|\alpha| \leq q} \int_{\mathcal{A}} \langle \partial^{\alpha}F(x), d\eta_{\alpha}(x) \rangle.
	$$
	And conversely, if $\eta_{\alpha} \in \mathfrak{M}(\mathcal{A}, \mathcal{H}) $, $\alpha \in \mathbb{Z}_{+}^{m}$ and $|\alpha| \leq q$, the linear operator
	$$
	T(F)= \sum_{|\alpha| \leq q} \langle \partial^{\alpha}F(x), d\eta_{\alpha}(x) \rangle.
	$$
	is continuous. A similar reasoning can be done for the continuous linear functionals in $C_{0}(U, \mathcal{H})$. This proves the following Corollary of Theorem \ref{Dinculeanu-Singer}.

	\begin{cor}\label{Dinculeanu-Singer-diff}  Let $\mathcal{A} \subset U$ be a compact set for which $ \overline{Int(\mathcal{A})}= \mathcal{A}$ ($U \subset \mathbb{R}^{m}$ an open set) and $L : C^{q}(\mathcal{A}, \mathcal{H}) \to \mathbb{C} $ ($L : C_{0}^{q}(U, \mathcal{H}) \to \mathbb{C}$) be a continuous linear functional. Then for each $\alpha \in \mathbb{Z}_{+}^{m}$ and $|\alpha| \leq q$ there exists a Radon measure of bounded variation $\eta_{\alpha} \in \mathfrak{M}(\mathcal{A}, \mathcal{H})$ ($\mathfrak{M}(U, \mathcal{H})$)  for which
		$$
		L(F)= \sum_{|\alpha| \leq q} \int_{\mathcal{A}} \langle \partial^{\alpha}F(x), d\eta_{\alpha}(x) \rangle, \quad L(F)=\sum_{|\alpha| \leq q} \int_{U} \langle \partial^{\alpha}F(x), d\eta_{\alpha}(x) \rangle.
		$$	
	\end{cor}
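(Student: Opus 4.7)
The plan is to reduce the statement to the vector-valued Riesz–Dinculeanu–Singer theorem (Theorem \ref{Dinculeanu-Singer}) through a Hahn–Banach extension argument, exactly as sketched informally in the paragraph preceding the corollary. The structural input is that the derivative map
\[
\partial : F \in C^{q}(\mathcal{A}, \mathcal{H}) \;\longmapsto\; (\partial^{\alpha} F)_{|\alpha|\leq q} \;\in\; \prod_{|\alpha|\leq q} C(\mathcal{A}, \mathcal{H})
\]
embeds $C^{q}(\mathcal{A}, \mathcal{H})$ isometrically into the product space when the latter is given the sum norm; this is immediate from the very definition of $\|\cdot\|_{C^{q}(\mathcal{A}, \mathcal{H})}$ recalled at the start of the subsection.

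First I would invoke Hahn–Banach: the functional $L \circ \partial^{-1}$, well defined and continuous on the closed subspace $\partial(C^{q}(\mathcal{A}, \mathcal{H}))$, extends to a continuous linear functional $J$ on the whole product space with $J \circ \partial = L$. Second, I would exploit the finite product structure: any $(\phi_{\alpha})_{|\alpha|\leq q}$ decomposes uniquely as a finite sum of elements supported in a single coordinate, hence by linearity and continuity of $J$ one has $J((\phi_{\alpha})_{|\alpha|\leq q}) = \sum_{|\alpha|\leq q} J_{\alpha}(\phi_{\alpha})$, where each $J_{\alpha} : C(\mathcal{A}, \mathcal{H}) \to \mathbb{C}$ is continuous because the inclusion of the $\alpha$-th coordinate into the product is an isometry.

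Third, applying Theorem \ref{Dinculeanu-Singer} to each $J_{\alpha}$ yields a Radon measure $\eta_{\alpha} \in \mathfrak{M}(\mathcal{A}, \mathcal{H})$ representing it, and substituting back into $L(F) = J(\partial F)$ gives the desired formula
\[
L(F) = \sum_{|\alpha|\leq q} \int_{\mathcal{A}} \langle \partial^{\alpha} F(x), d\eta_{\alpha}(x)\rangle.
\]
The $C_{0}^{q}(U, \mathcal{H})$ case proceeds identically, replacing the target product space by $\prod_{|\alpha|\leq q} C_{0}(U, \mathcal{H})$ and using the $C_{0}$-version of Theorem \ref{Dinculeanu-Singer} to produce measures in $\mathfrak{M}(U, \mathcal{H})$.

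There is no substantive obstacle here; the only subtlety worth flagging is that the measures $\eta_{\alpha}$ are \emph{not} unique, since the Hahn–Banach extension $J$ is in general not unique. The corollary only asserts existence, which is all that is needed for the applications to $C^{q}$- and $C_{0}^{q}$-universality in the next subsection, so this causes no difficulty.
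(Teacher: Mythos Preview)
Your proposal is correct and follows exactly the paper's own argument, which is laid out in the paragraph immediately preceding the corollary: the isometric embedding $\partial$, the Hahn--Banach extension, the coordinatewise splitting of the extended functional, and the application of Theorem~\ref{Dinculeanu-Singer} to each coordinate. Your closing remark on non-uniqueness of the $\eta_{\alpha}$ is also in line with the paper's observation right after the corollary that $\mathfrak{M}^{q}$ is not isometrically the dual of $C^{q}$.
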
  
	In order to simplify the notation, we say that $\eta\in \mathfrak{M}^{q}(U, \mathcal{H})$ if $\eta= (\eta_{\alpha})_{|\alpha| \leq q}$ and each $\eta_{\alpha} \in \mathfrak{M}(U, \mathcal{H})$ (similar for a compact set $\mathcal{A}$). For an element $\eta \in \mathfrak{M}^{q}(U, \mathcal{H})$ ($\eta \in \mathfrak{M}^{q}(\mathcal{A}, \mathcal{H})$) and a function $F \in C^{q}_{0}(U, \mathcal{H})$ ($F \in C^{q}(\mathcal{A}, \mathcal{H})$) we use the following symbology two simplify the writing
	$$
	\int \langle \partial F(x), d\eta(x)\rangle :=\sum_{|\alpha| \leq q} \int \langle \partial F^{\alpha}(x), d\eta_{\alpha}(x)\rangle.
	$$
	
	We remark that Corollary \ref{Dinculeanu-Singer-diff} does not mean that  $\mathfrak{M}^{q}(U, \mathcal{H})$($\mathfrak{M}^{q}(\mathcal{A}, \mathcal{H})$) is the actual dual space of $C_{0}^{q}(U, \mathcal{H})$($C^{q}(\mathcal{A}, \mathcal{H})$), on an isometric sense. This can be seen by the fact that $\{ \partial F, F \in  C_{0}^{q}(U, \mathcal{H})\}$ is not dense in $\prod_{|\alpha|\leq q}C_{0}(U, \mathcal{H})$ (similar for a compact set $\mathcal{A}$).%
	
	As a consequence, when we prove a result regarding $C^{q}$-universality or $C_{0}^{q}$-universality, an additional step, compared to Theorem $11$ of \cite{Caponnetto2008}, is necessary. The next Lemma is a differentiable version of  Theorem $11$ of \cite{Caponnetto2008}. 
	
	\begin{lem}\label{addstepunidiff} Let $K: U \times U \to \mathcal{L}(\mathcal{H})$ be a positive definite kernel such that  $\mathcal{H}_{K}\subset C_{0}^{q}(U, \mathcal{H})$ ($\mathcal{H}_{K}\subset C^{q}(U, \mathcal{H}) $). The kernel $K$ is $C_{0}^{q}$-universal ($C^{q}$-universal) if and only if a measure $\eta \in \mathfrak{M}^{q}(U, \mathcal{H})$ (for every compact set $\mathcal{A}$ for which $\mathcal{A}= \overline{Int(\mathcal{A})}$ and $\eta \in \mathfrak{M}^{q}(\mathcal{A}, \mathcal{H})$) such that 
		$$ 
		\int\langle \partial_{1}(K(x,y)v),d\eta(x) \rangle =0 	
		$$	 
		for all $y \in U$ and $v \in \mathcal{H}$ satisfies
		$$
		\int \langle \partial F,d\eta(x) \rangle =0 	
		$$
		for all $F \in C_{0}^{q}(U, \mathcal{H})$ (for all $F \in C^{q}(\mathcal{A}$)).\end{lem}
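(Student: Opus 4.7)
The plan is to combine Hahn--Banach with the representation in Corollary \ref{Dinculeanu-Singer-diff}, reducing the universality condition to a statement about measures $\eta \in \mathfrak{M}^{q}(U, \mathcal{H})$. For the $C_{0}^{q}$ case, density of $\mathcal{H}_{K}$ in $C_{0}^{q}(U, \mathcal{H})$ is equivalent, by Hahn--Banach, to the statement that every continuous linear functional $T$ on $C_{0}^{q}(U, \mathcal{H})$ that vanishes on $\mathcal{H}_{K}$ must be identically zero. By Corollary \ref{Dinculeanu-Singer-diff}, every such $T$ takes the form $T_{\eta}(F) = \int \langle \partial F, d\eta\rangle$ for some $\eta \in \mathfrak{M}^{q}(U, \mathcal{H})$; conversely, each such $T_{\eta}$ is automatically continuous on $C_{0}^{q}(U, \mathcal{H})$, since it is bounded by the sum of the total variations of its components times $\|F\|_{C_{0}^{q}(U, \mathcal{H})}$.

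The main step is identifying the condition $T_{\eta}|_{\mathcal{H}_{K}} = 0$ with the pointwise condition appearing in the statement. Applying $T_{\eta}$ to a kernel section $F = K_{y}v \in \mathcal{H}_{K}$ and using Proposition \ref{diffimersion0} to identify $\partial^{\alpha} F(x) = \partial_{1}^{\alpha}[K(x,y)v]$ gives
\[
T_{\eta}(K_{y}v) = \int \langle \partial_{1}(K(x,y)v), d\eta(x)\rangle,
\]
so the pointwise condition says precisely that $T_{\eta}$ vanishes on the generating family $\{K_{y}v : y \in U, v \in \mathcal{H}\}$. Since the linear span of this family is dense in $\mathcal{H}_{K}$ by construction of the RKHS, and since Proposition \ref{diffimersion0} (iii) asserts that the inclusion $\mathcal{H}_{K} \hookrightarrow C_{0}^{q}(U, \mathcal{H})$ is continuous, the restriction $T_{\eta}|_{\mathcal{H}_{K}}$ is continuous in the RKHS norm, whence it vanishes on the generators if and only if it vanishes on all of $\mathcal{H}_{K}$. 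Chaining these equivalences yields the statement.

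The $C^{q}$ case is entirely analogous, applied on each compact $\mathcal{A} \subset U$ with $\overline{\operatorname{Int}(\mathcal{A})} = \mathcal{A}$; since such sets exhaust $U$, $C^{q}$-universality is equivalent to density of the restrictions $\mathcal{H}_{K}|_{\mathcal{A}}$ in $C^{q}(\mathcal{A}, \mathcal{H})$ for every such $\mathcal{A}$, and one invokes Proposition \ref{diffimersion} together with the $\mathcal{A}$-version of Corollary \ref{Dinculeanu-Singer-diff}. The only genuine subtlety, which I expect to be the main obstacle, is that the correspondence $T \leftrightarrow \eta$ from the Dinculeanu--Singer representation is not injective in the differentiable setting (as emphasized immediately after Corollary \ref{Dinculeanu-Singer-diff}); this is handled by phrasing the criterion directly in terms of $\eta$, since both directions of the argument require only the existence of some representing $\eta$, not its uniqueness.
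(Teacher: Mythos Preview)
Your proposal is correct and follows essentially the same approach as the paper: both arguments use the Hahn--Banach duality criterion for density, represent the annihilating functionals via Corollary \ref{Dinculeanu-Singer-diff}, and reduce from $\mathcal{H}_{K}$ to the generating family $\{K_{y}v\}$ using the continuity of the inclusion (Propositions \ref{diffimersion}/\ref{diffimersion0}). The paper treats the $C^{q}$ case explicitly while you treat the $C_{0}^{q}$ case, but the logic is identical; your closing remark on the non-injectivity of $\eta \mapsto T_{\eta}$ is a helpful clarification that the paper leaves implicit.
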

	
	The next Theorem is a differentiable version of  Theorem \ref{oneintegraltodoubleintegral}

	\begin{thm}\label{oneintegraltodoubleintegralcq} Let $K: U \times U \to \mathcal{L}(\mathcal{H})$ be a positive definite kernel such that  $\mathcal{H}_{K}\subset C_{0}^{q}(U, \mathcal{H})$ ($\mathcal{H}_{K}\subset C^{q}(U, \mathcal{H}) $). Then
		\begin{enumerate}
			\item [(i)]For every $\eta=(\eta_{\alpha})_{|\alpha|\leq q} \in \mathfrak{M}^{q}(U, \mathcal{H})$ (for any compact $\mathcal{A}\subset U$ with $\overline{Int(\mathcal{A})}=\mathcal{A}$ and $ \eta \in \mathfrak{M}^{q}(\mathcal{A}, \mathcal{H})$) the function $K_{\eta} : U \to \mathcal{H}$, defined as 
			$$
			y \in U   \to K_{\eta}(y):=\sum_{|\alpha|\leq q}  \int  \partial_{1}^{\alpha}K(x,y) d\eta_{\alpha}(x)  \in \mathcal{H}
			$$
			is an element of $\mathcal{H}_{K}$.
			\item[(ii)] For every $|\beta| \leq q$ it holds that 
			$$
			\partial^{\beta}K_{\eta}(y)= \sum_{|\alpha|\leq q}\int \partial_{1}^{\alpha}[\partial_{2}^{\beta}K(x,y)] d\eta_{\alpha}(x) 
			$$
			\item [(iii)] The following equality holds
			$$\langle K_{\eta}, K_{\eta}\rangle_{\mathcal{H}_{K}}=
			\sum_{|\alpha|,|\beta| \leq q}\int \langle \int \partial_{1}^{\alpha}[\partial_{2}^{\beta}K(x,y)] d\eta_{\alpha}(x), d\eta_{\beta}(y) \rangle , 
			$$
			and if $d\eta_{\alpha}= H_{\alpha}d|\eta|$ is a Radon-Nikod\'ym decomposition for all measures $\eta_{\alpha}$, then
			$$\langle K_{\eta}, K_{\eta}\rangle_{\mathcal{H}_{K}}=
			\sum_{|\alpha|,|\beta| \leq q} \int   \int \langle \partial_{1}^{\alpha}[\partial_{2}^{\beta}K(x,y)] H_{\beta}(y), H_{\alpha}(x)\rangle_{\mathcal{H}}  d|\eta|(x) d|\eta|(y).  
			$$
			
			\item [(iv)] The kernel is $C_{0}^{q}$-universal ($C^{q}$-universal) if and only if $\langle K_{\eta}, K_{\eta} \rangle > 0 $ for all $\eta \in \mathfrak{M}^{q}(U, \mathcal{H})$ ($ \eta \in \mathfrak{M}^{q}(\mathcal{A}, \mathcal{H})$) such that the continuous  linear functional in $C^{q}_{0}(U, \mathcal{H})$ ($C^{q}(\mathcal{A}, \mathcal{H})$)    
			$$
			\int\langle \partial F(x), d\eta(x) \rangle= \sum_{|\alpha| \leq q} \int\langle \partial^{\alpha} F(x), d\eta_{\alpha}(x) \rangle
			$$	
			is nonzero.	
		\end{enumerate}	
	\end{thm}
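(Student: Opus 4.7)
The argument runs parallel to Theorem \ref{oneintegraltodoubleintegral}, with Proposition \ref{diffimersion0} (resp.\ Proposition \ref{diffimersion}) replacing Proposition \ref{rkhscontained} and Lemma \ref{addstepunidiff} replacing the functional-analytic step from \cite{Caponnetto2008}. The unifying idea is to view each multi-index component $\eta_\alpha$ as a continuous linear functional on $\mathcal{H}_K$ via Riesz representation. Concretely,
\[
L_{\eta_\alpha}(F) := \int \langle \partial^\alpha F(x), d\eta_\alpha(x)\rangle
\]
is continuous on $C_0^q(U,\mathcal{H})$ (or $C^q(\mathcal{A},\mathcal{H})$), and composing with the continuous inclusion $\mathcal{H}_K \hookrightarrow C_0^q(U,\mathcal{H})$ supplied by Proposition \ref{diffimersion0}(iii) (resp.\ Proposition \ref{diffimersion}(iii)) yields a continuous linear functional on $\mathcal{H}_K$. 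Let $G_{\eta_\alpha}\in\mathcal{H}_K$ be its Riesz representer and set $K_\eta := \sum_{|\alpha|\leq q} G_{\eta_\alpha} \in \mathcal{H}_K$, which establishes the membership claim in (i).

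To match $K_\eta(y)$ with the weak integral in the statement, I would substitute $F=K_y v$ into the reproducing identity $\langle K_y v, F\rangle_{\mathcal{H}_K} = \langle v, F(y)\rangle_{\mathcal{H}}$ from the RKHS construction, which gives
\[
\langle v, K_\eta(y)\rangle_{\mathcal{H}} = \sum_\alpha L_{\eta_\alpha}(K_y v) = \sum_\alpha \int \langle \partial_1^\alpha K(x,y)v, d\eta_\alpha(x)\rangle;
\]
by the convention for weak vector integrals from the Appendix this is precisely the stated formula for $K_\eta(y)$. The same computation with $\partial_y^\beta K_y v \in \mathcal{H}_K$ in place of $K_y v$ (legitimate by Proposition \ref{diffimersion}(i)) combined with the derivative reproducing identity $\langle v, \partial^\beta F(y)\rangle_{\mathcal{H}} = \langle \partial_y^\beta K_y v, F\rangle_{\mathcal{H}_K}$ from Proposition \ref{diffimersion}(ii) yields the formula in (ii).

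For (iii), by construction of $K_\eta$ one has $\|K_\eta\|_{\mathcal{H}_K}^2 = L_\eta(K_\eta) = \sum_\beta \int \langle \partial^\beta K_\eta(y), d\eta_\beta(y)\rangle$, and substituting the formula from (ii) produces the first displayed double integral. For the Radon--Nikod\'ym form, pick a common scalar dominating measure $|\eta|$ (e.g.\ $\sum_\gamma |\eta_\gamma|$) and write $d\eta_\gamma = H_\gamma d|\eta|$. Unpacking the weak vector integrals as Bochner integrals then involves the adjoint relation $[\partial_1^\alpha\partial_2^\beta K(x,y)]^* = \partial_1^\beta\partial_2^\alpha K(y,x)$ coming from the Hermitian property $K(y,x) = K(x,y)^*$, and after a Fubini-style rearrangement and the dummy-index swap $(\alpha,x) \leftrightarrow (\beta,y)$ one recovers the stated ordering $\langle \partial_1^\alpha\partial_2^\beta K(x,y) H_\beta(y), H_\alpha(x)\rangle$. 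Part (iv) is then immediate: by (i) and the reproducing property, $K_\eta = 0$ in $\mathcal{H}_K$ is equivalent to $\int \langle \partial_1(K(x,y)v), d\eta(x)\rangle = 0$ for all $y, v$, and Lemma \ref{addstepunidiff} asserts that $K$ is $C_0^q$-universal (resp.\ $C^q$-universal) iff this vanishing forces $L_\eta \equiv 0$, which is the contrapositive of ``$L_\eta \not\equiv 0 \Rightarrow \|K_\eta\|_{\mathcal{H}_K}^2 > 0$''.

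The main obstacle will be the Radon--Nikod\'ym bookkeeping in (iii): threading the adjoints of the operator-valued derivatives through the weak vector integration and keeping the conjugation conventions consistent with the scalar case already discussed after Theorem \ref{oneintegraltodoubleintegral}. The remaining parts are derivative-aware adaptations of the argument used for Theorem \ref{oneintegraltodoubleintegral}, with Proposition \ref{diffimersion} supplying the key reproducing identities and Lemma \ref{addstepunidiff} replacing the basic dual-space step at the end.
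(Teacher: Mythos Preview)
Your proposal is correct and follows essentially the same route as the paper: Riesz representation of the functional $L_\eta$ on $\mathcal{H}_K$ (via the continuous inclusion from Proposition~\ref{diffimersion}/\ref{diffimersion0}) to produce $K_\eta$, the derivative reproducing identity for (ii), the computation $\langle K_\eta,K_\eta\rangle_{\mathcal{H}_K}=L_\eta(K_\eta)$ followed by the Radon--Nikod\'ym unpacking for (iii), and Lemma~\ref{addstepunidiff} for (iv). The only cosmetic difference is that you build $K_\eta$ as a sum of per-$\alpha$ Riesz representers while the paper takes the representer of the summed functional directly, which is of course equivalent.
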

	
	Similar to the $C_{0}$-universal case, to verify if $\mathcal{H}_{K} \subset C_{0}(U, \mathcal{H})$ might be very complicated, but the analysis of the double sum  of the double integrals at relation $(iii)$ in Theorem \ref{oneintegraltodoubleintegralcq} can be easier. Then, if a positive definite kernel $K: U\times U \to \mathcal{L}(\mathcal{H}) \in C^{q,q}(U\times U, \mathcal{L}(\mathcal{H}))$   is such that the functions  
	$$
	x \in U \to \|\partial^{\alpha}_{1} \partial^{\alpha}_{2}K(x,x)\|_{\mathcal{L}(\mathcal{H})} \in \mathbb{C}
	$$
	are bounded for every $|\alpha|\leq q$, we say that $K$ is \textbf{$C^{q}$-integrally strictly positive definite} if a measure $\eta \in \mathfrak{M}^{q}(U, \mathcal{H})$ that satisfies
	$$
	\sum_{|\alpha|,|\beta| \leq q}\int \langle \int \partial_{1}^{\alpha}[\partial_{2}^{\beta}K(x,y)] d\eta_{\alpha}(x), d\eta_{\beta}(y) \rangle=0
	$$
	the continuous  linear functional in $C^{q}_{0}(U, \mathcal{H})$ 
	$$
	F \to \int\langle \partial F(x), d\eta(x) \rangle= \sum_{|\alpha| \leq q} \int\langle \partial^{\alpha} F(x), d\eta_{\alpha}(x) \rangle \in \mathbb{C}
	$$	
	is zero.	
	

	Now we specialize Theorem \ref{universalityintegralkernel}  to the differentiable setting.

	\begin{thm}\label{Cpuniversalityintegralkernel} Let $U \subset \mathbb{R}^{m}$ be an open set, $\Omega$  a locally compact Hausdorff space,  $p : \Omega \times U \times U \to \mathbb{C} \in C^{0, q, q}$ such that the kernel $$
		(x,y) \in U\times U \to p_{\omega}(x,y):=p(\omega, x,y), 
		$$  is positive definite for all $w \in \Omega$. Given a Radon nonnegative finite operator $\mathcal{L}(\mathcal{H})$ valued measure $\Lambda: \mathscr{B}(\Omega) \to \mathcal{L}(\mathcal{H})$  such that $\Lambda$ admits a  Radon-Nikod\'ym decomposition as $d\Lambda= Gd\lambda$, assume that $p$ is $C^{q}$-Dominated with respect to $\|G\|_{\mathcal{L}(\mathcal{H})}d\lambda= d|\Lambda|$. Consider the operator valued kernel
		$$	
		P : U\times U \to \mathcal{L}(\mathcal{H}), \quad P(x,y)= \int_{\Omega}p_{w}(x,y) d\Lambda(w),	
		$$
		for every $v \in \mathcal{H}\setminus{\{0\}}$ the scalar valued kernels  and the scalar valued nonnegative finite Radon measures
		$$
		P_{v} : U\times U \to \mathbb{C}, \quad P_{v}(x,y)= \langle P(x,y)v, v \rangle_{\mathcal{H}} 
		$$ 
		$$
		\Lambda_{v}: \mathscr{B}(\Omega) \to \mathbb{C}, \quad    
		\Lambda_{v}(A)= \langle \Lambda(A)v, v \rangle_{\mathcal{H}}.  	
		$$
		Then:
		\begin{enumerate}
			\item[(i)] The function $P : U \times U \to \mathcal{L}(\mathcal{H})$  is well defined and belongs to $C^{q,q}(U \times U, \mathcal{L}(\mathcal{H}))$. In particular, $\mathcal{H}_{P} \subset C^{q}(U, \mathcal{H})$.
			\item[(iii)] If the kernel $p_{\omega}(x,y)$ is $C^{q}$-universal for every $w \in \Omega$, then the operator valued kernel $P$ is $C^{q}$-universal universal if and only if   all the scalar valued kernels $P_{v}$ are $C^{q}$-universal.
			\item[(iii)] If the kernel $p_{\omega}(x,y)$ is $C^{q}$-integrally strictly positive definite for every $w \in \Omega$, then the operator valued kernel $P$ is $C^{q}$ integrally strictly positive definite if and only if all the scalar valued kernels $P_{v}$ are $C^{q}$ integrally strictly positive definite.
		\end{enumerate}
		Additionally, the equivalences in $(ii)$ and $(iii)$ are also equivalent at all scalar valued measures $\Lambda_{v}$ being nonzero.
	\end{thm}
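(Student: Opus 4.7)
My plan factors the proof into three ingredients: (a) the differentiation formula for $P$, (b) a spectral decomposition of the RKHS quadratic form from Theorem \ref{oneintegraltodoubleintegralcq} into non-negative scalar pieces indexed by $(w,k)$, and (c) a density-of-tensor-products argument. Part (i) is almost immediate: the $C^q$-domination hypothesis is exactly the assumption of Lemma \ref{diffkernelfourtrans}, which gives $\partial_1^\alpha\partial_2^\beta P(x,y)=\int_\Omega \partial_1^\alpha\partial_2^\beta p_w(x,y)\,d\Lambda(w)$ in operator norm for every $|\alpha|,|\beta|\le q$, so $P\in C^{q,q}(U\times U,\mathcal{L}(\mathcal H))$; Proposition \ref{diffimersion} then gives $\mathcal H_P\subset C^q(U,\mathcal H)$.

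The core identity is obtained by substituting this formula into Theorem \ref{oneintegraltodoubleintegralcq}\,(iii) and swapping the $\Omega$-integral to the outside via Fubini (legitimate by the $C^q$-domination). For $\eta=(\eta_\alpha)_{|\alpha|\le q}\in\mathfrak{M}^q(\mathcal A,\mathcal H)$ with a common dominating scalar measure $|\eta|$ and densities $H_\alpha$,
\[
\langle K_\eta,K_\eta\rangle_{\mathcal{H}_P}=\int_\Omega\sum_{|\alpha|,|\beta|\le q}\iint \partial_1^\alpha\partial_2^\beta p_w(x,y)\,\langle G(w)H_\beta(y),H_\alpha(x)\rangle\,d|\eta|(x)\,d|\eta|(y)\,d\lambda(w).
\]
Since $G(w)\ge 0$ for $\lambda$-a.e.\ $w$, factoring $G(w)=G(w)^{1/2}G(w)^{1/2}$ and expanding the pairing in an orthonormal basis $\{e_k\}$ of $\mathcal H$ exhibits the inner integrand as $\sum_k\|k^{p_w}_{\mu^{w,k}}\|^2_{\mathcal{H}_{p_w}}$, where $\mu^{w,k}$ is the tuple of scalar complex measures on $U$ with density (up to conjugation) $\langle G(w)^{1/2}H_\alpha(\cdot),e_k\rangle$ relative to $|\eta|$. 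In particular the integrand is non-negative, and $\langle K_\eta,K_\eta\rangle_{\mathcal{H}_P}=0$ forces $\|k^{p_w}_{\mu^{w,k}}\|^2_{\mathcal{H}_{p_w}}=0$ for $\lambda$-a.e.\ $w$ and every $k$.

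For the nontrivial direction of (ii), assume every $P_v$ is $C^q$-universal and take $\eta$ with $\langle K_\eta,K_\eta\rangle_{\mathcal{H}_P}=0$. Scalar $C^q$-universality of $p_w$ via Theorem \ref{oneintegraltodoubleintegralcq}\,(iv) makes each $\mu^{w,k}$-functional vanish on $C^q(\mathcal A,\mathbb C)$. Reassembling the $e_k$-components into an $\mathcal H$-valued statement translates this to: the target functional $L(F)=\sum_\alpha\int\langle\partial^\alpha F,H_\alpha\rangle\,d|\eta|$ satisfies $L(g\cdot G(w)^{1/2}a)=0$ for every $g\in C^q(\mathcal A,\mathbb C)$, $a\in\mathcal H$, and $\lambda$-a.e.\ $w$. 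A short scalar analogue of the present theorem (proved by the same decomposition, which is almost trivial once $\mathcal H=\mathbb C$) yields that $P_v$ is $C^q$-universal iff $\Lambda_v\ne 0$; the hypothesis therefore gives $\Lambda_v\ne 0$ for every $v\ne 0$, equivalent to the closed linear span of $\{G(w)^{1/2}a:w\in\Omega,\,a\in\mathcal H\}$ being dense in $\mathcal H$. Since $v\mapsto L(g\cdot v)$ is continuous linear on $\mathcal H$, this density forces $L(g\cdot v)=0$ for all $v\in\mathcal H$, and the density of $C^q(\mathcal A,\mathbb C)\otimes\mathcal H$ in $C^q(\mathcal A,\mathcal H)$ combined with continuity of $L$ yields $L\equiv 0$, so $P$ is $C^q$-universal. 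Part (iii) follows by the identical template, replacing $C^q$-universality of scalar kernels by $C^q$-ISPD throughout and $\mathcal A$ by $U$ for the $C^q_0$-analogue.

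The main obstacle is the density of $C^q(\mathcal A,\mathbb C)\otimes\mathcal H$ in $C^q(\mathcal A,\mathcal H)$ in the $C^q$-norm (and the $C^q_0$-analogue on open $U$): one must approximate a vector-valued function and all its derivatives simultaneously by finite-rank tensors, which I would handle via orthonormal projections onto $\mathrm{span}\{e_1,\dots,e_n\}$ combined with the compactness of $\overline{\partial^\alpha F(\mathcal A)}$ in $\mathcal H$ for each $|\alpha|\le q$, upgrading pointwise convergence to uniform convergence of each derivative. A secondary bookkeeping point is that the $\lambda$-null exceptional sets initially depend on $g$, $k$, and $a$, but separability of $\mathcal H$ together with the separability of $C^q(\mathcal A,\mathbb C)$ allows the union of such null sets (over countable dense families) to remain $\lambda$-null, after which continuity of $L$ extends the conclusion everywhere.
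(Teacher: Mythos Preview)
Your proposal is correct and follows the same overall architecture as the paper's proof: part (i) via Lemma \ref{diffkernelfourtrans} and Proposition \ref{diffimersion}; for (ii)--(iii), substitute the derivative formula into Theorem \ref{oneintegraltodoubleintegralcq}(iii), pull the $\Omega$-integral outside by Fubini (justified by $C^q$-domination), factor $G(w)=G(w)^{1/2}G(w)^{1/2}$, and expand in an orthonormal basis to reduce to scalar $C^q$-universality of each $p_w$.

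The only genuine difference is in the reassembly step after the scalar reduction. You argue via two density statements: first that $\overline{\mathrm{span}}\{G(w)^{1/2}a:w\notin N,\,a\in\mathcal H\}=\mathcal H$ (equivalent to every $\Lambda_v\neq 0$), forcing $L(g\cdot v)=0$ for every scalar $g\in C^q(\mathcal A)$ and $v\in\mathcal H$; then density of $C^q(\mathcal A)\otimes\mathcal H$ in $C^q(\mathcal A,\mathcal H)$. The paper instead takes the vanishing scalar functional, squares it to obtain $\sum_{\alpha,\beta}\iint \partial^\alpha f\,\overline{\partial^\beta f}\,\langle G(w)H_\beta,H_\alpha\rangle\,d|\eta|\,d|\eta|=0$ for $\lambda$-a.e.\ $w$, and then \emph{integrates this over $\Omega$} to land on $\langle \Lambda(\Omega)\xi_f,\xi_f\rangle=0$ with $\xi_f=\sum_\alpha\int\overline{\partial^\alpha f}\,d\eta_\alpha\in\mathcal H$. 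Positive definiteness of $\Lambda(\Omega)$ gives $\xi_f=0$ directly, and the paper finishes by the coordinate expansion of Lemma \ref{manipulation} (which is exactly your tensor-density step, written componentwise). The paper's ``integrate then use $\Lambda(\Omega)$'' trick sidesteps the null-set bookkeeping you flagged, so it is slightly cleaner; your route is more conceptually transparent about why the hypothesis $\Lambda_v\neq 0$ enters. Both are valid and equivalent in strength.
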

	
	\subsection{\textbf{Kernels on Euclidean spaces}\label{Kernels on Euclidean spaces}}

	In this subsection, we obtain several consequences of Theorem \ref{Cpuniversalityintegralkernel} for several trace class valued families of positive definite kernels on Euclidean spaces, with an emphasis on radial kernels. 
	
	First, we obtain a version of Corollary \ref{compcsuppor} to the differentiable setting.
	
	\begin{cor}\label{Cpuniversalityintegralkernelrestricmeasure} Let $f  \in C^{2q}(\mathbb{R}^{m})$ be a function for which the kernel
		$$	
		(x,y) \in \mathbb{R}^{m}\times \mathbb{R}^{m} \to k(x,y):=f(x-y) \in \mathbb{C}	
		$$
		is positive definite. Given a Radon nonnegative finite operator $\mathcal{L}(\mathcal{H})$ valued measure $\Lambda: \mathscr{B}((0,\infty)) \to \mathcal{L}(\mathcal{H})$  such that $\Lambda((0,\infty))$ is a trace class operator, consider the operator valued kernel
		$$	
		P : \mathbb{R}^{m}\times \mathbb{R}^{m} \to \mathcal{L}(\mathcal{H}), \quad P(x,y)= \int_{(0,\infty)}f(w(x-y)) d\Lambda(w),	
		$$
		and for every $v \in \mathcal{H}\setminus{\{0\}}$ the scalar valued kernels  and the scalar valued nonnegative finite Radon measures
		$$
		P_{v} : \mathbb{R}^{m}\times \mathbb{R}^{m} \to \mathbb{C}, \quad P_{v}(x,y)= \langle P(x,y)v, v \rangle_{\mathcal{H}} 
		$$ 
		$$
		\Lambda_{v}: \mathscr{B}((0,\infty)) \to \mathbb{C}, \quad    
		\Lambda_{v}(A)= \langle \Lambda(A)v, v \rangle_{\mathcal{H}}.  	
		$$
		If $(Tr P) \in C^{2q}(\mathbb{R}^{m})$ then
		\begin{enumerate}
			\item[(i)] The kernel $P \in C^{q,q}(\mathbb{R}^{m}\times \mathbb{R}^{m}, \mathcal{L}(\mathcal{H}))$ and $\mathcal{H}_{P} \subset C^{q}(\mathbb{R}^{m}, \mathcal{H})$.
			\item[(iii)] If the kernel $k$ is $C^{q}$-universal then the operator valued kernel $P$ is $C^{q}$-universal universal if and only if  all scalar valued kernels $P_{v}$ are $C^{q}$-universal. 
			\item[(iii)] If the kernel $k$ is $C^{q}$-integrally strictly positive definite, then the operator valued kernel $P$ is $C^{q}$-integrally strictly positive definite if and only if  all scalar valued  kernels $P_{v}$ are $C^{q}$-integrally strictly positive definite. 
		\end{enumerate}
		Additionally, the equivalences in $(ii)$ and $(iii)$ are also equivalent at all scalar valued measures $\Lambda_{v}$ being nonzero.
	\end{cor}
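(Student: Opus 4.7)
The plan is to recognize the corollary as a direct specialization of Theorem \ref{Cpuniversalityintegralkernel}, taking $\Omega = (0,\infty)$ and $p(w,x,y) := f(w(x-y))$. For every $w > 0$, the kernel $p_w(x,y) = k(wx,wy)$ is obtained from $k$ by a nondegenerate linear rescaling of coordinates, so $C^q$-universality (resp.\ $C^q$-integral strict positive definiteness) of $k$ transfers to each $p_w$. Granted the $C^q$-Dominated hypothesis of Theorem \ref{Cpuniversalityintegralkernel}, the conclusions (i)-(iii) together with the additional equivalence involving the $\Lambda_v$ then fall out at once; the entire proof therefore reduces to verifying $C^q$-Domination with respect to $|\Lambda|$.

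\textbf{Reduction to a moment condition.} A direct computation gives
$$
\partial_1^{\alpha}\partial_2^{\beta} p_w(x,y) = (-1)^{|\beta|}\, w^{|\alpha|+|\beta|}\, (\partial^{\alpha+\beta} f)(w(x-y)), \quad |\alpha|,|\beta|\leq q.
$$
By Proposition \ref{diffimersion}(iv) applied to $k$, the scalar matrix-valued kernel $[(-1)^{|\beta|}f^{(\alpha+\beta)}(x-y)]_{|\alpha|,|\beta|\leq q}$ is positive definite; Cauchy-Schwarz on its $2\times 2$ blocks yields $|\partial^{\gamma}f(z)|^2 \leq |\partial^{2\alpha}f(0)|\cdot|\partial^{2\beta}f(0)|$ whenever $\gamma = \alpha+\beta$ with $|\alpha|,|\beta|\leq q$, so every derivative $\partial^{\gamma}f$ with $|\gamma|\leq 2q$ is globally bounded. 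It therefore suffices to show that $w \mapsto w^{2q}$ is $|\Lambda|$-integrable, since one can then take $h(w) := C w^{2q}$ with $C$ the maximum of those sup-norms.

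\textbf{Moment control via $\mathrm{Tr}\, P$.} Here the trace-class hypothesis enters. Since $\Lambda((0,\infty))$ is trace class, $\mathrm{Tr}\,\Lambda$ is a finite nonnegative scalar Radon measure on $(0,\infty)$, and
$$
(\mathrm{Tr}\, P)(x,y) = \int_{(0,\infty)} f(w(x-y))\, d(\mathrm{Tr}\,\Lambda)(w).
$$
Writing $d\Lambda = G\, d\lambda$ with $G(w) \geq 0$, nonnegativity forces $\|G(w)\|_{\mathcal{L}(\mathcal{H})} \leq \mathrm{Tr}\, G(w)$. The hypothesis $(\mathrm{Tr}\, P) \in C^{2q}(\mathbb{R}^m)$ combined with Lemma \ref{diffform} applied to the scalar measure $\mathrm{Tr}\,\Lambda$ (the case of constant $f$ is trivial) then gives $\int w^{2q}\, d(\mathrm{Tr}\,\Lambda)(w) < \infty$, and consequently
$$
\int w^{2q}\, d|\Lambda|(w) = \int w^{2q}\,\|G(w)\|_{\mathcal{L}(\mathcal{H})}\, d\lambda(w) \leq \int w^{2q}\,\mathrm{Tr}\, G(w)\, d\lambda(w) < \infty.
$$
This completes the verification of $C^q$-Domination, so (i)-(iii) and the additional equivalence follow from Theorem \ref{Cpuniversalityintegralkernel}. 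The main obstacle is precisely this moment estimate: passing from the scalar-smoothness hypothesis on $\mathrm{Tr}\, P$ to integrability of $w^{2q}$ against the operator-valued $|\Lambda|$ is the one step where one truly needs the trace-class assumption, acting via the comparison $\|G\|_{\mathcal{L}(\mathcal{H})} \leq \mathrm{Tr}\, G$ together with the scalar identification principle of Lemma \ref{diffform}.
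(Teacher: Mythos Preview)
Your argument is correct and follows essentially the same route as the paper: reduce to Theorem \ref{Cpuniversalityintegralkernel} with $\Omega=(0,\infty)$ and $p_w(x,y)=f(w(x-y))$, then verify $C^q$-Domination by converting the smoothness hypothesis on $\mathrm{Tr}\,P$ into the moment bound $\int w^{2q}\,d|\Lambda|<\infty$ via Lemma \ref{diffform} and the comparison $\|G(w)\|_{\mathcal{L}(\mathcal{H})}\le \mathrm{Tr}\,G(w)$. The only point left implicit in your write-up is that the very existence of a Radon--Nikod\'ym decomposition $d\Lambda=G\,d\lambda$ with $G(w)\ge 0$ trace class (so that $\mathrm{Tr}\,G(w)$ makes sense and $\mathrm{Tr}\,P=\int f\,d(\mathrm{Tr}\,\Lambda)$ holds) is furnished by Lemma \ref{radonnikodintrace}, which the paper invokes explicitly; once this is cited your proof matches the paper's.
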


	Again, we emphasize that Corollary \ref{Cpuniversalityintegralkernelrestricmeasure} is also valid if $\Lambda$ is a measure of bounded variation that admits a Radon-Nikod\'ym decomposition $d\Lambda=Gd\lambda$ and $\int w^{2q}\|G(w)\|d\lambda< \infty$. On the proof of the Corollary   \ref{Cpuniversalityintegralkernelrestricmeasure} this property is obtained from the hypothesis that $(Tr P) \in C^{2q}(\mathbb{R}^{m})$.

	We finish this subsection with three examples of families of operator valued positive definite radial kernels  where the differentiable universalities are preserved by the scalar valued projections of the kernel as a consequence of Corollary \ref{Cpuniversalityintegralkernelrestricmeasure}. The first one is a continuation of Theorem \ref{universaleuclidian2}.
	
	\begin{thm}\label{universaleuclidian2cq}Let $q \in \mathbb{Z}_{+}$,  $F: [0, \infty) \to \mathcal{L}(\mathcal{H})$ be an ultraweakly continuous function for which $F(0)$ is a trace class operator, $(Tr F) \in C^{2q}([0,\infty))$ and the kernels 	
		$$
		(x,y) \in 	\mathbb{R}^{m} \times \mathbb{R}^{m} \to  \langle F(\|x-y\|)v ,v \rangle_{\mathcal{H}} \in   \mathbb{C},
		$$
		are positive definite for every $m \in \mathbb{N}$, $v \in \mathcal{H}$. Then the kernel
		$$
		(x,y) \in \mathbb{R}^{m} \times \mathbb{R}^{m} \to K_{F}(x,y):= F(\|x-y\|) \in \mathcal{L}(\mathcal{H})
		$$ 
		is positive definite, $\mathcal{H}_{K_{F}} \subset C^{q}(\mathbb{R}^{m}, \mathcal{H})$ and the following are equivalent
		\begin{enumerate}
			\item[(i)] The kernel $K_{F}$ is strictly positive definite. 
			\item[(ii)] The kernel $K_{F}$ is $C^{q}$-universal.
			\item[(iii)] For every $v \in \mathcal{H}\setminus\{{0}\}$ the function $t \in [0, \infty) \to \langle F(t)v, v\rangle_{\mathcal{H}} $ is non constant.
		\end{enumerate}
		Moreover, $\mathcal{H}_{K_{F}} \subset C_{0}(\mathbb{R}^{m}, \mathcal{H})$ if and only if for every $v \in \mathcal{H}\setminus\{{0}\}$ the function $t \in [0, \infty) \to \langle F(t)v, v\rangle_{\mathcal{H}} \in C_{0}([0, \infty))$. Under this additional hypothesys, the following are equivalent
		\begin{enumerate}
			\item[(i)] The kernel $K_{F}$ is strictly positive definite. 
			\item[(ii)] The kernel $K_{F}$ is $C_{0}^{q}$-universal.
			\item[(iii)] For every $v \in \mathcal{H}\setminus\{{0}\}$ the function $t \in [0, \infty) \to \langle F(t)v, v\rangle_{\mathcal{H}} $ is nonzero.
		\end{enumerate}	
	\end{thm}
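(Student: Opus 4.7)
The plan is to reduce this differentiable statement to its $C^{0}$-prototype Theorem \ref{universaleuclidian2} by invoking Corollary \ref{Cpuniversalityintegralkernelrestricmeasure} with the Gaussian $f(u)=e^{-\|u\|^{2}}$ in place of Corollary \ref{compcsuppor}. Three steps: (a) Schoenberg-represent $F$ as an operator-valued Gaussian mixture; (b) peel off the mass at $r=0$; (c) feed the remainder into the differentiable corollary and reinsert the atom.

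For step (a) I would copy the construction used in the proof of Theorem \ref{universaleuclidian2}: each scalar Schoenberg representation $\langle F(t)v,v\rangle=\int_{[0,\infty)}e^{-rt^{2}}\,d\mu_{v}(r)$, together with ultraweak continuity of $F$ and polarization, produces a nonnegative operator-valued finite Radon measure $\Lambda$ on $[0,\infty)$ with $F(t)=\int_{[0,\infty)}e^{-rt^{2}}\,d\Lambda(r)$ and $\Lambda([0,\infty))=F(0)$. The trace class hypothesis on $F(0)$ plus Lemma \ref{radonnikodintrace} gives a Radon--Nikod\'ym decomposition $d\Lambda=G\,d\lambda$. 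For step (b), write $F(t)=\Lambda(\{0\})+F'(t)$ with $F'(t)=\int_{(0,\infty)}e^{-rt^{2}}\,d\Lambda(r)$; the substitution $w=\sqrt{r}$ turns this into $F'(t)=\int_{(0,\infty)}f(wt)\,d\widetilde{\Lambda}(w)$, so that $K_{F'}(x,y)=\int_{(0,\infty)}f(w(x-y))\,d\widetilde{\Lambda}(w)$. The Gaussian $f\in C^{\infty}(\mathbb{R}^{m})$ is positive definite, $C^{\infty}$-universal and $C_{0}^{\infty}$-integrally strictly positive definite by \cite{Simon-Gabriel2018}.

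For step (c), the hypothesis $(\operatorname{Tr}F)\in C^{2q}([0,\infty))$ supplies, through Lemma \ref{diffform} applied to the scalar radial kernel $(\operatorname{Tr}F')(\|x-y\|)$, the moment condition $\int_{(0,\infty)}w^{2q}\,d(\operatorname{Tr}\widetilde{\Lambda})(w)<\infty$, which is exactly the regularity $(\operatorname{Tr}K_{F'})\in C^{2q}$ required to feed $K_{F'}$ into Corollary \ref{Cpuniversalityintegralkernelrestricmeasure}. That corollary then delivers $K_{F'}\in C^{q,q}(\mathbb{R}^{m}\times\mathbb{R}^{m},\mathcal{L}(\mathcal{H}))$, $\mathcal{H}_{K_{F'}}\subset C^{q}(\mathbb{R}^{m},\mathcal{H})$, and the equivalence of $C^{q}$-universality of $K_{F'}$ with the non-vanishing of every scalar $\widetilde{\Lambda}_{v}$, which is the same as non-constancy of every $\langle F(\cdot)v,v\rangle$. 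Reinserting the constant kernel $\Lambda(\{0\})$ only augments $\mathcal{H}_{K_{F}}$ by constant $\mathcal{H}$-valued functions, so $\mathcal{H}_{K_{F}}\subset C^{q}(\mathbb{R}^{m},\mathcal{H})$ remains valid; conditions (i)--(iii) transfer to $K_{F}$ because on any two distinct points a non-zero constant operator kernel is never strictly positive definite (the standard two-point/alternating-coefficient computation), exactly as in Theorem \ref{universaleuclidian2}. For the $C_{0}^{q}$-addendum, requiring every $\langle F(\cdot)v,v\rangle\in C_{0}$ forces $\langle\Lambda(\{0\})v,v\rangle=\lim_{t\to\infty}\langle F(t)v,v\rangle=0$, hence $\Lambda(\{0\})=0$ and $K_{F}=K_{F'}$; the $C_{0}^{q}$-half of Corollary \ref{Cpuniversalityintegralkernelrestricmeasure} closes the second chain with ``non-constant'' collapsing to ``non-zero'' once the atom at $0$ has been removed.

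The main obstacle I anticipate is the clean execution of step (c): converting the one-variable hypothesis $(\operatorname{Tr}F)\in C^{2q}([0,\infty))$ into a moment condition on $\widetilde{\Lambda}$ in the precise form consumed by Corollary \ref{Cpuniversalityintegralkernelrestricmeasure}, and tracking carefully how the constant summand $\Lambda(\{0\})$ interacts both with the differentiable RKHS structure (where it adds harmless constant functions) and with the three equivalences (where it must not smuggle in spurious strict positive definiteness or universality).
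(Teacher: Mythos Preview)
Your proposal is correct and follows essentially the same route as the paper: the paper's proof is the one-line statement that Theorem \ref{universaleuclidian2cq} is a direct application of Corollary \ref{Cpuniversalityintegralkernelrestricmeasure} with $f(x)=e^{-\|x\|^{2}}$ (which is $C^{\infty}$-integrally strictly positive definite) together with Theorem \ref{universaleuclidian2}, and your steps (a)--(c) are precisely the unpacking of that sentence. The obstacle you flag in step (c)---passing from $(\operatorname{Tr}F)\in C^{2q}([0,\infty))$ to the moment condition and hence to $(\operatorname{Tr}P)\in C^{2q}(\mathbb{R}^{m})$---is handled inside the proof of Corollary \ref{Cpuniversalityintegralkernelrestricmeasure} via Lemma \ref{diffform}, so once you have set up the Gaussian mixture on $(0,\infty)$ via the substitution $w=\sqrt{r}$ you can simply invoke the corollary without reproving that step.
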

	
	The second is the operator valued generalization of the Askey class, a  continuation of Theorem \ref{universaleuclidianaskey}.
	
	\begin{thm}\label{universaleuclidian2cqaskey}Let  $F: [0, \infty) \to \mathcal{L}(\mathcal{H})$ be an ultraweakly continuous function for which $F(0)$ is a trace class operator, $(Tr F) \in C^{(2q)}([0, \infty))$, $F_{v}$ is $\ell$-times completely monotone for every $v \in \mathcal{H}$ and $q \leq (\ell -2)/2$. Then the kernel 
		$$
		(x,y) \in 	\mathbb{R}^{m} \times \mathbb{R}^{m} \to  K_{F}(x,y):=F(\|x-y\|) \in   \mathcal{L}(\mathcal{H}),
		$$
		is positive definite for  $m \leq 2\ell -3 $, $\mathcal{H}_{K_{F}} \subset C^{q}(\mathbb{R}^{m}, \mathcal{H})$ and the following are equivalent
		\begin{enumerate}
			\item[(i)] The kernel $K_{F}$ is strictly positive definite. 
			\item[(ii)] The kernel $K_{F}$ is $C^{q}$-universal.
			\item[(iii)] For every $v \in \mathcal{H}\setminus\{{0}\}$ the function $t \in [0, \infty) \to \langle F(t)v, v\rangle_{\mathcal{H}} $ is non constant.
		\end{enumerate}
		Moreover, $\mathcal{H}_{K_{F}} \subset C_{0}(\mathbb{R}^{m}, \mathcal{H})$ if and only if for every $v \in \mathcal{H}\setminus\{{0}\}$ the function $t \in [0, \infty) \to \langle F(t)v, v\rangle_{\mathcal{H}} \in C_{0}([0, \infty))$. Under this additional hypothesys, the following are equivalent
		\begin{enumerate}
			\item[(i)] The kernel $K_{F}$ is strictly positive definite. 
			\item[(ii)] The kernel $K_{F}$ is $C_{0}^{q}$-universal.
			\item[(iii)] For every $v \in \mathcal{H}\setminus\{{0}\}$ the function $t \in [0, \infty) \to \langle F(t)v, v\rangle_{\mathcal{H}} $ is nonzero.
		\end{enumerate}	
	\end{thm}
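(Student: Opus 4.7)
The plan is to mirror the proof of Theorem~\ref{universaleuclidian2cq}, replacing the Schoenberg--Gaussian integral representation by the Askey--Williamson one, and using Corollary~\ref{Cpuniversalityintegralkernelrestricmeasure} in place of Corollary~\ref{compcsuppor}. First I would apply the scalar Williamson representation theorem to each projection $F_v$ to obtain a finite nonnegative Radon measure $\lambda_v$ on $[0,\infty)$ such that
\[
F_v(t)=\int_{[0,\infty)}(1-rt)_+^{\ell-1}\,d\lambda_v(r).
\]
Using the ultraweak continuity of $F$, polarization, and the fact that $F(0)$ is trace class (so that Lemma~\ref{radonnikodintrace} applies), these scalar measures assemble into a finite nonnegative $\mathcal{L}(\mathcal{H})$-valued Radon measure $\Lambda$ on $[0,\infty)$ admitting a Radon--Nikod\'ym decomposition $d\Lambda=G\,d\lambda$ and yielding the operator-valued integral representation $F(t)=\int(1-rt)_+^{\ell-1}\,d\Lambda(r)$. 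Peeling off the atom of $\Lambda$ at $r=0$, which contributes only a constant operator summand and does not affect any of the claims, places us in the setting of Corollary~\ref{Cpuniversalityintegralkernelrestricmeasure} with $f(z)=(1-\|z\|)_+^{\ell-1}$ and $\Lambda$ supported on $(0,\infty)$.

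The positive definiteness of $K_F$ for $m\le 2\ell-3$ is then immediate from Corollary~\ref{compcsuppor} together with Askey's classical result that the scalar kernel $(x,y)\mapsto(1-r\|x-y\|)_+^{\ell-1}$ is positive definite in that dimension range. To invoke Corollary~\ref{Cpuniversalityintegralkernelrestricmeasure} for the regularity statement $\mathcal{H}_{K_F}\subset C^q(\mathbb{R}^m,\mathcal{H})$, I would verify its decisive hypothesis $(\mathrm{Tr}\,K_F)\in C^{2q}(\mathbb{R}^m\times\mathbb{R}^m)$. The scalar Williamson representation of $\mathrm{Tr}\,F$, combined with the hypothesis $(\mathrm{Tr}\,F)\in C^{2q}([0,\infty))$ and the bound $2q\le\ell-2$ (ensuring that $(1-rt)_+^{\ell-1}$ has $2q$ continuous one-variable derivatives), forces the moment condition $\int_{(0,\infty)}r^{2q}\,d(\mathrm{Tr}\,\Lambda)(r)<\infty$, which is exactly what the Askey analogue of Lemma~\ref{diffform} needs in order to differentiate the radial integrand under the integral sign up to order $q$ in each variable.

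The equivalences (i)$\Leftrightarrow$(ii)$\Leftrightarrow$(iii) and their $C_0^q$ counterparts then drop out of Corollary~\ref{Cpuniversalityintegralkernelrestricmeasure} exactly as in Theorem~\ref{universaleuclidian2cq}: the projectivity argument reduces $C^q$-universality of $K_F$ to each scalar projection $P_v$ being $C^q$-universal, which under the hypotheses of the corollary is equivalent to $\Lambda_v$ being nonzero on $(0,\infty)$, which in turn is equivalent to $F_v$ being non-constant by uniqueness of the scalar Williamson representation. The $C_0^q$ part then follows in the same way, with the inclusion $\mathcal{H}_{K_F}\subset C_0^q(\mathbb{R}^m,\mathcal{H})$ under the vanishing-at-infinity hypothesis on each $F_v$ being handled exactly as in the non-differentiable Askey case, Theorem~\ref{universaleuclidianaskey}.

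The principal obstacle I anticipate is the smoothness verification in the second paragraph. Each individual Askey kernel $(1-r\|x-y\|)_+^{\ell-1}$ has a conical singularity at the diagonal $x=y$ and is not $C^1$ on $\mathbb{R}^m\times\mathbb{R}^m$, so the $C^{q,q}$-regularity of the operator-valued kernel $K_F$ must be produced by averaging against $\Lambda$; that is, one needs the Askey analogue of Lemma~\ref{diffform}, showing that a moment bound of the form $\int r^{2q}\,d|\Lambda|(r)<\infty$ is enough to legitimately differentiate $\int (1-r\|x-y\|)_+^{\ell-1}\,d\Lambda(r)$ up to order $q$ in each variable. Establishing this quantitative regularity transfer, which replaces the trivially smooth Gaussian integrand from Theorem~\ref{universaleuclidian2cq}, is the technical crux of the argument.
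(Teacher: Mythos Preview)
Your plan coincides with the paper's proof, which is the single sentence: ``It is a direct application of Corollary~\ref{Cpuniversalityintegralkernelrestricmeasure}, to the case $f(x)=(1-\|x\|)_+^{\ell-1}\in C^{(\ell-2)}(\mathbb{R}^m)$ which defines a $(\ell-2)/2$ integrally strictly positive definite kernel by Corollary~38 in \cite{Simon-Gabriel2018}.'' The assembly of the operator-valued measure you spell out is already packaged as Lemma~\ref{uniquelcomplmon} together with Lemma~\ref{radonnikodintrace}, so the route is identical.

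The obstacle you flag in your last paragraph is a genuine gap, and the paper does not close it either. Corollary~\ref{Cpuniversalityintegralkernelrestricmeasure} requires $f\in C^{2q}(\mathbb{R}^m)$, but the paper's assertion $(1-\|x\|)_+^{\ell-1}\in C^{\ell-2}(\mathbb{R}^m)$ is false at the origin: $\partial_{x_j}(1-\|x\|)^{\ell-1}=-(\ell-1)(1-\|x\|)^{\ell-2}x_j/\|x\|$ has no limit as $x\to 0$, so the Askey function is not even $C^1$ there. Your averaging idea cannot save this under the stated hypotheses. For any non-constant $\ell$-times completely monotone $g(t)=\int(1-rt)_+^{\ell-1}\,d\mu(r)$ with $g\in C^1([0,\infty))$ one gets $g'(0^+)=-(\ell-1)\int r\,d\mu(r)<0$, so $g(\|x\|)$ is never $C^1$ at $x=0$. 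Concretely, with $\mathcal{H}=\mathbb{C}$ and $F(t)=(1-t)_+^{\ell-1}$ every hypothesis of the theorem holds (in particular $\mathrm{Tr}\,F\in C^{\ell-2}([0,\infty))$), yet $K_F(\cdot,y)=(1-\|\cdot-y\|)_+^{\ell-1}\notin C^1(\mathbb{R}^m)$, so $\mathcal{H}_{K_F}\not\subset C^1(\mathbb{R}^m)$ and the stated conclusion fails for $q\ge 1$. A stronger hypothesis such as $x\mapsto\mathrm{Tr}\,F(\|x\|)\in C^{2q}(\mathbb{R}^m)$ (which, via the trace-class structure, forces the odd one-sided derivatives of each $F_v$ at $0$ to vanish) is needed before either argument can proceed.
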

	
	And the last result  is a differentiable version of Theorem \ref{universaleuclidian3}.
	
	\begin{thm}\label{universaleuclidian32}Let $m\geq2$, $F: [0, \infty) \to \mathcal{L}(\mathcal{H})$ be an ultraweakly continuous function for which $F(0)$ is a trace class operator, $(Tr F) \in C^{(2q)}([0, \infty))$, and  the kernel $(x,y) \in 	\mathbb{R}^{m} \times \mathbb{R}^{m} \to  \langle F(\|x-y\|)v , v\rangle_{\mathcal{H}}$ is positive definite for every $v \in \mathcal{H}$. Consider the kernel 
		$$
		(x,y) \in 	\mathbb{R}^{m-1} \times \mathbb{R}^{m-1} \to  P(x,y):=F(\|x-y\|) \in   \mathcal{L}(\mathcal{H}),
		$$  
		then $P$ is positive definite, $\mathcal{H}_{P}\subset C^{q}(\mathbb{R}^{m}, \mathcal{H})$ and  the following statements are equivalent
		\begin{enumerate}
			\item[(i)] The kernel $P$ is strictly positive definite. 
			\item[(ii)] The kernel $P$ is $C^{q}$-universal.
			\item[(iii)] For every $v \in \mathcal{H}\setminus\{{0}\}$ the function $t \in [0, \infty) \to \langle F(t)v, v\rangle_{\mathcal{H}} $ is non constant.
		\end{enumerate}
	\end{thm}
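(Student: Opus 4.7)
The strategy parallels the proof of Theorem \ref{universaleuclidian3}, upgrading the universality conclusion to the $C^{q}$-setting by invoking Corollary \ref{Cpuniversalityintegralkernelrestricmeasure} (equivalently Theorem \ref{Cpuniversalityintegralkernel}) applied to the atomic family $p_{w}(x,y) = \Omega_{m}(w\|x-y\|)$. First I would apply Wendland's representation for continuous positive definite radial functions on $\mathbb{R}^{m}$ to each scalar projection $F_{v}$, obtaining
$$
F_{v}(t) = \int_{[0,\infty)} \Omega_{m}(rt)\, d\lambda_{v}(r)
$$
for unique scalar nonnegative finite Radon measures $\lambda_{v}$ on $[0,\infty)$. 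Using ultraweak continuity of $F$ and the hypothesis that $F(0)$ is trace class, the same lifting argument used in Theorem \ref{universaleuclidian2} (via Lemma \ref{radonnikodintrace}) produces a unique finite nonnegative $\mathcal{L}(\mathcal{H})$-valued Radon measure $\Lambda$ on $[0,\infty)$ with Radon-Nikod\'ym decomposition $d\Lambda = G\, d|\Lambda|$, satisfying $\Lambda_{v} = \lambda_{v}$ and $F(t) = \int_{[0,\infty)} \Omega_{m}(rt)\, d\Lambda(r)$. The atomic mass $\Lambda(\{0\})$ contributes only a constant operator-valued summand, so after peeling it off we may assume $\Lambda$ is supported on $(0,\infty)$.

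The second step is to verify the moment condition needed to invoke the differentiable machinery. Tracing both sides gives $(\mathrm{Tr}\,F)(t) = \int \Omega_{m}(rt)\, d(\mathrm{Tr}\,\Lambda)(r)$, and since $(\mathrm{Tr}\,F) \in C^{2q}([0,\infty))$, Lemma \ref{diffform} applied to the Bochner-positive definite function $f(z) := \Omega_{m}(\|z\|)$ on $\mathbb{R}^{m-1}$ forces $\int_{(0,\infty)} r^{2q}\, d(\mathrm{Tr}\,\Lambda)(r) < \infty$. Because $G(r)$ is a positive trace-class operator $|\Lambda|$-almost everywhere, we have $\|G(r)\|_{\mathcal{L}(\mathcal{H})} \leq \mathrm{Tr}\,G(r)$, so the scalar moment bound promotes to $\int r^{2q}\, d|\Lambda|(r) < \infty$, supplying the $C^{q}$-domination hypothesis of Theorem \ref{Cpuniversalityintegralkernel}. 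Corollary \ref{Cpuniversalityintegralkernelrestricmeasure} then gives $P \in C^{q,q}(\mathbb{R}^{m-1}\times\mathbb{R}^{m-1}, \mathcal{L}(\mathcal{H}))$, and in particular $\mathcal{H}_{P} \subset C^{q}(\mathbb{R}^{m-1}, \mathcal{H})$.

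For the equivalences, Lemma \ref{radialfini} (together with the trivial scaling $z \mapsto wz$) ensures that the scalar kernel $\Omega_{m}(w\|x-y\|)$ on $\mathbb{R}^{m-1}\times\mathbb{R}^{m-1}$ is $C^{\infty}$-universal for every $w>0$; hence the final clause of Corollary \ref{Cpuniversalityintegralkernelrestricmeasure} gives that $P$ is $C^{q}$-universal if and only if every scalar measure $\Lambda_{v} = \lambda_{v}$ restricted to $(0,\infty)$ is nonzero. Since $\lambda_{v}$ being concentrated at $0$ is equivalent to $F_{v}$ being constant on $[0,\infty)$, this is precisely condition $(iii)$. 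The implication $(ii) \Rightarrow (i)$ is immediate ($C^{q}$-universal $\Rightarrow$ universal $\Rightarrow$ strictly positive definite), and $(i) \Rightarrow (iii)$ follows since a constant projection $F_{v}$ yields the rank-one kernel $\langle K(x,y)v,v\rangle_{\mathcal{H}} = F_{v}(0)$, which is incompatible with strict positive definiteness. The main obstacle I foresee is the controlled passage from the scalar differentiability $(\mathrm{Tr}\,F) \in C^{2q}$ to the operator-norm moment estimate $\int r^{2q}\, d|\Lambda|<\infty$; the estimate $\|G\|\leq \mathrm{Tr}\,G$ does the work, but it relies on carefully pinning down the Radon-Nikod\'ym density as trace-class valued, which is exactly where the hypothesis that $F(0)$ is trace class is used (and where Lemma \ref{radonnikodintrace} must be invoked with care).
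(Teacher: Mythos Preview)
Your proposal is correct and follows essentially the same route as the paper: represent $F$ via the operator-valued measure $\Lambda$ in the $\Omega_m$-basis (Lemma \ref{uniquelcomplmonradial} and Lemma \ref{radonnikodintrace}), use $(\mathrm{Tr}\,F)\in C^{2q}$ with Lemma \ref{diffform} to secure the moment condition, and then invoke Corollary \ref{Cpuniversalityintegralkernelrestricmeasure} with the $C^{\infty}$-universality of $\Omega_m^{m-1}$ from Lemma \ref{radialfini}. The paper's proof is a one-line citation of Corollary \ref{Cpuniversalityintegralkernelrestricmeasure}; you have simply unpacked the verification of its hypotheses, and your handling of the moment estimate (via $\|G\|\le \mathrm{Tr}\,G$ against the trace measure $\Lambda_T$) is exactly what underlies the proof of that corollary.
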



	\section{\textbf{Proofs}}

	\subsection{\textbf{Section \ref{Dual spaces and equivalent conditions for universality}}}    	   
	\begin{proof}[\textbf{Proof of Theorem \ref{oneintegraltodoubleintegral} }]We focus all the arguments on the $C_{0}$ case, being the other similar. Note that for every $\eta \in \mathfrak{M}(Z, \mathcal{H})$ the linear functional $$F \in \mathcal{H}_{K} \to \int _{Z}\langle F(x), d\eta(x) \rangle \in \mathbb{C}$$ is continuous. Indeed, it is the composition of the  continuous operators  $ I: \mathcal{H}_{K} \to C_{0}(Z, \mathcal{H}) $ (Proposition \ref{rkhscontained}) with the integral of a measure in $\mathfrak{M}(Z, \mathcal{H})$(Theorem \ref{Dinculeanu-Singer}). But then, the Riesz Representation Theorem for Hilbert spaces implies that there exists a function $K_{\eta} \in \mathcal{H}_{K}$ for which $\int _{Z}\langle F(x), d\eta(x) \rangle = \langle F,K_{\eta}  \rangle_{\mathcal{H}_{K}}$ for all $F \in \mathcal{H}_{K}$. In particular 
		$$
		\langle v, K_{\eta}(y)\rangle_{\mathcal{H}}=\langle K_{y}v, K_{\eta} \rangle_{\mathcal{H}_{K}} = \int_{Z}\langle K(x,y)v,d\eta(x)\rangle.
		$$
		The weak-Bochner integral characterization  $K_{\eta}(y)= \int_{Z}K(x,y)d\eta(x)$ holds because  the linear functional $ v \in \mathcal{H} \to \int_{Z}\langle K(x,y)v , d\eta(x) \rangle \in \mathbb{C} $ is continuous. This proves $(i)$.\\
		As for the proof of $(ii)$,  by Lemma \ref{manipulation} we have that  
		\begin{align*}
		\langle K_{\eta}, K_{\eta}\rangle_{\mathcal{H}_{K}}&= \int_{Z} \langle K_{\eta}(x), d\eta(x) \rangle=\int_{Z} \langle K_{\eta}(x), H(x) \rangle_{\mathcal{H}}d|\eta|(x) \\ 
		&=\int_{Z} \overline{\langle H(x), K_{\eta}(x) \rangle_{\mathcal{H}}}d|\eta|(x) = \int_{Z} \overline{ \left [ \int_{Z}\langle K(y,x)H(x),d\eta(y) \rangle \right ]} d|\eta|(x)\\
		&=\int_{Z} \overline{ \left [ \int_{Z}\langle K(y,x)H(x),H(y) \rangle_{\mathcal{H}}d|\eta|(y) \right ]} d|\eta|(x)\\
		&= \int_{Z}  \int_{Z}\langle H(y) ,K(y,x)H(x)\rangle_{\mathcal{H}}d|\eta|(y)  d|\eta|(x)\\
		&=\int_{Z} \int_{Z} \langle   K(x,y)H(y),H(x) \rangle_{\mathcal{H}} d|\eta|(x)  d|\eta|(y)
		\end{align*}
		Where $d\eta= Hd|\eta|$. Since $H$ is Bochner integrable and $x \in Z \to \|K(x,x)\|_{\mathcal{H}} \in \mathbb{C}$ is a bounded function, it is possible to reverse the order of integration by Fubinni-Tonelli.\\
		Finally $(iii)$ holds true because  $\mathcal{H}_{K}$ is a reproducing kernel Hilbert space, so $\langle K_{\eta}, K_{\eta}\rangle_{\mathcal{H}_{K}}=0 $ if and only if $K_{\eta}(y)=0$ for all $y \in Z$,  Theorem $11$ of \cite{Caponnetto2008} for the universal case and Equation \ref{c0univform2} for the $C_{0}$-universal case  concludes the proof.\end{proof}
	
	\begin{proof}[\textbf{Proof of Lemma \ref{compacinclusion}}] If the kernel is continuous  and $B \subset \mathcal{H}_{K}$ is a bounded set, then the equicontinuity follows from the inequality.
		$$
		\|F(x) - F(y)\|_{\mathcal{H}}\leq \|F\|_{\mathcal{H}_{K}}(\|K(x,x) -K(x,y) -K(y,x) +K(y,y)  \|_{\mathcal{L}(\mathcal{H})})^{1/2}.
		$$
		Conversely, fix arbitrary $x,y \in Z$. By Proposition \ref{rkhscontained}, there are open sets $U$ and $V$ of $Z$, that contains $x$ and $y$ respectively, for which $\sup_{z\in U \cup V}\|K(z,z)\|_{\mathcal{L}(\mathcal{H})}<\infty$. Then, the set $B:=\{ K_{z}v, \quad  z\in U \cup V, \|v\|_{\mathcal{H}}=1 \} \subset \mathcal{H}_{K}$  is bounded and by the hypothesis  is equicontinuous. In particular, for every $\epsilon >0$ there exists open sets $U_{x}$ and $U_{y}$ of $Z$, that contains $x$ and $y$ respectively, for which
		$$
		\|K(w,z)v- K(x,z)v \|_{\mathcal{H}}=\|[K_{z}v](w) - [K_{z}v](x)  \|_{\mathcal{H}}< \epsilon,\quad  w \in U_{x}, z\in U \cup V, \|v\|_{\mathcal{H}}=1  
		$$   	
		$$
		\|K(w,z)v- K(y,z)v \|_{\mathcal{H}}=\|[K_{z}v](w) - [K_{z}v](y)  \|_{\mathcal{H}}< \epsilon,\quad  w \in U_{y}, z\in U \cup V, \|v\|_{\mathcal{H}}=1  
		$$ 	
		Then
		$$
		\|K(x,y) - K(x^{\prime}, y^{\prime})\|_{\mathcal{L}(\mathcal{H})} \leq \|K(x,y) - K(x^{\prime}, y)\|_{\mathcal{L}(\mathcal{H})} + \|K(y,x^{\prime}) - K(y^{\prime}, x^{\prime})\|_{\mathcal{L}(\mathcal{H})} < 2\epsilon, 	
		$$	
		for all $x^{\prime} \in U \cap U_{x}$ and $y \in V \cap U_{y}$, which proves relation $(i)$. As for the proof of $(ii)$, note that
		$$
		|\langle e_{\mu}, F(y) \rangle_{\mathcal{H}} | \leq \|F\|_{\mathcal{H}_{K}} \sqrt{\langle K(y,y)e_{\mu}, e_{\mu} \rangle_{\mathcal{H}}}, 
		$$
		since $\sum_{\mu \in \mathcal{I}} (\sqrt{\langle K(y,y)e_{\mu}, e_{\mu} \rangle_{\mathcal{H}}})^{2}= Tr(K(y,y))< \infty$, the set $\{F(y), F \in B   \} \subset \mathcal{H}$ has compact closure on the norm topology of $\mathcal{H}$.\\
		Now we prove $(iii)$. By definition, the inclusion-restriction is  a compact operator if and only if for every bounded set $B \subset \mathcal{H}_{K}$ (restricted to $\mathcal{A}$) has compact closure on  $C(\mathcal{A}, \mathcal{H})$. By the Arzel\`{a}-Ascoli Theorem this occurs if and only if the set $B$(restricted to $\mathcal{A}$) is equicontinuous and pointwise relatively compact, the conclusion follows from $(i)$, $(ii)$ and the locally compact assumption on $Z$. The second part of $(iii)$ is a consequence from the fact  that  closed sets on $\mathcal{H}_{K}$(restricted to $\mathcal{A}$) are closed on $C(\mathcal{A}, \mathcal{H})$.\\ 
		The proof of $(iv)$ is similar to the previous ones,  we just emphasize that the importance of the compact set $\mathcal{A}$  is to ensure that $B_{\mathcal{A}}$ is equicontinuous at the infinity point of $Z$. \end{proof}

	\begin{proof}[\textbf{Proof of Example \ref{examplenonuniversalityradial}}]
		Indeed, let $\phi_{1}, \phi_{2} \in C^{\infty}_{c}(\mathbb{R}^{m})$ be two non zero radial functions that are linearly independent. Define the kernel $K: \mathbb{R}^{m}\times \mathbb{R}^{m}  \to M_{2}(\mathbb{C})$ by
		$$
		K(x,y) := \int_{\mathbb{R}^{m}}e^{-i(x-y)\cdot\xi}\begin{pmatrix}|\widehat{\phi_{2}}(\xi)|^{2} & -\widehat{\phi_{2}}(\xi)\widehat{\overline{\phi_{1}}}(\xi)\\ -\widehat{\phi_{1}}(\xi)\widehat{\overline{\phi_{2}}}(\xi) & |\widehat{\phi_{1}}(\xi)|^{2} \end{pmatrix} d\xi.
		$$ 	
		The kernel $K$ is well defined, continuous, it is  radial because $\phi_{1}, \phi_{2}$ are radial functions and is positive definite because the matrix 
		$$
		\begin{pmatrix}|\widehat{\phi_{2}}(\xi)|^{2} & -\widehat{\phi_{2}}(\xi)\widehat{\overline{\phi_{1}}}(\xi)\\ -\widehat{\phi_{1}}(\xi)\widehat{\overline{\phi_{2}}}(\xi) & |\widehat{\phi_{1}}(\xi)|^{2} \end{pmatrix}
		$$
		is positive semidefinite for every $\xi \in \mathbb{R}^{m}$. For every $v \in \mathbb{C}^{2}\setminus{\{0\}}$ the kernel $K_{v}$ is $C_{0}^{\infty}$-universal because   
		$$
		K_{v}(x-y)= \int_{\mathbb{R}^{m}}e^{-i(x-y)\cdot\xi}|v_{1}\widehat{\phi_{2}}(\xi) -v_{2}\widehat{\phi_{1}}(\xi) |^{2}d\xi, 
		$$ 
		being the $C^{\infty}_{0}$-universality  a consequence of Theorem $17$ of \cite{Simon-Gabriel2018} since the scalar valued nonnegative finite Radon measure   $|v_{1}\widehat{\phi_{2}}(\xi) -v_{2}\widehat{\phi_{1}}(\xi) |^{2}d\xi$ has positive measure on any open set of $\mathbb{R}^{m}$ and all of its moments are finite. Finally, the matrix valued kernel is not universal, because the scalar valued nonnegative finite Radon measures $d\eta_{1}(\xi):= \phi_{1}(-\xi)d\xi$ and $d\eta_{2}(\xi):= \phi_{2}(-\xi)d\xi$ have compact support and if $\eta= (\eta_{1}, \eta_{2})$
		\begin{align*}  
		\int_{\mathbb{R}^{m}}\langle \int_{\mathbb{R}^{m}}&K(x,y)d\eta(x),d\eta(y) \rangle = \sum_{\mu, \nu =1}^{2} \int_{\mathbb{R}^{m}}\int_{\mathbb{R}^{m}}F_{\mu, \nu}(x-y)d\eta_{\mu}(y)\overline{d\eta_{\nu}}(x)\\
		&\int_{\mathbb{R}^{m}}  \left (  |\widehat{\phi_{2}}|^{2}|\widehat{\phi_{1}}|^{2} -  \widehat{\phi_{2}}\widehat{\overline{\phi_{1}}}\widehat{\phi_{1}}\widehat{\overline{\phi_{2}}} - \widehat{\phi_{2}}\widehat{\overline{\phi_{1}}}\widehat{\phi_{1}}\widehat{\overline{\phi_{2}}} +|\widehat{\phi_{1}}|^{2}|\widehat{\phi_{2}}|^{2}                         \right )d\xi =0
		\end{align*}\end{proof}

	\begin{proof}[\textbf{Proof of Example \ref{exampleconverseuniversality}}] Since for every $x \in \mathbb{R}^m$
		$$
		e^{-\|x\|^{2}}:=\int_{\mathbb{R}^{m}}e^{-ix\cdot\xi}\frac{1}{2^{m}\pi^{m/2}}e^{-\|\xi\|^{2}/4}d\xi, 	
		$$	
		we have that 
		$$
		K(x,y) = \int_{\mathbb{R}^{m}}e^{-i(x-y)\cdot\xi}\left [\frac{1}{2^{m}\pi^{m/2}}e^{-\|\xi\|^{2}/4} \begin{pmatrix} 1 & e^{-iw\cdot\xi}\\ e^{iw\cdot\xi} & 1  \\\end{pmatrix}	 d\xi\right ].
		$$	
		The function
		$$
		\xi \in \mathbb{R}^{m} \to G(\xi):=\frac{1}{2^{m}\pi^{m/2}}e^{-\|\xi\|^{2}/4} \begin{pmatrix} 1 & e^{-i2w\cdot\xi}\\ e^{i2w\xi} & 1  \end{pmatrix} \in M_{2}(\mathbb{C})
		$$
		is continuous and positive semidefinite on every point. By relation $(i)$ at Theorem \ref{universalityintegralkernel}, the kernel $K$ is positive definite. Choosing the points $x_{1}=0$, $x_{2} = 2w$, the matrix
		$$
		[K(x_{\mu}, x_{\nu})]_{\mu, \nu =1}^{2}= \begin{pmatrix} 1 & e^{-\|2w\|^{2}} & e^{-\|2w\|^{2}} & 1\\ e^{-\|2w\|^{2}} & 1 & e^{-\|4w\|^{2}}& e^{-\|2w\|^{2}} \\ e^{-\|2w\|^{2}} & e^{-\|4w\|^{2}} & 1 & e^{-\|2w\|^{2}} \\ 1 &e^{-\|2w\|^{2}} & e^{-\|2w\|^{2}} & 1 \end{pmatrix}  \in M_{4}(\mathbb{C})
		$$
		is not positive definite, which implies that the kernel $K$ is not strictly positive definite, hence it is not universal. The set $\mathcal{H}_{K}$ is a subset of $C_{0}(\mathbb{R}^{m},\mathbb{C}^{2} )$, because it satisfies the conditions at Proposition \ref{rkhscontained}. But for every $v \in \mathbb{C}^{2}\setminus{\{0\}}$
		$$
		K_{v}(x -y) =  \int_{\mathbb{R}^{m}}e^{-i(x-y)\cdot\xi} \frac{1}{2^{m}\pi^{m/2}}e^{-\|\xi\|^{2}/4} |v_{1}e^{-iw\cdot\xi} + v_{2}e^{iw\cdot\xi} |^{2} d\xi,
		$$
		note that  the complex valued nonnegative finite measure $e^{-\|\xi\|^{2}/4} |v_{1}e^{-iw\cdot\xi} + v_{2}e^{iw\cdot\xi} |^{2} d\xi$ has positive measure on any open subset of $\mathbb{R}^{m}$ (by Lemma $6.7$ of \cite{wend}) and all of its moments are finite, which by Theorem $17$ of \cite{Simon-Gabriel2018}, the kernel $K_{v}$ is $C^{\infty}_{0}$ universal. 
	\end{proof}

	Below we prove a technical result in order to simplify  some arguments on the proof of Theorem \ref{universalityintegralkernel}

	\begin{lem}\label{almostaschurproduct} Let $X$  be a locally compact Hausdorff space and $p : X \times X\to \mathbb{C}$ be a continuous and bounded positive definite kernel. Given a scalar valued nonnegative finite Radon measure $\lambda$ and a Borel Bochner measurable function $H: X\to \mathcal{H}$ that is integrable with respect to $\lambda$ on $X$. Then, for  every positive semidefinite operator $T \in \mathcal{L}(\mathcal{H})$, we have that
		$$	
		\int_{X}\int_{X} p(x,y)\langle TH(y), H(x) \rangle_{\mathcal{H}}  d\lambda(x)d\lambda(y)\geq 0		
		$$	
	\end{lem}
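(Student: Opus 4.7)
The plan is to reduce the double integral to a finite sum by approximating $H$ in $L^{1}(\lambda,\mathcal{H})$ by simple functions, handle the finite case via a Hadamard-product argument, and then pass to the limit. I would first factor $T = S^{*}S$ with $S = T^{1/2}\in\mathcal{L}(\mathcal{H})$ the positive square root, so that $\langle TH(y),H(x)\rangle_{\mathcal{H}} = \langle SH(y),SH(x)\rangle_{\mathcal{H}}$; this makes the positivity of $T$ available through a Gram-matrix step.

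For the simple case $H = \sum_{i=1}^{n} v_{i}\chi_{A_{i}}$ with disjoint Borel sets $A_{i}$ of finite $\lambda$-measure and $v_{i}\in\mathcal{H}$, plugging in and using disjointness of the $A_{i}$ collapses the double integral to $\sum_{i,j=1}^{n}\langle Sv_{j},Sv_{i}\rangle_{\mathcal{H}} M_{ij}$, where $M_{ij} := \int_{A_{i}\times A_{j}} p(x,y)\, d\lambda(x)\, d\lambda(y)$. I would then check that: (a) the matrix $G_{ij} = \langle Sv_{j}, Sv_{i}\rangle_{\mathcal{H}}$ is Hermitian positive semidefinite, since $\sum_{i,j}\alpha_{i}\overline{\alpha_{j}} G_{ij} = \|S(\sum_{j}\overline{\alpha_{j}} v_{j})\|_{\mathcal{H}}^{2} \geq 0$; (b) the matrix $(M_{ij})$ is Hermitian (using $\overline{p(x,y)} = p(y,x)$) and positive semidefinite, since $\sum_{i,j}\alpha_{i}\overline{\alpha_{j}} M_{ij} = \int_{X}\int_{X} p(x,y) f(x)\overline{f(y)}\, d\lambda(x)\, d\lambda(y)$ with $f := \sum_{i}\alpha_{i}\chi_{A_{i}}$, and this double integral is the limit of Riemann sums $\sum_{k,\ell} p(x_{k}, x_{\ell}) f(x_{k})\overline{f(x_{\ell})}\lambda(B_{k})\lambda(B_{\ell}) \geq 0$ (each nonnegative by the discrete positive definiteness of $p$) upon exploiting continuity and boundedness of $p$ together with the inner regularity of $\lambda$. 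Schur's product theorem then yields that $M \circ G$ is Hermitian positive semidefinite, hence $\sum_{i,j} M_{ij} G_{ij} = \mathbf{1}^{*}(M \circ G)\mathbf{1} \geq 0$, settling the simple case.

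For general $H\in L^{1}(\lambda,\mathcal{H})$, I would pick simple $H_{n}$ with $\int\|H - H_{n}\|_{\mathcal{H}}\, d\lambda \to 0$ (this is the defining approximation of Bochner integrable functions) and bound the error using
\[
|\langle TH(y), H(x)\rangle - \langle TH_{n}(y), H_{n}(x)\rangle| \leq \|T\|\bigl(\|H(y) - H_{n}(y)\|\,\|H(x)\| + \|H_{n}(y)\|\,\|H(x) - H_{n}(x)\|\bigr),
\]
combined with $|p|\leq \|p\|_{\infty}$ and Fubini, to estimate the difference of the two double integrals by $\|p\|_{\infty}\|T\|(\|H\|_{L^{1}} + \|H_{n}\|_{L^{1}})\|H - H_{n}\|_{L^{1}}$, which tends to $0$. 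Hence the nonnegativity established for simple functions is preserved in the limit, completing the proof.

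The main technical hurdle is the positive semidefiniteness of $M$, i.e.\ the integral version of positive definiteness of the continuous kernel $p$; this requires genuine care with the Riemann approximation, using uniform continuity of $p$ on the compact pieces produced by inner regularity and controlling the tails through boundedness of $p$ and finiteness of $\lambda$. All remaining ingredients (the square root factorization of $T$, Schur's product theorem, density of simple functions in $L^{1}(\lambda,\mathcal{H})$, and Fubini's theorem) are standard.
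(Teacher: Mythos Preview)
Your proof is correct, but the paper takes a shorter and rather different route. Instead of approximating $H$ by simple functions and invoking Schur's product theorem, the paper uses the square root $T^{1/2}$ and then expands directly in an orthonormal basis $(e_{\mu})_{\mu\in\mathcal{I}}$ of $\mathcal{H}$:
\[
\langle TH(y),H(x)\rangle_{\mathcal{H}}=\langle T^{1/2}H(y),T^{1/2}H(x)\rangle_{\mathcal{H}}=\sum_{\mu\in\mathcal{I}}(T^{1/2}H(y))_{\mu}\,\overline{(T^{1/2}H(x))_{\mu}},
\]
which, after interchanging sum and integrals, rewrites the double integral as $\sum_{\mu}\int_{X}\int_{X}p(x,y)\,d\overline{\lambda_{\mu}}(x)\,d\lambda_{\mu}(y)$ with $d\lambda_{\mu}=(T^{1/2}H)_{\mu}\,d\lambda$. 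Each summand is then recognized as $\langle p_{\lambda_{\mu}},p_{\lambda_{\mu}}\rangle_{\mathcal{H}_{p}}\geq 0$ via the scalar case of Theorem~\ref{oneintegraltodoubleintegral}.

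The tradeoffs: the paper's argument is almost a one-liner once the RKHS machinery (Theorem~\ref{oneintegraltodoubleintegral}) is in place, and it avoids both the simple-function limit and your Riemann-sum justification of the integral positive definiteness of $p$, which you correctly flag as the main technical hurdle. Your approach is more self-contained---it does not require invoking the RKHS norm identity---but it pays for this with the approximation step and the Schur product. Incidentally, for the positive semidefiniteness of your matrix $M$, you could also bypass the Riemann-sum approximation entirely by observing that $\sum_{i,j}\alpha_{i}\overline{\alpha_{j}}M_{ij}$ is exactly $\langle p_{\eta},p_{\eta}\rangle_{\mathcal{H}_{p}}$ for the scalar measure $d\eta=\bigl(\sum_{i}\alpha_{i}\chi_{A_{i}}\bigr)d\lambda$, which is nonnegative by the scalar Theorem~\ref{oneintegraltodoubleintegral}; this would streamline your argument considerably.
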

	
	\begin{proof}	First, note that this double integral exists because 
		$$|p(x,y)\langle TH(y), H(x) \rangle_{\mathcal{H}} | \leq M \|T\|_{\mathcal{L}(\mathcal{H})}\|H(x)\|_{\mathcal{H}}\|H(y)\|_{\mathcal{H}}. $$ 
		Let $T^{1/2}$ be the positive semidefinite square root of the operator $T$, then by adding coordinates
		\begin{align*}
		\int_{X}\int_{X}& p(x,y)\langle TH(y), H(x) \rangle_{\mathcal{H}}  d\lambda(x)d\lambda(y)(y)\\
		& = \int_{X}\int_{X} p(x,y)\langle T^{1/2}H(y), T^{1/2}H(x) \rangle_{\mathcal{H}}  d\lambda(x)d\lambda(y)(y)\\
		& = \int_{X}\int_{X} p(x,y) \sum_{\mu \in \mathcal{I}} (T^{1/2}H(y))_{\mu}  \overline{(T^{1/2}H(y))_{\mu}}   d\lambda(x)d\lambda(y)(y)\\
		& = \sum_{\mu \in \mathcal{I}}\int_{X}\int_{X} p(x,y)\overline{\lambda_{\mu}}(x)d\lambda_{\mu}(y)\geq 0
		\end{align*}
		where $d\lambda_{\mu}(y)= (T^{1/2}H(y))_{\mu}d\lambda(y)$ is a finite scalar valued Radon measure on $X$ and the last inequality follows because for every $\mu \in \mathcal{I}$ it holds that $\langle K_{\lambda_{\mu}}, K_{\lambda_{\mu}}\rangle_{\mathcal{H}_{p}}\geq 0 $ by the scalar valued version of relation $(ii)$ at Theorem \ref{oneintegraltodoubleintegral}\end{proof}

	Before proving Theorem \ref{universalityintegralkernel}, we need a scalar valued version of it.
	
	\begin{thm}\label{universalityintegralkernelscalar} Let $X$ and $\Omega$  be Hausdorff spaces,  with $\Omega$ being locally compact and \\ $p : \Omega \times X \times X \to \mathbb{C}$ be a bounded continuous function such that the kernel 
		$$
		(x,y) \in X\times X \to p_{\omega}(x,y):=p(\omega, x,y)
		$$ 
		is positive definite for every $w \in \Omega$. Given a scalar valued nonnegative finite Radon measure $\lambda$ on $\Omega$, consider the  kernel
		$$	
		P : X\times X \to \mathbb{C}, \quad P(x,y)= \int_{\Omega}p_{w}(x,y) d\lambda(w). 	  	
		$$
		Then:
		\begin{enumerate}
			\item[(i)] The  kernel  $P$ is positive definite.
			\item[(ii)] The kernel $P$ is continuous, and in particular  $\mathcal{H}_{K} \subset C(X)$.
			\item[(iii)] If the kernel  $p_{\omega}$ is strictly positive definite for every $w \in \Omega$, then the  kernel $P$ is strictly positive definite if and only if the measure $\lambda$ is nonzero. 
			\item[(iv)] If the kernel  $p_{\omega}$ is universal for every $w \in \Omega$, then the  kernel $P$ is universal if and only if the measure $\lambda$ is nonzero.
			\item[(v)] If the kernel  $p_{\omega}$ is integrally strictly positive definite for every $w \in \Omega$, then the  kernel $P$ is integrally strictly positive definite if and only if the measure $\lambda$ is nonzero. 
		\end{enumerate}
	\end{thm}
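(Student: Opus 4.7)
The plan is to reduce each part to an application of Fubini--Tonelli, combined with the observation that certain positivity functionals depend continuously on $w \in \Omega$. The uniform bound $|p_w(x,y)| \leq \|p\|_{\infty}$ and finiteness of $\lambda$ (and of $|\eta|$ for the cases involving measures) supply the integrability needed throughout.

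For (i), the integrand $w \mapsto \sum_{\mu,\nu} c_{\mu}\overline{c_{\nu}}\, p_{w}(x_{\mu},x_{\nu})$ is nonnegative by positive definiteness of each $p_{w}$, so its $\lambda$-integral is nonnegative. For (ii), dominated convergence gives continuity of $P$ on $X\times X$, and boundedness is immediate; Proposition \ref{rkhscontained} then yields $\mathcal{H}_{P}\subset C(X)$.

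In (iii), (iv), and (v), the direction ``$P$ has the property $\Rightarrow \lambda \neq 0$'' is immediate, since $\lambda = 0$ forces $P\equiv 0$. For the converse, I would write the relevant testing quantity in each case as
\begin{equation*}
T_{P} \;=\; \int_{\Omega} T_{p_{w}} \, d\lambda(w),
\end{equation*}
where the integrand is continuous in $w$ (by dominated convergence), nonnegative by positive definiteness of $p_{w}$, and in fact \emph{strictly} positive at every $w \in \Omega$ by the stronger hypothesis on $p_{w}$. Since $\lambda$ is a nonzero nonnegative Radon measure, a nonnegative continuous function on $\Omega$ that is strictly positive everywhere has strictly positive $\lambda$-integral, giving $T_{P} > 0$. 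The choices of test quantity are: in (iii), $T_{K} := \sum_{\mu,\nu} c_{\mu}\overline{c_{\nu}}K(x_{\mu},x_{\nu})$ for any fixed choice of distinct points and nonzero coefficient tuple; in (iv), $T_{K} := \langle K_{\eta},K_{\eta}\rangle_{\mathcal{H}_{K}} = \int\!\int K(x,y)\,d\overline{\eta}(x)\,d\eta(y)$ for any fixed nonzero compactly supported scalar Radon measure $\eta$ (using the formula of Theorem \ref{oneintegraltodoubleintegral}(ii)); in (v), the same double integral but with $\eta$ any nonzero element of $\mathfrak{M}(X,\mathbb{C})$. Combining this with Theorem \ref{oneintegraltodoubleintegral}(iii) for (iv) and Definition \ref{intstricdefinition} for (v) yields the asserted equivalences.

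The main technical point is justifying Fubini--Tonelli and the continuity of $w \mapsto T_{p_{w}}$. Both are routine from boundedness of $p$ and finiteness of $\lambda$ and $|\eta|$, but they require slightly different bookkeeping depending on whether the test object is a finite point sum or a Radon measure. Conceptually, the theorem is the clean expression of the principle that any property of positive definite kernels encoded by strict positivity of a continuous nonnegative functional is preserved under averaging against a nonzero scalar measure over a family of kernels all satisfying that property.
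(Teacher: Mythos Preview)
Your proposal is correct and matches the paper's approach: both pull the relevant test quantity inside the $\lambda$-integral via Fubini and conclude from strict positivity of the integrand at every $w\in\Omega$. The paper phrases (iii)--(v) contrapositively (if the test quantity vanishes then $\lambda=0$) and, for (ii), uses an inner-regularity compact-set argument in place of a direct appeal to dominated convergence---a minor technical point since $X$ is only assumed Hausdorff---but the substance is identical.
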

	\begin{proof}
		Relation $(i)$ is immediate and relation $(ii)$ can be proved by the same arguments as the proof of relation $(ii)$ in Theorem \ref{universalityintegralkernel}.\\
		Relation $(iii)$ is valid because if $x_{1}, \ldots , x_{n}$ are distinct points in $X$ and $c_{1}, \ldots , c_{n} \in \mathbb{C}$, but not all of them are null, we have that for every $w \in \Omega$, $ \sum_{i,j=1}^{n}c_{i}\overline{c_{j}}p_{w}(x_{i}, x_{j})>0$, and this is a continuous function on the variable $w$. But then, if 
		$$
		0=\sum_{i,j=1}^{n}c_{i}\overline{c_{j}}P(x_{i}, x_{j})= \int_{\Omega}\sum_{i,j=1}^{n}c_{i}\overline{c_{j}}p_{w}(x_{i}, x_{j})d\lambda(w)
		$$
		then $\lambda$ is the zero measure.\\
		Now we prove $(iv)$. Let $\eta \in \mathfrak{M}(X)$ be a scalar valued nonnegative finite Radon measure of compact support. Then
		\begin{align*}
		0&=\int_{X}\int_{X}P(x,y)d\eta(y)d\overline{\eta}(x)=\int_{X}\int_{X}\int_{\Omega}p_{w}(x,y)d\lambda(w)d\eta(y)d\overline{\eta}(x)\\
		&=\int_{\Omega}\int_{X}\int_{X}p_{w}(x,y)d\eta(y)d\overline{\eta}(x)d\lambda(w).
		\end{align*}
		The integral and the change of order are possible, because $\eta$ has compact support and \\ $\int_{\Omega}\int_{X}\int_{X}|p_{w}(x,y)|d|\eta|(x)d|\eta|(y)d\lambda(w)<\infty$.  But since $p_{w}$ is an universal kernel for all $w \in \Omega$, if  $\eta$ is non zero, we have that
		$$	
		\int_{X}\int_{X} p_{w}(x,y)d\eta(y)d\overline{\eta}(x)> 0, \quad w \in \Omega		
		$$
		but then, the triple integral is zero if and only if the measure $\lambda$ is zero, which proves our claim.\\
		The proof of $(v)$ is identical to the proof of $(iv)$.
	\end{proof}
	
	On the following result, we  prove that if an operator valued positive definite kernel has a property, then all of its scalar valued projections also have the same property.

	\begin{lem}\label{scalarvaluedprojectionsuniversality}Let $X$ be a Hausdorff space and $K: X \times X \to \mathcal{L}(\mathcal{H})$ be an operator valued strictly positive definite (universal) kernel. Then for every $v  \in \mathcal{H}\setminus{\{0\}}$ the scalar valued kernel
		$$
		(x,y) \in X \times X \to K_{v}(x,y):=\langle K(x,y)v,v \rangle_{\mathcal{H}} \in \mathbb{C} 
		$$	
		is strictly positive definite (universal). In addition, if $X$ is locally compact and the operator valued kernel $K$ is $C_{0}$-universal (integrally strictly positive definite) then the complex valued kernels $K_{v}$ are $C_{0}$-universal (integrally strictly positive definite).  
	\end{lem}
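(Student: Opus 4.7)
The unifying idea is to pass from a scalar object (scalar coefficients, or a scalar Radon measure) to an $\mathcal{H}$-valued object by tensoring with the fixed vector $v\neq 0$, and then invoke the appropriate characterization of the corresponding operator-valued property of $K$.

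For the strictly positive definite case, the argument is immediate: given distinct $x_1,\dots,x_n\in X$ and scalars $c_1,\dots,c_n\in\mathbb{C}$ not all zero, the identity
\[
\sum_{\mu,\nu=1}^{n}c_\mu\overline{c_\nu}K_v(x_\mu,x_\nu)=\sum_{\mu,\nu=1}^{n}\langle K(x_\mu,x_\nu)(c_\mu v),(c_\nu v)\rangle_{\mathcal{H}}
\]
together with the observation that the vectors $c_\mu v$ are not all zero whenever $v\neq 0$ makes the right-hand side strictly positive by the hypothesis on $K$.

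For universality, $C_0$-universality, and integral strict positive definiteness I plan to use the common quadratic reformulation given by Theorem \ref{oneintegraltodoubleintegral}(ii)--(iii) (together with Definition \ref{intstricdefinition}): each of these properties amounts to the positivity of $\langle K_\eta,K_\eta\rangle_{\mathcal{H}_K}$ on all nonzero $\mathcal{H}$-valued Radon measures $\eta$ in the relevant class (compactly supported in $\mathcal{A}$ for the universal case; in $\mathfrak{M}(X,\mathcal{H})$ for the other two). Given any nonzero scalar Radon measure $\mu$ in the corresponding scalar class, I would set $\eta:=v\cdot\mu$, i.e.\ $d\eta(x):=v\,d\mu(x)$. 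Then $\eta$ inherits the support and regularity of $\mu$, and is nonzero because $v\neq 0$. Writing $d\mu=g\,d|\mu|$ for the polar decomposition of $\mu$ (so $|g|=1$), one has $d|\eta|=\|v\|\,d|\mu|$ and Radon-Nikod\'ym density $H=vg/\|v\|$; substituting into Theorem \ref{oneintegraltodoubleintegral}(ii) and canceling the $\|v\|^2$ factors yields
\[
\langle K_\eta,K_\eta\rangle_{\mathcal{H}_K}=\int\!\!\int K_v(x,y)\,g(y)\overline{g(x)}\,d|\mu|(x)\,d|\mu|(y)=\int\!\!\int K_v(x,y)\,d\mu(y)\,d\overline{\mu}(x),
\]
which is precisely the scalar quadratic form attached to $\mu$ via $K_v$. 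Since $K$ enjoys the operator-valued property and $\eta\neq 0$, the left-hand side is strictly positive; hence so is the right-hand side, giving the corresponding property of $K_v$.

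The only preliminary step not explicit above is the verification that $\mathcal{H}_{K_v}\subset C(X)$ (universal case) and $\mathcal{H}_{K_v}\subset C_0(X)$ ($C_0$-universal and integrally strictly positive definite cases); these follow at once from Proposition \ref{rkhscontained} applied to $K_v$, using the bound $|K_v(x,x)|\leq\|K(x,x)\|_{\mathcal{L}(\mathcal{H})}\|v\|^2$ and the continuity (respectively, $C_0$-behavior) of $y\mapsto K(x,y)v$ inherited from $K$. No substantial obstacle is expected beyond the routine Radon-Nikod\'ym bookkeeping needed to identify the operator-valued quadratic form with its scalar counterpart.
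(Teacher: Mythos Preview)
Your proposal is correct and follows essentially the same route as the paper: lift the scalar measure $\mu$ to the $\mathcal{H}$-valued measure $\eta=v\,d\mu$, identify the scalar double integral $\int\!\!\int K_v\,d\mu\,d\overline{\mu}$ with $\langle K_\eta,K_\eta\rangle_{\mathcal{H}_K}$ via Lemma~\ref{manipulation}/Theorem~\ref{oneintegraltodoubleintegral}, and invoke the operator-valued hypothesis to force positivity. Your explicit check that $\mathcal{H}_{K_v}\subset C(X)$ (respectively $C_0(X)$) via Proposition~\ref{rkhscontained} is a detail the paper leaves implicit but is indeed required before Theorem~\ref{oneintegraltodoubleintegral} can be applied to $K_v$.
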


	\begin{proof}
		We focus the proof on the universality case, being the other arguments similar.  Being the kernel $K$ universal, if $\eta \in \mathfrak{M}(X, \mathcal{H})$ has compact support and
		$$
		\int_{X}\langle \int_{X}K(x,y)d\eta(x),d\eta(y)\rangle=0 \in \mathbb{C} 
		$$	
		then $\eta$ is the zero measure. Then, given a nonzero vector $v \in \mathcal{H}$, if  $\lambda \in \mathfrak{M}(X)$ has compact support and satisfies
		$$
		\int_{X}\int_{X}K_{v}(x,y)d\lambda(y)d\overline{\lambda}(x)=0,
		$$
		define the $\mathcal{H}$ valued Radon measure of bounded variation $d\eta= vd\lambda \in \mathfrak{M}(X, \mathcal{H})$, and  by Lemma \ref{manipulation}
		$$
		0=\int_{X}\int_{X}K_{v}(x,y)d\lambda(y)d\overline{\lambda}(x)=\int_{X}\langle K(x,y)v,v \rangle_{\mathcal{H}}d\lambda(y)d\overline{\lambda}(x)= \int_{X}\langle \int_{X}K(x,y)d\eta(x),d\eta(y) \rangle.
		$$	
		This can only occur if $\lambda$ is the zero measure, then the kernel $K_{v}$ is universal.  
	\end{proof}

	\begin{proof}[\textbf{Proof of Theorem \ref{universalityintegralkernel}}]  \label{ProofofTheoremuniversalityintegralkernel}
		In order to simplify some expressions, we suppose that the function $p$ is bounded by $1$. We skip the proof of $(i)$ because the argument is similar to the one we use in $(iii)$.
		As for $(ii)$, we focus on the continuity, being the boundedness an easier and similar argument. We have that
		\begin{align*}
		|\langle [P(x,y)-P(x^{\prime}, y^{\prime})]u, v \rangle_{\mathcal{H}}|&= |\langle \int_{\Omega}[p_{w}(x,y) - p_{w}(x^{\prime},y^{\prime}) ]  \langle G(w)u, v  \rangle_{\mathcal{H}} d\lambda(w) | \\
		&\leq  \|u\|_{\mathcal{H}}\|v\|_{\mathcal{H}} \int_{\Omega}|p_{w}(x,y) - p_{w}(x^{\prime},y^{\prime})|\|G(w)\|_{\mathcal{L}(\mathcal{H})}  d\lambda(w)
		\end{align*}
		Since $\Lambda$ is a Radon measure on $\Omega$, for every $\epsilon >0$ there exists a compact set $\mathcal{C} \subset \Omega$ for which $|\Lambda|(\Omega - \mathcal{C}  )< \epsilon$.  On the other hand, since $\mathcal{C}\times \{x\}\times \{y\}$ is a compact set and $p$ is continuous, there exists open neighborhoods of $x$ and of $y$ for which $|p_{w}(x,y) - p_{w}(x^{\prime},y^{\prime})| < \epsilon$ for all  $x^{\prime }$, $y^{\prime}$ on the open neighborhoods of $x$ and $y$  respectively and $ w \in \mathcal{C}$. Gathering these information, we obtain that  
		$$
		\|P(x,y) - P(x^{\prime}, y^{\prime})\|_{\mathcal{L}(\mathcal{H})} <  \epsilon(|\Lambda|(\mathcal{C}) + 2) 
		$$ 
		which proves the continuity on the operator norm. The fact that $\mathcal{H}_{K} \subset C(X, \mathcal{H})$ is a direct consequence of the continuity on the operator norm and Proposition \ref{rkhscontained}.\\ 
		As for $(iii)$, a measure $\Lambda_{v}$ is nonzero if and only if $P_{v}$ is strictly positive definite by Theorem \ref{universalityintegralkernelscalar}. Now, suppose that all measures $\Lambda_{v}$ are nonzero and  let $x_{1}, \ldots , x_{n}$ be distinct points in $X$ and $v_{1}, \ldots , v_{n} \in \mathcal{H}$ such that
		$$
		0=\sum_{i,j=1}^{n} \langle P(x_{i},x_{j})v_{i}, v_{j} \rangle_{\mathcal{H}} = \sum_{i,j=1}^{n}  \int_{\Omega}p_{w}(x_{i}, x_{j})  \langle G(w) v_{i}, v_{j} \rangle_{\mathcal{H}} d\lambda(w) $$
		But since
		$$
		\sum_{i,j=1}^{n}  p_{w}(x_{i}, x_{j}) \langle G(w) v_{i}, v_{j} \rangle_{\mathcal{H}} \geq 0, \quad w \in \Omega
		$$
		and the measure $\lambda$ is nonnegative,  this double sum must be equal to $0$ almost everywhere on $\lambda$. The matrix $[ \langle G(w) v_{i}, v_{j} \rangle_{\mathcal{H}}]_{i,j=1}^{n}$ is positive semidefinite by Lemma \ref{simplifyoperatornonegativemeasure}. After using an argument involving the Gram representation of this matrix and the fact that the kernel $p_{w}$ is  strictly positive definite  for every $w \in \Omega$, we obtain that
		$$   
		\langle G(w) v_{i}, v_{i} \rangle_{\mathcal{H}} =0, \quad 1\leq i \leq n 
		$$
		almost everywhere on $\lambda$, and then $   
		\langle \Lambda(A)v_{i}, v_{i}\rangle_{\mathcal{H}}= \int_{A} \langle G(w) v_{i}, v_{i} \rangle_{\mathcal{H}} d\lambda(w)=0
		$ for every Borel measurable set $A$, so we must have that all vectors $v_{i}$ are zero and  the kernel is strictly positive definite. The remaining implication is a consequence of Lemma \ref{scalarvaluedprojectionsuniversality}.\\
		The proof of $(iv)$ is a bit longer. A measure $\Lambda_{v}$ is nonzero if and only if $P_{v}$ is universal by Theorem \ref{universalityintegralkernelscalar}. Now  assume that all measures $\Lambda_{v}$ are nonzero. Let $\eta \in \mathfrak{M}(X, \mathcal{H})$ be a finite Radon measure with bounded variation and compact support such that  
		$$
		\int_{X}\langle  \int_{X} P(x,y) d\eta(x), d\eta(y)\rangle =0 \in \mathbb{C} 
		$$  
		Since every Hilbert valued measure of bounded variation admits a Radon-Nikod\'ym decomposition, (explained at the comments after Definition \ref{Radon Nykodin Property}), if a measure $\eta$  satisfy this requirements, there exists a Bochner measurable function $H: X\to \mathcal{H}$, Bochner integrable with respect to $|\eta|$ such that $\eta = H d|\eta |$. By Lemma \ref{manipulation} and the arguments presented on the proof of Theorem \ref{oneintegraltodoubleintegral} we have that  
		$$
		0=\int_{\Omega}\int_{X} \int_{X}p_{w}(x,y)\langle G(w)H(y), H(x)\rangle_{\mathcal{H}}d|\eta|(x)  d|\eta|(y)d\lambda(w)
		$$
		The kernel defined by $p_{w}$ is positive  definite and bounded for every $w \in \Omega$ and the function $H$ is Bochner integrable with respect to $|\eta|$, so by Lemma \ref{almostaschurproduct},  for every $w \in \Omega$ 
		\begin{equation}\label{double}
		\int_{X} \int_{X}p_{w}(x,y)\langle G(w)H(y), H(x)\rangle_{\mathcal{H}}d|\eta|(x)  d|\eta|(y)\geq 0.
		\end{equation}
		Since $\lambda$  is a nonnegative measure, the double integral in Equation \ref{double} is zero   $\lambda$  almost everywhere. Let $G^{1/2}(w)$ be the unique positive semidefinite square root of the operator $G(w)$, then
		\begin{align*}
		0&=\int_{X} \int_{X}p_{w}(x,y)\langle G(w)H(y), H(x)\rangle_{\mathcal{H}}d|\eta|(x)  d|\eta|(y)\\
		&= \int_{X} \int_{X}p_{w}(x,y)\langle G^{1/2}(w)H(y), G^{1/2}(w)H(x)\rangle_{\mathcal{H}}d|\eta|(x)  d|\eta|(y).
		\end{align*}
		Adding coordinates, we get that
		\begin{align*}
		0&=\int_{X} \int_{X}p_{w}(x,y)\langle G(w)H(y), H(x)\rangle_{\mathcal{H}}d|\eta|(x)  d|\eta|(y)\\
		&= \int_{X} \int_{X}p_{w}(x,y)\left (\sum_{\mu \in \mathcal{I}}[G^{1/2}(w)H(y)]_{\mu}\overline{[G^{1/2}(w)H(x)]_{\mu}} \right )d|\eta|(x)  d|\eta|(y)\\
		&= \sum_{\mu \in \mathcal{I}} \int_{X} \int_{X}p_{w}(x,y)[G^{1/2}(w)H(y)]_{\mu}\overline{[G^{1/2}(w)H(x)]_{\mu}} d|\eta|(x)  d|\eta|(y).
		\end{align*}
		The third equality is a consequence of the Lebesgue Dominated Convergence  Theorem for nonnegative complex valued measures. But then, we must have that
		$$
		\int_{X} \int_{X}p_{w}(x,y)[G^{1/2}(w)H(y)]_{\mu}\overline{[G^{1/2}(w)H(x)]_{\mu}} d|\eta|(x)  d|\eta|(y)= 0, \quad \mu \in \mathcal{I}
		$$
		$\lambda$ almost everywhere (all the remaining equalities and properties we present that depends on $w$ holds $\lambda$ almost everywhere, but we do not specify that to simplify the reading). But since $[G^{1/2}(w)H(y)]_{\mu}d|\eta|$ is a scalar valued  finite Radon measure of compact support on $X$, and
		\begin{align*}
		0&=\int_{X} \int_{X}p_{w}(x,y)[G^{1/2}(w)H(y)]_{\mu}\overline{[G^{1/2}(w)H(x)]_{\mu}} d|\eta|(x)  d|\eta|(y) \\
		&\int_{X} \int_{X}p_{w}(x,y) d([G^{1/2}(w)H(y)]_{\mu}|\eta|(y))  d\overline{([G^{1/2}(w)H(x)]_{\mu}|\eta|)}(x), \quad \mu \in \mathcal{I}
		\end{align*}
		the fact that $p_{w}$ is an universal kernel for every $w \in \Omega$ and Theorem \ref{oneintegraltodoubleintegral} on the complex valued case, implies that the measure $[G^{1/2}(w)H]_{\mu}d|\eta|$ is the zero measure. In particular, for every Borel measurable set $A \subset X$ and $x \in X$ 
		$$
		\int_{A}\langle G(w)H(y), H(x) \rangle_{\mathcal{H}}d|\eta|(y)=  \sum_{\mu \in \mathcal{I}} \overline{[G^{1/2}(w)H(x)]_{\mu}} \int_{A} [G^{1/2}(w)H(y)]_{\mu}d|\eta|(y)=0.
		$$
		But on the other hand, since $H$ is Bochner integrable with respect to $|\eta|$, it is valid that
		$$
		\langle G(w)\eta(A), H(x)\rangle_{\mathcal{H}}=\int_{A}\langle G(w)H(y), H(x) \rangle_{\mathcal{H}}d|\eta|(y)=0,
		$$
		and then
		$$
		\langle G(w)\eta(A), \eta(A)\rangle_{\mathcal{H}}=\int_{A}\langle G(w)\eta(A), H(x)\rangle_{\mathcal{H}}d|\eta|(x)=0.
		$$
		Finally, integrating this over $\Omega$, we obtain that
		$$
		\langle \Lambda(\Omega)\eta(A), \eta(A)\rangle_{\mathcal{H}}= \int_{\Omega}\langle G(w)\eta(A), \eta(A)\rangle_{\mathcal{H}}d\lambda(w)= 0. 
		$$
		By the hypothesis on the operator $\Lambda(\Omega)$, we must have that $\eta(A)=0 \in \mathcal{H}$, since $A$ was an arbitrary Borel measurable set, the measure $\eta$ is the zero measure, which implies that the kernel $P$ is universal by Theorem \ref{oneintegraltodoubleintegral}. The remaining implication is a consequence of Lemma \ref{scalarvaluedprojectionsuniversality}\\
		The proof of $(v)$ follows the same arguments as the proof of $(iv)$, without the assumption that $\eta$ has compact support.\end{proof}

	\begin{proof}[\textbf{Proof of Corollary \ref{compcsuppor}}]The function
		$$	
		p(w,x,y)= f(w(x -y)) \in C([0, \infty)\times \mathbb{R}^{m}\times \mathbb{R}^{m}),	 
		$$
		is bounded by $f(0)$ and for every $w \in (0, \infty)$ the kernel $p_{w}$ is  positive definite, so the statements on the Corollary are  a direct consequence of Theorem \ref{universalityintegralkernel}.
	\end{proof}  
	
	Before proving Theorem \ref{universaleuclidian2} and Theorem \ref{universaleuclidianaskey}, we need a result that connect the scalar valued projections of an operator valued function and the function itself, concerning the $\ell$-times completely monotone property in a similar way  as presented in \cite{neeb1998}.  
	
	\begin{lem}\label{uniquelcomplmon} Let $F:[0, \infty) \to \mathcal{L}(\mathcal{H})$ be an  ultraweakly continuous function and $\ell \in \{2, \ldots , \infty \}$. Then $F_{v}$ is $l$-times completely monotone for all $v \in \mathcal{H}$ if and only if there exists a weak$^{*}$  nonnegative finite  $\mathcal{L}(\mathcal{H})$ valued measure $\Lambda$ for which  the weak$^{*}$ integral representation holds 
		$$
		F(t)= \int_{[0, \infty)} h_{\ell}(rt)d\Lambda(r)	
		$$	
		where $h_{\ell}(t):=(1-t)^{\ell-1}_{+}$ for $\ell \in \mathbb{N}$ and $h_{\infty}(t):= e^{-rt}$. The representation is unique.
	\end{lem}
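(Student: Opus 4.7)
The plan is to reduce to the classical scalar results of Williamson (for finite $\ell$) and Hausdorff--Bernstein--Widder (for $\ell = \infty$) by taking diagonal projections, and then reassemble the family of scalar measures into an operator-valued measure via polarization. The easy direction is essentially automatic: if $F(t) = \int h_\ell(rt)\, d\Lambda(r)$ in the weak$^{*}$ sense, then for each $v \in \mathcal{H}$ the scalar measure $\lambda_v(A) := \langle \Lambda(A)v,v\rangle_{\mathcal{H}}$ is nonnegative, finite and Radon, and $F_v(t) = \int h_\ell(rt)\,d\lambda_v(r)$, so the scalar version of the theorem shows $F_v$ is $\ell$-times completely monotone.

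For the hard direction, I would start by fixing $v \in \mathcal{H}$. The ultraweak continuity of $F$ makes $F_v$ continuous, and by hypothesis it is $\ell$-times completely monotone, so the scalar integral representation gives a unique nonnegative finite Radon measure $\lambda_v$ on $[0,\infty)$ with $F_v(t) = \int_{[0,\infty)} h_\ell(rt)\,d\lambda_v(r)$. Since $h_\ell(0)=1$, the total mass is $\lambda_v([0,\infty)) = F_v(0) = \langle F(0)v,v\rangle_{\mathcal{H}} \le \|F(0)\|_{\mathcal{L}(\mathcal{H})}\|v\|_{\mathcal{H}}^{2}$.

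The main step, and the one I expect to be the principal obstacle, is upgrading the assignment $v \mapsto \lambda_v$ to an operator-valued measure. For each Borel set $A \subset [0,\infty)$ I would set $q_A(v) := \lambda_v(A)$ and show that $q_A$ is a bounded nonnegative quadratic form. Nonnegativity is clear. Homogeneity $\lambda_{\alpha v} = |\alpha|^{2}\lambda_v$ follows by applying the scalar uniqueness to the identity $F_{\alpha v}(t) = |\alpha|^{2}F_v(t)$. The parallelogram identity $F_{u+v}(t)+F_{u-v}(t) = 2F_u(t)+2F_v(t)$ combined again with scalar uniqueness gives $\lambda_{u+v}+\lambda_{u-v} = 2\lambda_u+2\lambda_v$ as measures, hence $q_A(u+v)+q_A(u-v) = 2q_A(u)+2q_A(v)$. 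Boundedness follows from $q_A(v) \le q_{[0,\infty)}(v) \le \|F(0)\|_{\mathcal{L}(\mathcal{H})}\|v\|_{\mathcal{H}}^{2}$. By polarization and the Riesz representation theorem for bounded sesquilinear forms, there exists a positive operator $\Lambda(A) \in \mathcal{L}(\mathcal{H})$ with $\langle \Lambda(A)v,v\rangle_{\mathcal{H}} = \lambda_v(A)$ and $\|\Lambda(A)\|_{\mathcal{L}(\mathcal{H})} \le \|F(0)\|_{\mathcal{L}(\mathcal{H})}$.

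Finally, I would verify that $\Lambda$ is a weak$^{*}$ Radon measure and satisfies the representation. Weak$^{*}$ countable additivity on disjoint unions reduces, via polarization, to the countable additivity of each scalar $\lambda_v$. The weak$^{*}$ integral representation is obtained by noting that $\langle F(t)v,v\rangle_{\mathcal{H}} = \int h_\ell(rt)\,d\lambda_v(r) = \int h_\ell(rt)\,d\langle \Lambda(r)v,v\rangle_{\mathcal{H}}$ for every $v$, and extending to arbitrary pairs $(u,v)$ by polarization. Uniqueness of $\Lambda$ follows from the uniqueness of each $\lambda_v$ in the scalar Williamson/Bernstein representation, combined once more with polarization.
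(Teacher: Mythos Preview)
Your proposal is correct and follows essentially the same strategy as the paper: reduce to the scalar Williamson/Bernstein theorem to obtain the family $\{\lambda_v\}$, then reassemble by polarization into the operator-valued measure $\Lambda$, with uniqueness coming from scalar uniqueness. The paper writes the polarization identity directly and simply asserts that the resulting $\Lambda$ is bounded by $F(0)$ and is a weak$^{*}$ nonnegative measure, whereas you take the slightly more explicit route of first checking that $q_A(v)=\lambda_v(A)$ is a bounded nonnegative quadratic form; this extra care is harmless and in fact fills in details the paper leaves implicit.
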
 
	
	\begin{proof} If $F$ admits the weak$^{*}$ integral representation, then for every $v \in \mathcal{H}$
		$$
		F_{v}(t)= \int_{[0,\infty)}h_{\ell}(rt)d\Lambda_{v}(r),
		$$	
		and since $\Lambda_{v}$ is a scalar valued nonnegative finite Radon measure on  $[0, \infty)$, $F_{v}$ is a $\ell$-times completely monotone function. Note that since the representation is unique for scalar valued measures, $\Lambda_{v}$ is uniquely defined for every $v \in \mathcal{H}$, but the operator valued measure $\Lambda$ is also uniquely defined by the scalar valued projection measures $\Lambda_{v}$, $v \in \mathcal{H}$, so $\Lambda$ is also uniquely defined.\\
		Conversely, if all scalar valued projections of the function $F$ are $\ell$-times completely monotone, then for every $v \in \mathcal{H}$ there exists an unique scalar valued nonnegative finite Radon measure $\Lambda_{v}$ for which  
		$$
		F_{v}(t):=\langle F(t)v, v \rangle_{\mathcal{H}}= \int_{[0,\infty)}h_{\ell}(rt)d\Lambda_{v}(r).
		$$	
		Define $\Lambda : \mathscr{B}([0,\infty)) \to \mathcal{L}(\mathcal{H})$ by 
		$$
		4\langle \Lambda(A)u, v \rangle_{\mathcal{H}}:= \Lambda_{u+v}(A) - \Lambda_{u-v}(A) -i(\Lambda_{u-iv}(A) - \Lambda_{u+iv}(A)),
		$$
		then $\Lambda$ is bounded by $F(0)$, is an weak$^{*}$ nonnegative finite $\mathcal{L}(\mathcal{H})$ valued measure and
		$$
		4\int_{[0, \infty)} h_{\ell}(rt)d\Lambda_{u,v}(r)=  F_{u+v}(t) -F_{u-v}(t)-i(F_{u-iv}(t)-F_{u+iv}(t))= 4\langle F(t) u,v \rangle_{\mathcal{H}}. 
		$$\end{proof}
	
	With an similar argument it is also possible to prove a characterization of the ultraweakly continuous positive definite operator valued radial kernels on Euclidean spaces. 
	
	\begin{lem}\label{uniquelcomplmonradial} Let $F:[0, \infty) \to \mathcal{L}(\mathcal{H})$ be an  ultraweakly continuous function and $m \in \mathbb{N}$. The kernel
		$$
		(x,y) \in \mathbb{R}^{m} \times \mathbb{R}^{m} \to F(\|x-y\|) \in \mathcal{L}(\mathcal{H})
		$$
		is positive definite if and only if there exists a weak$^{*}$  nonnegative finite  $\mathcal{L}(\mathcal{H})$ valued measure $\Lambda$ for which  the weak$^{*}$ integral representation holds 
		$$
		F(t)= \int_{[0, \infty)} \Omega_{m}(rt)d\Lambda(r).	
		$$	
		The representation is unique. Additionally, the kernel being positive definite is also equivalent at every kernel $ (x,y) \in \mathbb{R}^{m} \times \mathbb{R}^{m} \to \langle F(\|x-y\|)v,v\rangle_{\mathcal{H}}  \in \mathbb{C}
		$, $v \in \mathcal{H}\setminus{\{0\}}$ being positive definite.\end{lem}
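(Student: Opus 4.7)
The plan is to mirror the argument used for Lemma \ref{uniquelcomplmon} almost verbatim, replacing the Williamson-type representation $h_{\ell}(rt)$ by the Schoenberg building block $\Omega_{m}(rt)$, i.e.\ using the scalar-valued integral representation for continuous positive definite radial kernels on $\mathbb{R}^{m}$ from \cite{wend} as the analogue of the Williamson theorem.

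For the forward direction, if $F$ admits the weak$^{*}$ integral representation against some weak$^{*}$ nonnegative finite $\mathcal{L}(\mathcal{H})$-valued measure $\Lambda$, then for every $v\in\mathcal{H}$ the scalar projection satisfies $F_{v}(t)=\int_{[0,\infty)}\Omega_{m}(rt)\,d\Lambda_{v}(r)$, where $\Lambda_{v}$ is a scalar valued nonnegative finite Radon measure. By \cite{wend} this is exactly the integral representation that makes the kernel $(x,y)\mapsto F_{v}(\|x-y\|)$ positive definite on $\mathbb{R}^{m}$; so every scalar projection of the operator valued radial kernel is positive definite. Using a Gram/polarization argument as at the end of the proof of Theorem \ref{universalityintegralkernel} (in conjunction with Lemma \ref{simplifyoperatornonegativemeasure}), positive definiteness of all scalar projections $\langle F(\|x-y\|)v,v\rangle_{\mathcal{H}}$ upgrades to positive definiteness of the operator valued kernel $F(\|x-y\|)$, which simultaneously settles the ``Additionally'' claim.

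For the converse (the harder direction), suppose the operator valued kernel $F(\|x-y\|)$ is positive definite, or more weakly that all scalar projections $F_{v}(\|x-y\|)$ are positive definite. Then by \cite{wend} each $F_{v}$ admits a unique scalar representation $F_{v}(t)=\int_{[0,\infty)}\Omega_{m}(rt)\,d\Lambda_{v}(r)$ with $\Lambda_{v}$ a scalar valued nonnegative finite Radon measure. Following the construction in the proof of Lemma \ref{uniquelcomplmon}, I define a set function $\Lambda:\mathscr{B}([0,\infty))\to \mathcal{L}(\mathcal{H})$ via the polarization formula
$$
4\langle \Lambda(A)u,v\rangle_{\mathcal{H}}:= \Lambda_{u+v}(A)-\Lambda_{u-v}(A)-i\bigl(\Lambda_{u-iv}(A)-\Lambda_{u+iv}(A)\bigr).
$$
The bound $|\Lambda_{v}([0,\infty))|=F_{v}(0)\le \|F(0)\|_{\mathcal{L}(\mathcal{H})}\|v\|^{2}$ shows that for each Borel $A$ the sesquilinear form is bounded and positive, hence $\Lambda(A)$ is a well-defined positive semidefinite operator of norm at most $\|F(0)\|_{\mathcal{L}(\mathcal{H})}$, and weak$^{*}$ countable additivity follows from scalar countable additivity of each $\Lambda_{v}$. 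Integrating $\Omega_{m}(rt)$ against $\Lambda$ in the weak$^{*}$ sense and comparing with the scalar identity $\langle F(t)u,v\rangle_{\mathcal{H}}=\tfrac14(F_{u+v}(t)-F_{u-v}(t)-i(F_{u-iv}(t)-F_{u+iv}(t)))$ yields the desired representation.

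Uniqueness is the easiest step: any two representing weak$^{*}$ measures induce the same scalar $\Lambda_{v}$ for every $v$, and these scalar measures are unique by \cite{wend}; since an ultraweakly bounded operator valued measure is determined by its scalar projections, $\Lambda$ is unique. I expect the only delicate point to be checking weak$^{*}$ countable additivity of the polarized $\Lambda$ (in particular, ensuring that the ultraweak continuity hypothesis on $F$ is compatible with the pointwise-defined polarization), but this is handled exactly as in Lemma \ref{uniquelcomplmon} and requires no new ideas beyond bounding each $\Lambda_{v}$ uniformly by $F(0)$ and invoking scalar dominated convergence coordinate-wise.
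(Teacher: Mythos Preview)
Your proposal is essentially correct and is exactly the approach the paper intends: the paper itself only writes ``With a similar argument it is also possible to prove\ldots'' immediately after Lemma~\ref{uniquelcomplmon}, so mirroring that proof with $\Omega_{m}(rt)$ in place of $h_{\ell}(rt)$ and the scalar Schoenberg representation from \cite{wend} in place of the Williamson theorem is precisely what is expected.

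One small point of precision: your reference to ``the end of the proof of Theorem~\ref{universalityintegralkernel}'' and Lemma~\ref{simplifyoperatornonegativemeasure} for the step \emph{all scalar projections PD $\Rightarrow$ operator kernel PD} is slightly off, because those results assume a global Radon--Nikod\'ym decomposition $d\Lambda=G\,d\lambda$, which a general weak$^{*}$ nonnegative $\mathcal{L}(\mathcal{H})$-valued measure need not admit. The fix is immediate and requires no new idea: given finitely many vectors $v_{1},\dots,v_{n}$, restrict $\Lambda$ to the finite-dimensional span of the $v_{i}$; the resulting $M_{n}(\mathbb{C})$-valued measure $A\mapsto[\langle\Lambda(A)v_{i},v_{j}\rangle]_{i,j}$ automatically has a Radon--Nikod\'ym density with respect to its trace (as in Lemma~\ref{radonnikodintrace} for finite-dimensional $\mathcal{H}$), and then the Schur-type inequality $\sum_{i,j}\Omega_{m}(r\|x_{i}-x_{j}\|)\,g_{i,j}(r)\geq 0$ gives positive definiteness of the operator kernel directly from the representation. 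With this adjustment your argument is complete and matches the paper.
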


	\begin{proof}[\textbf{Proof of Theorem \ref{universaleuclidian2}}] If the stated kernel is positive definite, then by Lemma \ref{uniquelcomplmon}  there exists an ultraweakly nonnegative $\mathcal{L}(\mathcal{H})$ valued Radon measure $\Lambda$ on $[0, \infty)$ for which the ultraweakly representation is valid
		\begin{equation}\label{operatorgaussian}
		F_{v}(\|x-y\|)= \langle F(\|x-y\|)v,v \rangle_{\mathcal{H}}= \int_{[0, \infty)}e^{-r\|x-y\|^{2}}d\Lambda_{v}(r),\quad  x,y \in \mathbb{R}^{m}.	  
		\end{equation}
		Note that $F(0) = \int_{[0, \infty)}d\Lambda(r) = \Lambda([0, \infty))$ is a trace class operator and  also  that the  function 
		$$
		p: (0, \infty) \times \mathbb{R}^{m} \times \mathbb{R}^{m} \to p(r, x,y) = :e^{-r\|x-y\|^{2}} \in \mathbb{C}
		$$ is continuous  and  bounded.  The kernel $p_{r}$ is $C_{0}^{\infty}$-universal for every  $r>0$ by \cite{Simon-Gabriel2018}. In particular, by Lemma \ref{radonnikodintrace},  $\Lambda$ is a countably additive operator valued  measure that admits a Radon-Nikod\'ym decomposition $d\Lambda= Gd\Lambda_{T}$ and the integral  \ref{operatorgaussian} can be reformulated as the Bochner integral $	F(\|x-y\|)= \int_{[0, \infty)}e^{-r\|x-y\|^{2}}G(r)d\Lambda_{T}(r)$.\\
		After this analysis, we begin the proof.\\
		($ii\to i $) If $F$ defines an universal kernel that is positive definite  then $F$ defines a strictly positive definite kernel by Theorem \ref{oneintegraltodoubleintegral}.\\
		($i \to iii$) If $F$ defines a strictly positive definite kernel then $F_{v}(\|x-y\|)$ is also a  strictly positive definite kernel  for every $v \in \mathcal{H}\setminus{\{0\}}$ by Lemma \ref{scalarvaluedprojectionsuniversality}, but since 
		$$ 
		F_{v}(\|x-y\|)= \int_{[0, \infty)}e^{\|x-y\|^{2}r}d\langle \Lambda(r)v,v \rangle_{\mathcal{H}},\quad  x,y \in \mathbb{R}^{m}	  
		$$
		this can only happen if the complex valued measure $\Lambda_{v}:=\langle \Lambda v, v\rangle_{\mathcal{H}}$ is not concentrated at $0$, or equivalently, the function $F_{v}$ is nonconstant, \cite{sun}.\\
		($iii \to ii$) Finally, if each function $F_{v}$ is non constant, then the measure $\Lambda_{v}$ is such that $\Lambda_{v}((0, \infty))>0$  by \cite{sun} for all $v \in \mathcal{H}\setminus{\{0\}}$, but then 
		$\Lambda((0, \infty))$, which is a trace class positive semidefinite operator must be positive definite, applying Theorem \ref{universalityintegralkernel}, we obtain that the kernel 
		$$
		(x,y) \in \mathbb{R}^{m} \times \mathbb{R}^{m} \to \int_{(0, \infty)}e^{\|x-y\|^{2}r}d\Lambda(r)= F(\|x-y\|) - \Lambda(\{0\})
		$$	
		is universal, and since adding a constant positive semidefinite operator does not change this fact (this can be proved directly from the definition), the kernel defined by $F$ is universal. \\
		Now we focus on the second part of the Theorem. Since $\lim_{\|y\| \to \infty}F_{v}(x-y)= \Lambda_{v}(\{0\}) $, if $\mathcal{H}_{K} \subset C_{0}(\mathbb{R}^{m}, \mathcal{H})$ then $\Lambda(\{0\})=0$ and $F_{v} \in C_{0}([0, \infty))$. Conversely, if $F_{v} \in C_{0}([0, \infty))$ for every $v \in \mathcal{H}\setminus{\{0\}}$, the same limit implies that $\Lambda(\{0\})=0$.  Note that
		\begin{align*}
		\|F(x-y)\|_{\mathcal{L}(\mathcal{H})}&=\|\int_{(0,\infty)}e^{-r\|x-y\|^{2}}G(r)d\Lambda_{T}(r)\|_{\mathcal{L}(\mathcal{H})}\\
		&\leq \int_{(0, \infty)}e^{-r\|x-y\|^{2}}\|G(r)\|_{\mathcal{L}(\mathcal{H})}d\Lambda_{T}(r) \to 0 , \quad \|y\| \to \infty 
		\end{align*}
		which is a stronger property compared to the ones we need to apply  Proposition \ref{rkhscontained}  to obtain that $\mathcal{H}_{K} \subset C_{0}(\mathbb{R}^{m},\mathcal{H})$. 
		The proof for the remaining equivalences follows by the same path as the first one, by using  relation $(v)$ in Theorem \ref{universalityintegralkernel}. 
	\end{proof}
	
	\begin{proof}[\textbf{Proof of Theorem \ref{universaleuclidianaskey}}] 
		The function
		$$
		p: (0, \infty) \times \mathbb{R}^{m} \times \mathbb{R}^{m} \to p(r, x,y) = :(1-r\|x-y\|)^{l-1}_{+}\in \mathbb{C}
		$$ is continuous and  bounded. The kernel $p_{r}$ is $C_{0}$-universal for every  $r>0$, because the kernel is positive definite and the function $x \in \mathbb{R}^{m} \to (1-r\|x\|)^{l-1}_{+} \in C_{c}(\mathbb{R}^{m})$, then by Corollary $10$ of \cite{Sriperumbudur}, we obtain the $C_{0}$-universality.\\
		The remaining statements of this Theorem are proved by the same arguments as those in Theorem \ref{universaleuclidian2}. \end{proof}

	\begin{proof}[\textbf{Proof of Lemma \ref{radialfini}}]
		If $\phi \in C_{c}^{\infty}(\mathbb{R}^{m})$ is a nonzero function, then $\psi:= \phi + \Delta \phi$ is also nonzero  and its Fourier transform satisfies $\hat{\psi}(x)= \hat{\phi} - \|x\|^{2}\hat{\phi}(x)= (1-\|x\|^{2})\hat{\phi}(x)$. Define the scalar valued finite measure $d\eta:= \psi d\xi$ of compact support, then
		\begin{align*}
		\int_{\mathbb{R}^{m}} \int_{\mathbb{R}^{m}}\Omega^{m}_{m}(\|x-y\|)d\eta(y)d\overline{\eta}(x)&=\frac{1}{Vol(S^{m-1})}\int_{\mathbb{R}^{m}} \int_{\mathbb{R}^{m}}\int_{S^{m-1}}e^{-i(x-y)\cdot\xi}d\xi d\eta(y)d\overline{\eta}(x)\\
		&= \frac{1}{Vol(S^{m-1})}\int_{S^{m-1}}|\hat{\eta}(\xi)|^{2}d\xi\\
		&= \frac{1}{Vol(S^{m-1})}\int_{S^{m-1}}(1-\|\xi\|^{2})^{2}|\hat{\phi}(\xi)|^{2}d\xi=0.
		\end{align*}	
		Hence, the kernel $\Omega_{m}^{m}$ is not universal. By the recurrence relation
		$$
		\Omega_{m}(t)= \frac{2\Gamma(m/2)}{\Gamma(1/2)\Gamma((m-1)/2)}\int_{(0,1)} \Omega_{m-1}(rt)(1-r^{2})^{-1/2}r^{m-2}dr 
		$$  
		proved in \cite{Emilio}, we have that
		\begin{align*}
		\Omega_{m}^{m-1}(\|x-y\|)&= \frac{2\Gamma(m/2)}{\Gamma(1/2)\Gamma((m-1)/2)}\int_{(0,1)} \Omega_{m-1}^{m-1}(r\|x-y\|)(1-r^{2})^{-1/2}r^{m-2}dr\\
		&= \frac{2\Gamma(m/2)}{Vol(S^{m-2})\Gamma(1/2)\Gamma((m-1)/2)} \int_{\mathbb{R}^{m-1}}e^{-i(x-y)\cdot\xi}(1-\|\xi\|^{2})_{+}^{-1/2}\|\xi\|^{m-2}d\xi.
		\end{align*}
		So, the support of $(1-\|\xi\|^{2})_{+}^{-1/2}\|\xi\|^{m-2}d\xi$	is the   closed unit ball $B[0,1]$ on $\mathbb{R}^{m-1}$, Theorem  $18$ in \cite{Simon-Gabriel2018} implies that the kernel $\Omega_{m}^{m-1}$ is $C^{\infty}$-universal, while Theorem $9$ at \cite{Sriperumbudur} implies that the kernel $\Omega_{m}^{m-1}$ is not $C_{0}$-universal.\\
		For the second part,  since $f:[0, \infty) \to \mathbb{R}$  is a nonconstant function and defines a positive definite radial kernel on $\mathbb{R}^{m}$, then there exists a  complex valued nonnegative finite measure $\lambda$ on $[0, \infty)$ for which
		$$
		f(\|x-y\|)= \int_{[0, \infty)} \Omega^{m}_{m}(r\|x-y\|)d\lambda(r), \quad x,y \in \mathbb{R}^{m}, 
		$$
		with $\lambda((0, \infty))>0$. Additionally,  being $f\in C^{2q}([0, \infty))$, by \cite{gneiting2}  we get that $\int_{[0, \infty)}w^{2q}d\lambda(w)<\infty$, and by the arguments of Lemma \ref{diffform}, we obtain that $p^{m}_{r}(x,y):=\Omega^{m-1}_{m}(r\|x-y\|)$ is $C^{q}$-dominated with respect to $\lambda$ on $\mathbb{R}^{m-1}$. Since for every $r>0$ this kernel is $C^{q}$-universal on $\mathbb{R}^{m-1}$, as a consequence of  Theorem \ref{Cpuniversalityintegralkernelcomplvalue} and the fact that $\lambda((0, \infty))>0$, we obtain that the kernel  
		the kernel
		$$
		(x,y) \in \mathbb{R}^{m-1}\times \mathbb{R}^{m-1} \to f(\|x-y\|)=\int_{[0, \infty)} \Omega^{m-1}_{m}(r\|x-y\|)d\lambda(r) \in \mathbb{R}
		$$
		is $C^{q}$-universal.
	\end{proof}
	
	\begin{proof}[\textbf{Proof of Theorem \ref{universaleuclidian3}} ]
		By  Lemma \ref{radialfini} the kernel 
		$$(0, \infty)\times\mathbb{R}^{m-1}\times \mathbb{R}^{m-1} \to  p(r,x,y):= \Omega_{m}^{m-1}(r\|x-y\|) $$	
		is continuous, bounded and for every fixed $r>0$ it is $C^{\infty}$-universal. 	The remaining statements of this Theorem are proved by the same arguments as those in Theorem \ref{universaleuclidian2}.
	\end{proof}

	\subsection{\textbf{Section \ref{On differentiable positive definite operator valued universal (and related) kernels}}}

	\begin{proof}[\textbf{Proof of Lemma \ref{diffimersion}}] We prove the case $q=1$, the general case follows by an induction argument. Suppose that $\mathcal{H}_{K} \subset C^{1}(U, \mathcal{H})$. Let $(h_{n})_{n \in \mathbb{N}}$ be a sequence of nonzero real numbers that converges to $0$, then for every fixed $y \in U$ and $v \in \mathcal{H}$ the function 
		$$
		z \in U \to  [\Delta^{e_{i}}_{h_{n}}K_{y}v](z):=[K(z, y+h_{n}e_{i})v - K(z,y)v]/h_{n} \in \mathcal{H}
		$$
		is an element of $ \mathcal{H}_{K}$. Since 
		\begin{equation}\label{diffmersionform}
		\langle \Delta^{e_{i}}_{h_{n}}K_{y}v, F \rangle_{\mathcal{H}_{K}}= \langle v,(F( y+h_{n}e_{i})- F( y))/h_{n} \rangle_{\mathcal{H}} \to  \langle v, \partial^{e_{i}}F(y) \rangle_{\mathcal{H}} 
		\end{equation}
		by the Uniform Boundedness principle we obtain that 
		$$
		\sup_{y \in \mathcal{A}}\sup_{n \in \mathbb{N}} \sup_{\|v\|=1}(\|\Delta^{e_{i}}_{h_{n}}K_{y}v\|_{\mathcal{H}_{K}})^{2}:= M^{e_{i}}_{\mathcal{A}}<\infty.
		$$
		Since on a RKHS every bounded sequence admits a subsequence that converges weakly and pointwise to a function on the RKHS, we obtain that the function  
		$$ 
		z\in U \to \partial_{y}^{e_{i}}[K_{y}v](z)=\partial_{2}^{e_{i}}[K(z,y)v] \in \mathcal{H}
		$$
		exists and is an element of $\mathcal{H}_{K}$ and this settles relation $(i)$. Moreover 
		\begin{equation}\label{diffmersionform2}
		\sup_{\|v\|=1}\sup_{y\in \mathcal{A}}(\| \partial_{y}^{e_{i}}[K_{y}v]\|_{\mathcal{H}_{K}})^{2}\leq M^{e_{i}}_{\mathcal{A}}.
		\end{equation}
		Note that if $F \in \mathcal{H}_{K}$, by Equation \ref{diffmersionform} and the weak convergence  $\Delta^{e_{i}}_{h_{n}}K_{y}v \rightharpoonup \partial_{y}^{e_{i}}[K_{y}v]$ on $\mathcal{H}_{K}$, we obtain relation $(ii)$. Since $\mathcal{H}_{K} \subset C^{1}(U, \mathcal{H})$ we can differentiate  $\partial_{2}^{e_{i}}[K(x,y)v]$ on the variable $x$, while the weak convergence implies that 
		$$
		\langle u, \partial_{1}^{e_{j}}[\partial_{2}^{e_{i}}[K(x,y)v]]\rangle_{\mathcal{H}}=  \langle \partial_{x}^{e_{j}}[K_{x}u] , \partial_{y}^{e_{i}}[K_{y}v]\rangle_{\mathcal{H}_{K}}.
		$$
		From the previous equality we obtain relation $(iv)$. The triple supremum  is also a consequence of the previous equality together with Equation \ref{diffmersionform2} and $\sup_{x\in \mathcal{A}}(\|K(x,x)\|_{\mathcal{L}(\mathcal{H})})^{1/2}:= M_{\mathcal{A}}^{0}< \infty$ from Proposition \ref{rkhscontained}. The inclusion-restriction is continuous because if $F \in \mathcal{H}_{K}$ 
		$$
		|\langle v, \partial^{e_{i}}F (x)\rangle _{\mathcal{H}}|= |\langle \partial^{e_{i}}_{x}K_{x}v, F \rangle _{\mathcal{H}_{K}}|\leq M^{e_{i}}_{\mathcal{A}} \|F\|_{\mathcal{H}_{K}},
		$$  
		and then $\|F\|_{C^{1}(\mathcal{A}, \mathcal{H})} \leq ( \sum_{i =1}^{m}M^{e_{i}}_{\mathcal{A}} +  M_{\mathcal{A}}^{0})\|F\|_{\mathcal{H}_{K}}$. \\
		It only remains to prove the converse of the first assertion. By the mean value inequality and the hypothesis we have that
		\begin{align*}
		\|K(z, x +he_{i})v& - K(z, x)v - h\partial_{x}^{e_{i}}[K_{x}v](z) \|_{\mathcal{H}}\\
		&\leq |h| \sup_{w \in [x, x+he_{i}]}\|  \partial_{w}^{e_{i}}[K_{w}v](z) - \partial_{x}^{e_{i}}[K_{x}v](z) \|_{\mathcal{H}}\leq 2|h|\|v\|M_{\mathcal{A}}.
		\end{align*}
		for every $z \in \mathcal{A}$, $x \in Int(\mathcal{A})$ and  small enough $|h|$. Similarly, 
		\begin{align*}
		\|\partial_{y}^{e_{i}}[K_{y}v](x + he_{i})& -\partial_{y}^{e_{i}}[K_{y}v](x) - h\partial_{1}^{e_{i}}[\partial_{2}^{e_{i}}[K(x,y)v]] \|_{\mathcal{H}}\\
		&\leq |h| \sup_{w \in [x, x+he_{i}]}\|\partial_{1}^{e_{i}}[\partial_{2}^{e_{i}}[K(w,y)v]] -\partial_{1}^{e_{i}}[\partial_{2}^{e_{i}}[K(x,y)v]] \|_{\mathcal{H}}\leq 2|h|\|v\|M_{\mathcal{A}}.
		\end{align*}              
		for every $y \in \mathcal{A}$, $x \in Int(\mathcal{A})$ and  small enough $|h|$. So
		\begin{equation}\label{counterargumentmatrixdeformation}
		| \langle \Delta_{h}^{e_{i}}K_{x}v, \Delta^{e_{i}}_{h}K_{x}v \rangle_{\mathcal{H}_{K}} - \langle \partial_{1}^{e_{j}}[\partial_{2}^{e_{i}}[K(x,x)v]], v\rangle_{\mathcal{H}}|\leq 6\|v\|^{2}M_{\mathcal{A}},
		\end{equation}
		then any sequence $\Delta^{e_{i}}_{h_{n}}K_{x}v$ is bounded in $\mathcal{H}_{K}$, by the hypothesis it converges pointwise to $\partial_{x}^{e_{i}}K_{x}v$, so $\Delta^{e_{i}}_{h_{n}}K_{x}v \rightharpoonup \partial_{x}^{e_{i}}[K_{x}v]\in \mathcal{H}_{K}$. In order to prove that an arbitrary $F \in \mathcal {H}_{K}$ is differentiable, let $\phi_{n} \in H_{K}$ be a sequence that converges to $F$ on the $\mathcal{H}_{K}$ norm. Since
		$$
		|\langle F(x) - \phi_{n}(x), v  \rangle_{\mathcal{H}}| \leq \|F - \phi_{n}\|_{\mathcal{H}_{K}} \|K_{x}v\|_{\mathcal{H}_{K}} 
		$$
		the function $F$ is continuous. Similarly
		$$
		| \langle \Delta_{h}^{e_{i}}F(x) -   \Delta_{h}^{e_{i}}\phi_{n}(x), v   \rangle_{\mathcal{H}} | \leq \|F - \phi_{n} \|_{\mathcal{H_{K}}} \|  \Delta_{h}^{e_{i}}K_{x}v \|_{\mathcal{H}_{K}}
		$$
		so the function $F$ is differentiable, finally
		$$
		| \langle \partial^{e_{i}}F(x) -    \partial^{e_{i}}\phi_{n}(x), v   \rangle_{\mathcal{H}} | = | \langle \partial^{e_{i}}_{x}K_{x}v, F - \phi_{n}   \rangle_{\mathcal{H}_{K}} | \leq \|F - \phi_{n} \|_{\mathcal{H_{K}}} \|   \partial^{e_{i}}K_{x}v \|_{\mathcal{H}_{K}}
		$$
		which concludes that $F \in C^{1}(U, \mathcal{H})$.\end{proof}
	\begin{rem}
		At Equation \ref{counterargumentmatrixdeformation} it might occur that
		$$
		\lim_{h \to 0}  \langle \Delta_{h}^{e_{i}}K_{x}v, \Delta^{e_{i}}_{h}K_{x}v \rangle_{\mathcal{H}_{K}} \neq  \langle \partial_{1}^{e_{j}}[\partial_{2}^{e_{i}}[K(x,x)v]], v\rangle_{\mathcal{H}}.
		$$
		For instance, if $k(x,y)= x^{2}y^{2}/(x^{2} + y^{2})$ then 
		$$
		\frac{1}{h^{2}}[k(h,h) - k(h,0)-k(0,h)+k(0,0)]= \frac{1}{2}
		$$
		while
		$$
		\partial_{2}[K(x,0)]=0 \text{ for all } x \in \mathbb{R} \longrightarrow \partial_{1}[\partial_{2}[K(0,0)]]=0. 
		$$
	\end{rem}

	\begin{proof}[\textbf{Proof of Proposition \ref{diffimersion0}}] Suppose that  $\mathcal{H}_{K} \subset C^{q}_{0}(U, \mathcal{H})$. On the proof of Proposition \ref{diffimersion} it is proved that $\partial_{x}^{\alpha}K_{x}v \in \mathcal{H}_{K} \subset C_{0}(U, \mathcal{H})$, for every $|\alpha| \leq q$.  The inequality involving $\partial_{1}^{\alpha}[\partial_{2}^{\beta}[K(x,y)v]]$ can be proved in a similar way as  the proof of Lemma \ref{diffimersion}, by  applying the Uniform Boundedness principle to the whole set $U$ (instead of just a compact set). The converse holds because the approximations $\phi_{n} \in H_{K} \subset C_{0}(U, \mathcal{H})$ for a function $F \in \mathcal{H}_{K}$(and its derivatives) is uniform on the whole set $U$ (instead of being uniform at all compact sets of $U$).\\
		Relation $(i)$ and $(ii)$ and $(iv)$ were already proved at Proposition \ref{diffimersion}.\\
		The inclusion $I : \mathcal{H}_{K} \to C_{0}^{q}(U, \mathcal{H})$ is continuous because
		$$	 
		|\langle \partial^{\alpha}F(x), v \rangle_{\mathcal{H}_{K}} | \leq \|F\|_{\mathcal{H}_{k}} \|\partial^{\alpha}_{x}K_{x}v \|_{\mathcal{H}_{K}}\leq \|F\|_{\mathcal{H}_{k}} M^{1/2} \|v\|
		$$	   
		for all $|\alpha| \leq q$, which proves $(ii)$.
	\end{proof}
	
	\begin{proof}{\textbf{Proof of Lemma \ref{compacinclusiondiff} }} If $K \in C^{q,q}(U\times U, \mathcal{L}(\mathcal{H}))$  and $B \subset \mathcal{H}_{K}$ is a bounded set, then the equicontinuity follows from the inequality
		\begin{align*}
		|\langle v, &\partial^{\alpha}F(z) - \partial^{\alpha}F(y)\rangle_{\mathcal{H}}| \leq \|F\|_{\mathcal{H}_{K}}\| \partial^{\alpha}_{1}[K(\cdot , z  )v] - \partial^{\alpha}_{1}[K(\cdot , y  )v]  \|_{\mathcal{H}_{K}}\\
		&\leq  \|F\|_{\mathcal{H}_{K}} \|v\|_{\mathcal{H}}(\| \partial^{\alpha}_{1}\partial^{\alpha}_{2}K(z , z  )  -\partial^{\alpha}_{1}\partial^{\alpha}_{2}K(z , y  )-\partial^{\alpha}_{1}\partial^{\alpha}_{2}K(y , z  )+\partial^{\alpha}_{1}\partial^{\alpha}_{2}K(y , y  )  \|_{\mathcal{L}(\mathcal{H})} )^{1/2
		}
		\end{align*}        
		For the converse we prove the case $q=1$, the general case follows by an induction argument. Since every bounded set of $\mathcal{H}_{K}$ is $C^{q}$-equicontinuous, the same set must be equicontinuous by definition, in particular  relation $(i)$ in Lemma \ref{compacinclusion} implies that  the kernel $K : U\times U \to \mathcal{L}(\mathcal{H})$ is continuous on the norm topology of $\mathcal{L}(H)$.\\ 
		For any compact set $\mathcal{A}\subset U$, the set $B:=\{ \partial_{z}^{\beta}K_{z}v, \quad  z \in \mathcal{A}, \|v\|=1, |\beta|\leq 1  \} $ is bounded on $\mathcal{H}_{K}$ by Proposition \ref{diffimersion}. From the $C^{q}$-equicontinuity we get that for every $x \in Int(\mathcal{A})$ there exists an open set $U_{x}$ that contains $x$ and
		$$
		\|\partial^{\alpha}_{1}[\partial_{2}^{\beta}[K(w,z)v]]- \partial^{\alpha}_{1}[\partial_{2}^{\beta}[K(x,z)v]] \|_{\mathcal{H}}< \epsilon,\quad  w \in U_{x}, z\in \mathcal{A}, \|v\|_{\mathcal{H}}=1, |\alpha|, |\beta|\leq 1. 
		$$ 
		By a similar argument as the one in the proof of Lemma \ref{compacinclusion}, we get that
		\begin{equation}\label{trick}
		\|\partial^{\alpha}_{1}[\partial_{2}^{\beta}[K(x,y)v]] - \partial^{\alpha}_{1}[\partial_{2}^{\beta}[K(x^{\prime}, y^{\prime})v]]\|_{\mathcal{H}}   < 2\epsilon, 	
		\end{equation}	
		for all $x^{\prime} \in Int(\mathcal{A})\cap U_{x}$, $y^{\prime} \in Int(\mathcal{A}) \cap U_{y}$ , $|\alpha|,|\beta|\leq 1$ and $\|v\|=1$.
		Then  by the mean value inequality 
		$$
		\| [\Delta^{e_{i}}_{h,2}K(x,y) - \Delta^{e_{i}}_{h^{\prime},2}K(x,y)]v \|_{\mathcal{H}} \leq 2 \sup_{w \in V_{2} } \| \partial^{e_{i}}_{2}[K(x,y)v]- \partial^{e_{i}}_{2}[K(x,w)v]      \|_{\mathcal{H}} 
		$$
		where $V_{2} := [y, y+he_{i}]\cup[y, z+ y^{\prime}e_{i}]$, this inequality together with Equation \ref{trick} proves that $\Delta^{e_{i}}_{h,2}K(x,y)$ is a Cauchy sequence (on $h$), which proves that $\partial^{e_{i}}_{2}K(x,y) $ exists. It is jointly continuous because Equation \ref{trick} for $\alpha =0$ and $\beta = e_{i}$  holds for every $\|v\|=1$. Similarly, by the mean value inequality 
		$$
		\| [\Delta^{e_{j}}_{h,1}\partial^{e_{i}}_{2}K(x,y) - \Delta^{e_{j}}_{h^{\prime},1}\partial^{e_{i}}_{2}K(x,y)]v \|_{\mathcal{H}} \leq 2 \sup_{w \in V_{1} } \| \partial^{e_{j}}_{1}[\partial^{e_{i}}_{2}[K(x,y)v]]- \partial^{e_{j}}_{1}[\partial^{e_{i}}_{2}[K(w,y)v]]      \|_{\mathcal{H}} 
		$$
		where $V_{1} := [x, x+he_{j}]\cup[x, x+ h^{\prime}e_{j}]$, this inequality together with Equation \ref{trick} proves that $\Delta^{e_{j}}_{h,1}\partial_{2}^{e_{i}}K(x,y)$ is a Cauchy sequence (on $h$), which proves that $\partial^{e_{j}}_{1}\partial_{2}^{e_{i}}K(x,y)$ exists. It is jointly continuous because Equation \ref{trick} for $\alpha =e_{j}$ and $\beta = e_{i}$ holds for every $\|v\|=1$. This settles $(i)$. \\
		As for $(ii)$, note that
		$$
		|\langle e_{\mu}, \partial^{\alpha}F(y)\rangle_{\mathcal{H}}| \leq \|F\|_{\mathcal{H}_{K}}  \sqrt{\langle  [\partial^{\alpha}_{1}[\partial^{\alpha}_{2}K(y,y)e_{\mu}]], e_{\mu} \rangle_{\mathcal{H}}},
		$$
		for every $|\alpha|\leq q$, since $ \sum_{\mu \in \mathcal{I}}(\sqrt{\langle  [\partial^{\alpha}_{1}[\partial^{\alpha}_{2}K(x,x)e_{\mu}]], e_{\mu} \rangle_{\mathcal{H}}})^{2}< \infty$, the set  $\{\partial^{\alpha}F(y), F \in B, |\alpha|\leq q   \} \subset \mathcal{H}$ has compact closure on the norm topology of $\mathcal{H}$.\\
		Now we prove $(iii)$. By definition, the inclusion-restriction is  a compact operator if and only if for every bounded set $B \subset \mathcal{H}_{K}$ (restricted to $\mathcal{A}$) has compact closure on  $C^{q}(\mathcal{A}, \mathcal{H})$. By the differentiable version of  of the Arzel\`{a}-Ascoli Theorem this occurs if and only if the set $B$(restricted to $\mathcal{A}$) is $C^{q}$-equicontinuous and $C^{q}$-pointwise relatively compact, the conclusion follows from $(i)$, $(ii)$. The second part of $(iii)$ is a consequence from the fact  that  closed sets on $\mathcal{H}_{K}$(restricted to $\mathcal{A}$) are closed on $C^{q}(\mathcal{A}, \mathcal{H})$.\\ 
		The proof of $(iv)$ is similar to the previous ones,  we just emphasize that the importance of the compact set $\mathcal{A}$  is to ensure that $B_{\mathcal{A}}$ is $C^{q}$-equicontinuous at the infinity point of $U$.
	\end{proof}

	\begin{proof}[\textbf{Proof of Lemma \ref{diffkernelfourtrans}}] Since the matrix valued kernel 
		$$
		(x,y) \in\mathbb{R}^{m}\times  \mathbb{R}^{m}\to  \partial_{1}^{\alpha} \partial_{2}^{\beta}p_{w}(x,y)\in M_{\ell}(\mathbb{C})
		$$
		is positive definite by Lemma \ref{diffimersion}, we have that 
		$$
		2|\partial_{1}^{\alpha} \partial_{2}^{\beta}p_{w}(x,y)| \leq \partial_{1}^{\alpha} \partial_{2}^{\alpha}p_{w}(x,x) + \partial_{1}^{\beta} \partial_{2}^{\beta}p_{w}(y,y) \leq 2h(w).
		$$
		By the  hypothesis we obtain  that $|\partial_{1}^{\alpha} \partial_{2}^{\beta}p_{w}(x,y)|\|G(w)\|_{\mathcal{L}(\mathcal{H})}$ is $\lambda$ integrable and  Equation \ref{bochnerintegration} implies that the kernels $K_{\alpha, \beta}$ are well defined.\\
		As for the differentiability of the kernels, we prove the case $\alpha = e_{i}$ and $\beta=0$, being the other cases proved by an induction argument based on the one we present. Note that
		\begin{align*}
		&\|\Delta_{h,1}^{e_{i}}K_{0,0}(x,y) - K_{e_{i},0}(x,y)   \|_{\mathcal{L}(\mathcal{H})}\\
		&=  \|\int_{\Omega}\left [\frac{p_{w}(x+he_{i}, y) - p_{w}(x,y)}{h} - \partial_{1}^{e_{i}}p_{w}(x,y)    \right ] G(w) d\lambda(w)\|_{\mathcal{L}(\mathcal{H})}\\
		&\leq \int_{\Omega} | \frac{p_{w}(x+he_{i}, y) - p_{w}(x,y)}{h} - \partial_{1}^{e_{i}}p_{w}(x,y) | \|G(w)\|_{\mathcal{L}(\mathcal{H})}	 d\lambda(w).
		\end{align*} 
		By the mean value inequality 
		$$|-\partial_{1}^{e_{i}}p_{w}(x,y) +\frac{p_{w}(x+he_{i}, y) - p_{w}(x,y)}{h} | \leq \sup_{t\in [x,x+h]}|\partial_{1}^{e_{i}}p_{w}(x,y)- \partial_{1}^{e_{i}}p_{w}(t,y)| \leq 2h(w) $$
		while $[p_{w}(x+he_{i}, y) - p_{w}(x,y)]/h$ converges  to $\partial_{1}^{e_{i}}p_{w}(x,y)$, so the Lebesgue Dominated Convergence Theorem implies that
		$$
		\int_{\Omega} | \frac{p_{w}(x+he_{i}, y) - p_{w}(x,y)}{h} - \partial_{1}^{e_{i}}p_{w}(x,y) | \|G(w)\|_{\mathcal{L}(\mathcal{H})}	 d\lambda(w) \to 0
		$$ 
		which proves our claim.	
	\end{proof}
	
	\begin{proof}[\textbf{Proof of Lemma \ref{diffform}} ]
		First, we prove the converse. Indeed, since $f \in C^{2q}(\mathbb{R}^{m})$, then the kernel
		$$
		(w,x,y) \in [0,\infty)\times \mathbb{R}^{m} \times \mathbb{R}^{m} \to f(w(x-y))
		$$ 
		is an element of $C^{0,q,q}([0,\infty)\times \mathbb{R}^{m} \times \mathbb{R}^{m})$, and it can be easily deducted that $$\partial^{\alpha}_{x}\partial^{\beta}_{y}f(w(x-y))=w^{|\alpha|}(-w)^{|\beta|}[\partial^{\alpha + \beta}f](w(x-y)).$$
		But since $f(w(x-y))$ is a positive definite kernel, the matrix valued kernel $[\partial^{\alpha}_{x}\partial^{\beta}_{y}f(w(x-y))]_{|\alpha|, |\beta|\leq q}$  is positive definite by Lemma \ref{diffimersion}, and then
		$$
		|\partial^{\alpha}_{x}\partial^{\beta}_{y}f(w(x-y))|\leq w^{2|\alpha|}|\partial^{2\alpha}f(0)|  + w^{2|\beta|}|\partial^{2\beta}f(0)|.
		$$
		By choosing $h(w)= M w^{2q}$, for a suitable nonnegative number $M$, we can apply Lemma \ref{diffkernelfourtrans} on the scalar valued case and obtain the result.\\
		On the other hand, suppose that $p \in C^{q,q}(\mathbb{R}^{m}\times \mathbb{R}^{m})$. Then by the hypothesis
		$$
		[p(x+he_{j}, x+he_{j}) - p(x, x+he_{j}) - p(x+he_{j},x) - p(x,x)]/h^{2} \to \partial^{e_{j}}_{1}\partial^{e_{j}}_{2}p(x,x),  
		$$
		but
		\begin{align*}
		p(x+he_{j}, x+he_{j}) - p(x, x+he_{j})& - p(x+he_{j},x) - p(x,x)\\
		& = \int_{[0,\infty)} 2f(0) - f(hwe_{j}) - f(hwe_{j})d\lambda(w).
		\end{align*}
		Note that $2f(0) - f(hwe_{j}) - f(-hwe_{j}) \geq 0$ (because $f$ defines a positive definite kernel on Bochner's sense)  and $[2f(0) - f(hwe_{j}) - f(-hwe_{j})]/h^{2} \to -\partial^{2e_{j}} [f(wx)](0)= w^{2}[-\partial^{2e_{j}}f(0)] $.\\
		If $-\partial^{2e_{j}}f(0) = 0 $ for $1\leq j \leq m$, then by the positivity of the matrix valued kernel $[\partial^{\alpha}_{x}\partial^{\beta}_{y}f(w(x-y))]_{|\alpha|, |\beta|\leq q}$, we obtain that $\partial^{e_{j}}f=0$ for $1\leq j \leq m$, and consequently $f$ is a constant function, which is an absurd by the hypothesis.  
		The integrability of $w^{2}$ with respect to $\lambda$ is then a consequence of the Fatou' s Lemma.\\
		The general case follows by an induction on this argument. Recall that any nonconstant polynomial on an Euclidean space is not a bounded function, however $f$ is a bounded function.   
	\end{proof}
	
	\begin{proof}[\textbf{Proof of Lemma \ref{addstepunidiff}}] We prove the $C^{q}$-universal case, being the other proof simpler. If $F\in \mathcal{H}_{K}$, there exists a sequence $F_{n} \in H_{K}:=span\{K_{x}v, x \in U , v \in \mathcal{H}\}$ for which $F_{n} \to F$ on the norm  of $\mathcal{H}_{K}$. Since the  the inclusion-restriction $I: \mathcal{H}_{K} \to C^{q}(\mathcal{A}, \mathcal{H})$ is continuous we have that $\|F_{|\mathcal{A}}-(F_{n})_{|\mathcal{A}}\|_{C^{q}(\mathcal{A}, \mathcal{H})} \leq M \|F-F_{n}\|_{\mathcal{H}_{K}}$, for some $M>0$, and then  $[\mathcal{H}_{K}]_{|\mathcal{A}}$ is dense in $C^{q}(\mathcal{A}, \mathcal{H})$ if and only if $[H_{K}]_{|\mathcal{A}}$ is dense in $C^{q}(\mathcal{A}, \mathcal{H})$.\\
		From functional analysis, we know that $[H_{K}]_{|\mathcal{A}}$ is dense in $C^{q}(\mathcal{A}, \mathcal{H})$ if and only if the  only continuous linear functional $T: C^{q}(\mathcal{A}, \mathcal{H}) \to \mathbb{C} $ for which $T([K_{y}v]_{|\mathcal{A}})=0$ for all $x \in U$ and $v \in \mathcal{H}$ is the zero functional. By Corollary \ref{Dinculeanu-Singer-diff}, we can describe a continuous linear functional $T$ in $C^{q}(\mathcal{A}, \mathcal{H})$ by 
		$$
		T([K_{y}v]_{|\mathcal{A}})=\sum_{|\alpha|\leq q} \int_{\mathcal{A}}\langle \partial_{x}^{\alpha}K(x,y)v, d\eta_{\alpha}(x) \rangle
		$$ 
		where $\eta= (\eta_{\alpha})_{|\alpha|\leq q} \in \mathfrak{M}^{q}(\mathcal{A}, \mathcal{H})$ and this linear functional is zero if and only if 
		$$
		T(\psi)=\sum_{|\alpha|\leq q} \int_{\mathcal{A}}\langle \partial^{\alpha}\psi, d\eta_{\alpha}(x) \rangle=0
		$$ 	 
		for all $\psi \in C^{q}(\mathcal{A}, \mathcal{H})$.\end{proof}
	
	\begin{proof}[\textbf{Proof of Theorem \ref{oneintegraltodoubleintegralcq}}]We focus the arguments on the $C^{q}$-universal case, being the other case similar.\\
		First, note that for every $\eta =(\eta_{\alpha})_{|\alpha|\leq q} \in \mathfrak{M}^{q}(\mathcal{A}, \mathcal{H})$, where $\mathcal{A}\subset U$ is a compact set that satisfies $\overline{Int(\mathcal{A})}=\mathcal{A}$, the linear functional 
		$$
		F \in \mathcal{H}_{K} \to \sum_{|\alpha|\leq q}\int _{\mathcal{A}}\langle \partial^{\alpha}F(x), d\eta_{\alpha}(x) \rangle \in \mathbb{C} 
		$$ is continuous. Indeed, it is the composition of the inclusion-restriction  $I: \mathcal{H}_{K} \to C^{q}(\mathcal{A}, \mathcal{H})$, proved to be continuous at Lemma \ref{diffimersion},  with a continuous linear functional of   $C^{q}(\mathcal{A}, \mathcal{H})$ by Corollary \ref{Dinculeanu-Singer-diff}.\\
		But then, the Riesz Representation Theorem for Hilbert spaces implies that there exists a function $K_{\eta} \in \mathcal{H}_{K}$ for which 
		$$
		\langle F,K_{\eta}  \rangle_{\mathcal{H}_{K}} = \sum_{|\alpha|\leq q}\int _{\mathcal{A}}\langle \partial^{\alpha}F(x), d\eta_{\alpha}(x) \rangle \in \mathbb{C}  
		$$
		for all $F \in \mathcal{H}_{K}$, and in particular
		\begin{align*}
		\langle v, K_{\eta}(y)\rangle_{\mathcal{H}}&= \langle K_{y}v, K_{\eta} \rangle_{\mathcal{H}_{K}}=  \sum_{|\alpha|\leq q}\int _{\mathcal{A}}\langle \partial^{\alpha}_{1}[K(x,y)v], d\eta_{\alpha}(x) \rangle\\
		&= \langle v, \sum_{|\alpha|\leq q}\int _{\mathcal{A}} \partial^{\alpha}_{1}K(x,y) d\eta_{\alpha}(x) \rangle_{\mathcal{H}}
		\end{align*}
		The last equality makes sense, because  the function $x \in U \to \partial^{\alpha}_{1}K(x,y) \in \mathcal{L}(\mathcal{H})$ is weak-Bochner integrable with respect to $\eta_{\alpha}$, since by Proposition \ref{diffimersion0} 
		$$
		| \int _{\mathcal{A}} \langle\partial^{\alpha}_{1}K(x,y)v, d\eta_{\alpha}(x) \rangle| \leq  \int _{\mathcal{A}} \|\partial^{\alpha}_{1}K(x,y)v\|_{\mathcal{H}}d|\eta_{\alpha}|(x)\\
		\leq M|\eta_{\alpha}|(\mathcal{A}) \|v\|_{\mathcal{H}}.
		$$
		This settles $(i)$.\\
		As for the proof of $(ii)$, by relation $(ii)$ in Lemma \ref{diffimersion}, we have that
		$$
		\langle v, \partial^{\beta}K_{\eta}(y) \rangle_{\mathcal{H}}= \langle \partial^{\beta}K_{y}v, K_{\eta}\rangle_{\mathcal{H}_{K}}=\sum_{|\alpha|\leq q}\int_{\mathcal{A}}\langle \partial_{1}^{\alpha}[\partial_{2}^{\beta}[K(x,y)v]] , d\eta_{\alpha}(x)\rangle
		$$
		and the weak-Bochner integral exists by a similar argument used at relation $(i)$.\\ 
		Now we prove $(iii)$. The first equality is immediate. For the second equality, note that 
		\begin{align*}
		\langle K_{\eta}, K_{\eta}\rangle_{\mathcal{H}_{K}}&= \sum_{|\alpha|\leq q} \langle \int_{\mathcal{A}}\partial^{\alpha} K_{\eta}(x), d\eta_{\alpha}(x) \rangle  =\sum_{|\alpha|\leq q}\int_{\mathcal{A}} \langle \partial^{\alpha} K_{\eta}(x), H_{\alpha}(x) \rangle_{\mathcal{H}}d|\eta|(x) \\ 
		&=\sum_{|\alpha|\leq q}\int_{\mathcal{A}} \overline{\langle H_{\alpha}(x), \partial^{\alpha} K_{\eta}(x) \rangle_{\mathcal{H}} }d|\eta|(x) \\
		&=\sum_{|\alpha|\leq q}\int_{\mathcal{A}} \overline{\langle \sum_{|\beta|\leq q}\int_{\mathcal{A}} \partial_{1}^{\beta}[ \partial_{2}^{\alpha} [K(y,x)  H_{\alpha}(x)]] , d\eta_{\beta}(y) \rangle }d|\eta|(x)\\
		&= \sum_{|\alpha|, |\beta|\leq q}\int_{\mathcal{A}}\overline{ \int_{\mathcal{A}}\langle \partial_{1}^{\beta}[ \partial_{2}^{\alpha} [K(y,x)  H_{\alpha}(x)]] , H_{\beta}(y) \rangle_{\mathcal{H}}}d|\eta|(y) d|\eta|(x)\\
		&= \sum_{|\alpha|, |\beta|\leq q}\int_{\mathcal{A}} \int_{\mathcal{A}} \langle H_{\beta}(y), \partial_{1}^{\beta}[ \partial_{2}^{\alpha} [K(y,x)  H_{\alpha}(x)]]  \rangle_{\mathcal{H}}d|\eta|(y) d|\eta|(x)\\
		& = \sum_{|\alpha|,|\beta| \leq q} \int_{\mathcal{A}}   \int_{\mathcal{A}} \langle \partial_{1}^{\alpha}[\partial_{2}^{\beta}[K(x,y) H_{\beta}(y)]], H_{\alpha}(x)\rangle_{\mathcal{H}}  d|\eta|(x) d|\eta|(y).
		\end{align*}
		Since the functions $H_{\alpha}$ are Bochner integrable with respect to $|\eta|$ and \\$x \in \mathcal{A} \to \|\partial_{y}^{\beta} \partial_{x}^{\alpha}K(y,x)\|_{\mathcal{L}(\mathcal{H})} \in \mathbb{C}$ are bounded functions, it is possible to reverse the order of integration by Fubinni-Tonelli.\\
		Finally $(iv)$ holds true because  $\mathcal{H}_{K}$ is a reproducing kernel Hilbert space, so $\langle K_{\eta}, K_{\eta}\rangle_{\mathcal{H}_{K}}=0 $ if and only if $K_{\eta}(y)=0$ for all $y \in U$, and in this case	
		$$
		0=\langle v, K_{\eta}(y) \rangle_{\mathcal{H}} = \langle K_{y}v, K_{\eta} \rangle_{\mathcal{H}_{K}}= \sum_{|\alpha|\leq q} \int\langle \partial_{1}^{\alpha}K(x,y)v, d\eta_{\alpha}(x) \rangle	
		$$
		and the conclusion follows from Lemma \ref{addstepunidiff}. 
	\end{proof}

	Before proving Theorem \ref{Cpuniversalityintegralkernel}, we need a complex valued version of it.

	\begin{thm}\label{Cpuniversalityintegralkernelcomplvalue}Let $U \subset \mathbb{R}^{m}$ be an open set, $\Omega$  be a locally compact Hausdorff space and $p : \Omega \times X \times X \to \mathbb{C} \in C^{0,q,q}(\Omega\times U\times U)$, such that the kernel
		$$
		(x,y) \in U\times U\to p_{\omega}(x,y):=p(\omega, x,y)
		$$ 
		is positive definite for every $w \in \Omega$. Given a  scalar valued nonnegative finite Radon measure $\lambda$ on $\Omega$, consider the  kernel
		$$	
		P : U\times U \to \mathbb{C}, \quad P(x,y)= \int_{\Omega}p_{w}(x,y) d\lambda(w). 	  	
		$$
		Then if $p$ is $C^{q}$-dominated with respect to $\lambda$,  we have that:
		\begin{enumerate}	
			\item[(i)] The kernel $P \in C^{q,q}(U\times U)$ and  $\mathcal{H}_{K} \subset C^{q}(U)$.
			\item[(ii)] If the kernel  $p_{\omega}$ is $C^{q}$-universal for every $w \in \Omega$, then the  kernel $P$ is $C^{q}$-universal if and only if the measure $\lambda$ is nonzero.
			\item[(iii)] If the kernel  $p_{\omega}$ is $C^{q}$-integrally strictly positive definite for every $w \in \Omega$, then the  kernel $P$ is $C^{q}$-integrally strictly positive definite if and only if the measure $\lambda$ is nonzero. 
		\end{enumerate}
	\end{thm}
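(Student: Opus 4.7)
The plan follows the pattern of Theorem \ref{universalityintegralkernelscalar}, now upgraded to the $C^q$ setting using the machinery of Subsection \ref{On differentiable positive definite kernels and properties of its RKHS}. For (i), I would invoke Lemma \ref{diffkernelfourtrans} in the scalar case ($\mathcal{H}=\mathbb{C}$, $d\Lambda = d\lambda$, $G\equiv 1$): the $C^q$-domination hypothesis is precisely what is needed to obtain $\partial_1^\alpha \partial_2^\beta P(x,y) = \int_\Omega \partial_1^\alpha \partial_2^\beta p_w(x,y)\,d\lambda(w)$ for all $|\alpha|,|\beta|\leq q$, whence $P \in C^{q,q}(U\times U)$; Proposition \ref{diffimersion} then yields $\mathcal{H}_P \subset C^q(U)$.

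For (ii) and (iii), only the nontrivial direction needs work, since $\lambda = 0$ forces $P\equiv 0$. Assume $\lambda \neq 0$ and let $\eta = (\eta_\alpha)_{|\alpha|\leq q}$ be an element of $\mathfrak{M}^q(\mathcal{A})$ (resp. $\mathfrak{M}^q(U)$) with $\langle K_\eta, K_\eta\rangle_{\mathcal{H}_P} = 0$. By Theorem \ref{oneintegraltodoubleintegralcq} it suffices to show that the associated continuous functional $F \mapsto \sum_{|\alpha|\leq q}\int \partial^\alpha F\,d\eta_\alpha$ vanishes. Expanding the norm via Theorem \ref{oneintegraltodoubleintegralcq}(iii) together with the formula from (i), and invoking Fubini (justified by the $C^q$-domination, since $|\partial_1^\alpha\partial_2^\beta p_w(x,y)| \leq |h(w)|$ is integrable against $|\eta_\alpha|\otimes|\eta_\beta|\otimes\lambda$ for each pair $\alpha,\beta$), I obtain
\[
0 \;=\; \langle K_\eta, K_\eta\rangle_{\mathcal{H}_P} \;=\; \int_\Omega \Phi(w)\,d\lambda(w),
\]
where
\[
\Phi(w) \;:=\; \sum_{|\alpha|,|\beta|\leq q} \int\int \partial_1^\alpha\partial_2^\beta p_w(x,y)\,d\eta_\alpha(x)\,d\overline{\eta_\beta}(y).
\]

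Now for each fixed $w$, applying the scalar case of Theorem \ref{oneintegraltodoubleintegralcq}(iii) to the positive definite kernel $p_w$ identifies $\Phi(w)$ with the squared norm $\|K^{p_w}_\eta\|^2_{\mathcal{H}_{p_w}} \geq 0$. Hence $\Phi \geq 0$ and $\int_\Omega \Phi\,d\lambda = 0$ force $\Phi = 0$ $\lambda$-almost everywhere. A dominated convergence argument, using the continuity of $(w,x,y) \mapsto \partial_1^\alpha\partial_2^\beta p_w(x,y)$ supplied by the $C^{0,q,q}$ hypothesis together with the $C^q$-domination bound, shows that $\Phi$ is continuous in $w$; thus $\{\Phi = 0\}$ is closed and must contain $\mathrm{supp}(\lambda)$, which is nonempty since $\lambda \neq 0$. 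Picking any $w_0 \in \mathrm{supp}(\lambda)$ yields $\|K^{p_{w_0}}_\eta\|^2_{\mathcal{H}_{p_{w_0}}} = 0$; the $C^q$-universality (resp. $C^q$-integral strict positive definiteness) of $p_{w_0}$ then implies, via Theorem \ref{oneintegraltodoubleintegralcq}(iv), that the functional vanishes, which completes the argument.

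The main technical hurdle is the Fubini swap and the continuity of $\Phi$ in $w$; both reduce to routine applications of the Lebesgue dominated convergence theorem with the common dominating function $|h| \in L^1(\lambda)$ supplied by the $C^q$-domination hypothesis, so the only genuine subtlety lies in bookkeeping the triple integral cleanly. Once $\Phi \equiv 0$ on $\mathrm{supp}(\lambda)$ is established, the reduction to a single $w_0$ and appeal to the hypothesis on $p_{w_0}$ is immediate.
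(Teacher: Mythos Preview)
Your proposal is correct and follows essentially the same route as the paper: invoke Lemma \ref{diffkernelfourtrans} for (i), then for (ii)--(iii) use the derivative formula and Fubini to write $\langle K_\eta,K_\eta\rangle_{\mathcal{H}_P}=\int_\Omega \Phi(w)\,d\lambda(w)$ with $\Phi(w)=\|K^{p_w}_\eta\|^2_{\mathcal{H}_{p_w}}\geq 0$, and reduce to the hypothesis on a single $p_w$.

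The only difference is logical direction. The paper argues directly: if the functional associated to $\eta$ is nonzero, then by $C^q$-universality of \emph{every} $p_w$ one has $\Phi(w)>0$ for \emph{all} $w\in\Omega$, and hence $\int_\Omega \Phi\,d\lambda>0$ whenever $\lambda\neq 0$. No continuity of $\Phi$ is needed. Your contrapositive argument also works, but the continuity step is superfluous: once $\Phi=0$ $\lambda$-a.e.\ and $\lambda\neq 0$, the zero set of $\Phi$ is already nonempty, so you can pick $w_0$ there directly. This matters slightly because your continuity claim, while provable in case (ii) via compactness of $K\times\mathcal{A}\times\mathcal{A}$, requires a bit more care in case (iii) (the domination $|h(w)|$ need not be locally bounded in $w$, so you would have to combine local compactness of $\Omega$ with inner regularity of the $|\eta_\alpha|$); dropping continuity sidesteps this entirely.
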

	
	\begin{proof}
		Relation $(i)$ is an application of Lemma \ref{diffkernelfourtrans} to the complex valued case.\\ 
		Now, we prove $(ii)$. Let $\mathcal{A} \subset U$ be a compact set for which $\overline{Int(\mathcal{A})}=\mathcal{A}$ and $\eta= (\eta_{\alpha})_{|\alpha| \leq q} \in \mathfrak{M}^{q}(\mathcal{A})$ for which the linear functional 
		$$
		\phi \in C^{q}(\mathcal{A}) \to \sum_{|\alpha|\leq q}  \int_{\mathcal{A}}\partial^{\alpha}\phi(x)d\eta_{\alpha}(x) \in \mathbb{C}
		$$
		is nonzero. Since $p_{w}$ is $C^{q}$-universal for every $w \in \Omega$, by the complex  valued version of Theorem \ref{oneintegraltodoubleintegralcq} (proved in \cite{Simon-Gabriel2018}),
		we have that $$\sum_{|\alpha|, |\beta| \leq q}\int_{\mathcal{A}} \int_{\mathcal{A}} \partial^{\alpha}_{1}\partial_{2}^{\beta}p_{w}(x,y)d\eta_{\beta}(y)d\overline{\eta_{\alpha}}(x) >0$$ for all $w\in \Omega$. So,  $\lambda$ is a nonzero measure if and only if  
		\begin{align*}
		\sum_{|\alpha|, |\beta| \leq q}\int_{\mathcal{A}} \int_{\mathcal{A}} \partial^{\alpha}_{1}\partial_{2}^{\beta}P(x,y)&d\eta_{\beta}(y)d\overline{\eta_{\alpha}}(x)
		= \sum_{|\alpha|, |\beta| \leq q}\int_{\mathcal{A}} \int_{\mathcal{A}}\int_{\Omega} \partial^{\alpha}_{1}\partial_{2}^{\beta}p_{w}(x,y)d\lambda(w)d\eta_{\beta}(y)d\overline{\eta_{\alpha}}(x)\\
		&=\int_{\Omega}   \left[ \sum_{|\alpha|, |\beta| \leq q} \int_{\mathcal{A}} \int_{\mathcal{A}} \partial^{\alpha}_{1}\partial_{2}^{\beta}p_{w}(x,y)d\eta_{\beta}(y)d\overline{\eta_{\alpha}}(x) \right ]d\lambda(w)>0
		\end{align*}
		which proves our claim.\\
		The proof of $(iii)$ is identical to the proof of $(ii)$.
	\end{proof}

	\begin{proof}[\textbf{Proof of Theorem \ref{Cpuniversalityintegralkernel}}]
		By Lemma \ref{diffkernelfourtrans}, not only the kernel $P$ is well defined and differentiable but we also have that $\partial^{\alpha}_{1}\partial_{2}^{\beta}P(x,y)= \int_{\Omega}\partial^{\alpha}_{1}\partial_{2}^{\beta}p_{w}(x,y)d\Lambda(w)$. The fact that $\mathcal{H}_{P} \subset C^{q}(U, \mathcal{H})$ is a consequence of Lemma \ref {diffimersion}. This settles $(i)$.\\
		The proof of $(ii)$ is very similar to the proof of $(iv)$ in Theorem \ref{universalityintegralkernel}. The measure $\Lambda_{v}$ is nonzero if and only if the kernel $P_{v}$ is $C^{q}$-universal  by Theorem \ref{Cpuniversalityintegralkernelcomplvalue}.  Now suppose that the measure $\Lambda_{v}$ is nonzero  for every $v \in \mathcal{H}\setminus{\{0\}}$  and let  $\mathcal{A}\subset U$ be such that $\overline{Int(\mathcal{A})}= \mathcal{A}$ and a measure $\eta= (\eta_{\alpha})_{|\alpha|\leq q} \in \mathfrak{M}^{q}(\mathcal{A}, \mathcal{H})$ for which
		$$
		\sum_{|\alpha|, |\beta|\leq q}\int_{\mathcal{A}}\langle \int_{\mathcal{A}} \langle \partial_{1}^{\alpha} \partial_{2}^{\beta}P(x,y)H_{\beta}(y), H_{\alpha}(x)\rangle_{\mathcal{H}}d|\eta|(x), d|\eta|(y)  =0 \in \mathbb{C}.
		$$
		Note that	
		\begin{align*} 
		0&=\sum_{|\alpha|, |\beta|\leq q}\int_{\mathcal{A}} \int_{\mathcal{A}} \langle \partial_{1}^{\alpha} \partial_{2}^{\beta}P(x,y) H_{\beta}(y),H_{\alpha}(x) \rangle_{\mathcal{H}}d|\eta|(x) d|\eta|(y)\\
		&=\sum_{|\alpha|, |\beta|\leq q}\int_{\mathcal{A}} \int_{\mathcal{A}}\int_{\Omega}\partial_{1}^{\alpha} \partial_{2}^{\beta}p_{w}(x,y) \langle G(w) H_{\beta}(y),H_{\alpha}(x) \rangle_{\mathcal{H}}d\lambda(w)d|\eta|(x) d|\eta|(y)\\
		&=\int_{\Omega}\sum_{|\alpha|, |\beta|\leq q}\int_{\mathcal{A}} \int_{\mathcal{A}}\partial_{1}^{\alpha} \partial_{2}^{\beta}p_{w}(x,y) \langle G(w) H_{\beta}(y),H_{\alpha}(x) \rangle_{\mathcal{H}}d|\eta|(x) d|\eta|(y)d\lambda(w)       
		\end{align*}
		And again, the change in the order of integration is possible because 
		$$|\partial_{1}^{\alpha} \partial_{2}^{\beta}p_{w}(x,y) \langle G(w) H_{\beta}(y),H_{\alpha}(x) \rangle_{\mathcal{H}}|\leq |h(w)|\|G(w)\|_{\mathcal{L}(\mathcal{H})}\|H(x)\|_{\mathcal{H}} \|H(y)\|_{\mathcal{H}}
		$$ 
		which is $\lambda\times |\eta| \times |\eta|$ integrable. Since the matrix valued kernel $(x,y)\to [\partial_{1}^{\alpha} \partial_{2}^{\beta}p_{w}(x,y)]_{|\alpha|, |\beta|\leq q} $ is positive definite, applying Lemma \ref{almostaschurproduct} with respect to this kernel, we get that  for all $w \in \Omega$
		$$\sum_{|\alpha|, |\beta|\leq q}\int_{\mathcal{A}} \int_{\mathcal{A}}\partial_{1}^{\alpha} \partial_{2}^{\beta}p_{w}(x,y) \langle G(w) H_{\beta}(y),H_{\alpha}(x) \rangle_{\mathcal{H}}d|\eta|(x) d|\eta|(y)\geq 0.$$
		Since $\lambda$ is nonnegative, we must have that this double integral is zero $\lambda$ almost everywhere. Let $G^{1/2}(w)$ be the unique square root of the positive semidefinite operator $G(w)$ that is positive semidefinite, then by adding coordinates	
		\begin{align*}
		0&=\sum_{|\alpha|, |\beta|\leq q}\int_{\mathcal{A}} \int_{\mathcal{A}}\partial_{1}^{\alpha} \partial_{2}^{\beta}p_{w}(x,y) \langle G(w) H_{\beta}(y),H_{\alpha}(x) \rangle_{\mathcal{H}}d|\eta|(x) d|\eta|(y)\\
		&=\sum_{|\alpha|, |\beta|\leq q}\int_{\mathcal{A}} \int_{\mathcal{A}}\partial_{1}^{\alpha} \partial_{2}^{\beta}p_{w}(x,y) \langle G^{1/2}(w)H_{\beta}(y), G^{1/2}(w)H_{\alpha}(x) \rangle_{\mathcal{H}}d|\eta|(x) d|\eta|(y)\\
		&=\sum_{|\alpha|, |\beta|\leq q}\int_{\mathcal{A}} \int_{\mathcal{A}}\sum_{\mu \in \mathcal{I}}\partial_{1}^{\alpha} \partial_{2}^{\beta}p_{w}(x,y) (G^{1/2}(w)H_{\beta}(y))_{\mu}\overline{(G^{1/2}(w)H_{\alpha}(x))_{\mu}} d|\eta|(x) d|\eta|(y)\\
		&= \sum_{|\alpha|, |\beta|\leq q}\sum_{\mu \in \mathcal{I}}\int_{\mathcal{A}} \int_{\mathcal{A}} \partial_{1}^{\alpha} \partial_{2}^{\beta}p_{w}(x,y) (G^{1/2}(w)H_{\beta}(y))_{\mu}\overline{(G^{1/2}(w)H_{\alpha}(x))_{\mu}} d|\eta|(x) d|\eta|(y)
		\end{align*}
		and once again by Lemma \ref{almostaschurproduct},  
		$$
		\sum_{|\alpha|, |\beta|\leq q}\int_{\mathcal{A}} \int_{\mathcal{A}} \partial_{1}^{\alpha} \partial_{2}^{\beta}p_{w}(x,y) (G^{1/2}(w)H_{\beta}(y))_{\mu}\overline{(G^{1/2}(w)H_{\alpha}(x))_{\mu}} d|\eta|(x) d|\eta|(y) \geq 0
		$$
		for all $\mu \in \mathcal{I}$ and $w \in \Omega$. Then, we must have that
		$$
		\int_{\mathcal{A}} \int_{\mathcal{A}}\sum_{|\alpha|, |\beta|\leq q} \partial_{1}^{\alpha} \partial_{2}^{\beta}p_{w}(x,y) (G^{1/2}(w)H_{\beta}(y))_{\mu}\overline{(G^{1/2}(w)H_{\alpha}(x))_{\mu}} d|\eta|(x) d|\eta|(y) = 0
		$$
		for all $\mu \in \mathcal{I}$ and $\lambda$ almost everywhere on $w$. But, $p_{w}$ is a $C^{q}$-universal kernel for all $w \in \Omega$ and  $\overline{(G^{1/2}(w)H_{\alpha}(x))_{\mu}}d|\eta|(x))_{|\alpha|\leq q} \in \mathfrak{M}^{q}(\mathcal{A})$, so the following linear functional is the zero functional for every $\mu \in \mathcal{I}$ 
		$$
		f \in C^{q}(\mathcal{A})  \to  \sum_{|\alpha| \leq q} \int_{\mathcal{A}}\partial^{\alpha}f(x)\overline{(G^{1/2}(w)H_{\alpha}(x))_{\mu}}d|\eta|(x) \in \mathbb{C}.  
		$$
		This implies that
		$$
		\sum_{|\alpha|, |\beta| \leq q} \int_{\mathcal{A}}\int_{\mathcal{A}}\partial^{\alpha}f(x)\overline{\partial^{\beta}f(y)}\langle G(w)H_{\beta}(y), H_{\alpha}(x) \rangle_{\mathcal{H}}d|\eta|(x)d|\eta|(y)=0  
		$$
		for $\lambda$ almost every $w$. Integrating this equality on the variable $w$ with respect to $\lambda$ we get that
		\begin{align*}
		0&=\sum_{|\alpha|, |\beta| \leq q} \int_{\mathcal{A}}\int_{\mathcal{A}}\partial^{\alpha}f(x)\overline{\partial^{\beta}f(y)}\langle \Lambda(\Omega)H_{\beta}(y), H_{\alpha}(x) \rangle_{\mathcal{H}}d|\eta|(x)d|\eta|(y)\\
		& =\langle \Lambda(\Omega) [\sum_{|\alpha| \leq q}  \int_{\mathcal{A}}\overline{\partial^{\alpha}f(x)}d\eta_{\alpha}(x)], \sum_{|\alpha| \leq q}  \int_{\mathcal{A}}\overline{\partial^{\alpha}f(x)}d\eta_{\alpha}(x) \rangle_{\mathcal{H}}.
		\end{align*}  
		Now we are at the final steps of the proof. Since $\Lambda(\Omega)$ is a positive definite operator, we must have that  
		$$
		\sum_{|\alpha| \leq q}  \int_{\mathcal{A}}\overline{\partial^{\alpha}f(x)}d\eta_{\alpha}(x)=0 \in \mathcal{H}.
		$$
		But then, by Lemma \ref{manipulation}, note that for every $F \in C^{q}(\mathcal{A}, \mathcal{H})$ we have that
		$$
		\sum_{|\alpha|\leq q}\int_{\mathcal{A}}\langle \partial^{\alpha} F(x), d\eta_{\alpha}(x) \rangle=\sum_{\mu \in \mathcal{I}}    \sum_{|\alpha|\leq q}\int_{\mathcal{A}} \partial^{\alpha} (F(x))_{\mu} \overline{(H_{\alpha}(x))_{\mu}}d|\eta|(x)=0,
		$$ 
		which proves that the kernel $P$  is $C^{q}$-universal. The remaining implication is a consequence of Lemma \ref{scalarvaluedprojectionsuniversality}. This settles $(ii)$.\\
		As for  $(iii)$, since we are assuming that the function $p$ is $C^{q}$-Dominated with respect to $\|G\|_{\mathcal{L}(\mathcal{H})}d\lambda$, we have that
		\begin{align*}
		\|\partial_{1}^{\alpha}\partial_{2}^{\alpha} P(x,y)\|_{\mathcal{L}(\mathcal{H})}&= \|\int_{\Omega}\partial_{1}^{\alpha}\partial_{2}^{\alpha}p_{w}(x,y)d\Lambda(w)\|_{\mathcal{L}(\mathcal{H})}\\
		&\leq \int_{\Omega}|h(w)|\|G(w)\|_{\mathcal{L}(\mathcal{H})}d\Lambda(w)<\infty,
		\end{align*}
		in other words, the kernels $\partial_{1}^{\alpha}\partial_{2}^{\alpha} P(x,y)$ are bounded in $\mathcal{L}(\mathcal{H})$ . The remaining  arguments are just an adaptation of the arguments we presented in $(ii)$.
	\end{proof}

	\begin{proof}[\textbf{Proof of Corollary  \ref{Cpuniversalityintegralkernelrestricmeasure}}]
		The kernel $P$ is well defined and positive definite by Proposition \ref{compcsuppor}. Moreover, by Lemma \ref{radonnikodintrace} there exists a scalar valued nonnegative finite Radon measure $\lambda_{T}$ on $(0,\infty)$ and a Bochner integrable function $G: (0,\infty)\to \mathcal{L}(\mathcal{H})$ with respect to $\lambda_{T}$ for which $G(w)$ is a positive semidefinite operator for all $w \in (0,\infty)$ and 
		$$
		P(x,y)= \int_{(0,\infty)}f(w(x-y))G(w)d\lambda_{T}(w).
		$$
		By the proof of Lemma \ref{radonnikodintrace},    
		$$
		Tr(P(x,y))= \int_{(0,\infty)}f(w(x-y))Tr(G(w))d\lambda(w)= \int_{(0,\infty)}f(w(x-y))d\lambda(w).	
		$$	
		Since $Tr(P) \in C^{q,q}(\mathbb{R}^{m})$, Lemma \ref{diffform} implies that $\int_{(0,\infty)}w^{2q}d\lambda(w)< \infty$. In particular, $p_{w}(x,y):=f(w(x-y))$ is $C^{q}$-dominated with respect to $\|G\|d\lambda_{T}= d|\Lambda|$, by choosing $h(w)= Mw^{2q}$, for a suitable number $M>0$. Then, relation $(i)$, $(ii)$ and $(iii)$ are a directly application of Theorem \ref{Cpuniversalityintegralkernel}. \end{proof}

	\begin{proof}[\textbf{Proof of Theorem \ref{universaleuclidian2cq}}]
		It is a direct application of Corollary \ref{Cpuniversalityintegralkernelrestricmeasure}, to the case $f(x)= e^{-\|x\|^{2}}$ which defines a $C^{\infty}$-integrally strictly positive definite kernel and Theorem \ref{universaleuclidian2}.\end{proof}
	
	\begin{proof}[\textbf{Proof of Theorem \ref{universaleuclidian2cqaskey}}]
		It is a direct application of Corollary \ref{Cpuniversalityintegralkernelrestricmeasure}, to the case $f(x)= (1-\|x\|)_{+}^{\ell-1}\in C^{(\ell -2)}(\mathbb{R}^{m})$ which defines a $(\ell-2)/2$ integrally strictly positive definite kernel by Corollary $38$ in \cite{Simon-Gabriel2018}.   
	\end{proof}
	
	\begin{proof}[\textbf{Proof of Theorem \ref{universaleuclidian32}}]It is a direct application of Corollary \ref{Cpuniversalityintegralkernelrestricmeasure}, to the case $ x\in \mathbb{R}^{m-1}\to f(x)= \Omega_{m}^{m-1}(\|x\|) \in \mathbb{C}$ which defines a $C^{\infty}$-universal kernel by Lemma \ref{radialfini}. \end{proof}

	
	\appendix
	\counterwithin{thm}{section} 
	\section{\textbf{Appendix: Measures and integrals related to  Hilbert spaces}}\label{Appendix}
	In this appendix we review some definitions regarding: 
	\begin{enumerate}
		\item[$\circ$]  Integration of vector valued functions with respect to complex valued measures.
		\item[$\circ$] Integration of complex valued functions with respect to vector valued measures.
		\item[$\circ$] Integration of Hilbert valued functions with respect to Hilbert valued measures.
		\item[$\circ$]  Integration of operator valued  functions with respect to Hilbert valued measures.
	\end{enumerate}
	The main reference is \cite{vectormeasures}, more specifically, section $I$ and $II$, but we adapt some terminologies to fit our setting,  like the definition of finite nonnegative operator valued measure. We fix throughout this section that  $(\Omega, \mathscr{B})$ is a sigma algebra,   $\lambda : (\Omega, \mathscr{B}) \to \mathbb{C}$ is a complex valued finite measure, $\mathfrak{B}$ is a  Banach space and $\mathfrak{B}^{*}$ is the Banach space of all continuous linear functionals defined on $\mathfrak{B}$. 
	
	First, we focus on integrals of vector valued functions over complex valued measures. Roughly speaking, there are $3$ ways for defining this integral, two of them having a functional analysis approach.
	
	\begin{defn}\textbf{(Pettis integral)} A function $F: \Omega \to \mathfrak{B}$ is \textbf{weak measurable}, if for every $v \in \mathfrak{B}^{*}$ the complex valued function $ w \in \Omega \to F_{v}:=  ( v, F(w))_{(\mathfrak{B}^{*}, \mathfrak{B})}  \in \mathbb{C}$ is a measurable function and  $F$ is \textbf{weak integrable} (or \textbf{Pettis integrable}) with respect to the measure $\lambda $, if  all functions $F_{v}$ are integrable and  for every $A \in  (\Omega, \mathscr{B})$ the following linear functional on $\mathfrak{B}^{*}$ is continuous
		$$	
		v \in \mathfrak{B}^{*}	\to \int_{A}( v, F(w))_{(\mathfrak{B}^{*}, \mathfrak{B})}  d\lambda(w) \in \mathbb{C}. 
		$$
	\end{defn}
	\begin{defn} \textbf{(Gelfand integral)} If $\mathfrak{B}= \mathfrak{D}^{*}$, for some Banach space $\mathfrak{D}$, a function $F: \Omega \to \mathfrak{B}$ is \textbf{weak$^{*}$ measurable}, if for every $v \in \mathfrak{D}$ the complex valued function $w \in \Omega \to F_{v}:=(F(w),v)_{\mathfrak{B}, \mathfrak{D}} \in \mathbb{C}$ is measurable, and that $F$ is \textbf{weak$^{*}$ integrable }(or \textbf{Gelfand integrable}) with respect to $\lambda$ if for every $(\Omega, \mathscr{B})$ the following linear functional on $\mathfrak{D}$ is continuous
		$$	
		v \in \mathfrak{D}	\to \int_{A}(  F(w),v)_{(\mathfrak{B}, \mathfrak{D})}  d\lambda(w) \in \mathbb{C}. 
		$$
	\end{defn}
	Note that the Pettis integral is an element in $\mathfrak{B}^{**}$ while the Gelfand integral is an element in $\mathfrak{B}$. When the Banach space is reflexive, the Gelfand and Pettis integrals are the same object.

	\begin{defn}\textbf{(Bochner integral)} A function $\phi : \Omega \to \mathfrak{B}$ is called a \textbf{simple function} if there exists a finite amount of $b_{1}, \ldots , b_{n} \in \mathfrak{B}$ and   measurable sets $A_{1}, \ldots , A_{n} \in (\Omega, \mathscr{B})$ for which 
		$$
		\phi(x)= \sum_{i=1}^{n}b_{i}\chi_{A_{i}}(x). 
		$$
		A function $F: \Omega \to \mathfrak{B}$ is \textbf{Bochner measurable} with respect to $\lambda$ if there exists a sequence of simple functions $F_{n} : \Omega \to \mathfrak{B}$ for which $\lim_{n \to \infty}\|F(w)-F_{n}(w)\|_{\mathfrak{B}}=0$, $|\lambda|$ almost everywhere. The function $F$ is \textbf{Bochner integrable} with respect to  $\lambda $, if there exists a sequence of simple functions $F_{n} : \Omega \to \mathfrak{B}$ for which 
		$$\lim_{n \to \infty}\int_{\Omega}\|F(w)-F_{n}(w)\|_{\mathfrak{B}} d|\lambda(w)|=0 
		$$
	\end{defn}
	Under the setting on the previous definition, the sequence $\int_{\Omega}F_{n}(w)d\lambda(w)$ in $\mathfrak{B}$ is convergent, and we denote its value as $\int_{\Omega}F(w)d\lambda(w) \in \mathfrak{B}$, which is independent of the choice for the simple functions.

	The following famous result connects the concept of weak and Bochner measurability, page $42$ \cite{vectormeasures}.

	
	\begin{thm}(\textbf{Pettis Measurability Theorem}) \label{Pettismeastheorem} A function $F : \Omega \to \mathfrak{B}$ is Bochner measurable with respect to  $\lambda$ if and only if $F$ is weak measurable with respect to $\lambda$ and there exists $E \in (\Omega,  \mathscr{B})$ for which $|\lambda|(E)=0$ and the set $\{ F(w), w \in \Omega-E  \} \subset \mathfrak{B}$ is separable.
	\end{thm}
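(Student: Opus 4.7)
The plan is to handle the two implications separately, treating the forward direction as a soft consequence of the definitions and reserving the real work for the converse, which requires a constructive approximation by simple functions.

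For the forward direction, suppose $F$ is Bochner measurable and let $F_n$ be simple functions with $\|F(w)-F_n(w)\|_{\mathfrak{B}} \to 0$ outside a $|\lambda|$-null set $E$. For any $v \in \mathfrak{B}^*$, the scalar functions $F_n^v(w):=(v,F_n(w))_{(\mathfrak{B}^*,\mathfrak{B})}$ are simple (hence measurable) and converge pointwise off $E$ to $F^v(w)$ by continuity of $v$, so $F^v$ is measurable; this is weak measurability. Moreover, since each $F_n$ has finite range, the countable set $D:=\bigcup_n F_n(\Omega)$ satisfies $F(w)\in \overline{D}$ for $w\in \Omega\setminus E$, so $\{F(w): w\in \Omega\setminus E\}$ is contained in a separable subspace. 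This gives the essential separability required.

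For the converse, after discarding a null set I may assume $F$ has separable range contained in a closed separable subspace $\mathfrak{B}_0 \subset \mathfrak{B}$. The first key step is to obtain measurability of the norm function $w\mapsto \|F(w)\|_{\mathfrak{B}}$. By a Hahn–Banach argument, pick a countable family $\{v_k\}\subset \mathfrak{B}^*$ of unit functionals that norms $\mathfrak{B}_0$, i.e.\ $\|b\|_{\mathfrak{B}}=\sup_k |(v_k,b)_{(\mathfrak{B}^*,\mathfrak{B})}|$ for every $b\in \mathfrak{B}_0$; then $\|F(\cdot)\|_{\mathfrak{B}}=\sup_k |F^{v_k}(\cdot)|$ is measurable as a countable supremum of scalar measurable functions, and the same reasoning gives measurability of $w\mapsto\|F(w)-b\|_{\mathfrak{B}}$ for each fixed $b\in \mathfrak{B}_0$. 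With these measurability facts in hand, fix a countable dense sequence $\{b_n\}$ in the range, and for each $n$ define
\[
A_n^k := \Bigl\{w\in\Omega:\ \|F(w)-b_k\|_{\mathfrak{B}} = \min_{1\le j\le n}\|F(w)-b_j\|_{\mathfrak{B}}\Bigr\}\setminus \bigcup_{j<k}A_n^j,
\]
and set $F_n(w):=\sum_{k=1}^n b_k\chi_{A_n^k}(w)$. Each $F_n$ is a simple function, and by density $\|F(w)-F_n(w)\|_{\mathfrak{B}}\to 0$ pointwise, so $F$ is Bochner measurable.

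The main obstacle is the norm-measurability step: although $F$ is weak measurable, it is a priori unclear that $\|F(\cdot)\|$ is measurable in general Banach spaces, because the norm is $\sup\{|(v,\cdot)|:\|v\|\le 1\}$ over an uncountable family. The essential separability of the range is precisely what rescues this, by letting one replace the full unit ball of $\mathfrak{B}^*$ with a countable norming family via Hahn–Banach. Once norm-measurability is available the approximation scheme above is routine; the only remaining subtlety is verifying that the sets $A_n^k$ are measurable, which follows from measurability of each $w\mapsto \|F(w)-b_j\|_{\mathfrak{B}}$ together with closure under finite differences.
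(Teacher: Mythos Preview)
Your proof is correct and follows the standard textbook argument. Note, however, that the paper does not actually prove this theorem: it is stated in the Appendix as a known result and attributed to page~42 of Diestel--Uhl, \emph{Vector Measures}, so there is no ``paper's own proof'' to compare against. What you have written is essentially the classical proof one finds in that reference: the forward direction is immediate from the definitions, and the converse hinges on using essential separability of the range to extract a countable norming family via Hahn--Banach, thereby reducing the uncountable supremum defining the norm to a measurable countable supremum, after which a nearest-point simple-function approximation does the job.
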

	
	Similar to complex valued functions (page $45$ in \cite{vectormeasures}), a Bochner measurable function $F: \Omega \to \mathfrak{B}$ is Bochner integrable if and only if
	\begin{equation}\label{bochnerintegration}
	\int_{\Omega}\|F(w)\|_{\mathfrak{B}}d|\lambda|(w)< \infty.
	\end{equation} 
	And because of this, every Bochner integrable function is weak (also weak$^{*}$, whenever is possible) integrable, and the value is the same.
	

	Now we focus on integration of complex valued functions with respect to finite vector valued measures. Similarly, there are $3$ main types for this type of integral, but on this setting it is related on how we define a vector valued measure.
	
	\begin{defn}\label{finitemeasdef} A  set  function $\Lambda : (\Omega, \mathscr{B}) \to \mathfrak{B}$ is a \textbf{finite vector valued measure} (\textbf{weak finite vector valued measure}) if:
		\begin{enumerate}
			\item $\Lambda(\emptyset)= 0 \in \mathfrak{B}$.
			\item There exist $M >0$ such that $\|\Lambda(A)\|_{\mathfrak{B}} \leq M$ for all $A \in \mathscr{B}$.
		\end{enumerate}
		If $(A_{n})_{n \in \mathbb{N}}$ is a countable  family of sets in $\mathscr{B}$ such that every two of them are disjoint, then it satisfies: 
		\begin{enumerate}
			\item[3.] $\displaystyle{\sum_{n \in \mathbb{N}} |(v, \Lambda(A_{n}))_{(\mathfrak{B}^{*},\mathfrak{B})}|< \infty \text{ for all } v \in \mathfrak{B}^{*}.}$
			\item[4.] $\displaystyle{\Lambda(\bigcup_{n \in \mathbb{N}} A_{n}) = \sum_{n \in \mathbb{N}} \Lambda(A_{n}) \text{ on the norm of $\mathfrak{B}$ (weak convergence ).}
			}$
		\end{enumerate}
	\end{defn}
	If $\mathfrak{B}= \mathfrak{D}^{*}$, for some Banach space $\mathfrak{D}$, a  set  function $\Lambda : (\Omega, \mathscr{B}) \to \mathfrak{B}$ is a \textbf{weak$^{*}$ finite vector valued measure} if it satisfies satisfies $1$ and $2$ at Definition \ref{finitemeasdef}, but $3$ and $4$ are replaced by
	\begin{enumerate}
		\item[$3^{\prime}.$] $\displaystyle{\sum_{n \in \mathbb{N}} |( \Lambda(A_{n}),v)_{(\mathfrak{B},\mathfrak{D})}|< \infty \text{ for all } v \in \mathfrak{D}.}$
		\item[$4^{\prime}.$] $\displaystyle{\Lambda(\bigcup_{n \in \mathbb{N}} A_{n}) = \sum_{n \in \mathbb{N}} \Lambda(A_{n}) \text{  (weak$^{*}$ convergence)}.}$
	\end{enumerate}

	Our definition of finite vector valued measure is based on the definition of countably additive vector valued measure from \cite{vectormeasures}. Note that if $\Lambda$ is a finite vector valued measure on $(\Omega, \mathscr{B})$, then by $2$ and $3$ for every $v \in \mathfrak{B}^{*}$ the function  $A \in (\Omega, \mathscr{B}) \to \Lambda_{v}(A):= (v,\Lambda(A))_{(\mathfrak{B}^{*}, \mathfrak{B})} \in \mathbb{C}$ defines a finite complex valued measure (usually, on a vector valued setting they do not impose this condition, but this consequence is highly important and by doing this we  do not have a conflict of definitions with  finite complex valued measures). A similar property holds for weak and weak$^{*}$ finite vector valued measures
	
	

	The integration of complex valued functions with respect to weak$^{*}$ finite vector valued measures is straightforward, a function $f: \Omega \to \mathbb{C}$ is weak$^{*}$ integrable with respect to $\Lambda$ if  the following  linear functional  on $\mathfrak{D}$ is continuous
	$$	
	v \in \mathfrak{D}	\to \int_{A}  f(w)d(\Lambda,v)_{(\mathfrak{B}, \mathfrak{D})}(w)   \in \mathbb{C}, 
	$$
	for every $A \in (\Omega, \mathscr{B})$. A similar definition for the weak integral is possible, but we do not use this integral in this article. However, the  weak$^{*}$ integral of complex valued functions occurs  naturally on the generalization of some complex valued kernels to the operator valued setting, for instance the operator valued Bochner Theorem at \cite{neeb1998} and at Lemma \ref{uniquelcomplmon}.

	On the special case that $\mathfrak{B}= \mathcal{L}(\mathcal{H})$, we say that a finite vector valued measure (weak$^{*}$ finite vector valued measure) is \textbf{nonnegative} if $\Lambda(A)$ is a positive semidefinite operator for every $A \in (\Omega, \mathscr{B})$. By the representation of trace class operators on $\mathcal{H}$, instead of analysing the weak$^{*}$ convergence of measures and integrals, it is sufficient to analyse the ultraweakly convergence. 
	

	Unlike finite complex valued measures, finite vector valued measures can behave on an unwanted way. One class of measures that has a more appealing  behavior  are those with  bounded variation.
	
	\begin{defn} Let $\Lambda: (\Omega, \mathscr{B}) \to \mathfrak{B} $ be a finite vector valued measure. We say that this finite measure has \textbf{bounded variation} if exists $M>0$ such that for every measurable set $A$, and every disjoint sequence of measurable sets $(A_{j})_{j \in \mathbb{N}}$ for which $A= \cup_{j \in \mathbb{N}}A_{j}$, we have that
		$$	
		\sum_{j \in \mathbb{N}}\| \Lambda(A_{j})\|_{\mathfrak{B}} \leq M.	 
		$$	
		And if that occurs, we define the set function $|\Lambda| : (\Omega, \mathscr{B}) \to [0, \infty) $, called the \textbf{variation} of $\Lambda$, by
		$$
		|\Lambda|(A)= \sup_{\pi(A)}  \sum_{j \in \mathbb{N}}\| \Lambda(A_{j})\|_{\mathfrak{B}} 
		$$ 
		where $\pi(A)$ stands for the set of partitions of the set $A$ into  countable measurable disjoint sets. 	
	\end{defn}

	From now on to simplify the notation, we omit the term finite when dealing with measures with bounded variation. On a measure with bounded variation, its variation is a complex valued nonnegative finite measure, \cite{vectormeasures} Chapter $1$, and with it it is possible to define a norm over the vector space of $\mathfrak{B}$ valued measures of bounded variation defined on $(\Omega, \mathscr{B})$, which we denote by  $\mathfrak{M}(\Omega, \mathscr{B}, \mathfrak{B})$ and the norm is $\| \Lambda_{1} - \Lambda_{2}\|_{\mathfrak{M}(\Omega, \mathscr{B}, \mathfrak{B})}:= |\Lambda_{1} - \Lambda_{2}|(\Omega)$.
	
	When $\mathfrak{B}= \mathbb{C}$, this definition of bounded variation and the variation measure agrees with the standard definition, \cite{folland} Chapter $3$.

	After this comments, we are able to define the integral of a complex valued function with respect to a  vector measure of bounded variation,  in a similar that we defined the Bochner integration.
	
	\begin{defn} Let $f: \Omega \to \mathbb{C}$ be a measurable function. The function $f$ is integrable with respect to a vector measure with bounded variation $\Lambda$, if $f$ is integrable with respect to the finite complex valued measure $|\Lambda|$. In that case, if $f_{n}: \Omega \to \mathbb{C}$ is a  sequence of simple functions, for which $\lim_{n\to \infty}|f_{n} - f|_{\mathbb{C}}=0$, $|\Lambda|$ almost everywhere then $\int_{\Omega}f_{n}(w)d\Lambda(w)$ is a convergent sequence in $\mathfrak{B}$, it is independent of the simple functions choosen, and we denote its value by $\int_{\Omega}f(w)d\Lambda(w)$.
	\end{defn}

	In some sense, the Bochner integral of vector valued functions with respect to complex valued measures is more technically advantageous then the integral of complex valued functions with respect to vector valued measures of bounded variation. If $F: \Omega \to \mathbb{B}$ is a Bochner integrable function with respect to $\lambda$, then the set function
	$$
	A \in (\Omega, \mathscr{B}) \to \Lambda_{F}(A):=\int_{A}F(w)d\lambda(w) \in \mathfrak{B}
	$$
	is a vector measure of bounded variation in $(\Omega, \mathscr{B})$, more precisely\\ $|\Lambda_{F}|(A)= \int_{A}\|F(w)\|_{\mathfrak{B}}d\|\lambda|(w)$, page $46$ at \cite{vectormeasures}. The following definition aims the opposite, it is the terminology for  which vector measures of bounded variation are defined by a Bochner integral.
	
	\begin{defn}\label{Radon Nykodin Property} Let  $\Lambda: (\Omega, \mathscr{B})\to \mathfrak{B}$ be a measure with bounded variation. We say that $\Lambda$ admits \textbf{Radon-Nikod\'ym decomposition} if there exists a Bochner measurable function $G: (\Omega, \mathscr{B})\to \mathfrak{B} $ that is Bochner integrable with respect to a finite nonnegative measure $\lambda : (\Omega, \mathscr{B})\to \mathbb{C}$  of bounded variation, for which
		$$
		\Lambda(A)= \int_{A}G(w)d\lambda(w).
		$$
	\end{defn}  
	
	On reflexive spaces (in particular, Hilbert spaces), all measures of bounded variation admits a Radon-Nikod\'ym decomposition with respect to its variation measure. This is a consequence of the famous Radon-Nikod\'ym property, \cite{vectormeasures} section $III$, which is a property on the Banach space $\mathfrak{B}$ rather than a particular measure, in the sense that a Banach space $\mathfrak{B}$ satisfy the  Radon-Nikod\'ym property if all $\mathfrak{B}$ valued measures of bounded variation admits a Radon-Nikod\'ym decomposition, with $\lambda$ being the variation of the measure. 
	
	The Banach space $\mathfrak{B}= c_{0}(\mathbb{N})$  does not satisfy the Radon Nykodin Property as shown in \cite{vectormeasures} page $60$. The Banach space $\mathcal{L}(\mathcal{H})$ also does not satisfy it, as shown  in \cite{zhang2012}, using  a similar argument as the $c_{0}(\mathbb{N})$ case. The author does not know if there exists a nonnegative   $\mathcal{L}(\mathcal{H})$-valued measure with bounded variation that does not admits a Radon-Nikod\'ym decomposition.

	A finite nonnegative operator valued measure might not have bounded variation. Indeed, if $(v_{\mu})_{ \mu \in \mathbb{N}} \in c_{0}(\mathbb{N}) \setminus \ell^{1}(\mathbb{N})$ and all coefficients $v_{\mu}$ are positive, then the weak$^{*}$finite operator valued measure $\Lambda : \mathbb{N} \to \mathcal{L}(\ell^{2}(\mathbb{N}))$ given by $\Lambda(\{\mu\})= v_{\mu}e_{\mu}^{*}e_{\mu}$ is nonnegative  but it is not of bounded variation. In particular, there is a gap between the operator valued Bochner Theorem at \cite{neeb1998} and  relation $(i)$ in Theorem \ref{universalityintegralkernel} (where $p_{w}(x,y)= e^{-i(x-y)w}$, $\Omega= X= \mathbb{R}^{m}$), being our setting more restrictive, since we are assuming that the  nonnegative operator valued measure admits a Radon-Nikod\'ym decomposition.

	Next, we clarify what happens on a nonnegative operator valued measure of bounded variation that admits a Radon-Nikod\'ym decomposition and we also explain why we focus on separable Hilbert spaces.  
	
	\begin{prop}\label{simplifyoperatornonegativemeasure}Let  $\Lambda: (\Omega, \mathscr{B})\to \mathcal{L}(\mathcal{H})$ be a nonnegative measure with bounded variation and a Radon-Nikod\'ym decomposition $d\Lambda= Gd\lambda$. Then for every $v \in \mathcal{H}$,  the function $\langle G(w)v,v \rangle_{\mathcal{H}}\geq 0$ almost everywhere on  $\lambda$. In particular, if $\mathcal{H}$  is separable, we can assume that   the operator $G(w)$ is positive semidefinite  for all $w \in \Omega$.
	\end{prop}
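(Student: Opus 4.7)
The plan is to exploit the scalar-valued projection measures $\Lambda_v$ and then use separability to exchange quantifiers. First I would fix an arbitrary $v \in \mathcal{H}$ and consider the finite nonnegative scalar valued measure
$$
\Lambda_v : A \in (\Omega, \mathscr{B}) \to \langle \Lambda(A)v, v \rangle_{\mathcal{H}} \in [0,\infty),
$$
which is nonnegative precisely because each $\Lambda(A)$ is a positive semidefinite operator. Using the Radon-Nikod\'ym decomposition $d\Lambda = G d\lambda$, for every measurable $A$ we have
$$
\Lambda_v(A) = \langle \Lambda(A)v, v\rangle_{\mathcal{H}} = \left \langle \int_A G(w)d\lambda(w) v , v  \right \rangle_{\mathcal{H}} = \int_A \langle G(w)v, v\rangle_{\mathcal{H}} d\lambda(w),
$$
where the last equality follows from the continuity of $T \in \mathcal{L}(\mathcal{H}) \to \langle Tv, v \rangle_{\mathcal{H}} \in \mathbb{C}$ and the properties of the Bochner integral. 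Since the left side is nonnegative for every $A$, the standard scalar measure theory argument (if $\int_A f d\lambda \geq 0$ for every measurable $A$ and $f$ is $\lambda$-integrable, then $f \geq 0$ $\lambda$-a.e.) yields that $\langle G(w)v, v\rangle_{\mathcal{H}} \geq 0$ for $\lambda$-almost every $w \in \Omega$. This proves the first claim.

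For the second assertion, assume $\mathcal{H}$ is separable and pick a countable dense subset $(v_n)_{n \in \mathbb{N}} \subset \mathcal{H}$. By the first part, for each $n$ there exists a $\lambda$-null measurable set $N_n$ such that $\langle G(w)v_n, v_n\rangle_{\mathcal{H}} \geq 0$ for every $w \in \Omega \setminus N_n$. Setting $N := \bigcup_{n \in \mathbb{N}} N_n$ we still have $\lambda(N) = 0$, and for every $w \in \Omega \setminus N$ the inequality $\langle G(w)v_n, v_n\rangle_{\mathcal{H}} \geq 0$ holds simultaneously for all $n$. Since $G(w) \in \mathcal{L}(\mathcal{H})$, the map $v \in \mathcal{H} \to \langle G(w)v, v\rangle_{\mathcal{H}}$ is continuous, and density of $(v_n)$ yields that $\langle G(w)v, v\rangle_{\mathcal{H}} \geq 0$ for every $v \in \mathcal{H}$ and every $w \in \Omega \setminus N$, i.e., $G(w)$ is positive semidefinite on the complement of $N$.

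Finally, I would redefine $G$ on the null set $N$ by setting $\widetilde{G}(w) := G(w)$ for $w \notin N$ and $\widetilde{G}(w) := 0 \in \mathcal{L}(\mathcal{H})$ for $w \in N$. This modification does not affect any Bochner integral (since $N$ is $\lambda$-null), so $d\Lambda = \widetilde{G} d\lambda$ is still a valid Radon-Nikod\'ym decomposition, and $\widetilde{G}(w)$ is positive semidefinite at every $w \in \Omega$. The main (very mild) obstacle here is the quantifier exchange from ``for every $v$, a.e.\ $w$'' to ``a.e.\ $w$, for every $v$'', and this is precisely where the separability of $\mathcal{H}$ is essential; without it, the union of null sets indexed by a dense subset need not cover the required set.
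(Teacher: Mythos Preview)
Your proposal is correct and follows essentially the same approach as the paper: both argue via the scalar projection measures $\Lambda_v$ and the Bochner-to-Pettis passage to get $\langle G(w)v,v\rangle_{\mathcal{H}}\geq 0$ $\lambda$-a.e.\ for each fixed $v$, then use a countable dense subset of $\mathcal{H}$ to obtain a single $\lambda$-null exceptional set, extend to all $v$ by continuity of $v\mapsto\langle G(w)v,v\rangle_{\mathcal{H}}$, and redefine $G$ to be $0$ on that null set. Your write-up is in fact slightly more explicit than the paper's about the continuity step and the justification for the redefinition.
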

	\begin{proof}
		Indeed, since $\int_{A}\langle G(w)v,v \rangle_{\mathcal{H}} d\lambda(w)= \langle \Lambda(A)v,v \rangle_{\mathcal{H}}$ (we used the fact that Bochner integral implies Pettis integral) then for every $v \in \mathcal{H}$ the function $\langle G(w)v,v \rangle_{\mathcal{H}}\geq 0$ almost everywhere on  $\lambda$. \\If $\mathcal{H}$ is separable, choose a sequence $v_{k} \in \mathcal{H}$, $k \in \mathbb{N}$, that is dense on it. Define
		$$
		E_{v}:= \{ w \in \Omega, \quad \langle G(w)v,v \rangle_{\mathcal{H}} \notin[0,\infty )  \}
		$$
		Each set $E_{v}$ has $\lambda$ measure $0$. Because $\{v_{k}\}_{k \in \mathbb{N}}$ is dense, we have that
		$$
		\bigcup_{v \in \mathcal{H}}E_{v}= \bigcup_{k \in \mathbb{N}}E_{v_{k}}, \quad  \text{ and then } \lambda(\bigcup_{v \in \mathcal{H}}E_{v})=0
		$$
		which implies that $\{w \in \Omega, \quad G(w) \text{ is not positive semidefinite}\}$ has $\lambda$ zero measure. Because of that, redefine  $G(w)$ as the zero operator on $\bigcup_{v \in \mathcal{H}}E_{v}$ (which do not change the outcome of the integral), and then we obtain that $G(w)$ is positive semidefinite for every $w \in \Omega$.  
	\end{proof}

	Next we deal with integration of vector valued functions with respect to vector valued measures. We only present the definition and results for Hilbert spaces,  the general definition and properties can be found at \cite{bartle}. It is sufficient for our purposes to define only one type of this integral. 
	
	\begin{defn}\label{intvecfunvecmec} Let  $\eta: (\Omega, \mathscr{B})\to \mathcal{H}$ be a measure with bounded variation and $\phi: \Omega  \to \mathcal{H}$, $ \phi=\sum_{i=1}^{n}v_{i} \chi_{A_{i}}$    be a simple function. Then, we define
		$$	
		\int_{\Omega}\langle \phi(w),d\eta(w)\rangle= \sum_{i=1}^{n}\langle v_{i}, \eta(A_{i}) \rangle_{\mathcal{H}}  \in \mathbb{C}.
		$$
		A  Bochner measurable function $F: \Omega \to \mathcal{H}$ is   \textbf{Bartle integrable} with respect to $\eta$  if it is Bochner integrable with respect to $|\eta|$, and we define $\int_{\Omega}\langle F(w),d\eta(w)\rangle$ as the limit of $\int_{\Omega}\langle F_{n}(w),d\eta(w)\rangle$, where $F_{n}$ are simple functions for which
		$\int_{\Omega}\|F(w) -F_{n}(w)\|_{\mathcal{H}}d|\eta|(w) \to 0$.\end{defn}

	This definition  is independent from the sequence of simple functions that converges to $F$. If $d\eta=Hd\lambda$ is a Radon-Nikod\'ym decomposition for a vector measure of bounded variation $\eta$ (which always exists by the Radon-Nikod\'ym of $\mathcal{H}$), it is possible to prove that  a Bochner measurable function $F$ is integrable with respect to $\eta$ if and only if $$\int_{\Omega}\|F(w)\|_{\mathcal{H}}\|H(w)\|_{\mathcal{H}}d\lambda(w)<\infty.$$

	The final integral  we define is of  operator valued functions with respect to a vector measure of bounded variation. The definition that is relevant for our purposes is a mixed type integral, involving a Bochner and a Pettis integrability.

	\begin{defn}\label{weak-bochner} Let  $\eta: (\Omega, \mathscr{B})\to \mathcal{H}$ be a measure with bounded variation and $T : \Omega \to  \mathcal{L}(\mathcal{H})$, a function such that for every $v \in \mathcal{H}$, $Tv : \Omega \to  \mathcal{L}(\mathcal{H})$ is Bochner measurable and integrable with respect to $\eta$. We say that $T$ is \textbf{weak-Bochner integrable} with respect to $\eta$ if the following linear operator is continuous
		$$
		v \in \mathcal{H} \to \int_{\Omega}\langle T(w)v, d\eta(w)\rangle \in \mathbb{C}  
		$$
		and in that case, the value of the integral is the unique vector, which we denote by $\int_{\Omega} T(w) d\eta(w)$, that satisfies
		$$
		\langle v, \int_{\Omega} T(w) d\eta(w) \rangle_{\mathcal{H}}= \int_{\Omega}\langle T(w)v, d\eta(w)\rangle, \quad v \in \mathcal{H}
		$$
	\end{defn}
	
	This integral is used as a middle step in order to go from the characterization of universal kernels at Theorem $11$ in \cite{Caponnetto2008} to Theorem \ref{oneintegraltodoubleintegral}. It also has a similar purpose on the context of differentiable  universal kernels.

	Below we state and prove a very important result that simplifies the integral of vector valued (and operator valued) functions with respect to vector valued measures using the Radon-Nikod\'ym decomposition.
	
	\begin{lem}\label{manipulation} Let  $\eta: (\Omega, \mathscr{B})\to \mathcal{H}$ be a measure with bounded variation and functions $T: \Omega \to \mathcal{L}(\mathcal{H})$(weak-Bochner integrable with respect to $\eta$) and $F: \Omega \to \mathcal{H}$(Bochner integrable with respect to $|\eta|$). Then if $\eta= Hd\lambda$ is a Radon-Nikod\'ym decomposition for $\eta$, we have that
		$$
		\int_{\Omega}\langle F(w), d\eta(w)\rangle = \int_{\Omega}\langle F(w), H(w) \rangle_{\mathcal{H}}d\lambda(w)
		$$	
		$$
		\langle v, \int_{\Omega}T(w) d\eta(w)\rangle_{\mathcal{H}} = \int_{\Omega}	\langle T(w)v, H(w) \rangle_{\mathcal{H}}d\lambda(w), \quad v \in \mathcal{H}
		$$
		Moreover, if $F$ is bounded, then
		$$
		\int_{\Omega}\langle F(w), d\eta(w)\rangle = \sum_{\mu \in \mathcal{I}}\int_{\Omega}F_{\mu}(w)\overline{H_{\mu}(w)}d\lambda(w)= \sum_{\mu \in \mathcal{I}}\int_{\Omega}F_{\mu}(w)d\eta_{\mu}(w)
		$$
		where $(e_{\mu})_{\mu \in \mathcal{I}}$ is a complete orthonormal basis for $\mathcal{H}$, $F_{\mu}(w):= \langle F(w), e_{\mu}\rangle_{\mathcal{H}}$, $H_{\mu}(w):=\langle H(w), e_{\mu}\rangle_{\mathcal{H}}$ are complex valued functions defined on $\Omega$ and the finite complex valued  measure with bounded variation $\eta_{\mu}:= \langle e_{\mu}, \eta\rangle_{\mathcal{H}} $.
	\end{lem}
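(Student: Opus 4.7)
The plan is to establish the three identities successively, beginning with the first one (integration of a Hilbert-valued function against $\eta$), then deriving the weak-Bochner identity for operator-valued $T$ as an immediate consequence, and finally handling the series representation by orthonormal expansion and dominated convergence.

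First I would verify the identity $\int_{\Omega}\langle F(w),d\eta(w)\rangle = \int_{\Omega}\langle F(w),H(w)\rangle_{\mathcal{H}}d\lambda(w)$ on simple functions. For $\phi = \sum_{i=1}^{n} v_i \chi_{A_i}$, Definition \ref{intvecfunvecmec} gives $\int \langle \phi,d\eta\rangle = \sum_i \langle v_i,\eta(A_i)\rangle_{\mathcal{H}}$, while the Radon-Nikod\'ym decomposition $\eta(A_i) = \int_{A_i} H\,d\lambda$ (interpreted as a Bochner integral, hence weakly) yields $\langle v_i,\eta(A_i)\rangle_{\mathcal{H}} = \int_{A_i}\langle v_i,H(w)\rangle_{\mathcal{H}}d\lambda(w)$. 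Summing gives the identity for simple $\phi$. For general $F$, pick simple $F_n$ with $\int \|F - F_n\|_{\mathcal{H}} d|\eta| \to 0$; by hypothesis $d|\eta| = \|H\|_{\mathcal{H}} d\lambda$, so both sides are continuous in $F$ with respect to this $L^1$ norm (the right side by Cauchy-Schwarz: $|\langle F-F_n,H\rangle| \leq \|F - F_n\|\|H\|$), and passing to the limit proves the first identity.

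The second identity then follows directly: by Definition \ref{weak-bochner}, $\langle v,\int T\,d\eta\rangle_{\mathcal{H}} = \int \langle T(w)v,d\eta(w)\rangle$, and applying the first identity to the Bochner integrable function $w \mapsto T(w)v$ (which is integrable with respect to $|\eta|$ by the hypothesis of weak-Bochner integrability together with Pettis' Measurability Theorem) gives $\int \langle T(w)v,H(w)\rangle_{\mathcal{H}}d\lambda(w)$, as required.

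For the series form, assume $F$ is bounded, say $\|F(w)\|_{\mathcal{H}} \leq M$. By Parseval, $\langle F(w),H(w)\rangle_{\mathcal{H}} = \sum_{\mu \in \mathcal{I}} F_\mu(w)\overline{H_\mu(w)}$ pointwise in $w$, and the partial sums are dominated by $\|F(w)\|\|H(w)\| \leq M\|H(w)\|_{\mathcal{H}}$, which is $\lambda$-integrable since $|\eta|(\Omega) = \int \|H\|_{\mathcal{H}} d\lambda < \infty$. Dominated convergence applied to finite subsets of $\mathcal{I}$ directed by inclusion lets us exchange the sum and the integral, yielding $\int \langle F,H\rangle_{\mathcal{H}}d\lambda = \sum_\mu \int F_\mu \overline{H_\mu}\,d\lambda$. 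The last equality $\int F_\mu \overline{H_\mu}\,d\lambda = \int F_\mu\,d\eta_\mu$ is just the scalar-valued Radon-Nikod\'ym identity for the complex measure $\eta_\mu = \langle e_\mu,\eta\rangle_{\mathcal{H}}$, whose density with respect to $\lambda$ is $\overline{H_\mu}$. The only subtle point, and the one deserving care, is the justification that $H$ is indeed Bochner measurable with separable essential range so that Parseval's identity applies pointwise a.e.; this is automatic since $H$ arises from a Bochner integrable representation and thus satisfies the hypothesis of Theorem \ref{Pettismeastheorem}.
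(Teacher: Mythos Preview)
Your proposal is correct and follows essentially the same route as the paper: verify the first identity on simple functions via $\eta(A)=\int_A H\,d\lambda$, pass to the limit using $d|\eta|=\|H\|_{\mathcal H}\,d\lambda$, deduce the operator identity from the first by the very definition of the weak--Bochner integral, and justify the series by dominated convergence and the scalar Radon--Nikod\'ym relation $d\eta_\mu=\overline{H_\mu}\,d\lambda$. Two minor remarks: the invocation of Pettis' Measurability Theorem for $T(\cdot)v$ is unnecessary, since Definition~\ref{weak-bochner} already assumes $T(\cdot)v$ is Bochner measurable and Bartle integrable (hence Bochner integrable with respect to $|\eta|$); and the comment about separable essential range for Parseval is also not needed, as Parseval's identity $\langle F(w),H(w)\rangle=\sum_\mu F_\mu(w)\overline{H_\mu(w)}$ holds for any pair of vectors in $\mathcal H$ regardless of measurability.
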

	\begin{proof}
		By the definition of a weak-Bochner integrable function, the second equality is a direct consequence of the first one. As for the first equality, the left side is well defined by the definition of the integral, while the right side is well defined because
		$$
		\int_{\Omega} |\langle F(w), H(w) \rangle_{\mathcal{H}}|d\lambda(w)\leq \int_{\Omega} \| F(w)\|_{\mathcal{H}} \|H(w) \|_{\mathcal{H}}d\lambda(w)= \int_{\Omega} \| F(w)\|_{\mathcal{H}} d|\eta|(w)< \infty.
		$$
		If $F= v\chi_{A}$, then 
		$$
		\int_{\Omega}\langle F(w), d\eta(w)\rangle =\langle v, \eta(A)\rangle_{\mathcal{H}} = \int_{\Omega}\langle F(w), H(w) \rangle_{\mathcal{H}}d\lambda(w).
		$$
		By linearity, this equality holds for every simple function. For a general Bochner measurable function $F$ and Bartle integrable with respect to $\eta$, consider $F_{n}$, $n \in \mathbb{N}$, a sequence of simple functions that converges to $F$, then
		$$ |\int_{\Omega}\langle F(w) - F_{n}(w), H(w) \rangle_{\mathcal{H}}d\lambda(w)|\leq \int_{\Omega} \| F(w) -F_{n}(w)\|_{\mathcal{H}} d|\eta|(w) \to 0 $$
		which proves the first equality. \\
		For the second claim, note that
		$$
		\int_{\Omega} \langle F(w), H(w) \rangle_{\mathcal{H}}d\lambda(w) = \sum_{\mu \in \mathcal{I}} \int_{\Omega}F_{\mu}(w)\overline{H_{\mu}}(w)d\lambda(w)
		$$
		by the Lebesgue Dominated convergence, since
		\begin{align*}
		& \sum_{\mu \in \mathcal{I}} \int_{\Omega}|F_{\mu}(w)\overline{H_{\mu}}(w)|d\lambda(w)= \int_{\Omega}\sum_{\mu \in \mathcal{I}}|F_{\mu}(w)H_{\mu}(w)|d\lambda(w)\\
		&\leq  \int_{\Omega}\|F(w)\|_{\mathcal{H}}\|H(w)\|_{\mathcal{H}}d\lambda(w)
		\leq \sup_{\omega \in \Omega}\|F(\omega)\|_{\mathcal{H}}\int_{\Omega}\|H(w)\|_{\mathcal{H}}d\lambda(w)\\
		&=\sup_{\omega \in \Omega}\|F(\omega )\|_{\mathcal{H}}   |\eta|(\Omega) < \infty
		\end{align*}
		the remaining equality is a consequence that $$\eta_{\mu}(A) =\langle e_{\mu}, \eta(A)\rangle_{\mathcal{H}} = \int_{A}\langle e_{\mu}, H(w)\rangle_{\mathcal{H}}d\lambda(w)=\int_{A} H_{\mu}(w)d\lambda(w) $$
	\end{proof}
	
	Although the next result is technical, it is a good source of examples for nonnegative operator valued measures of bounded variation that admits a Radon-Nikod\'ym decomposition, which is a critical condition for our results.

	\begin{lem}\label{radonnikodintrace}Let $X$ be a Hausdorff space and  
		$\Lambda : \mathscr{B}(X) \to \mathcal{L}(\mathcal{H})$ be a set function for which  $\Lambda(\Omega) \in \mathcal{L}(\mathcal{H})$ is a trace class operator and that for every $v \in \mathcal{H}$ 
		$$\Lambda_{v}(A):= \langle \Lambda(A)v,v \rangle_{\mathcal{H}}, \quad A \in \mathscr{B}(X),$$
		is a finite complex valued nonnegative Radon measure. Then,  $\Lambda$ is an operator valued measure of bounded variation  that admits a  Radon-Nikod\'ym   $d\Lambda= Gd\Lambda_{T}$, where
		$$
		\Lambda_{T}(A):= \sum_{\mu \in \mathcal{I}}\langle \Lambda(A) e_{\mu}, e_{\mu} \rangle_{\mathcal{H}} \in \mathbb{C}, \quad A \in  \mathscr{B}(X)
		$$
		$G(w)$ is positive semidefinite and $Tr(G(w))\leq 1$ for all $w \in X$.	
	\end{lem}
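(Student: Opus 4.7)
The plan is to derive bounded variation of $\Lambda$ from the trace estimate $\|\Lambda(A)\|_{\mathcal{L}(\mathcal{H})} \leq \operatorname{Tr}(\Lambda(A)) = \Lambda_{T}(A)$, then construct $G$ coordinate-wise in a fixed orthonormal basis via the scalar Radon--Nikod\'ym theorem, with the main difficulty being Bochner measurability of $G$ as a function into $\mathcal{L}(\mathcal{H})$ despite this Banach space lacking the Radon--Nikod\'ym property.

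First I would observe that $\langle \Lambda(A)v, v\rangle_{\mathcal{H}} = \Lambda_{v}(A) \geq 0$ forces $\Lambda(A)$ to be positive semidefinite, and $\Lambda_{v}(A) \leq \Lambda_{v}(X)$ gives $0 \leq \Lambda(A) \leq \Lambda(X)$ in the operator order, so $\Lambda(A)$ is trace class with $\operatorname{Tr}(\Lambda(A)) \leq \operatorname{Tr}(\Lambda(X))$. The series $\Lambda_{T}(A) = \sum_{\mu \in \mathcal{I}} \Lambda_{e_{\mu}}(A)$ then defines a finite nonnegative Borel measure, and its Radon regularity follows by a tail-truncation argument: choose $N$ so that $\sum_{\mu > N} \Lambda_{e_{\mu}}(X)$ is small, apply Radon regularity of the finitely many $\Lambda_{e_{\mu}}$ with $\mu \leq N$ individually, and intersect the resulting open supersets (respectively, take the finite union of compact subsets). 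Since $\|T\|_{\mathcal{L}(\mathcal{H})} \leq \operatorname{Tr}(T)$ for positive $T$, any disjoint partition $\{A_{n}\}$ of $A$ satisfies $\sum_{n} \|\Lambda(A_{n})\|_{\mathcal{L}(\mathcal{H})} \leq \sum_{n} \Lambda_{T}(A_{n}) = \Lambda_{T}(A) < \infty$, giving $|\Lambda| \leq \Lambda_{T}$ and, applied to tails, the norm countable additivity of $\Lambda$.

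Next I would build $G$ through polarization. For $u,v \in \mathcal{H}$ the complex measure $\Lambda_{u,v}(A) := \langle \Lambda(A)u, v\rangle_{\mathcal{H}}$ satisfies $|\Lambda_{u,v}(A)|^{2} \leq \Lambda_{u}(A)\Lambda_{v}(A) \leq \|u\|^{2}\|v\|^{2}\Lambda_{T}(A)^{2}$ by Cauchy--Schwarz for the positive semidefinite operator $\Lambda(A)$, so $\Lambda_{u,v} \ll \Lambda_{T}$. Applying the scalar Radon--Nikod\'ym theorem to the countable family $\Lambda_{e_{\mu}, e_{\nu}}$ coming from a fixed orthonormal basis $(e_{\mu})_{\mu \in \mathcal{I}}$ (countable by separability of $\mathcal{H}$) yields measurable densities $g_{\mu,\nu}$. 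Outside a single $\Lambda_{T}$-null set, every finite matrix $[g_{\mu,\nu}(w)]_{\mu,\nu \leq n}$ is positive semidefinite---for rational-coefficient $v = \sum c_{\mu} e_{\mu}$ the form $\sum_{\mu,\nu} c_{\mu}\overline{c_{\nu}} g_{\mu,\nu}(w)$ is the $\Lambda_{T}$-density of $\Lambda_{v} \geq 0$, and one extends to arbitrary complex coefficients by continuity---and $\sum_{\mu \leq n} g_{\mu,\mu}$ is the density of the submeasure $\sum_{\mu \leq n} \Lambda_{e_{\mu}} \leq \Lambda_{T}$, hence bounded by $1$, so $\sum_{\mu} g_{\mu,\mu}(w) \leq 1$. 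These two properties make $[g_{\mu,\nu}(w)]$ the matrix of a positive semidefinite trace class operator $G(w)$ with $\operatorname{Tr}(G(w)) \leq 1$; set $G \equiv 0$ on the exceptional null set to enforce the conclusion pointwise.

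The hard part will be Bochner measurability of $G$. I would approximate $G$ by the finite-rank compressions $G_{n}(w) := P_{n} G(w) P_{n}$, with $P_{n}$ the orthogonal projection onto $\mathrm{span}\{e_{1}, \ldots, e_{n}\}$: explicitly $G_{n}(w) u = \sum_{\mu,\nu \leq n} g_{\mu,\nu}(w) \langle u, e_{\mu}\rangle_{\mathcal{H}} e_{\nu}$, a finite linear combination of fixed bounded operators with measurable scalar coefficients, hence Bochner measurable. Writing $Q_{n} = I - P_{n}$ and $G - G_{n} = Q_{n} G + P_{n} G Q_{n}$, the estimate $\|Q_{n} G\|_{\mathcal{L}(\mathcal{H})}^{2} = \|Q_{n} G^{2} Q_{n}\|_{\mathcal{L}(\mathcal{H})} \leq \|Q_{n} G Q_{n}\|_{\mathcal{L}(\mathcal{H})} \leq \operatorname{Tr}(Q_{n} G Q_{n}) = \sum_{\mu > n} g_{\mu,\mu}(w)$ (with $G^{2} \leq G$ from $\|G\| \leq 1$) tends to $0$ $\Lambda_{T}$-a.e., so $\|G(w) - G_{n}(w)\|_{\mathcal{L}(\mathcal{H})} \to 0$ $\Lambda_{T}$-a.e., proving Bochner measurability. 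Bochner integrability then follows from $\|G\|_{\mathcal{L}(\mathcal{H})} \leq 1$ and finiteness of $\Lambda_{T}$, and the identity $\Lambda = G \, \Lambda_{T}$ is checked by matching matrix entries, $\langle \int_{A} G \, d\Lambda_{T} \, e_{\mu}, e_{\nu}\rangle_{\mathcal{H}} = \int_{A} g_{\mu,\nu} \, d\Lambda_{T} = \Lambda_{e_{\mu}, e_{\nu}}(A) = \langle \Lambda(A) e_{\mu}, e_{\nu}\rangle_{\mathcal{H}}$, since a bounded operator on $\mathcal{H}$ is determined by its matrix in a complete orthonormal basis.
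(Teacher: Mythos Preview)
Your proof is correct and follows essentially the same route as the paper: construct $G$ via scalar Radon--Nikod\'ym on the matrix entries $g_{\mu,\nu}$ in a fixed orthonormal basis, verify positive semidefiniteness and the trace bound on a single $\Lambda_T$-null complement by countability, then check $\Lambda = G\,\Lambda_T$.

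The one substantive difference is your treatment of Bochner measurability. You prove it explicitly through the finite-rank compressions $G_n = P_n G P_n$ and the estimate $\|G - G_n\|_{\mathcal{L}(\mathcal{H})}^2 \leq \|Q_n G Q_n\| \leq \sum_{\mu > n} g_{\mu,\mu}(w) \to 0$ a.e., whereas the paper only asserts that $G$ is weak measurable and then invokes the bound $\int_X \|G(w)\|\,d\Lambda_T < \infty$ to conclude Bochner integrability; strictly speaking that step needs Bochner measurability, which weak-operator measurability of $G$ does not immediately give in $\mathcal{L}(\mathcal{H})$. Your argument closes this gap cleanly. A minor variant: the paper actually obtains $\sum_{\mu} g_{\mu,\mu}(w) = 1$ a.e. via monotone convergence (since $\sum_\mu \Lambda_{e_\mu} = \Lambda_T$ exactly), while you record only $\leq 1$, which is all the lemma claims; and the paper verifies $\Lambda = G\,\Lambda_T$ through quadratic forms $\langle \Lambda(A)v,v\rangle$ and polarization rather than matrix entries, but both checks are equivalent.
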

	
	\begin{proof} Let $(e_{\mu})_{\mu \in \mathcal{I}}$ be a complete orthonormal basis on $\mathcal{H}$. Since the operator $\Lambda(X)$ is positive semidefinite, trace class and the measure $\Lambda_{v}$ is nonnegative for every $v \in \mathcal{H}$, then for every Borel measurable set $A \subset X$
		$$	
		\sum_{\mu \in \mathcal{I}}\langle \Lambda(A) e_{\mu}, e_{\mu} \rangle_{\mathcal{H}} \leq  \sum_{\mu \in \mathcal{I}}\langle \Lambda(A) e_{\mu}, e_{\mu} \rangle_{\mathcal{H}}  + \sum_{\mu \in \mathcal{I}}\langle \Lambda(X-A) e_{\mu}, e_{\mu}\rangle_{\mathcal{H}}   = \sum_{\mu \in \mathcal{I}}\langle \Lambda(X) e_{\mu}, e_{\mu} \rangle_{\mathcal{H}} < \infty   	 
		$$	
		and then  $\Lambda(A)$ is a trace class positive semidefinite operator. Define the  complex valued function $\Lambda_{T}:  \mathscr{B}(X) \to \mathbb{C} $ as
		$$
		\Lambda_{T}(A):= \sum_{\mu \in \mathcal{I}}\langle \Lambda(A) e_{\mu}, e_{\mu} \rangle_{\mathcal{H}} \in \mathbb{C}.
		$$
		Then $\Lambda_{T}$ is a finite nonnegative complex valued Radon measure on $X$. For every $A \in \mathscr{B}$ the kernel (on the variables $\mu, \nu$) $\Lambda_{\mu, \nu}(A):=\langle \Lambda(A) e_{\mu}, e_{\nu} \rangle_{\mathcal{H}}$ is positive definite, and moreover $\Lambda_{\mu, \nu}$ is a finite complex valued Radon measure on $X$ that is absolutely continuous with respect to $\Lambda_{T}$ ($\Lambda_{\mu, \nu} << \Lambda_{T}$). Indeed, 
		$$
		2|\Lambda_{\mu, \nu}(A)|= 2|\langle \Lambda(A) e_{\mu}, e_{\nu}  \rangle_{\mathcal{H}} | \leq \langle \Lambda(A) e_{\nu}, e_{\nu}  \rangle_{\mathcal{H}}+ \langle \Lambda(A) e_{\mu}, e_{\mu}  \rangle_{\mathcal{H}} \leq 2 \Lambda_{T}(A).
		$$
		By the famous Radon-Nikod\'ym Theorem \cite{folland}, there exists a function $g_{\mu, \nu} : X \to \mathbb{C}$ for which $\Lambda_{\mu, \nu} =g_{\mu, \nu}\Lambda_{T}$. Note that $g_{\mu,\mu}$ is a nonnegative function (almost everywhere on $\Lambda_{T}$) and   
		$$
		\Lambda_{T}(A) = \sum_{\mu \in \mathcal{I}}\langle \Lambda(A) e_{\mu}, e_{\mu} \rangle_{\mathcal{H}}=  \sum_{\mu \in \mathcal{I}}\int_{A}g_{\mu, \mu}(w)d\Lambda_{T}(w)= \int_{A} \left [\sum_{\mu \in \mathcal{I}}g_{\mu, \mu}(w) \right]d\Lambda_{T}(w)
		$$	
		the last equality is an application of the Monotone Convergence Theorem,  \cite{folland}. As a direct consequence we obtain that  $\sum_{\mu \in \mathcal{I}}g_{\mu, \mu}(w) =1$ (almost everywhere on $\Lambda_{T}$). \\
		The kernel $g_{w}: \mathcal{I} \times  \mathcal{I} \to \mathbb{C}$, $w \in X$,  $g_{w}(\mu, \nu)= g_{\mu, \nu}(w)$ is positive definite (almost everywhere on $\Lambda_{T}$). This happens because if $F \subset \mathcal{I}$ is a finite set and  complex numbers $c_{\mu}$, $\mu \in F$, we have that for every Borel measurable set $A $
		$$
		0\leq \langle \Lambda(A)  \sum_{\mu \in  F} c_{\mu}e_{\mu}, \sum_{\mu \in  F} c_{\mu}e_{\mu} \rangle_{\mathcal{H}}= \int_{A} \sum_{\mu, \nu \in  F} c_{\mu}\overline{c_{\nu}}g_{\mu, \nu}(w)d\Lambda_{T}
		$$
		and then we must have that $\sum_{\mu, \nu \in  F} c_{\mu}\overline{c_{\nu}}g_{\mu, \nu}(w)\geq 0$ (almost everywhere on $\Lambda_{T}$, for every fixed finite set $F$ and scalars $c_{\mu}$).  This implies that the kernel $g_{w}$ is positive definite  almost everywhere on $X$, because since $\mathcal{I}$ is countable, we only need a countable amount of finite sets and complex numbers indexed on this finite set (and independent of the kernel) to test if $g_{w}$ is a positive definite kernel.\\
		From those information we obtain that the operator described on the statement of the Theorem is continuous (almost everywhere on $\Lambda_{T}$). Indeed, if $v=\sum_{\mu \in \mathcal{I}}v_{\mu}e_{\mu} \in \mathcal{H}$, then
		$$
		\sum_{\nu \in \mathcal{I}} |g_{\mu, \nu}(w)v_{\nu}| \leq \sum_{\nu \in \mathcal{I}} (g_{\mu, \mu}(w))^{1/2}(g_{\nu, \nu}(w))^{1/2}|v_{\nu}|= (g_{\mu, \mu}(w))^{1/2}\|v\|_{\mathcal{H}}  
		$$
		because $\|\sum_{\nu \in \mathcal{I}} (g_{\nu, \nu}(w))^{1/2} e_{\mu}\|_{\mathcal{H}}= \sum_{\nu \in \mathcal{I}} (g_{\nu, \nu}(w))= 1 $, so
		$$
		\|G(w)v\|_{\mathcal{H}}= \sqrt{\sum_{\mu \in \mathcal{I}} (\sum_{\nu \in \mathcal{I}} g_{\mu, \nu}(w)v_{\nu})^{2}} \leq \sqrt{\sum_{\mu \in \mathcal{I}} g_{\mu, \mu}(w)\|v\|^{2}_{\mathcal{H}}} =  \|v\|_{\mathcal{H}}  
		$$
		which implies that the operator $G(w)$ is continuous (almost everywhere on $\Lambda_{T}$).    Since $\mathcal{I}$ is countable, there exists a sequence of finite sets $(I_{k})_{k \in \mathbb{N}}$ for which $I_{k} \subset I_{k+1} \subset \mathcal{I}$ and $\cup_{k \in \mathbb{N}}I_{k}= \mathcal{I}$.  Given $v= \sum_{\mu \in \mathcal{I}}v_{\mu}e_{\mu} \in \mathcal{H}$, consider $v_{k} :=\sum_{\mu \in I_{k}}v_{\mu}e_{\mu} \in \mathcal{H}$, and note that $v_{k} \to v $ in $\mathcal{H}$. The Lebesgue convergence Theorem implies that
		$$
		\langle \Lambda(A)  v_{k}, v_{k} \rangle_{\mathcal{H}} = \int_{A} \sum_{\mu, \nu  \in I_{k}}v_{\mu}\overline{v_{\nu}}g_{\mu,\nu}(w)d\Lambda_{T}(w) \to \int_{A}\langle G(w) v ,v \rangle_{\mathcal{H}} d\Lambda_{T}(w)
		$$
		while the fact that $v_{k} \to v$ in $\mathcal{H}$ implies that
		$$
		\langle \Lambda(A)  v_{k}, v_{k} \rangle_{\mathcal{H}} \to  \langle \Lambda(A)  v, v \rangle_{\mathcal{H}}. 
		$$
		The general case follow from this case since $\langle \Lambda(A)  u, v \rangle_{\mathcal{H}}$ is finite linear combination of
		$$\langle \Lambda(A)  (u+v), v+v \rangle_{\mathcal{H}}, \quad  \langle \Lambda(A)  (u-v), u-v \rangle_{\mathcal{H}},\quad  \langle \Lambda(A)  (u+iv), u+iv \rangle_{\mathcal{H}}.$$
		Gathering all this information, we can define an  operator valued weak measurable function $G: X \to \mathcal{L}(\mathcal{H})$  for which $G(w)$  is a positive semidefinite  and $Tr(G(w)) \leq 1$ for all $w \in X$, by defining $G(w)$ as the zero operator on the problematic sets listed previously, which is  countable amount of  sets of $\Lambda_{T}$ zero measure.\\
		It only remains to prove the Bochner integrability of the function $G$, which is direct  consequence of the following inequality
		$$
		\int_{X}\|G(w)\|_{\mathcal{L}(\mathcal{H})}d\Lambda_{T}(w) \leq \int_{X} Tr(G(w))d\Lambda_{T}(w) \leq \int_{X} d\Lambda_{T}(w)=\Lambda_{T}(X) < \infty. 
		$$\end{proof}

\end{document}